\definecolor{greencite}{rgb}{0.2,0.6,0.2}
\definecolor{bluformula}{rgb}{0.1,0.2,0.6}
\definecolor{rosso}{rgb}{1,0,0}
\theoremstyle{plain}
\newtheorem{theorem}{Theorem}
\newtheorem{theorem*}{Theorem}
\newtheorem{lemma}{Lemma}
\newtheorem{proposition}{Proposition}
\newtheorem{corollary}{Corollary}
\theoremstyle{definition}
\theoremstyle{plain}
\newtheorem{remark}{Remark}
\newtheorem*{remark*}{Remark}
\numberwithin{equation}{section}
\def\e{{\epsilon}}
\def\ni{\noindent}
\def\OT{{\O\!\times\!(0,T)}}
\def\OTY{{\O\!\times\!(0,T)\!\times \!Y}}
\def\re{\rho_\e }
\def\ln{\left|\left|}
\def\rn{\right|\right|}
\def\lb{\left|}
\def\rb{\right|}
\def\ov{\overline}
\def \tr {\mathop {\rm tr}}
\def\le{\leq}
\def\lc{\left[}
\def\rc{\right]}
\def\si3{\sum_{i=1}^3}
\def\OYs{{\O \times Y\setminus B}}
\def\rpt{\right.}
\def\O{\Omega}
\def\ov{\overline}
\def\e{\varepsilon}
\def\a{\alpha}
\def\b{\beta}
\def\l{\lambda}
\def\a{\alpha}
\def\b{\beta}
\def\OY{{\O\time1s Y}}
 \def\OYs{{\O\!\times \!(Y\setminus B)}}
 \def\xe{{x  \over \e}}
\def\OT{{\O\!\times\!(0,T)}}
\def\sdto{{\longrightarrow \!\!\!\!\!\!\!\!\longrightarrow}}
\def\dto{\rightharpoonup \!\!\!\!\!\!\!\!\rightharpoonup}
\def\bfdiv{{\rm \bf div}} 
\def\dist{{\rm dist}}
\def\lp{\left(}
\def\rp{\right)}
\def\la{\left\{}
\def\ra{\right\}}
\def\build#1_#2^#3{\mathrel{\mathop{\kern 0pt #1}\limits_{#2}^{#3}}}
\def\ru{\ov\rho_1}
\def\lime{\lim_{\e\to 0}} 
\def\bfa{\pmb{a}} 
\def\bfb{\pmb{b}}
\def\bfe{\pmb{e}}  
\def\bff{\pmb{f}}  
\def\bfg{\pmb{g}}  
\def\bfh{\pmb{h}}  
\def\bfi{\pmb{i}}
\def\bfu{\pmb{u}}  
\def\bfv{\pmb{v}}  
\def\bfw{\pmb{w}}  
\def\bfx{\pmb{x}}
\def\bfA{\pmb{A}} 
\def\bfB{\pmb{B}}
\def\bfH{\pmb{H}} 
\def\bfI{\pmb{I}}
\def\B{{\mathcal B}}
\def\D{{\mathcal D}}
\def\F{{\mathcal F}}
\def\H{{\mathcal H}}
\def\I{{\mathcal I}}
\def\L{{\mathcal L}}
\def\M{{\mathcal M}}
\def\P{{\mathcal P}}
\def\NN{\mathbb{N}}
\def\RR{\mathbb{R}}  
\def\SS{\mathbb{S}}
\def\ZZ{\mathbb{Z}}
\def\rr{\mathbb{R}}
\def\bfnu{{\pmb{ \nu}}} 
\def\bfxi{{\pmb{ \xi}}}
\def\bfsigma{{\pmb{ \sigma}}} 
\def\bfphi{{\pmb{  \phi}}} 
\def\bfpsi{{\pmb{  \psi}}}
\def\bfvarphi{{\pmb{ \varphi}}}
\def\bfXi{\pmb{ \Xi}}
\def\bfPsi{\pmb{  \Psi}}
\def\gotN{{{\mathfrak{N}}}}
\def\gotO{{{\mathfrak{O}}}}
\def\gotO{{{\mathfrak{O}}}}
  \def\bfnabla{\pmb{  \nabla}}
\def\?{{\bf ???}}
\def\OT{{\O\times (0,t_1)}}
\def\mdto{{\,\buildrel m_\e \over \dto\,}}
\def\implique{\quad  \Longrightarrow  \quad}
\def\lpt{\left.}
\def\rpt{\right.}
\def\f{\varphi}
\def\intb{{\int\!\!\!\!\!\!-}}
\def\.{\cdot}
  \def\OTI{\OT\times I}
\def\OTB{{\O\!\times\!(0,T)\!\times\! B}}
\def\implique{\quad  \Longrightarrow  \quad}
\def\lpt{\left.}
\def\rpt{\right.}
\def\f{\varphi}
\def\intb{{\int\!\!\!\!\!\!-}}
\def\xpe{{x'\over\e}}
\begin{document}

\bibliographystyle{plain}

\title[]{HOMOGENIZATION OF  STRATIFIED ELASTIC media
   WITH HIGH CONTRAST }

\author{Michel Bellieud}
 \address{ Universit\'e  de Montpellier, Case courier
048, Place Eug\`ene Bataillon, 34095 Montpellier Cedex 5, France}
 \email{michel.bellieud@univ-montp2.fr}
 \subjclass[2000]{35B27, 35B40,  35R60  74B05,   74Q10 }

\date{}
 
\keywords{homogenization,   elasticity,
 non-local effects }

\begin{abstract}
  We determine the asymptotic behavior of the solutions to the linear  elastodynamic equations
in a stratified medium   comprising an alternation of possibly very  stiff layers  with much softer ones, 
when  the thickness of the layers tends to zero. The limit equations may depend on higher order terms, characterizing bending effects. 
A part  of this work is   set in the context of non-periodic homogenization and an extension to  stochastic homogenization is presented.
\end{abstract}

\maketitle

\centerline{Preliminary version}


 \section{Introduction}
\label{intro}
{\color{black}
In this paper, 
we  analyze the asymptotic behavior
of the solution    to the linear elastodynamic equations    
in  a 
 composite material 
 wherein, at a microscopic scale,    
  possibly very "stiff"  
  layers  
alternate   with  a  much "softer" medium.}
%
 Stratified   composite media 
  have been intensively investigated over the last decades, {\color{black} especially}      in the context  of diffusion equations \cite{BoPi,GuMo,GuMoPi,HeMo,HeMoPi,Hn,MuTa,TaR,TaP}. 
As regards   linear elasticity, 
 layered  elastic composites 
{\color{black}  have}
  been studied in  {\color{black}\cite{FrMu, GuMo2, JaLiLu, McCo}}  under  assumptions
of uniform boundedness and uniform definite positiveness of 
 the elasticity  tensor 
{\color{black}   guaranteeing  that the effective equation 
 is   a standart   linear elasticity equation.   
When  these assumptions break down, as for instance  in the so-called   "high contrast case",
 the limit  equilibrium equation   may be of a quite different type: it may   correspond,
in theory,   to  the Euler equation associated to the minimization of 
 any lower semi-continuous quadratic form  on $L^2$ vanishing on rigid motions \cite{CaSe}.
In particular,  it  may be non-local and   depend on higher order derivatives of the displacement.
Elastic media with  high contrast have been studied under  various geometrical assumptions.
Composites with stiff grain-like  inclusions  have been investigated in  \cite{BeArma,  BeSiam, Pa2}, 
  stiff fibered structures in 
\cite{BeSiam, BeBo2, BeGr,PiSe, Sili}, and  stiff media with holes filled with a soft material in \cite{ChCh, Co, Sa1}.
Our aim is to complement this body of work in 
the context  of stratified  media. 
Our approach is  based on the two-scale convergence  method \cite{Al,ArDoHo,BraBri,
ChSmZh,Ne,Ng},   which yields    the convergence  to an effective  solution. It also yields     a first order  corrector result in $L^2$ (see Remark \ref{remcorrector}), but not 
the    rigorous  error estimates of higher order with respect to small parameters 
 provided by the  asymptotic expansions method   \cite{AbKoPaSm,AmKoPaSm,BaKaSm,BiSu,BiSu2,Ch,PaSu,Pa,Pa2, 
Sa,Sa07}.

For a given     bounded smooth    open subset $\O$ of $\RR^3$,  we  consider a  linear elastodynamic problem like \eqref{Pe}.
We  assume  that    the  Lam\'e  coefficients 
  take  
 possibly large  values   in a 
   subset ${B_\e}$  of $\O$  and much smaller values elsewhere. 
 The set $B_\e$   consists 
 of a non-periodic distribution of   parallel disjoint  homothetic  layers   of thickness  $r_\e$,
 whose median planes are orthogonal to $\bfe_3$  and  separated by a minimal distance $\e$, where   $\e, r_\e$ are 
 positive reals converging to zero (see fig. 3.1).
 The effective volume fraction of the stiff phase is characterized  by the parameter $\vartheta$ defined by \eqref{defvartheta}. 
 Both cases  $\vartheta=0$ and  $0<\vartheta<1$ are investigated.
The  order of magnitude  of the Lam\'e  coefficients in the 
stiff phase is determined  by the 
 parameters $k$ and $\kappa$ defined by (\ref{kkappa}).  

When the    elasticity  coefficients in the soft phase  are of order    $1$
and  the effective  volume fraction of the stiff phase vanishes
   the limit behavior of the composite is governed, 
if  $0<k<+\infty$,  by the equation 
 
\begin{equation}
\label{Pvintro2}
\begin{aligned}
  ( \rho +n{\overline \rho_1})  {\partial^2\bfu\over \partial t^2}   
 -\bfdiv   \bfsigma ( \bfu)
 - nk   \bfdiv\bfsigma_{x'}(\bfu')
 =( \rho +n\ru ) \bff   \hskip 0,5cm    
\hbox{ in } \OT,
  \end{aligned}
  \end{equation}

\ni where $\rho$ denotes the mass density in the softer phase, and   $\bfu'$,   $\bfsigma_{x'}$, $\bfsigma$ and 
 $\ru$  are defined, respectively,  by  
  (\ref{exprim}),  (\ref{defsigma}),  and (\ref{defrhoe}). 
The function $n$  characterizes the rescalled  effective  
 number of  sections of stiff layers  per unit length in the $\bfe_3$ direction and  is obtained as 
 the weak* limit in $L^\infty(\Omega)$ of the sequence $(n_\e)$ defined by \eqref{defne}.
When    the order of magnitude of the elasticity coefficients in the stiff layers  is larger,  that is when $k=+\infty$, 
   the  functions 
$u_1$  and   $u_2$  vanish on  the set $\{n>0\}$ and 
the behavior of $u_3$
   is       governed  by the  equation  
 (\ref{infty0f}),
(\ref{inftykappaf}) or (\ref{inftyinftyf}), depending on   the order  of magnitude of 
$\kappa$.
In the 
case     $0\!<\!\kappa\!<\!+\infty$,    this equation     involves    the $4^{th}$  partial derivatives  of   
  $u_3$ with respect to $x_1, x_2$:

\begin{equation}
\label{bendingintro2}
\begin{aligned}
  ( \rho +n{\overline \rho_1})  {\partial^2
u_3
\over \partial t^2}  -(\bfdiv\bfsigma(\bfu))_3  + n{ \kappa  \over 3}{l+1\over l+2}&\sum_{\a,\b=1}^2 {\partial^4  u_3 \over \partial x_\a^2\partial x_\b^2}  
 \\& =  ( \rho +n{\overline \rho_1})  f_3   \ \hbox{ in } \OT,
    \end{aligned}
  \end{equation}

 \ni  
 revealing  bending    effects. 
The effective behavior on the set $\{n=0\}$ is that of a homogeneous material without stiff layers.
%
In Theorem \ref{thstoch}, we extend these results to the stochastic case. 
The set $B_\e(\omega)$ then depends
  on
  a random element $\omega$ of some sample space $\gotO\subset 2^\RR$
equiped with a probability   $P$ satisfying \eqref{P}. 
The limit  problem as $\e\to0$  is deduced  from the above equations, $P$-almost surely, 
by substituting for 
  $n$    the conditional expectation $ {E}_{{P}}^{\F} n_0(\omega)$  with respect to $P$ 
given the   $\sigma$-algebra  $\F$ 
  of  the periodic sets,   
 of the random variable  $n_0$ defined by \eqref{defn0}.

If  the  
 order of magnitude of  the elasticity 
 coefficients  in  the soft interlayers  is strictly smaller than $1$ and strictly  larger than $\e^2$, 
 the effective equations
are  deduced from (\ref{Pvintro2}),  (\ref{bendingintro2}), formally, 
by removing the term $\bfdiv \bfsigma (\bfu)$ (see   Theorem \ref{thinter}).  
%
%

 When 
  the elastic moduli in the soft phase 
are of order $\e^2$, 
the effective behavior of the composite turns sensitive  to    the  slightest  geometrical   perturbation  (see Remark \ref{remper}).
The effective equation can not be expressed   simply in terms of the function $n$ as in the other cases. This characteristic renders  the study of non-periodic homogenization a very difficult task: we only treat   the  case of an $\e$-periodic distribution of stiff layers.
The  homogenized problem   then takes the form of   a system  of equations  
coupling   some  field $\bfv$, characterizing the effective displacement in the stiff layers, 
with  the two-scale limit 
 $\bfu_0:\OT\times (-{1\over 2}, {1\over 2})^3\to \RR^3$  of the solution  $(\bfu_\e)$ to \eqref{Pe}  (see \cite{Al,Ng}).
This  field  $\bfv$ is obtained as  the limit of the sequence $(\bfu_\e m_\e)$, where $m_\e$ is the measure    supported by the stiff layers defined by \eqref{defme}.  
If $0<k<+\infty$, the  effective  behavior of the displacement in the stiff 
medium  
  is governed   by the      equation   

\begin{equation} \begin{aligned}
 {\overline \rho_1}\! {\partial^2\bfv\over \partial t^2}   
-k   \bfdiv\bfsigma_{x'}(\bfv')
 =\!\ru  \bff +    \bfg(\bfu_0)\qquad \hbox{  in }\   \Omega\times(0,T), 
 \end{aligned}
\label{Pvtintro}
\end{equation}

\ni associated with  the  boundary and initial  conditions given in (\ref{k0}).
This
 equation      displays     stretching    vibrations with regard to the 
transversal components $v_1$, $v_2$  of $\bfv$. 
It is coupled with the soft phase through 
 the   field   $\bfg(\bfu_0)$ which  represents  the sum of the  surface forces   applied on each stiff   layer   by the adjacent  soft medium. This field  is  defined by (\ref{eysigmayg}),   in terms of   the restriction  
of    $\bfu_0$ to ${\Omega\times(0,T)\times  Y\setminus A}$, which  characterizes  the effective displacement in  the soft interstitial layers. The letters      $Y$   and     $A$  
symbolize, respectively,     the unit cell  and   the rescaled  stiff layer (see \eqref{defYBSigma}, \eqref{defA}).
The effective displacement in the soft  phase   is governed   by the   equation  

\begin{equation}
\rho {\partial^2\bfu_0\over \partial t^2}-\bfdiv_y (\bfsigma_{0y} (\bfu_0 ))= \rho \bff\qquad \hbox{  in }\  {\Omega\times(0,T)\times  Y\setminus A} , 
 \nonumber
\end{equation}

\ni where $ \bfsigma_{0y}$ is defined by (\ref{eysigmayg}). This equation is    coupled with the  variable  $\bfv $  by  the 
relation (\ref{coupling})  on $ \OT\times A$.
The weak limit    of    $(\bfu_\e)$ in     $L^2(\Omega\times(0,T))$ 
is given by 
$\bfu (x,t)=\int_Y
\bfu_0 (x,t,y)dy$. 

When    the order of magnitude of the elasticity coefficients in the stiff layers  is larger,
   the  functions 
$v_1$  and   $v_2$  vanish  and 
the effective displacement  in the stiff phase   is       governed  by the  equation  of   $v_3$ 
given by (\ref{infty0}),
(\ref{inftykappa}) or (\ref{inftyinfty}), depending on   the order  of magnitude of 
$\kappa$.
In the 
case     $0\!<\!\kappa\!<\!+\infty$,    this equation,   
\begin{equation}
   {\overline \rho_1} {\partial^2
v_3
\over \partial t^2} \!  +\!{\kappa  \over3} {l\!+\!1\over l\!+\!2}\!\!\sum_{\a,\b=1}^2\!\! {\partial^4  v_3 \over \partial x_\a^2\partial x_\b^2}  \!
 =  \ru\!  f_3+ (\bfg(\bfu_0))_3    \quad \hbox{ in } \OT,
 \nonumber
\end{equation}

\ni       involves    the $4^{th}$  partial derivatives  of   
  $v_3$ with respect to $x_1, x_2$, 
 characterizing   bending   vibrations.
   Otherwise, the stiff layers   display the behavior of a collection of 
           unstretchable membranes  if $(k,\kappa)=(+\infty,0)$   and that   of  fixed   bodies  if $\kappa=\infty$.

Our  results apply as well to the case of equilibrium equations 
and    to multiphase composites (see  remarks \ref{remmultiphase}, \ref{remequilibrium}, \ref{remmultistiffinter},  \ref{remequilibriumstiffinter}). 
  The paper is organised as follows: in Section \ref{secnotations}   we specify the notations  and 
in  Section \ref{secresults}   we state our main results.  
In Section \ref{sectwoscale}, we recall some classical  results 
and introduce 
a non-periodic variant of the two-scale convergence for which we establish a compactness result.  
The effective equations are derived in Section \ref{secprooftheoremth} by employing 
  apriori estimates  demonstrated in Section \ref{secapriori}, and a technical lemma  proved in the appendix. 
}

 \section{Notations}\label{secnotations} 
  In this article, $\{ \bfe_1, .., \bfe_N\}$ stands for the canonical basis of   $\RR^N$.
Points in $\RR^N$ or in $\ZZ^N$ and real-valued functions  are represented by symbols beginning by a  lightface lowercase  (example $x, i, \det \bfA...$) and vectors and vector-valued functions   by symbols
beginning by a boldface lowercase (examples: $\bfx$, 
 $\bfi$,   
$\bfu$,
$\bff$, $\bfg$,
$\bfdiv  \bfsigma_\e $,...). Matrices and matrix-valued functions are represented  by symbols beginning by a boldface uppercase
with the following exceptions:
$\bfnabla\bfu$ (displacement gradient), $\bfe(\bfu)$ (linearized strain tensor).
We denote by $u_i$ or
$(\bfu)_i$  the  components of a vector $\bfu$ and   by $A_{ij}$ or $(\bfA)_{ij}$ those of a matrix $\bfA$ (that is $\bfu=
\sum_{i=1}^N u_i \bfe_i =\sum_{i=1}^N (\bfu)_i
\bfe_i$; $\bfA=\sum_{i,j=1}^N A_{ij} \bfe_i\otimes\bfe_j =\sum_{i,j=1}^N (\bfA)_{ij} \bfe_i\otimes\bfe_j $). We do not employ  the usual repeated index
convention for summation.    We denote 
by $\bfA\!:\!\bfB=\sum_{i,j=1}^N A_{ij}B_{ij}$ the   inner
product of two matrices, by $ \e_{ijk} $   the three-dimensional   alternator, by 
$\bfu\wedge \bfv=\sum_{i,j,k=1}^3\e_{ijk}u_jv_k\bfe_i$ the exterior product in $\RR^3$,   by $\SS^M$ ($M\in \NN$)
   the set of all real symmetric matrices of order
$M$, by $\bfI_M$   the $M\times M$ identity matrix.  
{\color{black} The symbol   $\sharp D$ denotes   the cardinality of  a finite set $D$. }
The letter
$C$ stands for  different  constants  whose precise values  may vary.
For any weakly differentiable vector field   $\bfpsi:\Omega\subset  \RR^3\to \RR^3$, we set 

\begin{equation}
\begin{aligned} 
 &  \bfpsi':= \psi_1\bfe_1+\psi_2\bfe_2; \quad 
%
%
 \bfe_{x'}(\bfpsi):= \sum_{\a,\b=1}^2 {1\over2}\lp {\partial \psi_\a\over\partial x_\b}+ {\partial \psi_\b\over\partial x_\a}\rp \bfe_\a\otimes\bfe_\b\  (=\bfe_{x'}(\bfpsi'));
 \\  &  
\bfsigma_{x'}(\bfpsi ):= \begin{pmatrix}2 { \partial  \psi_{   1} \over \partial x_1}+ {2l \over l +2}\lp { \partial \psi_{   1} \over \partial x_1}+{ \partial  \psi_{ 2} \over \partial x_2}\rp&  { \partial \psi_{  1} \over \partial x_2}+{ \partial  \psi_{   2} \over \partial x_1} & 0
\cr   { \partial \psi_{  1} \over \partial x_2}+{ \partial  \psi_{  2} \over \partial x_1} &  2{ \partial \psi_{ 2} \over \partial x_2}+{2l\over l +2}\lp { \partial  \psi_{   1} \over \partial x_1}+{ \partial  \psi_{  2} \over \partial x_2}\rp&0
\cr 0 &  0& 0
\end{pmatrix}.
\end{aligned} 
\label{exprim}  
   \end{equation}

  \ni  {\color{black} We reproduce and modify here some notations from \cite{BeSiam}:} we denote by $C_\sharp^\infty(Y)$ (resp. $C_\sharp(Y)$)  the set of $Y$-periodic functions of $C^\infty\lp \RR^3\rp$ (resp. $C(\RR^3)$), by $C^\infty_\sharp(Y\setminus {B})$ the set of the
restrictions of the elements of 
$C_\sharp^\infty(Y)$ to $Y\setminus {B}$,   by
$H^1_\sharp(Y)$ (resp. $H^1_\sharp(Y\setminus {B})$) the completion of $C_\sharp^\infty(Y)$ (resp. $C^\infty_\sharp(Y\setminus {B})$)  with respect to the norm $w\to \lp \int_Y(|w|^2+|\bfnabla w|^2)dy
\rp^{1\over2}$ 
( resp. $w\to( \int_{Y\setminus {B}}(|w|^2   + |\bfnabla w|^2)dy
)^{1\over2}$  ).   
For any  subset $Q$ of the unit cell $Y$,  the symbol $Q_\sharp$ stands for  the periodization on all $\RR^3$ of $Q$,
 that is 

\begin{equation}
\begin{aligned}
Q_\sharp:= \bigcup_{z\in \ZZ^3} \{z\}+Q.
\end{aligned}
  	\label{Qsharp}
\end{equation}

 \section{Setting  of the problem and results}\label{secresults} 
 We consider a cylindrical domain  $\Omega :=\Omega'\times  (0,L) $, where 
  $\Omega'$ is 
a  bounded smooth  domain of $ \RR^2$.
Given a small positive real $\e$, the non-periodic distribution $B_\e$ of  disjoint  homothetical  stiff layers $B_\e^{j}$   will be described in terms of   a finite 
  subset $ \omega_\e$  of   $\RR$
 
  \begin{equation} \label{defomegae}
 \omega_\e := \la \omega_\e^{j}, j\in   J_\e\ra ,
 \end{equation}
 
\ni  satisfying 
 
  \begin{equation}  \label{condomegae}
  \omega_\e\subset (0,L), \quad 
\min_{j,j'\in J_\e, j\not= j'} |\omega_\e^{j}-\omega_\e^{j'}|=\e, 
\quad \dist(\omega_\e, \{0,L\}) > \frac{\e}{2}, 
 \end{equation}
 
\ni  and of a small parameters  $r_\e$ verifying  

\begin{equation}
 \begin{aligned}
 &
\e >r_\e(1+\delta) \qquad \hbox{for some $\delta>0$},
   \end{aligned}
  	\label{disjoint}
\end{equation}

\ni by setting    (see Fig. 3.1)

  \begin{equation}
 \begin{aligned}
 &{B_\e}:=   
  \bigcup_{j\in J_\e} B^{j}_\e; \quad B^{j}_\e :=
  \Omega'\times    
   \lp \omega_\e^{j}+ r_\e I\rp; \quad I:=  \lp -\frac{1}{2},  \frac{1}{2}\rp.
 \end{aligned}
  	\label{defBe}
\end{equation}


 \begin{figure}[h]\label{fig}
 \centering
 \includegraphics[height=5.5cm]{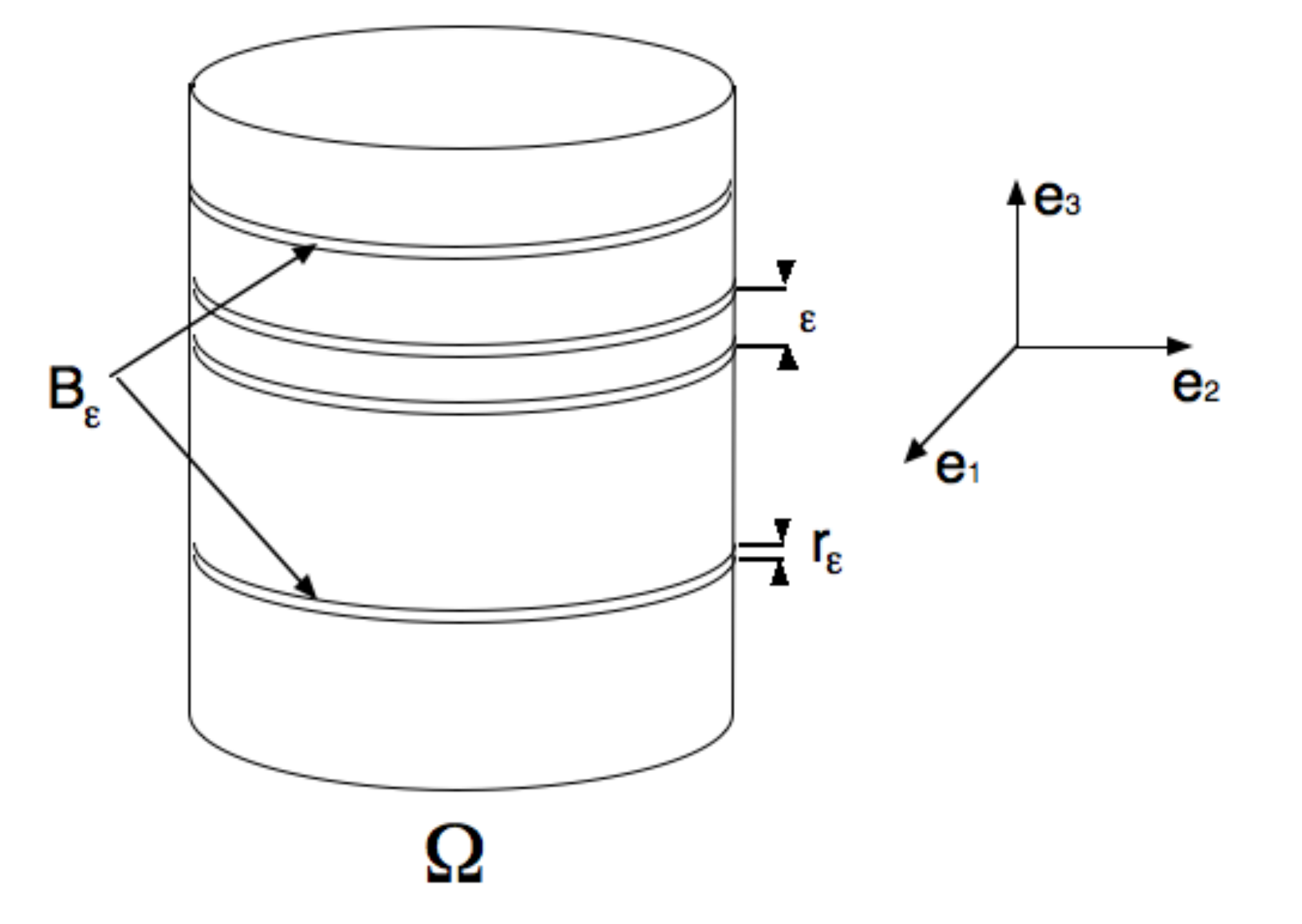} 
 \caption{} 
  \end{figure}
 
\ni 

  {\color{black}

\ni As in \cite{BeSiam}, we  consider the vibration problem 

{\color{black}
\begin{equation}
(\P_\e):\la\begin{aligned}
 &\rho_\e {\partial^2 \bfu_\e\over \partial t^2} - \bfdiv(\bfsigma_\e( \bfu_\e)) =\rho_\e \bff  \qquad  \hbox{ in }    \OT,
  \\ & \bfsigma_\e( \bfu_\e)=\l_\e \tr(\bfe(\bfu_\e))\bfI + 2 \mu_\e \bfe(\bfu_\e), \quad  \bfe(\bfu_\e)= {1\over 2} (\bfnabla \bfu_\e +
 \bfnabla^T
\bfu_\e),
\\ & \bfu_\e\in C ([ 0,\,T] ;\,H_0^1(\O, \RR^3))\cap
C^1( [0,\,T] ;\,L^2(\O, \RR^3)),  
\\ & \bfu_\e( 0) = \bfa_0 ,\quad {\partial \bfu_\e \over \partial t}( 0) =
\bfb_0  ,
\quad  \bff\in C(\ov{\OT}; \RR^3), 
 \\&(\bfa_0,\bfb_0)\in  \lp H^1_0(\O,\RR^3) \times
L^2( \O, \RR^3)\rp  \cap C(\ov\O,\RR^3)^2.
\end{aligned}\rpt
\label{Pe}
\end{equation}
\ni}
 
\ni The  Lam\'e coefficients  $\mu_\e$, $\l_\e$ and the mass density $\rho_\e$   are assumed to take  constant values of possibly different orders of magnitude in the set of  layers $B_\e$ and
in the set of  interlayers $\Omega\setminus B_\e$. More precisely, we suppose  that

   \begin{equation}
\begin{aligned}
 &  \mu_\e(x)=\mu_{1\e}\mathds{1}_{{B_\e}}(x)+ \mu_{0\e} \mathds{1}_{\O\setminus {B_\e}}(x) ,\quad  \l_\e(x)=\l_{1\e}\mathds{1}_{{B_\e}}(x)+ \l_{0\e} \mathds{1}_{\O\setminus {B_\e}}(x) ,   
 \\ &  \mu_{1\e}\ge c>0,  \qquad l_\e:= {\l_{1\e}\over \mu_{1\e}}, \qquad    \lim_{\e\to 0} l_\e=  l \in  [0,+\infty ), \quad  0<\mu_{0\e}<\!\!<\mu_{\e1}, 
\end{aligned}
\label{lmu}\end{equation}

 \ni  and 
 
 \begin{equation}
  \rho_\e(x)= \rho\mathds{1}_{\Omega\setminus B_\e} + \frac{\e}{r_\e} \ov\rho_1 \mathds{1}_{ B_\e}, 
  \qquad \rho, \ov\rho_1\in (0,+\infty).
  	\label{defrhoe}
\end{equation}


\ni We  assume and set

 \begin{equation}
 k:= \lim_{\e \to 0} \frac{r_\e}{\e} \mu_{1\e}\in (0,+\infty],\qquad\qquad    \kappa:= \lim_{\e \to 0}  \frac{r_\e^3}{\e} \mu_{1\e}  \in [0,+\infty].
  	\label{kkappa}
\end{equation}

 \ni The weak* relative compactness in   $L^\infty(0,T; L^2(\O;\RR^3))$  of the sequence of the solutions to (\ref{Pe}) is ensured by  the following hypothesis:

   \begin{equation}
\begin{aligned} & \sup_{\e>0}   \int_\O \! \!\lp\re\!\left| \bfb_0 \right|^2  \!\! +\! \bfsigma_\e ( \bfa_0 )\!:\!\bfe(\bfa_0  )\!\rp \!dx+\int_\OT  \! \re 
\left|
\bff\right|^2\!\!dxdt<+\infty.
\end{aligned}
\label{hyp2}\end{equation}

}
 

 \subsection{ Case of   interlayers  with Lam\'e coefficients of order 1
   }\label{subsecstiff}


\ni We consider the case of extremely thin layers of extremely large stiffness alternating with interlayers of elastic moduli  of order 1.   The effective volume fraction of the stiff layers is characterized by:
 
 \begin{equation}
\vartheta := \lim_{\e\to 0} \frac{r_\e}{\e}.
\label{defvartheta}
\end{equation}

\ni We 
  assume in this subsection  that 
  
     \begin{align}
  	& \vartheta=0,
  \label{repetit}
	 \\ & \mu_{0\e}=\mu>0; \qquad \quad \l_{0\e}=\l\ge0.\label{mulambda}
  	\end{align}

%

\ni We introduce  the operator 
$   \bfsigma \!:H^1(\Omega;\RR^3)\to\! L^2(\Omega;\SS^3)$ and  $n_\e\in L^\infty(\O)$
defined  by 

   \begin{align}
  	&  \bfsigma(\bfvarphi) := \l\tr(\bfe(\bfvarphi))\bfI +2\mu \bfe(\bfvarphi)\quad \forall \bfvarphi \in H^1(\Omega;\RR^3), \label{defsigma}
 	 \\ &n_\e(  x) :=  \sum_{i\in Z_\e}\sharp   \lp  \omega_\e  \cap \lp \e i-\frac{ \e}{2}, \e i+\frac{ \e}{2}\rc  \rp\  \mathds{1}_{ \lp \e i-\frac{ \e}{2}, \e i+\frac{ \e}{2}\rp}(x_3),\label{defne}
	 \\&  Z_\e:= \la i\in \ZZ, \    \lp \e i-\frac{ \e}{2}, \e i+\frac{ \e}{2}\rc\subset (0,L)\ra \label{defZe}.
  	\end{align}

%
%
%

%
%

\ni   Assumption  (\ref{condomegae})   implies that $|n_\e|_{L^\infty(\O)}\le 1$, therefore, up to a subsequence,

\begin{equation}
\label{neton}
\begin{aligned}
&
 n_\e \buildrel \star \over \rightharpoonup n  \ \hbox{ weakly*  in }   \ L^\infty(\O)\quad \hbox{for some $n\in L^\infty(\O)$}.
\end{aligned}
\end{equation}
 
\ni  The scalar    $\frac{1}{\e} n_\e(x )$ is an approximation   at    $x$   of the   local number   of stiff layers  per unit length   in the $\bfe_3$ direction.
\ni Under these assumptions, we prove that   the solution       to  (\ref{Pe}) weakly* converges   in $L^\infty(0,T; $ $H^1_0(\O;\RR^3))$ 
 to the unique
solution 
to
  $ ( \P^{hom}_{( n,  k, \kappa)})$ defined,  in terms of  
  $k$,  $\kappa$,  $n$  given  by  (\ref{kkappa}),  (\ref{neton}),  as follows:
if $0<k<+\infty$, we get (see (\ref{exprim}))

 \begin{equation}
\label{k0f}
\begin{aligned}&
(\P^{hom}_{ (n,   k, 0) } ): 
\\&  {  (0 <k<+\infty )}
\end{aligned}
\la \begin{aligned}
&
 ( \rho +n{\overline \rho_1})  {\partial^2\bfu\over \partial t^2}   
 -\bfdiv   \bfsigma ( \bfu)
\\& \hskip 1,5cm - nk   \bfdiv\bfsigma_{x'}(\bfu')
 =( \rho +n\ru ) \bff   \hskip 0,5cm    
\hbox{ in } \OT,
\\& \bfu   \in   C([0,T]; H^1_0(\O;\RR^3))  \cap    C^1\!([0,T]; L^2(\O; \RR^3 )), 
\\& \bfu(0)= \bfa_0, \ {\partial \bfu \over \partial t}(0)=
\bfb_0.
\end{aligned}\rpt
\end{equation}

\ni   If $k=+\infty$ and $\kappa=0$,  
 the limit problem is   deduced from 
 (\ref{k0f}), formally,  by substituting $(0,0)$ for $(u_1(x), u_2(x))$ when $n(x)>0$:

 \begin{equation}
\label{infty0f}
(\P^{hom}_{( n, +\infty, 0) }) :  \la
\begin{aligned}
&
 ( \rho +n{\overline \rho_1}) \! {\partial^2 u_3\over \partial t^2}   
 -(\bfdiv\bfsigma(\bfu))_3
 = ( \rho +n{\overline \rho_1})  f_3   \hskip 0,5cm 
\hbox{ in } \OT,
\\&
{\color{black}   \rho   \! {\partial^2 u_\a\over \partial t^2}   
 -(\bfdiv\bfsigma(\bfu))_\a
 =   \rho    f_\a   \hskip 0,5cm 
\hbox{ in } \{n=0\} \times(0,T),}
\\&{\color{black}  
nu_1\!=\!nu_2\!\!=0,\  
 \bfu   \! \in\! C([0,T]; \!H^1_0(\O;\RR^3 )\!) \!\cap \!  C^1([0,T]; L^2(\O;\RR^3  )\!), }
  \\&{\color{black} \bfu(0) = (a_{01} \mathds{1}_{\{n=0\}}, a_{02}\mathds{1}_{\{n=0\}}, a_{03}), \  }
\\&{\color{black}{\partial \bfu \over \partial t}(0)= (b_{01} \mathds{1}_{\{n=0\}},b_{02}\mathds{1}_{\{n=0\}}, b_{03}).}
\end{aligned}\rpt
\end{equation}

\ni The case $0<\kappa<+\infty$ is
characterized by  the emergence  of   fourth order derivatives with respect to $x_1, x_2$ in the limit equations, revealing bending effects: 

  \begin{equation}
\label{inftykappaf}
\begin{aligned}&
(\P^{hom}_{( n, +\infty, \kappa)}): \! \!\!
\\& (0\!<\!\kappa\!<\!\!+\infty) 
\end{aligned}
\la\begin{aligned}&
 ( \rho +n{\overline \rho_1})  {\partial^2
u_3
\over \partial t^2}  -(\bfdiv\bfsigma(\bfu))_3
 + n{ \kappa  \over 3}{l+1\over l+2}\sum_{\a,\b=1}^2 {\partial^4  u_3 \over \partial x_\a^2\partial x_\b^2}  
   \\& \hskip 4cm =  ( \rho +n{\overline \rho_1})  f_3   \ \hbox{ in } \OT,
   \\&
{\color{black}   \rho   \! {\partial^2 u_\a\over \partial t^2}   
 -(\bfdiv\bfsigma(\bfu))_\a
 =   \rho    f_\a   \hskip 0,5cm 
\hbox{ in } \{n=0\} \times(0,T),}
\\&{\color{black} nu_1\!=\!nu_2\!\!=0,\  
 \bfu   \! \in\! C([0,T]; \!H^1_0(\O;\RR^3 )\!) \!\cap \!  C^1([0,T]; L^2(\O;\RR^3  )\!), }
  \\&{\color{black}  u_3   \in C([0,T]; L^2_n(0,L; H^2_0(\O') )) , \quad}
  \\&{\color{black}  \bfu(0) = (a_{01} \mathds{1}_{\{n=0\}}, a_{02}\mathds{1}_{\{n=0\}}, a_{03}), \  }
\\&{\color{black}{\partial \bfu \over \partial t}(0)= (b_{01} \mathds{1}_{\{n=0\}},b_{02}\mathds{1}_{\{n=0\}}, b_{03}).}
\end{aligned}\rpt
\end{equation}

\ni If $\kappa=+\infty$, 
we     get:

 \begin{equation}\label{inftyinftyf}(\P^{hom}_{( n, +\infty,+\infty)}): 
 \la \begin{aligned}& \rho  {\partial^2\bfu\over \partial t^2}    -\bfdiv   \bfsigma ( \bfu)  
 =  \rho   \bff   \hskip 0,5cm  \hbox{ in } \{n=0\}\times (0,T),
 \\& n\bfu=0,  
\quad \bfu   \in   C([0,T]; H^1_0(\O;\RR^3))  \cap  \hskip-0 cm C^1\!([0,T]; L^2(\O; \RR^3 )), 
\\& \bfu(0)= \bfa_0\mathds{1}_{\{n=0\}}, \ {\partial \bfu \over \partial t}(0)=\bfb_0\mathds{1}_{\{n=0\}}  .\end{aligned}\rpt\end{equation}


 
  \begin{theorem} 
\label{thstiff} Assume  (\ref{repetit}),    (\ref{mulambda}), (\ref{neton}),    then    the sequence $( \bfu_\e)$ of the
solutions to (\ref{Pe})
 weakly*    converges  in $L^\infty(0,T; $ $H^1_0(\O;\RR^3))$ to the unique solution of the problem $(\P^{hom}_{(n,   k,\kappa)})$ given
 by  (\ref{k0f})-(\ref{inftyinftyf}).

  \end{theorem}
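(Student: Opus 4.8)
The plan is to combine the a priori estimates of Section~\ref{secapriori} with a two-scale compactness argument — in the non-periodic variant of Section~\ref{sectwoscale} — to identify the limit equation in each regime, and then to close the argument by a uniqueness statement for $(\P^{hom}_{(n,k,\kappa)})$. Well-posedness of \eqref{Pe} for fixed $\e$ being classical, I would first record the bounds coming from its energy identity: testing with $\partial_t\bfu_\e$, integrating in $t$ and using \eqref{hyp2} and Gronwall's lemma, one obtains that $(\bfu_\e)$ is bounded in $L^\infty(0,T;H^1_0(\O;\RR^3))$, $(\partial_t\bfu_\e)$ in $L^\infty(0,T;L^2(\O;\RR^3))$, and — the decisive point — that the stiff-phase elastic energy is controlled: $\mu_{1\e}\int_0^T\!\!\int_\O\mathds{1}_{B_\e}\,\bfe(\bfu_\e)\!:\!\bfe(\bfu_\e)\,dx\,dt\le C$ (Section~\ref{secapriori}). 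Extracting a subsequence, $\bfu_\e\weakstarconverge\bfu$ in $L^\infty(0,T;H^1_0)$ and $\partial_t\bfu_\e\weakstarconverge\partial_t\bfu$ in $L^\infty(0,T;L^2)$; moreover, by \eqref{defrhoe}, \eqref{defne} and \eqref{neton}, the mass measure $\rho_\e\,dx$ weakly-$\star$ converges to $(\rho+n{\overline\rho_1})\,dx$ and the stiffness measures $\nu_\e:=\mu_{1\e}\mathds{1}_{B_\e}\,dx$ to $kn\,dx$ — the mechanisms by which $n$ enters the limit — so the stiff-layer kinetic contribution will pass to the limit as $n{\overline\rho_1}\,\partial_t^2\bfu$.

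The core of the argument is the analysis of the stiff layers. Rescaling $x_3$ by $1/\e$ and applying the non-periodic two-scale convergence of Section~\ref{sectwoscale} with its compactness property, one extracts weak limits of $(\bfu_\e)$ and $(\bfe(\bfu_\e))$ with respect to the measures $\nu_\e$; since $\vartheta=0$ the layer $r_\e I$ shrinks to a point inside each period, so these limits live on the slabs. In each thin slab $B_\e^j=\Omega'\times(\omega_\e^j+r_\e I)$, the bound $\mu_{1\e}\int_{B_\e^j}|\bfe(\bfu_\e)|^2\le C$, the thin-domain Korn inequality and the appendix lemma force a Kirchhoff--Love decomposition: up to a remainder negligible in the natural anisotropically scaled norm, $\bfu_\e|_{B_\e^j}$ is governed by its in-plane average $\bfv_\e^j(x')$ and its transverse component $v_{3,\e}^j(x')$, with $u_\alpha\approx v_{\alpha,\e}^j-(x_3-\omega_\e^j)\partial_\alpha v_{3,\e}^j$ and $u_3\approx v_{3,\e}^j$; correspondingly the surviving part of $\bfe(\bfu_\e)$ on $B_\e^j$ splits into a membrane strain $\bfe_{x'}(\bfv_\e^j)$ of order $1$ and a bending strain $\sim-(x_3-\omega_\e^j)\nabla^2_{x'}v_{3,\e}^j$ of order $r_\e$, the out-of-plane stresses vanishing at leading order — the plane-stress constraint that turns $l$ into $\tfrac{2l}{l+2}$ in the membrane energy and produces the factor $\tfrac{l+1}{l+2}$ in the bending energy.

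Then I would pass to the limit in the weak formulation of \eqref{Pe}, tested against $\bfphi_\e$ built from a smooth $\bfphi\in C^\infty_c(\O\times(0,T);\RR^3)$ by adding inside the slabs the corrector $-(x_3-\omega_\e^j)\nabla_{x'}\phi_3$ (and the plane-stress corrector for the $33$-strain), so that $\bfe(\bfphi_\e)$ converges strongly in $L^2(\O,\nu_\e)$ and pairs with the weak $L^2(\O,\nu_\e)$-limit of $\bfe(\bfu_\e)$. The soft-phase contribution converges by weak $H^1$ convergence to $\rho\partial_t^2\bfu-\bfdiv\bfsigma(\bfu)$, with $\bfsigma$ as in \eqref{defsigma}; the stiff-phase bilinear term $\int_{B_\e}\mu_{1\e}\bfe(\bfu_\e)\!:\!\bfe(\bfphi_\e)=\int\bfe(\bfu_\e)\!:\!\bfe(\bfphi_\e)\,d\nu_\e$ then, by the Kirchhoff--Love decomposition, has its membrane part survive at order $\tfrac{r_\e}{\e}\mu_{1\e}\to k$, yielding via \eqref{neton} precisely $-nk\,\bfdiv\bfsigma_{x'}(\bfu')$ with $\bfsigma_{x'}$ as in \eqref{exprim}, so the limit is \eqref{k0f} when $0<k<+\infty$ (whence $\kappa=0$). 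When $k=+\infty$, the same bound read as $\tfrac{r_\e}{\e}\mu_{1\e}\int n_\e|\bfe_{x'}(\bfu_\e')|^2\le C$ with $\tfrac{r_\e}{\e}\mu_{1\e}\to+\infty$ forces $\bfe_{x'}(\bfu')=0$ on $\{n>0\}\times(0,T)$, hence $u_1=u_2=0$ there by the $H^1_0$ condition; if moreover $\kappa=0$ no further energy survives, giving \eqref{infty0f}; if $0<\kappa<+\infty$, the bending energy $\tfrac{r_\e^3}{\e}\mu_{1\e}\int n_\e|\nabla^2_{x'}u_3|^2\le C$ survives and, testing with $\bfphi$ such that $\phi_1=\phi_2=0$ on $\{n>0\}$, produces the fourth-order term of \eqref{inftykappaf} with constant $\tfrac{\kappa}{3}\tfrac{l+1}{l+2}$ (the $\tfrac13$ coming from the transverse moment $\int_I y_3^2\,dy_3$, up to the plane-stress factor) together with the regularity $u_3\in C([0,T];L^2_n(0,L;H^2_0(\O')))$; if $\kappa=+\infty$, bending is frozen as well, $u_3=0$ on $\{n>0\}$, yielding \eqref{inftyinftyf}. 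In every regime $(\P^{hom}_{(n,k,\kappa)})$ is a linear second-order hyperbolic problem whose bilinear form is coercive on the relevant closed (possibly constrained) subspace of $H^1_0(\O;\RR^3)$, so the classical Galerkin/energy method gives existence and uniqueness of its solution; this identifies the limit unambiguously and upgrades the convergence to the whole sequence $(\bfu_\e)$.

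The step I expect to be the main obstacle is the one in the second paragraph: extracting the Kirchhoff--Love structure of $\bfu_\e$ inside the vanishingly thin, non-periodically placed slabs from the single scalar bound on $\mu_{1\e}\int_{B_\e}|\bfe(\bfu_\e)|^2$, and proving that the membrane ($k$-order) and bending ($\kappa$-order) contributions decouple in the limit with the correct plane-stress-reduced Lam\'e coefficients and the correct weight $n$. This blends a plate-type dimension reduction inside each layer with an averaging over the non-periodically distributed layer positions encoded by $n_\e\weakstarconverge n$, and is exactly where the technical lemma of the appendix is used. A related difficulty is the passage to the limit in the terms concentrated on the thin set $B_\e$ — both kinetic and elastic — since $\bfsigma_\e(\bfu_\e)$ is not bounded in $L^2(\O)$; this is handled by working with the varying measures $\nu_\e$ (and the analogous mass measures $\tfrac{\e}{r_\e}\mathds{1}_{B_\e}\,dx$) and the thin-domain Poincar\'e estimate, which shows $\bfu_\e$ coincides in the limit with its midplane traces. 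Finally, one must carefully track the boundary and initial conditions through the transition $k<+\infty\to k=+\infty$, where the in-plane rigidity constraint on $\{n>0\}$ appears.
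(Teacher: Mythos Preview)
Your outline is essentially the paper's own proof: energy bounds from Proposition~\ref{propapriori}, weak-$\star$ compactness, analysis of the stiff layers via the measures $m_\e$ (your $\nu_\e$ differs only by the scalar $\tfrac{r_\e}{\e}\mu_{1\e}$) and the non-periodic two-scale convergence of Lemma~\ref{lemtwoscalewrtme}, construction of oscillating test fields with the Kirchhoff--Love and plane-stress correctors (this is exactly the field $\wideparen\bfpsi_\e$ of Section~\ref{appendix}), passage to the limit in the weak formulation split as $I_{1\e}+I_{2\e}+I_{3\e}$, and uniqueness via Theorem~\ref{thDautrayLions}. Your identification of the constants $k$, $\tfrac{\kappa}{3}\tfrac{l+1}{l+2}$ and of the mechanism producing the weight $n$ is correct, as is your diagnosis of the main technical step (handled in the paper by Lemma~\ref{lemident2} and Lemma~\ref{lemI3}).

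One point you should make explicit: the identification of the effective stiff-layer displacement $\bfv$ (the $m_\e$-limit of $\bfu_\e$) with the macroscopic field $\bfu$ on $\{n>0\}$ is not automatic from the bounds alone. In the paper this is Lemma~\ref{lemident1} under hypothesis~\eqref{H2}, and the required \emph{strong} $L^2(\O\times(0,T))$ convergence of $(\bfu_\e)$ comes from the Aubin--Lions--Simon lemma (boundedness in $L^\infty(0,T;H^1_0)$ plus $\partial_t\bfu_\e$ bounded in $L^\infty(0,T;L^2)$). Your remark about the ``thin-domain Poincar\'e estimate'' showing coincidence with midplane traces is the right local ingredient (this is the computation \eqref{defhatve}--\eqref{neue-ovve}), but you still need the strong compactness to pass from $n_\e\bfu_\e\rightharpoonup n\bfv$ to $n\bfu=n\bfv$; without it, $(n_\e\bfu_\e)$ is a product of two merely weakly convergent sequences. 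Once this is in place, your argument closes exactly as you describe.
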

 

%
%
%
   %

{\color{black}  
  \begin{remark}
(i) When 
stiff fibers \cite{BeVisco, BeGr}  (resp.  grain-like inclusions \cite{BeArma}) 
 embedded in a matrix of stiffness 
 of order 1 are considered,   the fibers (resp. the  inclusions)  disappear from  the limit problem 
 if $r_\e \ll \exp-\frac{1}{\e^2}$ (resp. $r_\e\ll \e^3$), where $r_\e$ denotes  the diameter of the sections 
 of the fibers (resp. of the inclusions). 
 This never  occurs in the stratified case, 
 whatever the choice of
  $(r_\e)$.
This is related to the fact that the harmonic capacity of a surface in $\Omega$ is always positive, whereas that of a line or a point are equal to zero. 
%
%
%

\ni (ii) Under (\ref{mulambda}), the case $\vartheta>0$,  $k<+\infty$ 
has been studied in   \cite{GuMo}, \cite{McCo}. In the case $\vartheta>0$,
$k=+\infty$, we  came up against   technical complications (see Remark \ref{remI22}).


\end{remark}

%


 \subsection{Stochastic case}\label{subsecstoch}

 \ni 
Fixing  $d>0$ and set 

   \begin{align}
  	&\gotO:=\la \omega
 \in 2^{\RR }, \ \forall (\omega_1,\omega_2)\in \omega^2, \  \omega_1\not= \omega_2 \ \Rightarrow \ 
\vert \omega_1-\omega_2\vert \geq d \ra,\nonumber
	 \\ &\omega_\e (\omega):= \e \omega \cap (\e,L-\e)\qquad \forall \omega\in \gotO.
\label{Ceta}
  	\end{align}

%
%
%
}


 \ni  Let  $\B_\gotO$  be  the Borel $\sigma$-algebra   generated by the Hausdorff distance on $\gotO$ (see Remark \ref{remHaus}),
and   $ {P} $ be  a probability 
on   $(\gotO, \B_\gotO)$  satisfying 

\begin{equation}
{P}(A+z)= {P}(A) \quad  \forall \  z\in \ZZ , \  \forall \,A\in \B_\gotO.
\label{P}
\end{equation}

\ni   We consider the random distribution of stiff homothetical layers $B_\e(\omega_\e (\omega))$ and the problem $(\P_{\e}(\omega))$ obtained by substituting $\omega_\e (\omega)$ for $\omega_\e$ in 
 \eqref{defBe}, \eqref{Pe}.
In what follows,
  $\F$  represents   the $\sigma$-algebra of the  $Y$-periodic  elements of $\B_\gotO$,
   ${E}_{{P}}^{\F} X$ 
    the   conditional expectation  of a random variable  $X$ given $\F$ with respect to ${P}$,
    $n_\e(\omega)$  the element of $L^\infty(\O)$  defined by    substituting 
 $\omega_\e (\omega)$ for $\omega_\e $ in  (\ref{defne}), 
 and  $n_0:\gotO\to \NN$ the random variable given  by  {\color{black}
 \begin{equation}
 n_0(\omega):= \sharp \lp \omega\cap \left[ -\frac{1}{2}, \frac{1}{2}\right[\rp \qquad \forall \omega\in \gotO.
 \label{defn0}
 \end{equation}
 }
 

\ni  The following theorem  is proved in \cite{BeHDR}:


\begin{theorem}\label{thcvne}
Under the assumptions stated  above, there exists a    sequence of reals  $(\e_k)$ converging to $0$ and  a ${P}$-negligible subset $\gotN$ of $\gotO$, 
   such that for all $\omega\in \gotO\setminus \gotN$, 
 
  \begin{equation}
\begin{aligned}
 &
 n_{\e_k}(\omega)\buildrel \star\over \rightharpoonup 
  {E}_{{P}}^{\F} n_0(\omega) \quad \hbox{weakly* in } \ L^\infty(\O).
\end{aligned}
\label{neta}
\end{equation}
 
 \end{theorem}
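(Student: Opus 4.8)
The plan is to deduce Theorem~\ref{thcvne} from the ergodic structure induced by the shift-invariant probability $P$ on $\gotO$, viewed as a measure-preserving $\ZZ$-action. First I would record the elementary computation identifying $n_\e(\omega)$ in terms of shifts of $n_0$: unwinding the definitions \eqref{defne}, \eqref{defn0}, \eqref{defomegae}, \eqref{Ceta}, one sees that for $x_3\in\big(\e i-\tfrac{\e}{2},\e i+\tfrac{\e}{2}\big)$ one has $n_\e(\omega)(x)=\sharp\big(\e\omega\cap(\e,L-\e)\cap(\e i-\tfrac{\e}{2},\e i+\tfrac{\e}{2}]\big)$, which for indices $i$ with the whole cell inside $(\e/\e,(L-\e)/\e)$ equals $\sharp\big(\omega\cap(i-\tfrac12,i+\tfrac12]\big)=n_0(\tau_i\omega)$, where $\tau_i$ is the translation by $i$ on $\gotO$. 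Thus $n_\e(\omega)$ is, up to a boundary layer of thickness $O(\e)$ near $\{0,L\}$ and up to reindexing, the piecewise-constant function whose value on the $i$-th $\e$-cell is $n_0(\tau_i\omega)$. Since $0\le n_0\le \lfloor 1/d\rfloor+1$ is bounded (the constraint $|\omega_1-\omega_2|\ge d$ caps the number of points of $\omega$ in an interval of length $1$), the sequence $(n_\e(\omega))$ is bounded in $L^\infty(\O)$ for every fixed $\omega$, so weak* subsequential limits exist; the content is to identify the limit as the constant $E_P^\F n_0(\omega)$ and to do so along one common subsequence for $P$-a.e.\ $\omega$.

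Next I would invoke the pointwise (Birkhoff) ergodic theorem for the measure-preserving system $(\gotO,\B_\gotO,P,\{\tau_z\}_{z\in\ZZ})$ applied to the bounded random variable $n_0$: there is a $P$-negligible set $\gotN_1$ so that for $\omega\notin\gotN_1$ the Cesàro averages $\frac1N\sum_{i=0}^{N-1} n_0(\tau_i\omega)$ converge to $E_P^{\I}n_0(\omega)$, where $\I$ is the $\sigma$-algebra of shift-invariant sets. Here I would use the identification of $\I$ with $\F$, the $\sigma$-algebra of $Y$-periodic elements of $\B_\gotO$ (a set in $\gotO$ is shift-invariant iff, as a subset of $2^\RR$, it consists of $1$-periodic configurations in the relevant sense), which is precisely the hypothesis under which the statement is phrased. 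Testing the weak* convergence $n_{\e}(\omega)\weakstarconverge n$ against indicator functions $\mathds 1_{(a,b)}(x_3)\,\phi(x')$ with $\phi\in L^1(\Omega')$, the integral $\int_a^b n_\e(\omega)(x_3)\,dx_3$ is exactly $\e\sum_{i:\,\e i\in(a,b)} n_0(\tau_i\omega)$ plus an $O(\e)$ endpoint error, and Birkhoff's theorem forces this to converge to $(b-a)\,E_P^\F n_0(\omega)$. Because the limit is a constant, independent of the test function, the whole sequence (not merely a subsequence) converges weak* to $E_P^\F n_0(\omega)$ for each fixed $\omega\notin\gotN_1$; one then absorbs the boundary-layer contributions near $x_3\in\{0,L\}$, which are controlled by $\e\|n_0\|_\infty\,|\Omega'|\to0$.

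A slightly subtler route, which matches the way the theorem is stated (existence of a single sequence $(\e_k)\to0$ and a single null set $\gotN$), is to first fix \emph{any} sequence $\e_k\to0$ and use a weak*-separability/diagonal argument: $L^\infty(\O)$ has a weak*-dense countable subset of test functions in $L^1(\O)$, so for each such test function the scalar sequence $\int_\O n_{\e_k}(\omega)\psi\,dx$ is, for $\omega$ outside a null set, a convergent sequence by Birkhoff; intersecting countably many null sets yields $\gotN$ with $P(\gotN)=0$, and on $\gotO\setminus\gotN$ the bounded sequence $(n_{\e_k}(\omega))$ has all its weak* cluster points equal, hence converges, to $E_P^\F n_0(\omega)$. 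Either way, the main obstacle is the bookkeeping that turns the two-sided, non-anchored definition \eqref{defne}--\eqref{defZe} of $n_\e$ into a genuine Birkhoff average over translates $\tau_i\omega$: one must check that the rescaling $\omega\mapsto\e\omega$ together with the truncation to $(\e,L-\e)$ in \eqref{Ceta} aligns the $\e$-grid $Z_\e$ of \eqref{defZe} with the integer orbit of the shift up to a negligible boundary correction, and that the constraint-set $\gotO$ is genuinely shift-invariant so that $\tau_i\omega\in\gotO$ and $n_0$ is defined $P$-a.s. Once this reduction is in place the convergence is an immediate consequence of the ergodic theorem; no quantitative rate is claimed, consistent with the fact that this is exactly the stochastic-homogenization analogue of the periodic two-scale computation. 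Since the statement is attributed to \cite{BeHDR}, at this point I would simply refer to that reference for the routine details of the boundary-layer estimate and the measurability of $\omega\mapsto n_\e(\omega)$.
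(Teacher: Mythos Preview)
The paper does not prove this theorem at all: it simply states ``The following theorem is proved in \cite{BeHDR}'' and moves on. Your sketch via Birkhoff's ergodic theorem for the $\ZZ$-action $\tau_i\omega=\omega-i$ on $(\gotO,\B_\gotO,P)$ applied to the bounded random variable $n_0$ is the natural argument and is essentially correct; in fact it yields more than the statement claims, since once you fix the Birkhoff null set $\gotN$ you obtain convergence of the \emph{full} family $(n_\e(\omega))_{\e>0}$ for every $\omega\notin\gotN$, not merely along a subsequence $(\e_k)$.

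One point to clean up: your parenthetical explanation of $\F$ is wrong. A shift-invariant event $A\in\B_\gotO$ satisfies $A+1=A$, i.e.\ $\omega\in A\Leftrightarrow \omega+1\in A$; this does \emph{not} mean that the elements of $A$ are $1$-periodic configurations (the whole space $\gotO$ is shift-invariant and contains aperiodic $\omega$). The paper's phrase ``$\sigma$-algebra of the $Y$-periodic elements of $\B_\gotO$'' should be read as ``events $A$ with $A+1=A$'', which is exactly the invariant $\sigma$-algebra $\I$ of the Birkhoff theorem, so the identification $\F=\I$ is definitional and needs no further argument. The remaining details you flag (half-open vs.\ half-closed endpoints in \eqref{defne} versus \eqref{defn0}, the boundary layer near $x_3\in\{0,L\}$ coming from the truncation $\e\omega\cap(\e,L-\e)$, and measurability of $\tau_i$ and $n_0$ with respect to the Hausdorff-Borel structure on $\gotO$) are genuine but routine, and deferring them to \cite{BeHDR} is consistent with what the paper itself does.
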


\ni The following result    straightforwardly  follows from  theorems \ref{thstiff}, 
  \ref{thcvne}: 


\begin{theorem}\label{thstoch}
Assume  (\ref{repetit}),    (\ref{mulambda}),  and let $(\e_k)$ and $\gotN$ be  the sequence and the $P$-negligible set given   by  Theorem \ref{thcvne}. Then, for all $\omega\in \gotO\setminus \gotN$, the solution  
to 
 $(\P_{\e_k}(\omega))$,   
 weakly*  converges in  $L^\infty(0,T; H^1_0(\Omega;\RR^3))$
 to the unique solution to the problem  
$ ( \P^{hom}_{ ({E}_{{P}}^{\F} n_0(\omega),    k, \kappa)})$ defined by (\ref{k0f}-\ref{inftyinftyf}).
 \end{theorem}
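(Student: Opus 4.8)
The proof of Theorem \ref{thstoch} is purely a matter of combining Theorem \ref{thstiff} with Theorem \ref{thcvne}, so the plan is essentially an assembly argument with one point requiring care. Fix $\omega\in\gotO\setminus\gotN$, where $\gotN$ is the $P$-negligible set produced by Theorem \ref{thcvne}, and work along the subsequence $(\e_k)$ furnished by that theorem. First I would observe that the random distribution $\omega_{\e_k}(\omega)$ defined in \eqref{Ceta} satisfies, for this fixed $\omega$ and all $k$ large, the deterministic hypotheses \eqref{condomegae} and \eqref{disjoint}: indeed $\omega_{\e_k}(\omega)=\e_k\omega\cap(\e_k,L-\e_k)\subset(0,L)$, the minimal spacing between distinct points of $\e_k\omega$ is at least $d\e_k$ (and after intersecting with a grid of mesh $\e_k$ the relevant gap in the definition of $n_\e$ is $\e_k$; here one must check the normalisation matches \eqref{condomegae}), and $\dist(\omega_{\e_k}(\omega),\{0,L\})\ge\e_k>\e_k/2$. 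Likewise, the scaling hypotheses \eqref{lmu}, \eqref{defrhoe}, \eqref{kkappa}, \eqref{hyp2} on the Lamé coefficients, mass density and data do not involve $\omega$ at all, and \eqref{repetit}, \eqref{mulambda} are assumed in the statement. Hence for each fixed admissible $\omega$ the problem $(\P_{\e_k}(\omega))$ is an instance of the family \eqref{Pe} to which Theorem \ref{thstiff} applies.

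Second, I would invoke Theorem \ref{thcvne}: for the chosen $\omega\in\gotO\setminus\gotN$ one has $n_{\e_k}(\omega)\weakstarconverge E_P^{\F}n_0(\omega)$ in $L^\infty(\O)$, so the limit density appearing in \eqref{neton} is identified, \emph{along this very subsequence}, as $n=E_P^{\F}n_0(\omega)$. Third, I would apply Theorem \ref{thstiff} to the sequence $(\bfu_{\e_k}(\omega))$ of solutions to $(\P_{\e_k}(\omega))$: since \eqref{neton} holds with the specified $n$, the theorem yields weak* convergence in $L^\infty(0,T;H^1_0(\O;\RR^3))$ of $(\bfu_{\e_k}(\omega))$ to the unique solution of $(\P^{hom}_{(E_P^{\F}n_0(\omega),k,\kappa)})$, which is exactly the asserted conclusion. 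The uniqueness of the homogenised solution (part of Theorem \ref{thstiff}) is what guarantees there is no ambiguity in the limit, so no further subsequence extraction is needed at the end.

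The one subtlety — and the step I would flag as the only real obstacle — is the compatibility between the spacing normalisation $\min_{j\ne j'}|\omega_\e^j-\omega_\e^{j'}|=\e$ demanded in \eqref{condomegae} and the spacing $\ge d$ built into $\gotO$ in \eqref{Ceta}. Theorem \ref{thstiff} as stated presupposes \eqref{condomegae} with the exact value $\e$, whereas $\omega_\e(\omega)=\e\omega\cap(\e,L-\e)$ only gives a minimal spacing $\ge d\e$, generally strictly larger than $\e$. This is precisely why Theorem \ref{thcvne} is phrased in terms of the \emph{counting} function $n_\e(\omega)$ built from the grid of mesh $\e$ in \eqref{defne}: the relevant quantity controlling the estimates of Section \ref{secapriori} is $n_\e$, which is bounded by $1$ by \eqref{condomegae} but remains bounded (by $1/d$, say, or by a rescaling of $\e$) under the weaker spacing $d\e$. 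I would therefore either (a) note that replacing $\e$ by $d\e$ throughout leaves all hypotheses of Theorem \ref{thstiff} intact, since $k,\kappa,\vartheta$ in \eqref{defvartheta}, \eqref{kkappa} are defined through ratios $r_\e/\e$, $r_\e^3/\e$ that merely get multiplied by the fixed constant $d$ (absorbable into the limits, or into the definition of $r_\e$), or (b) invoke directly the version of the deterministic result in which the apriori estimates are run with $n_\e$ in place of the normalised spacing, as is in fact done in Section \ref{secapriori}. With this identification in hand, the three-line assembly above is rigorous; everything else is a routine citation of the two quoted theorems.
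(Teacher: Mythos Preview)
Your proposal is correct and matches the paper's approach exactly: the paper itself gives no proof beyond the single sentence that Theorem~\ref{thstoch} ``straightforwardly follows from theorems \ref{thstiff}, \ref{thcvne}'', and your assembly argument (fix $\omega\in\gotO\setminus\gotN$, identify $n=E_P^{\F}n_0(\omega)$ via Theorem~\ref{thcvne}, then apply Theorem~\ref{thstiff}) is precisely that combination. The spacing subtlety you flag is genuine and is glossed over by the paper; your option (b)---noting that the a~priori estimates of Section~\ref{secapriori} depend only on the boundedness of $n_\e$, not on the exact normalisation $\min|\omega_\e^j-\omega_\e^{j'}|=\e$---is the correct resolution, whereas option (a) would alter the values of $k,\kappa$ and should be avoided.
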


  \begin{remark}\label{remHaus}
The  restriction   of  the Hausdorff distance   $d_\H$  to  $\gotO$ is an extended metric on $\gotO$, and    the mapping   $d_\gotO: \gotO^2\to [0,1]$  defined  by  
$d_\gotO( \omega, \omega'):= \min \{1, d_\H(\omega,\omega')\}$ is a finite metric on $\gotO$ which 
 turns   $\gotO$ into a complete metric space.
 \end{remark}
  


 \subsection{Intermediate case }\label{subsecintermediate}

Under the assumptions 

   \begin{align}
  	&\e^2<\!\!<\mu_{0\e}<\!\!<1, \quad  {\color{black} 0\le \lambda_{0\e}\le C\mu_{0\e},}  \label{mulambdaintermediaire}
	 \\ &n_\e \to n \quad \hbox{ strongly in } \quad L^2(\Omega),\label{netonstrong}
  	\end{align}

%
%
%
%

\ni 
 we show   that  the solution   to (\ref{Pe}) weakly*  converges  in $L^\infty(0,T; L^2(\O;\RR^3))$
to the unique solution to  $ ( \P^{hom}_{(n,   k,\kappa)})$   defined
 by

 \begin{equation}
\label{k01}
\begin{aligned}&
(\P^{hom}_{ ( n, k, 0) } ): 
\\&  {  (0\! <\!k\!<\!+\infty )}
\end{aligned}
\!\!\la \begin{aligned}
&
 ( \rho(1-\vartheta n)+n{\overline \rho_1})  {\partial^2\bfu\over \partial t^2}   
-nk   \bfdiv\bfsigma_{x'}(\bfu')
  \\& \hskip 2,5cm =(\rho(1-\vartheta n)+n\ov \rho_1 ) \bff \    
\hbox{ in } \OT,
  \\ &   u_1, u_2 \in C([0,T]; L^2_n(0,L; H^1_0(\Omega'))), 
  \\ & {\color{black}   \bfu\in   C^1([0,T]; L^2(\O; \RR^3 )), 
\ 
  \bfu(0)= \bfa_0, \ {\partial \bfu \over \partial t}(0)=
\bfb_0,}
\end{aligned}\rpt
\end{equation}


 \begin{equation}
\label{infty01}
(\P^{hom}_{( n, +\infty, 0) }) : \hskip 0 cm \la
\begin{aligned}
&
(\rho(1\!-\!\vartheta n)\!+\!n\ov \rho_1 ) \! {\partial^2 u_3\over \partial t^2}   
 =(\rho(1-\vartheta n)\!+n\ov \rho_1 )  f_3 
\quad \hbox{ in } \OT,  
  \\ &{\color{black} \rho  {\partial^2
u_\a
\over \partial t^2}    =  \rho  f_\a  \ \hbox{ in } \{n=0\}\times (0,T), \ (\a\in \{1,2\}),}
\\& {\color{black} n}u_1={\color{black} n}u_2=0,\quad 
  \bfu  \in C^1([0,T]; L^2(\O;\RR^3 )),
\\&{\color{black} \bfu(0) = (a_{01} \mathds{1}_{\{n=0\}}, a_{02}\mathds{1}_{\{n=0\}}, a_{03}), \  }
\\&{\color{black}{\partial \bfu \over \partial t}(0)= (b_{01} \mathds{1}_{\{n=0\}},b_{02}\mathds{1}_{\{n=0\}}, b_{03}), }
\end{aligned}\rpt
\end{equation}


 \begin{equation}
\label{inftykappa1}
\begin{aligned}&
(\P^{hom}_{(n, +\infty,\kappa)}):  
\\& (0<\kappa<+\infty) 
\end{aligned}
\la\begin{aligned}&
(\rho(1-\vartheta n)+n\ov \rho_1 )  {\partial^2
u_3
\over \partial t^2}  
 + n{\kappa  \over 3} {l+1\over l+2}\sum_{\a,\b=1}^2 {\partial^4  u_3 \over \partial x_\a^2\partial x_\b^2}  
  \\& \hskip 2,5cm =  (\rho(1-\vartheta n)+n\ov \rho_1 )  f_3   \ \hbox{ in } \OT,
  \\ &{\color{black}\rho  {\partial^2
u_\a
\over \partial t^2}    =  \rho  f_\a  \ \hbox{ in } \{n=0\}\times (0,T), \ (\a\in \{1,2\}),}
  \\& {\color{black}n}u_1 ={\color{black}n}u_2=0,\quad 
\\&{\color{black}   u_3  \in C ([0,T] ; L^2_n (0,L; H^2_0(\Omega'))), \  \bfu\in C^1 ([0,T] ; L^2(\O;\RR^3)), }
\\&{\color{black} \bfu(0) = (a_{01} \mathds{1}_{\{n=0\}}, a_{02}\mathds{1}_{\{n=0\}}, a_{03}), \  }
\\&{\color{black}{\partial \bfu \over \partial t}(0)= (b_{01} \mathds{1}_{\{n=0\}},b_{02}\mathds{1}_{\{n=0\}}, b_{03}),}
\end{aligned}\rpt
\end{equation}


\begin{equation}\label{inftyinfty1}(\P^{hom}_{( n, +\infty,+\infty)}): 
 \la \begin{aligned}& \rho  {\partial^2\bfu\over \partial t^2}    
 =  \rho   \bff   \hskip 0,5cm  \hbox{ in } \{n=0\}\times (0,T),
 \\& n\bfu=0,  
\quad \bfu   \in     C^1\!([0,T]; L^2(\O; \RR^3 )), 
\\& \bfu(0)= \bfa_0\mathds{1}_{\{n=0\}}, \ {\partial \bfu \over \partial t}(0)=\bfb_0\mathds{1}_{\{n=0\}}  .\end{aligned}\rpt\end{equation}

%

 
  \begin{theorem} 
\label{thinter}  
Under
     (\ref{mulambdaintermediaire}), (\ref{netonstrong}), 
    the solution   to  (\ref{Pe}) weakly*  converges  in $L^\infty(0,T; L^2(\O;\RR^3))$ to the unique solution to    $(\P^{hom}_{(n,k,\kappa)})$ given
by  (\ref{k01})-(\ref{inftyinfty1}). 
    \end{theorem}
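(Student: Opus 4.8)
The plan is to follow the same two-scale convergence scheme used for Theorem \ref{thstiff}, but adapted to the weaker topology dictated by the vanishing stiffness $\mu_{0\e}\to0$ of the interlayers. First I would establish the a priori estimates: from the energy identity associated with $(\P_\e)$ together with hypothesis \eqref{hyp2}, one obtains uniform bounds on $\sqrt{\rho_\e}\,\partial_t\bfu_\e$ in $L^\infty(0,T;L^2(\Omega;\RR^3))$ and on $\bfsigma_\e(\bfu_\e):\bfe(\bfu_\e)$ in $L^\infty(0,T;L^1(\Omega))$. In particular $\sqrt{\mu_{1\e}}\,\bfe(\bfu_\e)\mathds{1}_{B_\e}$ and $\sqrt{\mu_{0\e}}\,\bfe(\bfu_\e)\mathds{1}_{\Omega\setminus B_\e}$ are bounded in $L^\infty(0,T;L^2)$. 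Since now $\mu_{0\e}\to0$ we only get $\bfu_\e$ bounded in $L^\infty(0,T;L^2(\Omega;\RR^3))$ (not in $H^1_0$), which explains the weaker convergence claimed. Using the scaling $\tfrac{r_\e}{\e}\to\vartheta$ and the estimates $\tfrac{r_\e}{\e}\mu_{1\e}\to k$, $\tfrac{r_\e^3}{\e}\mu_{1\e}\to\kappa$, one extracts, up to a subsequence, a weak-$*$ limit $\bfu$ of $(\bfu_\e)$ in $L^\infty(0,T;L^2)$, a limit $\bfv$ of the layer-concentrated field $\bfu_\e m_\e$ (with $m_\e$ as in the stratified setting), and two-scale limits of the rescaled strains. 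The key structural fact, to be proved as in Section \ref{secapriori}, is that the transversal in-plane strain $\bfe_{x'}(\bfu_\e)$ restricted to each stiff layer remains controlled of order $\sqrt{\e/(r_\e\mu_{1\e})}$, forcing in the limit the in-plane membrane displacements $u_1,u_2$ to lie in $L^2_n(0,L;H^1_0(\Omega'))$ when $0<k<+\infty$, and to vanish on $\{n>0\}$ when $k=+\infty$ (respectively, $u_3\in L^2_n(0,L;H^2_0(\Omega'))$ when $0<\kappa<+\infty$), exactly as in Theorem \ref{thstiff}.

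Next I would pass to the limit in the variational formulation of $(\P_\e)$. Testing against $\bfvarphi_\e(x,t)=\bfvarphi(x,t)+r_\e\bftheta(x_3,t)\,\bfpsi(\tfrac{x_3}{r_\e})$-type oscillating test functions adapted to the layer geometry (built from \eqref{defBe}), the contribution of $\bfdiv\bfsigma_\e(\bfu_\e)$ splits into a soft-phase part and a stiff-phase part. The decisive difference with the proof of Theorem \ref{thstiff} is that, because $\mu_{0\e}\to0$, the term $\int_{\OT}\bfsigma_\e(\bfu_\e)\mathds{1}_{\Omega\setminus B_\e}:\bfe(\bfvarphi_\e)\,dxdt$ now tends to $0$; this is precisely the "removal of $\bfdiv\bfsigma(\bfu)$" announced in the statement. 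The stiff-phase term, by contrast, is handled exactly as before: the factor $\tfrac{r_\e}{\e}\mu_{1\e}\to k$ (resp. $\tfrac{r_\e^3}{\e}\mu_{1\e}\to\kappa$) survives and produces the surviving $-nk\,\bfdiv\bfsigma_{x'}(\bfu')$ term (resp. the $n\tfrac{\kappa}{3}\tfrac{l+1}{l+2}\sum\partial^4 u_3$ bending term), using the technical lemma from the appendix to identify the limit quadratic form on the rescaled layer. The mass term requires the strong convergence \eqref{netonstrong}: since $\rho_\e=\rho\mathds{1}_{\Omega\setminus B_\e}+\tfrac{\e}{r_\e}\ov\rho_1\mathds{1}_{B_\e}$, one has $\rho_\e\mathds{1}_{\Omega\setminus B_\e}\weakstarconverge\rho(1-\vartheta n)$ and $\tfrac{\e}{r_\e}\ov\rho_1\mathds{1}_{B_\e}$ contributes, after rescaling by $\tfrac{r_\e}{\e}$ inside the layer, the mass $n\ov\rho_1$; this yields the effective inertia $\rho(1-\vartheta n)+n\ov\rho_1$ appearing in \eqref{k01}--\eqref{inftyinfty1}. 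The strong $L^2$-convergence of $n_\e$ is needed here (rather than mere weak-$*$, which sufficed in Theorem \ref{thstiff} because there $\vartheta=0$ killed the interaction between the oscillation of $n_\e$ and the soft-phase volume fraction).

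Finally, to upgrade the subsequential limit to convergence of the whole sequence, I would prove uniqueness for each of the homogenized problems \eqref{k01}--\eqref{inftyinfty1}. Each of these is a linear hyperbolic problem (or a degenerate one, on $\{n=0\}$ the equations decouple into trivial ODEs in $t$ for the components that are not constrained), and uniqueness follows from a standard energy argument: the associated bilinear form is nonnegative (the quadratic form $\int n k\,\bfsigma_{x'}(\bfu'):\bfe_{x'}(\bfu')$, respectively $\int n\tfrac{\kappa}{3}\tfrac{l+1}{l+2}\sum(\partial^2 u_3)^2$, is $\ge0$), the inertia coefficient $\rho(1-\vartheta n)+n\ov\rho_1$ is bounded below by $\min(\rho(1-\vartheta),\rho,\ov\rho_1)>0$ since $0\le\vartheta n\le\vartheta<1$ by \eqref{disjoint}, and the initial data are prescribed; a Gronwall estimate on the energy difference of two solutions gives uniqueness. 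Since every subsequence has a further subsequence converging to the same limit, the full sequence converges.

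The main obstacle I anticipate is the identification of the limit stiff-phase energy: one must show that the rescaled strains on the thin layer $\omega_\e^j+r_\e I$ converge two-scale to the expected one-dimensional profile (affine in the transverse variable, modulated by $u_3$'s in-plane Hessian for the bending regime), with no spurious loss of energy across the very many interfaces $\partial B_\e^j$, and that the non-periodic distribution $\omega_\e$ causes no difficulty thanks to \eqref{netonstrong}. This is where the appendix lemma and the careful a priori bounds of Section \ref{secapriori} do the real work; the rest of the argument is a routine adaptation of the proof of Theorem \ref{thstiff} with the soft-phase elastic term switched off.
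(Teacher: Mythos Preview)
Your strategy is essentially the paper's, and the main line---energy bounds, weak-$*$ compactness in $L^\infty(0,T;L^2)$, oscillating test fields $\bfphi_\e$ built on $\wideparen\bfpsi_\e$, the soft-phase elastic term $I_{1\e}\to 0$ via $|I_{1\e}|\le C\mu_{0\e}^{1/2}(\int\mu_{0\e}|\bfe(\bfu_\e)|^2)^{1/2}$, the stiff-phase term via Lemma~\ref{lemI3}, and uniqueness through Theorem~\ref{thDautrayLions}---is correct.

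However, you misidentify where the strong convergence \eqref{netonstrong} (and implicitly the lower bound $\e^2\ll\mu_{0\e}$ in \eqref{mulambdaintermediaire}) is actually used. It is \emph{not} needed to pass the effective density $\rho(1-\vartheta n)+n\ov\rho_1$ to the limit; the convergences $\mathds{1}_{\Omega\setminus B_\e}\weakstarconverge 1-\vartheta n$ and $m_\e\weakstarconverge n\L^3$ follow from the weak-$*$ convergence of $n_\e$ alone, whether or not $\vartheta=0$. The real role of \eqref{netonstrong} is the identification $n\bfu=n\bfv$, i.e.\ that the weak limit of $(\bfu_\e)$ and the weak-$*$ limit of $(\bfu_\e m_\e)$ coincide on $\{n>0\}$, carried out in Lemma~\ref{lemident1} under hypothesis~(H1). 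In Theorem~\ref{thstiff} this identification works under mere weak-$*$ convergence of $n_\e$ because there $(\bfu_\e)$ converges \emph{strongly} in $L^2(\OT)$ (Aubin--Lions, using the $H^1_0$ bound granted by $\mu_{0\e}=\mu>0$), so the product $n_\e\bfu_\e$ converges. Here $(\bfu_\e)$ converges only weakly, so one needs $n_\e\to n$ strongly in $L^2$ instead. The bound $\e^2\ll\mu_{0\e}$ enters in the same lemma: the comparison of $n_\e\bfu_\e$ with a layer-averaged field $\ov\bfv_\e$ (see \eqref{defovve}, \eqref{neue-ovve}) produces an error $\le C\e^2\int_\Omega|\bfe(\bfu_\e)|^2\le C\e^2/\mu_{0\e}$, which must vanish. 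Without making $n\bfu=n\bfv$ explicit, you cannot rewrite the stiff-layer limit $\I_{n,k,\kappa}(\bfv,\bfpsi)$ from Lemma~\ref{lemI3} in terms of $\bfu$ alone, which is what the homogenized problems \eqref{k01}--\eqref{inftyinfty1} require.
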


  
{\color{black}
  \begin{remark}
 (i)  Problems  (\ref{k01})-(\ref{inftyinfty1}) are formally deduced from  (\ref{k0f})-(\ref{inftyinftyf}) by removing the term ``$\bfdiv\bfsigma(\bfu)$" (see (\ref{repetit})).
 This indicates  that no    strain energy  is stored in the softer phase. 
%
%
%
 
 \ni (ii) Assumption (\ref{netonstrong}), stronger than  (\ref{neton}), precludes the application of   Theorem \ref{thcvne}  and the extension of 
   Theorem \ref{thinter} to the setting of stochastic homogenization.
   
    \end{remark}
    }



{\color{black}
 \subsection{ Case of  soft interlayers  with Lam\'e coefficients of order $\e^2$
   }\label{subseccritical}


We}  
  assume  that 

 \begin{equation}
\begin{aligned}
 &   \mu_{0\e}=\e^2\mu_0, \quad \lambda_{0\e}=\e^2\lambda_0, \quad \mu_0>0, \quad \lambda_0\ge 0,
\end{aligned}
\label{mu0lambda0}
\end{equation}

  \ni  
  and 
   that the stiff layers are periodically distributed (see \eqref{defZe}):
  
%
%
%
%
%
%
  
   \begin{equation}
 \begin{aligned}
 &{B_\e}:=   
  \bigcup_{i\in Z_\e} B^{i}_\e; \quad B^{i}_\e :=
  \Omega'\times    
   \lp \e i+ r_\e I\rp.
 \end{aligned}
  	\label{defBeper}
\end{equation}

%
%

 \ni  Under these hypotheses, setting  

   \begin{align}
  	& Y := \lp -{1\over2},  {1\over 2} \rp^3 ; 
\   {B}:= \lp -{1\over2},  {1\over 2} \rp^2\times \lp -{\vartheta\over2},  {\vartheta\over 2} \rp; 
\  \Sigma:=  \lp -{1\over2},  {1\over 2} \rp^2\times\{0\},
\label{defYBSigma}
	 \\ &A:=   B \quad \hbox{if } \ \  \vartheta>0,
 \qquad A:=   \Sigma  \quad \hbox{if } \ \ 
  \vartheta=0, 
\label{defA}
  	\end{align}

%
%
%
%
%

\ni 
 we show that the  solution $\bfu_\e$  to
(\ref{Pe}) two-scale converges to  $\bfu_0\in  C([0,T]; L^2(\O, $ $ H^1_\sharp(Y;\RR^3) ) )$ (see  Section \ref{sectwoscale} for the definition of this   convergence), 
and  the sequence $(\bfu_\e m_\e)$, where $m_\e$
 is  the measure
  defined by 

 \begin{equation}
\label{defme}
\begin{aligned}
m_\e:= \frac{\e}{r_\e}
\mathds{1}_{B_\e}(x) \L^3_{\lfloor \Omega},
\end{aligned}
\end{equation}

 \ni weakly* converges in  $L^\infty(0,T; \M (\ov \Omega;\RR^3))$
to   $\bfv\in C^1([0,T]; L^2(\O; \RR^3 ))$,
where $(\bfu_0, \bfv )$
  is the unique solution to the 
  coupled 
  system  of equations (comparable in certain respects with \cite[(2.17)]{BeSiam})
 
   \begin{equation}
\la \begin{aligned}
&{\color{black} ( \P^{hom}_{soft}(\kappa, \vartheta) ) ,  }
\\ &( \P^{hom}_{stif\!f}(  k,\kappa)),
\end{aligned}
\rpt
\label{Phom1}
\end{equation}

\ni  defined below in terms of  $k$, $\kappa$, and  $\vartheta$   given, respectively,  by (\ref{kkappa}) and  (\ref{defvartheta}).
The fields $\bfu_0$ and $\bfv$ are linked  by  the following  relation on $\Omega\times (0,T)\times A$:
 
\begin{equation} 
\begin{aligned}
&\bfv(x,t)=   \bfu_0(x,t,y)   \ \  \hbox{   in }  \ \OT \times  A \quad & &\hbox{if } \quad
\vartheta>0
   \quad  \hbox{or } \quad  \kappa>0,
\\&
\hskip-0,13cm \lpt \begin{aligned}
&\bfv' =  {\bfu'_0}    \ \  \hbox{  on }  \ \OT \times  A
 \quad 
 \\&  
v_3  = \int_{A} u_{03}(.,y) d\H^2(y)\quad 
\end{aligned}\ra & &\hbox{if } \quad 
\vartheta=0
 \quad  \hbox{ and} \quad  \kappa=0.
\end{aligned}   
\label{coupling}
\end{equation}


\ni We introduce  the operators 
$  \bfe_y, \bfsigma_{0y}\!:\! H^1(Y;\RR^3)\! \to\! L^2(Y;\SS^3)$, $\bfg \!:\! \H \!\to \!\RR^3$
defined  by {\color{black} 
 \begin{equation}
 \begin{aligned}
  &  (\bfe_y(\bfw))_{ij}={1\over 2} \lp{\partial w_i\over\partial y_j} +{\partial w_j \over \partial y_i}\rp, \ 
 \bfsigma_{0y} (\bfw) := \l_0 \tr(\bfe_y(\bfw ))\bfI +2\mu_0 \bfe_y(\bfw)  ,
 \\ &  \bfg(\bfw) := 
 \la\begin{aligned}
 &-\int_{\partial (Y\setminus B)\cap \ov B} \bfsigma_{0y}(\bfw)  \bfnu_{Y\setminus B} d\H^2(y),
 & & \hbox{if } \ A=B,
 \\& \int_{\Sigma} (\bfsigma_{0y}(\bfw^+) -\bfsigma_{0y}(\bfw^-))\cdot \bfe_3 d\H^2(y)
 & & \hbox{if } \ A=\Sigma,
 \end{aligned} \rpt
 \end{aligned}
\label{eysigmayg}
\end{equation}

\ni where   $\bfnu_{Y\setminus B}$ stands for  the outward  normal to $\partial ({Y\setminus B})$
and

\begin{equation}
\H:= \la \bfw\in  H^1(Y\setminus A;\RR^3),\ \  \bfdiv
(\bfsigma_{0y}(\bfw))\in (H^1(Y\setminus A;\RR^3))^* \ra,
  	\label{Hdiv}
\end{equation}

\ni denoting  by  $E^*$  the topological  dual of a   Banach space $E$,
%
%
and  by} $\bfw^+$ (resp. $\bfw^-$)  the restriction of $\bfw$ to $\lp\frac{-1}{2},\frac{1}{2}\rp^2\times \lp0,\frac{1}{2}\rp$
(resp. $\lp\frac{-1}{2},\frac{1}{2}\rp^2\times \lp \frac{-1}{2},0\rp$).
Problem   $( \P^{hom}_{soft} (\kappa,\vartheta))$ in  (\ref{Phom1})  is the equation of $\bfu_0$ 
 in $\Omega\times (0,T)\times  (Y\setminus A)$ 
coupled with $\bfv$
 (\ref{coupling}) and given by 
(denoting by $\bfnu $   the outward normal to $\partial Y$):

   \begin{equation}
( \P^{hom}_{soft} (\kappa,\vartheta)\!)\!: \! \!  \left\{\begin{aligned}
& \rho {\partial^2 \bfu_0 \over \partial t^2}-\bfdiv_y (\bfsigma_{0y} (\bfu_0 ))= \rho \bff  \hskip0,2 cm \hbox{ in } \ {\Omega\times(0,T)\times  Y\setminus A},\\
  & (\bfu_0,\bfv) \quad \hbox{satisfies} \quad (\ref{coupling}),
  \\  & \bfsigma_{0y}(\bfu_0 )\bfnu(y)= -\bfsigma_{0y}(\bfu_0 )\bfnu( -y)   \hskip0 cm \hbox{ on } \ \OT\times \partial Y, \\
    & \bfu_0\! \in \!C([0,T]; L^2(\O, H^1_\sharp(Y;\RR^3)\!)\!)\!\cap \!C^1([0,T]; L^2(\OY;\RR^3)\!),   \\
    &  \bfu_0(0)\mathds{1}_{Y\setminus A}=\bfa_0 \mathds{1}_{Y\setminus A}, \quad   {\partial \bfu_0\over
\partial t}(0)\mathds{1}_{Y\setminus A} =\bfb_0 \mathds{1}_{{Y\setminus A}} .  
\end{aligned}
\right.
\label{Phommatrix}\end{equation}

 \ni  Equation \eqref{Phommatrix}  governs  the effective  behavior of  the displacement in the soft phase.
Problem  $( \P^{hom}_{stif\!f}(  k,\kappa))$  in  (\ref{Phom1})  is   an equation of  $\bfv$ in $\Omega\times(0,T)$
coupled  with $( \P^{hom}_{soft} (\kappa,\vartheta))$  through  the source term 
$\bfg(\bfu_0)$ defined 
  by (\ref{eysigmayg}). This equation  
 rules the effective behavior of the  displacement in the stiff layers. 
Its form is determined by   the order of magnitude of the coefficients $k,\kappa$.
If $0<k<+\infty$, we get  (see (\ref{exprim}))
   
   \begin{equation}
   \begin{aligned}
   &
   \begin{aligned}
    & (\P^{hom}_{stif\!f}(  k, 0) ):  \\ &(0<k<+\infty) \end{aligned}
 \!  \left\{\begin{aligned} 
 &  
 {\overline \rho_1} {\partial^2
\bfv
\over \partial t^2}   -  k  \bfdiv \bfsigma_{x'}(\bfv')
   =\ru  \bff 
+    \bfg(\bfu_0)  
\    \hbox{ in } \OT,
  \\ &   v_1, v_2 \in C([0,T]; L^2(0,L; H^1_0(\Omega')))\cap C^1([0,T]; L^2(\O )) , 
  \\ &   \bfv\in   C^1([0,T]; L^2(\O; \RR^3 )), 
\quad 
  \bfv(0)= \bfa_0, \ {\partial \bfv \over \partial t}(0)=
\bfb_0.
\end{aligned}
\right.
\\&
\end{aligned}
 \label{k0}
\end{equation}

\ni If $(k,\kappa)=(+\infty,0)$,  we obtain
 
\begin{equation} 
(\P^{hom}_{stif\!f}(  +\infty,0)):
\la
\begin{aligned}
  & {\overline \rho_1} {\partial^2
v_3
\over \partial t^2}   
  = \ru    f_3+   (\bfg (\bfu_0))_3 \quad   \hbox{ in }\  \OT,
  \\&  v_1=v_2 =0, 
  \\ &      v_3 \in   C^1([0,T], L^2(\O )),  \  
   v_3(0)=  a_{03}, \ {\partial v_3 \over \partial t}(0)= b_{03}. 
\end{aligned}
 \rpt
 \label{infty0}
\end{equation} 

\ni If $0<\kappa<+\infty$, the 
emergence of  fourth derivatives of $v_3$ 
 reveal  bending effects:

\begin{equation}
\begin{aligned}
&(\P^{hom}_{stif\!f}( +\infty,\kappa)) : \\&(0<\kappa<+\infty)\end{aligned}
\la\begin{aligned}
&
 {\overline \rho_1} {\partial^2
v_3
\over \partial t^2} \!  +\!{\kappa  \over3} {l\!+\!1\over l\!+\!2}\!\!\sum_{\a,\b=1}^2\!\! {\partial^4  v_3 \over \partial x_\a^2\partial x_\b^2}  \!
\\&\hskip 2,2cm=  \ru\!  f_3+ (\bfg(\bfu_0))_3    \quad \hbox{ in } \OT,
\\& v_1=  v_2 =0 , 
 \\& v_3 \in C ([0,T] ;L^2(0,L;H^2_0(\Omega' )))\cap  C^1 ([0,T] ; L^2(\O  )),
\\&  v_3(0)= a_{03}, \ {\partial v_3 \over \partial t}(0)= b_{03}. 
\end{aligned}\rpt
 \label{inftykappa}
\end{equation} 

\ni If  $\kappa=+\infty$,  the displacement in the   stiff layers asymptotically  vanishes:
   \begin{equation}
 (\P^{hom}_{stif\!f}(+\infty,+\infty)): 
 \quad\bfv=0 
 .\hskip5,5 cm
	\label{inftyinfty}
\end{equation}


    \begin{theorem} 
\label{th} Under  
   (\ref{mu0lambda0}), (\ref{defBeper}), 
the solution  $\bfu_\e$  to  (\ref{Pe}) 
  two-scale converges to  $\bfu_0$ with respect to $x$ and  weakly*   converges      in  
$L^\infty(0,T; L^2(\O, \RR^3))$  to   $\bfu= \int_Y \bfu_0(.,y) dy$,
 and  the sequence
 $(\bfu_\e m_\e)$, where $m_\e$ is   defined by (\ref{defme}),  weakly*  converges        in  
$L^\infty(0,T; \M (\ov\O, \RR^3))$  to   $\bfv\L^3_{\lfloor\O}$, where   $( \bfu_0,\bfv
  )$ is the unique solution   to (\ref{Phom1}).
 Moreover, $\bfu_\e(\tau)$ two-scale converges to  $\bfu_0(\tau)$ with respect to $x$, for each
$\tau \in  [0,T] $.   

  \end{theorem}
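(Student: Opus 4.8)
The plan is to follow the standard two-scale convergence scheme adapted to the high-contrast elastodynamic setting, combining four ingredients: a priori estimates, compactness, passage to the limit in the weak formulation against oscillating test functions, and uniqueness of the limit system. First I would record, from hypothesis \eqref{hyp2} together with the scaling \eqref{mu0lambda0} of the soft Lamé coefficients and \eqref{defrhoe} of the mass density, the energy estimate obtained by testing \eqref{Pe} with $\partial_t \bfu_\e$: this gives uniform bounds for $\rho_\e^{1/2}\partial_t\bfu_\e$ in $L^\infty(0,T;L^2(\O;\RR^3))$ and for $\bfsigma_\e(\bfu_\e):\bfe(\bfu_\e)$ in $L^\infty(0,T;L^1(\O))$. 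Decomposing the elastic energy over $B_\e$ and $\O\setminus B_\e$, the bound $\mu_{0\e}=\e^2\mu_0$ yields $\e\,\bfnabla\bfu_\e$ bounded in the soft phase, hence (by a scaled Poincaré/Korn inequality on cells of width $\e$) $\bfu_\e$ itself bounded in $L^2$; and the bound $\tfrac{r_\e^3}{\e}\mu_{1\e}=\kappa<\infty$ on $B_\e$ controls the in-plane strains $\bfe_{x'}$ of the trace of $\bfu_\e$ on the layers with the correct power of $r_\e$, which is what produces the fourth-order bending term in the limit. These are exactly the estimates proved in Section \ref{secapriori}, which I would invoke.

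Next, using the compactness theorem for the (non-periodic variant of) two-scale convergence from Section \ref{sectwoscale}, I would extract a subsequence so that $\bfu_\e$ two-scale converges to some $\bfu_0(x,t,y)$; the energy bound on $\e\bfnabla\bfu_\e$ forces $\bfu_0\in L^2(\OT;H^1_\sharp(Y;\RR^3))$, and the bound on $\bfe_{x'}$ of the layer trace (via the measure $m_\e$ of \eqref{defme}) gives the extra regularity $v_3\in L^2_n(0,L;H^2_0(\O'))$ in the cases $0<\kappa<\infty$ and $\kappa=+\infty$; here one also passes to the limit of the measures $\bfu_\e m_\e\weakstarconverge \bfv\,\L^3_{\lfloor\O}$ and identifies $\bfv$ with the trace of $\bfu_0$ on $A$ via \eqref{coupling} (the distinction $A=B$ vs. $A=\Sigma$, i.e. $\vartheta>0$ vs. $\vartheta=0$, being handled by whether the layers have positive asymptotic volume, in which case $\bfu_0$ is affine — in fact constant in $y$ — across $B$ by the energy bound, or collapse to the interface $\Sigma$). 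Time regularity $C([0,T];\cdot)\cap C^1([0,T];\cdot)$ and the identification of the initial data follow from the uniform bound on $\partial_t\bfu_\e$ by a standard Aubin–Lions/Arzelà–Ascoli argument together with the convergence of $\bfa_0,\bfb_0$ in the appropriate spaces.

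Then I would pass to the limit in the variational formulation of \eqref{Pe}. The test functions are of the oscillating type $\bfvarphi_\e(x,t)=\bfvarphi(x,t)+\e\,\bfpsi(x,t,\tfrac{x}{\e})$ with $\bfpsi$ smooth, $Y$-periodic in $y$, adjusted so that $\bfvarphi+\e\bfpsi$ matches the rigid (in $y$) behavior on the layers; testing and letting $\e\to0$, the bulk term $\int\bfsigma_\e(\bfu_\e):\bfe(\bfvarphi_\e)$ splits into a soft-phase contribution converging to $\int\bfsigma_{0y}(\bfu_0):\bfe_y(\cdots)$ over $\O\times(0,T)\times(Y\setminus A)$, producing \eqref{Phommatrix}, and a stiff-phase contribution which, after the correct rescaling by $r_\e/\e$ and $r_\e^3/\e$, converges to the membrane term $k\,\bfdiv\bfsigma_{x'}(\bfv')$ and the bending term $\tfrac{\kappa}{3}\tfrac{l+1}{l+2}\sum\partial^4_{\alpha\beta} v_3$ in \eqref{k0}–\eqref{inftykappa}; the coupling term $\bfg(\bfu_0)$ arises as the boundary integral of $\bfsigma_{0y}(\bfu_0)$ against the jump of the test function across $\Sigma$ (resp. against the normal on $\partial(Y\setminus B)\cap\bar B$), which is \eqref{eysigmayg}. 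The inertial terms pass to the limit using the strong two-scale convergence of $\rho_\e$ and $m_\e$ and the weak convergence of $\partial_t\bfu_\e$. Finally, uniqueness of the solution $(\bfu_0,\bfv)$ to the coupled system \eqref{Phom1} — proved by the usual energy method, testing the difference of two solutions against $(\partial_t\bfu_0,\partial_t\bfv)$ and using the symmetry $\bfsigma_{0y}(\bfw)\bfnu(y)=-\bfsigma_{0y}(\bfw)\bfnu(-y)$ to make the coupling terms cancel — shows the whole sequence converges, not just a subsequence, and gives the pointwise-in-$\tau$ two-scale convergence by propagating uniqueness to each time slice.

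\medskip\noindent\textbf{Main obstacle.} The delicate point is the correct identification of the limit of the stiff-phase elastic energy and of the coupling term $\bfg(\bfu_0)$, especially in the degenerate case $\vartheta=0$ where the layers shrink to the single interface $\Sigma$: one must simultaneously rescale the strain on a set of thickness $r_\e$ by the two different powers $r_\e$ (membrane) and $r_\e^3$ (bending), show that the transverse ($e_3$-direction) oscillations of $\bfu_\e$ inside each layer are asymptotically negligible at the membrane scale but contribute, through a quadratic-in-$y_3$ corrector, precisely the factor $\tfrac13\tfrac{l+1}{l+2}$ in front of the fourth-order term, and control the trace of $\bfsigma_{0y}(\bfu_0)$ on $\Sigma$ in the dual space $(H^1(Y\setminus A))^*$ appearing in \eqref{Hdiv} so that $\bfg(\bfu_0)$ is well defined. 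This layer-profile analysis — essentially an anisotropic thin-plate reduction carried out inside the homogenization limit — together with the technical lemma from the appendix, is where the bulk of the real work lies; the rest is by now routine two-scale bookkeeping.
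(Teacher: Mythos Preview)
Your four-step outline (energy estimates $\to$ compactness $\to$ pass to the limit $\to$ uniqueness) is the paper's, and you correctly locate the real difficulty in the stiff-layer term. But the test-function ansatz you write down, $\bfvarphi_\e=\bfvarphi(x,t)+\e\,\bfpsi(x,t,x/\e)$ with $\bfpsi$ $Y$-periodic and ``adjusted to be rigid in $y$ on the layers'', will not do the job, and this is where your proposal has a genuine gap.

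The problem is one of scales. A $Y$-periodic corrector at scale $\e$ does not resolve the internal structure of a layer of thickness $r_\e$ (most visibly when $\vartheta=0$, i.e.\ $r_\e\ll\e$, where the layer collapses to $\Sigma$ in the unit cell). With your ansatz, $\bfe(\bfvarphi_\e)=\bfe_x(\bfvarphi)+\bfe_y(\bfpsi)+O(\e)$ in $B_\e$; making $\bfpsi$ rigid in $y$ there leaves $\bfsigma_\e(\bfvarphi_\e)\sim\mu_{1\e}$ times the full three-dimensional isotropic stress of $\bfvarphi$, not the reduced plane-stress tensor $\bfsigma_{x'}$ of \eqref{exprim}. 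Since only $\bfe_{x'}(\bfu_\e')m_\e$ is shown to converge (Lemma~\ref{lemident1}) while the $i3$-components of $\bfe(\bfu_\e)$ are merely bounded in $L^2(m_\e)$, the pairing $I_{3\e}=\int_{B_\e}\bfe(\bfu_\e)\!:\!\bfsigma_\e(\bfvarphi_\e)$ then contains unidentifiable terms and neither the factor $l/(l+2)$ nor, in the bending regime, the coefficient $\tfrac13\tfrac{l+1}{l+2}$ emerges.

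What the paper actually builds (see \eqref{defphie} and the appendix) is a two-piece test field: away from the layers the standard two-scale field $\bfw_0(x,t,x/\e)$; in a thin neighborhood $C_\e\supset B_\e$ a Kirchhoff--Love plate ansatz
\[
\wideparen\bfpsi_\e(x,t)=\ov\bfpsi_\e(x,t)+r_\e\,\bfw_{1\e}\!\Bigl(x,t,\tfrac{y_\e(x_3)}{r_\e}\Bigr)\ \Bigl(+\,r_\e^2\,\bfw_{2\e}\ \text{when }\kappa>0\Bigr),
\]
where $y_\e(x_3)$ is the signed distance to the nearest layer midplane and the correctors $\bfw_{1\e},\bfw_{2\e}$ are chosen precisely so that the $i3$-components of $\bfsigma_\e(\wideparen\bfpsi_\e)$ vanish to leading order in $B_\e$. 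This forces $\tfrac{r_\e}{\e}\bfsigma_\e(\wideparen\bfpsi_\e)\to k\,\bfsigma_{x'}(\bfpsi')$ (membrane) and $\tfrac{r_\e^2}{\e}\bfsigma_\e(\wideparen\bfpsi_\e)\to -2\kappa\,y_3\,\bfH^{\bfsigma}(\psi_3)$ (bending) uniformly on $B_\e$, after which Lemma~\ref{lemI3} computes $\lim I_{3\e}$ via the two-scale convergence \emph{with respect to $(m_\e)$} stated in \eqref{cvflexion}. The two pieces are glued by a cutoff $\eta_\e$, and the transition-zone integral $I_{2\e}$ over $C_\e\setminus B_\e$ is killed by the volume estimate \eqref{LCe} combined with \eqref{estimsigmasoft}. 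Thus the ``quadratic-in-$y_3$ corrector'' you mention lives on the test-function side, at the $r_\e$ scale, not as an oscillation of $\bfu_\e$ at the $\e$ scale.

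Two smaller points. The $L^\infty(0,T;L^2)$ bound on $\bfu_\e$ does not come from a cell-wise Poincar\'e/Korn argument (which controls oscillations, not means): the paper gets it by writing $\bfu_\e(\tau)=\bfa_0+\int_0^\tau\partial_t\bfu_\e$ and using the kinetic-energy bound, see \eqref{uborne}. And uniqueness together with the time regularity $C([0,T];V)\cap C^1([0,T];H)$ is obtained not by a direct energy argument on the limit system but by recasting the limit variational problem as \eqref{Pvar2} for a suitable triple $V\hookrightarrow H\hookrightarrow V'$ and invoking the abstract Theorem~\ref{thDautrayLions}; this is also what upgrades the subsequence convergence to the full sequence and yields the pointwise-in-$\tau$ two-scale convergence.
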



  \begin{remark}\label{remcorrector} {\it
  One can show (see \cite[p.2548]{BeSiam} for more details), 
that if $\bfa_0=0 $ and if the fields  $\bfb_0$,  $ \bff$   are sufficiently regular,
%
%
%
  the following corrector result holds
 \begin{equation}
\lim_{\e\to 0} \ln \bfu_\e- \bfu_0\lp x,t, \xe\rp \rn_{L^2(\OT;\RR^3)}=0.
	\label{corrector}
\end{equation}

  }
  \end{remark}
 

\begin{remark}
\ni The effective problem (\ref{Phom1}) is non-local   in space and time.
Non-local effects    \cite{Al}, \cite{ArDoHo}-\cite{BraBri},   \cite{CaSe,Ch,ChSmZh,Kh,Pa},  \cite{Sa1}-\cite{Sili},  
and memory effects \cite{AbKoPaSm,AmHaZi,Ma,Ta} are typical of   composite media   with high contrast.   

\end{remark}


 
 \begin{remark}[Multiphase stratified elastic  media ]\label{remmultiphase} {\color{black} In the same way as in  \cite[Section 4]{BeSiam}, we}  can    extend Theorem \ref{th}   to the case of a multiphase   medium whereby $m$ $\e$-periodic disconnected  families $B_\e^{ [1] }$, ...,  $B_\e^{ [m] }$ of  parallel layers 
are 
embedded in a soft matrix.
The limit problem then takes the form

\begin{equation} 
\la \begin{aligned}
  &( \P^{hom}_{soft} ) ,  
   \\   
     &   ( \P^{hom\  [j] }_{stif\!f}) ,   \  j\in \{1,...,m\}, 
\end{aligned}\rpt
\nonumber
\end{equation}

\ni   and can be written under the variational  form  (\ref{Pvar2}) for some  suitable choice of data $H,V,a, h, \xi_0, \xi_1$.
Each family  $B_\e^{ [m] }$  is associated to some  subset $A^{[j]}$ of $Y$  like in (\ref{defA}).
The   system $( \P^{hom}_{soft} ) $  governs  the effective displacement  in the soft phase, and  only 
differs from (\ref{Phommatrix})   by  the relation  (\ref{coupling})  which is  replaced by    a series of relations on  each set  $\Omega\times (0,T)\times A^{[j]}$ between $\bfu_0$ and some auxiliary variable $\bfv^{[j]}$ characterizing the effective displacement in $B_\e^{[j]}$.
Each   problem   $( \P^{hom\ [j]}_{stif\!f}) $  
consists  of  an equation  of   $ \bfv^{[j]} $   of the same form  as
$( \P^{hom }_{stif\!f}) $  in (\ref{Phom1}), coupled with $\bfu_0$ through the   operator  $\bfg^{[j]}$ deduced from 
(\ref{eysigmayg}) by replacing $A$ by $A^{[j]}$. 
Multiphase composites comprising, besides    stiff layers, periodic distributions of  fibers  and  grain-like inclusions,  
can  be considered. 
Multiphase homogenized models have   been studied   in  \cite{BeSiam,Pa,Pa2,Sa1,Sa,Sa07}.

\end{remark}


\begin{remark}[Equilibrium equations]\label{remequilibrium} 
One can check that  if the solution   to
 
 \begin{equation} 
  - \bfdiv(\bfsigma_\e( \bfu_\e)) =
   \bff \quad \hbox{ in } \quad \O ,
\ \quad \bfu_\e\in   H_0^1(\O, \RR^3) ,  \quad  \bff \in  L^2(\O, \RR^3). \label {Pelliptique} 
\end{equation}

\ni  
 two-scale converges  to  $\bfu_0\in L^2(\OY;\RR^3)$, then  

 \begin{equation} 
 \bfu_0 \in V \quad \hbox{ and }\quad  a(\bfu_0, \bfw_0)= (\bff, \bfw_0)_H, \quad \forall \bfw_0\in V, \label {Phomell} 
\end{equation}

\ni  where     the Hilbert spaces $V$ and $H$ and    the non-negative  symmetric bilinear form  $a(.,.)$ are  those mentioned in Remark \ref{remmultiphase}. 
The    form $a(.,.)$    may 
fail  to be coercive on $V$. 
 One can prove (the proof is similar to that  sketched in Remark \ref{remequilibriumstiffinter} in  the  context of Theorem \ref{thinter})
 that 
this  coercivity and the convergence of the solution to \eqref{Pelliptique} 
 are  guaranteed 
provided that
a multiphase stratified composite is considered   whereby the set of stiff layers  comprises  
a  family $B_\e^{[j]}$  of parallel  layers of    thickness $r_\e^{[j]}$
such that $\kappa^{[j]}>0$, that is     with elastic moduli of order  larger than $  {\e}{ \lp r_\e^{[j]}\rp^{-3}} $. 
Similar results were obtained in 
  \cite[Corollary 5.1, Proposition 5.2]{BeSiam} for fibers and grain-like inclusions. Note in passing that 
  one should substitute $1$ for $\rho_\e$ in \cite[Formula 5.1]{BeSiam}, otherwise the proof of "$(iii)\Rightarrow (iv)$" in \cite[p. 2552]{BeSiam} is false. 
 

%

\end{remark}

\begin{remark} \label{remper}
 Under Assumption (\ref{mu0lambda0}),
 the  slightest   perturbation  of  periodicity  leads to a complete change of the form of the effective problem.
 For instance, if 
$m\in \NN$ and 
$\omega_\e^j=\e \lp j + \frac{1}{2}\rp $  if $j$ is a multiple of $m$, and  $\omega_\e^j=\e  j $ otherwise  in (\ref{defBeper}), 
then the limit problem 
is  a   system of equations coupling $\bfu_0$ with $m$  auxiliary variables $\bfv^{[1]}, ..., \bfv^{[m]}$ as described   in  Remark \ref{remmultiphase}, which
 can not be  expressed, as in theorems \ref{thstiff}, \ref{thinter}, simply
 in terms of the function $n$ defined by (\ref{neton}).
The extension of Theorem \ref{th} to  the  non-periodic
 case is far beyond the scope of this paper.
 
\end{remark}

 
  \section{Two-scale convergence and other analysis tools}\label{sectwoscale}  
     {\color {black}
{\color{black} In Section \ref{subsectwoscale}, we recall some properties 
of the two-scale convergence  of G. Allaire   \cite{Al} and G. Nguetseng   \cite{Ng}
and  reproduce   some statements of   \cite{BeSiam} in a suitable form for the present context. 
In Section \ref{subsectwoscaleme}, we introduce a 
non-periodic notion  of two-scale convergence with respect to 
a sequence of measures 
and  establish  a compactness result (Lemma \ref{lemtwoscalewrtme}).  
Two  classical  analysis results are  recalled in Section \ref{subsecabstract}. }
 \subsection{Two-scale convergence}\label{subsectwoscale} 
A sequence $(f_\e)$ in $ L^2(0,T;L^2(\O ))$ {\color{black} is said to two-scale converge  }
  to   $f_0  \in   L^2(0,T  ; $ $L^2(\OY))$  with respect to    $ x $   
if, for {\color{black} all }
$\f_0 
\in \D(\OT ,  C^\infty_\sharp (Y)) $,  

\begin{equation}\begin{aligned}
&\lim_{\e\to 0} \int_\OT  f_\e (x,t) \f_0\lp x,t, {x \over \e} \rp  dxdt =  \int_\OTY f_0\f_0 d xdtdy,
\\& \hbox{(notation: $f_\e \ \dto \ f_0$) } .
  \end{aligned} 	\label{defdto}
\end{equation}

\ni  A sequence    $(\f_\e)\subset  L^2(0,T  ; L^2(\O ))$ 
{\color{black} strongly two-scale converges }
 to   $\f_0  \in   L^2(0,T  ; L^2(\O$ $\times Y))$ with respect to $x$ 
 if

\begin{equation}
\begin{aligned}
&\f_\e \ \dto \ \f_0 \quad \hbox{ and } \quad \lim_{\e\to 0} ||\f_\e||_{ L^2(0,T  ; L^2(\O ))} =|| \f_0||_{ L^2(0,T  ; L^2(\OY))},
\\& \hbox{(notation: $\f_\e \ \sdto \ \f_0$)}.
 \end{aligned}
  	\label{sdto}
\end{equation}

\ni The  symbols   $\ \dto\,$ and  $\ \sdto\ $ 
{\color{black} will also}
denote   
  the   two-scale convergence  and the strong two-scale convergence 
of sequences  $(f_\e)$ in $  L^2(\O ) $   independent of $t$,
 defined  by formally 
 {\color{black} considering them}
   as constant in $t$.
{\color{black} 
 Any   bounded sequence in $L^2(0,T;L^2(\O ))$ has  a two-scale convergent subsequence \cite{Ng}.   
{\color{black} An admissible sequence with respect to two-scale convergence is a
}
  sequence  $(\f_\e)\!\subset \!L^2(0,T;L^2(\O ))$ 
that    two-scale converges to some  $\f_0\in  L^2(0,T  ;  L^2(\OY))$ and 
{\color{black} such that,}
 for   every two-scale convergent sequence $(f_\e)$, 
\begin{equation}
\lim_{\e\to 0} \int_\OT  f_\e  \f_\e dxdt =  \int_\OTY f_0\f_0 d xdtdy .
\label {dto}
\end{equation}

\ni 
{\color{black} A sequence $(\f_\e)$ 
 is admissible if and only if it strongly two-scale converges to  some 
$\bfvarphi_0$ (see \cite[p.2528]{BeSiam}).}
{\color{black} For all  $ \psi_0 \in L^2(0,T; L^2(\O , 
C^\infty_\sharp (Y)))\cup L^2_\sharp (Y, C(\ov\OT))$,
the sequence $( \psi_0( x,t ,{x \over \e}))_{\e>0}$
 strongly two-scale converges to $\psi_0$ (see \cite{Al}, Lemma 5.2, Corollary 5.4).}
In particular, if $Q$ is a Borel subset of $Y$, and $Q_\sharp$ is  
its periodization on $\RR^3$ 
 defined by (\ref{Qsharp}), then the sequence 
$\lp\mathds{1}_{Q_\sharp}\lp \frac{x}{\e}\rp\rp$ strongly two-scale converges to $\mathds{1}_Q(y)$. 
Under (\ref{defBeper}), if $\vartheta>0$, then 
$|\mathds{1}_{\Omega\setminus B_\e}-\mathds{1}_{(Y\setminus B)_\sharp}\lp \frac{x}{\e}\rp|_{L^2(\Omega)} \to 0$, therefore  $\mathds{1}_{\Omega\setminus B_\e} \ \sdto \ \mathds{1}_{Y\setminus B}$. If  $\vartheta=0$,  $(\mathds{1}_{\Omega\setminus B_\e}) $ 
strongly converges to $1$ in $L^2(\Omega)$, hence strongly two-scale converges to $1$.
We deduce that (see \eqref{defA})

\begin{equation}
\label{1Besdto}
\begin{aligned}
\mathds{1}_{\Omega\setminus B_\e} \ \sdto \ \mathds{1}_{Y\setminus A}, 
\quad \mathds{1}_{ B_\e} \ \sdto \ \mathds{1}_A.
\end{aligned}
\end{equation}



 {\color{black}
\ni The next Lemma is a straightforward variant of  \cite[Lemma 6.1]{BeSiam}.
  }

 
 \begin{lemma}\label{lemtwoscale} 
 \ni  (i) 
  Let $(h_\e)$ be a bounded  sequence in $L^\infty(\OTY)$  such that $h_\e \sdto h_0$.   Then,  for every sequence 
$(\chi_\e)\subset L^2(0,T;L^2(\O))$, 
the  following implications hold:

    \begin{align}
  	&\chi_\e \ \sdto \   \chi_0  \implique  \chi_\e h_\e \ \sdto \ \chi_0  h_0,\label{lem1}
	 \\ & \chi_\e \ \ \dto \ \, \chi_0   \implique  \chi_\e h_\e \ \ \dto \ \, \chi_0  h_0.\label{lem2}
  	\end{align}


\ni  (ii) If     $(f_\e)$ is  
bounded       in $L^\infty(0,T; L^2(\O))$, {\color{black} then $(f_\e)$   two-scale converges, up to a subsequence,  to    some    
  $f_0\in L^\infty(0,T; L^2(\OY))$.}
If in addition $(f_\e)$ is   bounded   in $W^{1,\infty}(0,T; L^2(\O))$,
then  $f_0\in W^{1,\infty}(0,T; L^2(\OY))$ and 
 $\lp \partial f_\e\over \partial t\rp$ two-scale converges to  
$ {\partial f_0\over \partial t}$.  Besides, if    $f_\e(0)\dto  a_0$, then $a_0=f_0(0)$ and    $f_\e( \tau)\dto f_0( \tau ),\ \forall \tau \in [0,T]$. {\color{black} Furthermore,}
if  $\lp \partial f_\e\over \partial t\rp\sdto  {\partial f_0\over \partial t}$ and  $f_\e(0)\sdto  a_0$, then $f_\e( \tau)\sdto f_0( \tau )$, $\forall \tau \in [0,T]$.


\end{lemma}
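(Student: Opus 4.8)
The assertions in Lemma \ref{lemtwoscale} are the standard toolbox of two-scale convergence, most of which are already in the literature (\cite{Al,Ng,BeSiam}); the task is to assemble them. For part (i) I would argue as follows. To prove \eqref{lem2}, let $(f_\e)$ be any two-scale convergent sequence, $f_\e\dto f_0$. Fix a test function $\f_0\in\D(\OT,C^\infty_\sharp(Y))$. Then $h_\e\f_0(x,t,x/\e)$ is itself (essentially) an admissible test sequence: since $(h_\e)$ is bounded in $L^\infty$ and $h_\e\sdto h_0$, while $(\f_0(\cdot,\cdot,\cdot/\e))$ strongly two-scale converges to $\f_0$, the product $h_\e\f_0(x,t,x/\e)$ strongly two-scale converges to $h_0\f_0$ by \eqref{lem1} applied with the roles interchanged — or directly: it is bounded in $L^\infty(\OT)$ hence admissible, and one identifies its two-scale limit as $h_0\f_0$ by testing against a further smooth function and using $h_\e\sdto h_0$. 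Consequently
\[
\lim_{\e\to0}\int_\OT \chi_\e h_\e\,\f_0\!\lp x,t,\tfrac{x}{\e}\rp dxdt
=\lim_{\e\to0}\int_\OT \chi_\e\,\bigl(h_\e\f_0(x,t,\tfrac{x}{\e})\bigr)\,dxdt
=\int_\OTY \chi_0 h_0\f_0\,dxdtdy,
\]
using $\chi_\e\dto\chi_0$ against the admissible sequence $(h_\e\f_0(x,t,x/\e))$ (this is exactly the defining property \eqref{dto} of admissibility, applied with the two-scale sequence being $\chi_\e$). Since $\f_0$ was arbitrary, $\chi_\e h_\e\dto\chi_0 h_0$. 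For \eqref{lem1}, I would combine this with the norm convergence: once $\chi_\e h_\e\dto\chi_0 h_0$ is known, strong two-scale convergence follows if $\|\chi_\e h_\e\|_{L^2(\OT)}\to\|\chi_0 h_0\|_{L^2(\OTY)}$; this is obtained by writing the $L^2$-norm of $\chi_\e h_\e$ as $\int_\OT\chi_\e^2 h_\e^2$, observing $h_\e^2\sdto h_0^2$ (strong two-scale convergence is stable under products of bounded sequences, by \cite[Lemma 6.1]{BeSiam} or the characterization via strong $L^2(\OTY)$-convergence of $\chi_\e(x,t)\mapsto\chi_\e$ composed appropriately) and $\chi_\e^2\sdto\chi_0^2$, and passing to the limit. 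Alternatively one cites \cite[Lemma 6.1]{BeSiam} directly, since the statement is advertised as a "straightforward variant" of it; I would spell out only the point where the present hypotheses differ (here $h_\e$ lives on $\OTY$ rather than $\OT$, but that changes nothing in the argument).

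**Part (ii), the compactness and time-regularity statement.** The first sentence — a bounded sequence in $L^\infty(0,T;L^2(\O))$ has a two-scale convergent subsequence with limit in $L^\infty(0,T;L^2(\OY))$ — is the Nguetseng--Allaire compactness theorem adapted to the $L^\infty$-in-time scale; I would invoke \cite{Ng,Al} and note that the $L^\infty$ bound passes to the limit by lower semicontinuity of the norm under two-scale convergence (testing against $\f_0$ supported in a small time interval and using the bound on each slice). For the $W^{1,\infty}(0,T;L^2(\O))$ part: extract a subsequence so that both $f_\e\dto f_0$ and $\partial_t f_\e\dto g_0$ (possible by the first part applied to $\partial_t f_\e$). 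One identifies $g_0=\partial_t f_0$ in the distributional sense by testing against $\f_0(x,t,x/\e)$ with $\f_0\in\D(\OT,C^\infty_\sharp(Y))$ and integrating by parts in $t$: $\int \partial_t f_\e\,\f_0(x,t,x/\e) = -\int f_\e\,\partial_t\f_0(x,t,x/\e)$, and passing to the two-scale limit on both sides gives $\int_\OTY g_0\f_0 = -\int_\OTY f_0\,\partial_t\f_0$, so $g_0=\partial_t f_0\in L^\infty(0,T;L^2(\OY))$.

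**The pointwise-in-time statements.** For "$f_\e(0)\dto a_0\Rightarrow a_0=f_0(0)$ and $f_\e(\tau)\dto f_0(\tau)$ for all $\tau$": here I would use the uniform Lipschitz bound in time. For fixed $\f_0(x,y)\in C^\infty_\sharp(Y)\otimes\D(\O)$ (time-independent test function), the scalar function $\tau\mapsto\int_\O f_\e(\tau)\f_0(x,x/\e)\,dx$ is uniformly Lipschitz in $\tau$ (with constant controlled by $\|\partial_t f_\e\|_{L^\infty(0,T;L^2)}\|\f_0(\cdot,\cdot/\e)\|_{L^2}\le C$), and by Arzelà–Ascoli it converges uniformly on $[0,T]$, along a subsequence, to some limit $\tau\mapsto\ell(\tau)$. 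On the one hand $\ell(\tau)=\int_{\OY}f_0(\tau)\f_0\,dxdy$ for a.e. $\tau$ (from the $L^2$-in-time two-scale convergence, integrating against $\phi(\tau)\f_0$ and using density), and by continuity of both sides this holds for all $\tau$; in particular $\ell(0)=\int_{\OY}f_0(0)\f_0$ and $\ell(0)=\int_{\OY}a_0\f_0$, giving $a_0=f_0(0)$, and $f_\e(\tau)\dto f_0(\tau)$ for every $\tau$. The final clause — upgrading to strong two-scale convergence at each $\tau$ when $\partial_t f_\e\sdto\partial_t f_0$ and $f_\e(0)\sdto a_0$ — I would prove via an energy/norm identity: write
\[
\|f_\e(\tau)\|_{L^2(\O)}^2=\|f_\e(0)\|_{L^2(\O)}^2+2\int_0^\tau\!\!\int_\O f_\e\,\partial_t f_\e\,dxds,
\]
and pass to the limit: the first term converges to $\|a_0\|_{L^2(\OY)}^2=\|f_0(0)\|^2$ by strong two-scale convergence of $f_\e(0)$, and the integral term converges to $\int_0^\tau\!\int_{\OY}f_0\,\partial_t f_0\,dxdyds$ because $f_\e\dto f_0$ on $\O\times(0,\tau)$ while $\partial_t f_\e\sdto\partial_t f_0$ is admissible (product of a two-scale and a strongly-two-scale sequence). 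Hence $\|f_\e(\tau)\|_{L^2(\O)}\to\|f_0(\tau)\|_{L^2(\OY)}$, which together with $f_\e(\tau)\dto f_0(\tau)$ yields $f_\e(\tau)\sdto f_0(\tau)$. I expect the \textbf{main obstacle} to be bookkeeping rather than conceptual: making the Arzelà–Ascoli / density argument for "every $\tau$" fully rigorous requires care that the limit is independent of the subsequence (forced by uniqueness of $f_0$ and of its trace), and one must be slightly careful that time-independent test functions $\f_0(x,x/\e)$ are admissible — which holds by \cite[Lemma 5.2, Corollary 5.4]{Al} quoted in the excerpt. Since all the ingredients are either cited or elementary, I would present this as a chain of short verifications, leaning on \cite{Al,Ng,BeSiam} wherever a statement is verbatim from there.
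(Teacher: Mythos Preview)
The paper does not actually prove Lemma~\ref{lemtwoscale}: it merely states that the result is ``a straightforward variant of \cite[Lemma 6.1]{BeSiam}'' and moves on. Your proposal supplies precisely the kind of standard argument one would give (and that one finds, in essence, in \cite{Al,Ng,BeSiam}): admissibility of products for part (i), compactness plus integration by parts in $t$ for the identification $g_0=\partial_t f_0$, Arzel\`a--Ascoli on the scalar functions $\tau\mapsto\int_\O f_\e(\tau)\f_0(x,x/\e)\,dx$ for the pointwise-in-time two-scale convergence, and the norm identity $\|f_\e(\tau)\|^2=\|f_\e(0)\|^2+2\int_0^\tau\int f_\e\,\partial_t f_\e$ for the strong version. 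This is correct and is exactly the intended route; there is nothing to compare against since the paper gives no proof of its own.
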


{\color{black}
 \subsection{Two-scale convergence with respect to $(m_\e)$}\label{subsectwoscaleme}
 One can  easily check  that   the sequence  $(m_\e)$ defined by \eqref{defme}  is bounded in 
  $\M (\ov\O)$    and satisfies 

   \begin{equation}
   \begin{aligned}
&m_\e \buildrel * \over \rightharpoonup  n \L^3_{\lfloor \O} \quad \hbox{weakly* in } \ \M (\ov\Omega),
\end{aligned}
\label{meton}
  \end{equation}

\ni where $n$ is defined by (\ref{neton}). Notice that 

\begin{equation}
   \begin{aligned}
   n=1 \quad 
   \hbox{ under} \quad \eqref{defBeper}.
\end{aligned}
\label{n=1per}
  \end{equation}
  
  \ni 
In what follows, the symbol  $L_n^2(\Omega;\RR^3)$  stands for  the set of all Borel fields $\bfw:\Omega\to \RR^3$ such that 
$\int_\Omega  |\bfw|^2 n dx<+\infty$. Similarly,  for any Hilbert space $H$,  we denote by   $L^2_n(0,L;H)$ 
 the set of all Borel fields $\bfw:(0,L)\to H$ such that 
$\int_0^L  |\bfw|_H^2 n dx<+\infty$.
We set (see (\ref{disjoint}), (\ref{defBe}))
 
 \begin{equation}
\label{yepsnonper}
\begin{aligned}
y_\e(z):= \sum_{j\in J_\e} ( z-\omega_\e^j) \mathds{1}_{ \omega_\e^j+r_\e(1+\delta) I}(z)
.
\end{aligned}
\end{equation}

 \ni  
We say that a sequence $(f_\e)$ in $ L^2(0,T;L^2 (\O ))$     two-scale converges  to   $f_0  \in   L^2(0,T  ; $ $L^2_n(\O\times I))$    with respect to the sequence  of measures  $(m_\e)$  
if for each
$\psi
\in \D(\OT ; $ $ C^\infty_\sharp (I)) $,  the following  holds 
%
 \begin{equation}
\begin{aligned}
&\lim_{\e\to 0} \int_\OT  f_\e (x,t) \psi \lp x,t, {y_\e(x_3)\over r_\e} \rp dm_\e dt =  \int_{\OT\times I}   f_0\psi nd xdtdy_3.
\\& \hbox{(Notation: $f_\e \ \buildrel m_\e \over \dto \ f_0$)}.
\end{aligned}
\label{defmdto}
\end{equation}


\begin{lemma}\label{lemtwoscalewrtme} 
%
%
 Let    $(f_\e)$  be a sequence   in  $ L^2(0,T;L^2(\O ))$
satisfying 

\begin{equation}
\begin{aligned}
\sup_{\e>0} \int_{\O\times (0,T)}   | f_\e|^2  d m_\e dt<+\infty. 
\end{aligned}
\label{feme<infty}
\end{equation}

\ni Then $(f_\e)$
two-scale converges    with respect to $(m_\e)$,   up to a subsequence, to some $f_0\in L^2_n(0,T;L^2(\O\times I ))$.
  In addition, if 

\begin{equation}
\begin{aligned}
\sup_{\e>0, \tau>0} \int_{\O }   | f_\e|^2(\tau)  d m_\e  <+\infty, 
\end{aligned}
\label{feme<infty2}
\end{equation}

\ni then $f_0\in L^\infty(0,T; L^2_n(\Omega\times I))$.

\end{lemma}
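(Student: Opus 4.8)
\textbf{Proof proposal for Lemma \ref{lemtwoscalewrtme}.} The plan is to mimic the classical Allaire--Nguetseng compactness argument, adapted to the sequence of measures $(m_\e)$ and the ``sliding'' local variable $y_\e(x_3)/r_\e$. First I would fix a countable dense family $(\psi_k)$ in $\D(\OT;C^\infty_\sharp(I))$ and, for each $k$, consider the sequence of reals $\big(\int_\OT f_\e\,\psi_k(x,t,y_\e(x_3)/r_\e)\,dm_\e\,dt\big)_\e$. By Cauchy--Schwarz with respect to $m_\e dt$, \eqref{feme<infty}, and the fact that $\sup_\e m_\e(\ov\O)<\infty$ together with $\sup_k\|\psi_k\|_\infty<\infty$, each such sequence is bounded; a diagonal extraction yields a subsequence along which all of them converge. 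Denoting the limit functional by $L(\psi_k)$, linearity and the same Cauchy--Schwarz bound show that $|L(\psi)|\le C\,\|\psi\|_{L^2(\OT\times I;\,ndxdtdy_3)}$ for every $\psi$ in the dense family, hence $L$ extends to a bounded linear functional on $L^2(\OT\times I;\,ndxdtdy_3)$. By the Riesz representation theorem there is $f_0\in L^2(\OT\times I;\,ndxdtdy_3)=L^2_n(0,T;L^2(\O\times I))$ with $L(\psi)=\int_{\OT\times I}f_0\,\psi\,n\,dxdtdy_3$, which is exactly \eqref{defmdto} for all $\psi$ in the dense family, and then for all test functions by density.

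The one point that genuinely needs the structure of $(m_\e)$ is the \emph{identification of the upper bound}, i.e.\ that $\limsup_\e \int_\OT |\psi(x,t,y_\e(x_3)/r_\e)|^2\,dm_\e\,dt = \int_{\OT\times I}|\psi|^2 n\,dxdtdy_3$. This is where I expect the main obstacle to lie. The measure $m_\e=\frac{\e}{r_\e}\mathds1_{B_\e}\L^3_{\lfloor\O}$ concentrates on the union of the slabs $\Omega'\times(\omega_\e^j+r_\e I)$, and on the $j$-th slab the change of variables $x_3\mapsto y_3=(x_3-\omega_\e^j)/r_\e$ turns $\frac{\e}{r_\e}dx_3$ into $\e\,dy_3$ over $y_3\in I$, so $\int$ over the $j$-th slab of a function of $(x',t,y_\e(x_3)/r_\e)$ equals $\e\int_{\O'\times I}(\dots)dx'dy_3$ evaluated at the $e_3$-coordinate $\omega_\e^j$. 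Summing over $j\in J_\e$ and grouping the $\omega_\e^j$ lying in each $\e$-cell $(\e i-\tfrac\e2,\e i+\tfrac\e2]$ (whose number is exactly $n_\e$ restricted to that cell, by \eqref{defne}), one sees the $x_3$-integral is a Riemann-type sum against $n_\e\,dx_3$; since $\psi$ is continuous in $x_3$ and $n_\e\buildrel*\over\rightharpoonup n$ by \eqref{neton}, this sum converges to $\int_\O\!\int_I|\psi|^2 n$. Making this rigorous requires controlling the oscillation of $\psi$ in $x_3$ over an $\e$-cell (uniform continuity of $\psi$, compact support) and handling the boundary cells via $\dist(\omega_\e,\{0,L\})>\e/2$ from \eqref{condomegae}; this is essentially the computation already carried out in \cite{BeSiam}, so I would cite it rather than redo it in full. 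Granting this, $(\psi(x,t,y_\e(x_3)/r_\e))$ is admissible with respect to two-scale convergence relative to $(m_\e)$, which is what makes the Riesz step and the passage from the dense family to all test functions legitimate.

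For the final assertion, assume \eqref{feme<infty2}. I would show $f_0\in L^\infty(0,T;L^2_n(\O\times I))$ by testing against $\psi(x,y_3)\,\varphi(t)$ with $\varphi\ge0$, $\varphi\in\D(0,T)$: the lower semicontinuity of the $L^2(m_\e)$-norm under two-scale convergence with respect to $(m_\e)$ (a standard consequence of expanding $\int|f_\e-\psi|^2 dm_\e\ge0$ and using admissibility of $(\psi(x,t,y_\e/r_\e))$, exactly as in the classical proof that two-scale limits do not increase $L^2$ norms) gives
\begin{equation*}
\int_{\OT\times I}|f_0|^2 n\,\varphi\,dxdtdy_3\;\le\;\liminf_\e\int_\OT |f_\e|^2\varphi\,dm_\e dt\;\le\;\Big(\sup_{\e,\tau}\int_\O|f_\e|^2(\tau)\,dm_\e\Big)\int_0^T\varphi\,dt.
\end{equation*}
Since $\varphi\ge0$ in $\D(0,T)$ is arbitrary, this yields $\int_{\O\times I}|f_0(t)|^2 n\,dxdy_3\le C$ for a.e.\ $t$, i.e.\ $f_0\in L^\infty(0,T;L^2_n(\O\times I))$. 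The only subtlety here is again ensuring that $(\varphi^{1/2}\psi(x,t,y_\e(x_3)/r_\e))$ (or a direct lower-semicontinuity statement for the $m_\e$-two-scale convergence) is available, which follows from the admissibility established above; I would state this lower-semicontinuity as a one-line corollary of the norm-convergence computation and then conclude.
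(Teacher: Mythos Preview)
Your approach is correct and follows the same broad strategy as the paper (Cauchy--Schwarz bound, identification of the limit functional via Riesz), but with two technical variations worth noting. For the extraction step, the paper packages the integrals $\psi\mapsto\int_{\OT} f_\e\,\psi(\cdot,y_\e(x_3)/r_\e)\,dm_\e dt$ as a bounded sequence of Radon measures $\theta_\e$ on $\ov{\OTI}$ and invokes weak* compactness in $\M(\ov{\OTI})$; this delivers convergence for \emph{all} continuous test functions at once and sidesteps the density step you need after diagonal extraction. If you keep your route, make explicit that the passage from the countable family to all $\psi$ uses density in the sup norm together with the uniform bound $\big|\int f_\e\,\psi\,dm_\e dt\big|\le C\|\psi\|_\infty$, so that the three-$\e$ argument is uniform in $\e$. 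For the $L^\infty(0,T;L^2_n)$ assertion, the paper argues by duality: using Fubini, Cauchy--Schwarz in $x$ only, dominated convergence, and the key convergence of $\int|\psi|^2 dm_\e$, it obtains $\big|\int_{\OTI} f_0\,\psi\,n\big|\le C\,\|\psi\|_{L^1(0,T;L^2_n(\O\times I))}$, hence $f_0\in L^\infty(0,T;L^2_n)$. Your lower-semicontinuity route is equally valid and arguably more direct. Both proofs rest on the same key computation, namely $\int_\O |\varphi(x,y_\e(x_3)/r_\e)|^2\,dm_\e \to \int_{\O\times I}|\varphi|^2\,n\,dx\,dy_3$ for continuous $\varphi$, which the paper also defers (to the analogue of \cite[Lemma~1.3]{Al}) rather than writing out in full; your Riemann-sum sketch of it is the right idea.
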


\begin{proof} 
%
By    Cauchy-Schwarz Inequality  and by (\ref{feme<infty}), we have 

 \begin{equation}
\label{apdtm}
\begin{aligned}
&\lb  \int_\OT f_\e(x,t) \psi\lp x,t,{y_\e(x_3)\over r_\e}\rp dm_\e dt\rb
\\&\hskip 2,5cm  \le C \lp  \int_\OT |f_\e|^2 dm_\e dt\rp^{\frac{1}{2}} ||\psi||_{L^\infty(\OTI)}
\\&\hskip 2,5cm  \le C||\psi||_{L^\infty(\OTI)}
\quad \forall \psi \in C(\ov{\OTI}). 
\end{aligned}
\end{equation}

\ni Hence, by the Riesz representation theorem, for each $\e>0$   there exists a finite Radon measure $\theta_\e\in \M (\ov \OTI )$ such that 
 
\begin{equation}
\label{nonumber}
\begin{aligned}
&\int \psi d\theta_\e  = \int_\OT \!\!\!f_\e(x,t) \psi\lp x,t,{y_\e(x_3)\over r_\e}\rp dm_\e dt\quad \forall \psi \in C(\ov{\OTI}).
\end{aligned}
\end{equation}

\ni By (\ref{apdtm}) and (\ref{nonumber}),  the sequence   $(\theta_\e)$ is bounded in $\M (\ov{\OTI})$, thus  weakly* converges, up to a subsequence, 
to some $\theta \in \M (\ov\OTI)$.
By Cauchy-Schwarz Inequality,  we have 
 \begin{equation}
\label{inu}
\begin{aligned}
\lb \int \psi d\theta_\e \rb 
&\!\le \!\lp \int_\OT \!|f_\e|^2 dm_\e dt\rp^{1\over2} \!\!\! \lp \int_\OT  \lb \psi\lp x,t,{y_\e(x_3)\over r_\e}\rp\rb^2 \!\!dm_\e dt\rp^{1\over2}
\!\!\! .\end{aligned}
\end{equation}

\ni 
The proof of the next statement is similar to that of 
    \cite[Lemma 1.3]{Al}:
   
    \begin{equation}
\label{inu2}
\begin{aligned}
\lime  \int_\O  \lb \varphi\lp x, {y_\e(x_3)\over r_\e}\rp\rb^2 dm_\e  = \int_{\O\times I}  |\varphi|^2 n dx  dy_3
\quad   \forall \varphi\in C(\ov {\O\times I }).
\end{aligned}
\end{equation}

\ni  We deduce from (\ref{feme<infty}), 
(\ref{nonumber}),  (\ref{inu}),  (\ref{inu2}), and from  the weak* convergence in  $\M (\ov{\OTI})$ of $(\theta_\e)$ to $\theta$, that 

 \begin{equation}
\nonumber
\begin{aligned}
\lb \int \psi d\theta\rb =  \lime \lb \int \psi d\theta_\e \rb  \le   C ||\psi||_{L^2_n(\OTI)} \quad \forall \psi \in C(\ov{\OTI}). 
\end{aligned}
\end{equation}

\ni  Thus, the linear form $\psi\to \int \psi d\theta$  is continuous on $C(\ov{\OTI})$ with respect to the strong topology of $L^2_n(\OTI)$.
By a density argument, 
this linear form can be extended to a continuous linear form on $L^2_n(\OTI)$ which, 
by the Riesz representation theorem, takes the form $\psi\to \int_{\OTI} \psi f_0 n dxdtdy$  for some  
$f_0\in L^2_n(\OT\times I)$.
We infer  that 
 $\theta = n f_0 \L^5_{\lfloor_{\OTI} }$, and then,  taking (\ref{nonumber})  and  the weak* convergence of $(\theta_\e)$ to $\theta $ into account, deduce  (\ref{defmdto}). 
%
%
%
%
%
%
%
%
\ni  Under  (\ref{feme<infty2}),  by Fubini's Theorem and Cauchy-Schwarz inequality, we have

 \begin{equation}
\begin{aligned}
    \int_\OT  \hskip-0,8cm f_\e (x,t) \psi  \lp x,t, {y_\e(x_3)\over r_\e} \rp &dm_\e dt  
\le  C \!\int_{(0,T)}  \lp \int  \lb \psi\lp x,\tau,{y_\e(x_3)\over r_\e}\rp\rb^2 \!\!dm_\e  \rp^{1\over2} d\tau.
\end{aligned}
\nonumber
\end{equation}

\ni By passing to the limit as $\e\to 0$ in the last inequality, thanks to (\ref{inu2})    and to the Dominated Convergence Theorem, we get
$ \int_{\OT\times I}   f_0\psi   nd xdtdy_3 \!\le C |\psi|_{L^1(0,T; L^2_n (\Omega\times I))} $
%
%
and deduce, by the arbitrary choice of $\psi$,  that $f_0\in L^\infty(0,T; L^2_n (\Omega\times I))$.
  \end{proof}
}


  \subsection{{\color{black} Two classical  results}}\label{subsecabstract}
  {\color{black} 
 For the reader's convenience, we reproduce below   Lemma A2 of \cite{BeBo} 
 (see  also \cite{Bo}   for  a more general version) and 
Theorem 6.2 of \cite{BeSiam},  which collects some abstract results proved in  \cite{DaLi,Li,LiMa}.
The lemma   will be employed to identify the limit of the sequence 
$(\bfu_\e m_\e)$, where $\bfu_\e$ is the solution to \eqref{Pe}.
The theorem  
  will be applied  to check  the existence,  the  uniqueness, and some regularity properties  of the solution 
to Problem  (\ref{Pe}) and of the  associated limit problems.}

\begin{lemma} \label{feps} Let $K$ be a compact subset of $\RR^N$ and 
 $(\theta_\e)$   a  bounded   sequence of   positive Radon measures on  $K $,  weakly* 
converging in $\M(K)$    to some   $\theta\in \M(K)$. 
 Let $(f_\e)$  be   a sequence of
$\theta_\e$-measurable functions such
that $\sup_\e \int |f_\e|^2 d\theta_\e<+\infty$. Then the sequence  
$(f_\e \theta_\e)$
is sequentially  relatively compact in the weak* topology $\sigma(\M (K), C(K))$ and every
cluster point  is of the form $ f\theta$, with $f \in L^2_\theta$. 
 Moreover, if $f_\e\theta_\e\buildrel\star \over \rightharpoonup f\theta$, then 
 
 \begin{equation}
\begin{aligned}
& \liminf_{\e\to 0} \int |f_\e|^2d\theta_\e \ge \int |f|^2d\theta.
\end{aligned}
\label{lb}
\end{equation}
 
 \end{lemma}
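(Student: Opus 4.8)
The plan is to follow the classical argument behind "lower semicontinuity under weak* convergence of measures weighted by quadratic functionals," adapting the proof of the analogous statement in the appendix of \cite{BeBo}. Since $(\theta_\e)$ consists of positive measures with $\sup_\e \int|f_\e|^2\,d\theta_\e<+\infty$, the measures $f_\e\theta_\e$ have total variation bounded by Cauchy--Schwarz: $|f_\e\theta_\e|(K)\le \bigl(\int|f_\e|^2 d\theta_\e\bigr)^{1/2}\bigl(\theta_\e(K)\bigr)^{1/2}\le C$. Hence $(f_\e\theta_\e)$ is sequentially relatively compact for $\sigma(\M(K),C(K))$, and it suffices to show every cluster point $\nu$ is absolutely continuous with respect to $\theta$ with an $L^2_\theta$ density, and then to establish \eqref{lb}.

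First I would prove absolute continuity: for $\varphi\in C(K)$,
\begin{equation}
\Bigl|\int \varphi\, d(f_\e\theta_\e)\Bigr|\le \Bigl(\int|f_\e|^2 d\theta_\e\Bigr)^{1/2}\Bigl(\int|\varphi|^2 d\theta_\e\Bigr)^{1/2}\le C\Bigl(\int|\varphi|^2 d\theta_\e\Bigr)^{1/2}.
\nonumber
\end{equation}
Passing to the limit along the subsequence realizing $f_\e\theta_\e\buildrel\star\over\rightharpoonup\nu$, and using $\theta_\e\buildrel\star\over\rightharpoonup\theta$ with $|\varphi|^2\in C(K)$, gives $|\int\varphi\,d\nu|\le C\bigl(\int|\varphi|^2 d\theta\bigr)^{1/2}$ for all $\varphi\in C(K)$. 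Thus the linear form $\varphi\mapsto\int\varphi\,d\nu$ is continuous for the $L^2_\theta$-norm on the dense subspace $C(K)\subset L^2_\theta$; it extends to a bounded linear form on $L^2_\theta$, so by Riesz representation there is $f\in L^2_\theta$ with $\int\varphi\,d\nu=\int\varphi f\,d\theta$ for all $\varphi\in C(K)$, i.e. $\nu=f\theta$ with $f\in L^2_\theta$. This identifies the cluster point and simultaneously yields the bound $\|f\|_{L^2_\theta}\le C$.

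For the lower-semicontinuity inequality \eqref{lb}, the standard trick is a duality/quadratic completion. For any $\varphi\in C(K)$ one has the elementary pointwise inequality $|f_\e|^2\ge 2 f_\e\varphi-\varphi^2$, hence
\begin{equation}
\int|f_\e|^2\,d\theta_\e\ \ge\ 2\int f_\e\varphi\,d\theta_\e-\int\varphi^2\,d\theta_\e = 2\int\varphi\,d(f_\e\theta_\e)-\int\varphi^2\,d\theta_\e.
\nonumber
\end{equation}
Taking $\liminf_{\e\to0}$ and using $f_\e\theta_\e\buildrel\star\over\rightharpoonup f\theta$ and $\theta_\e\buildrel\star\over\rightharpoonup\theta$ gives $\liminf_\e\int|f_\e|^2 d\theta_\e\ge 2\int\varphi f\,d\theta-\int\varphi^2 d\theta$ for every $\varphi\in C(K)$. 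Since $C(K)$ is dense in $L^2_\theta$ and both sides are continuous in $\varphi$ for the $L^2_\theta$-norm, the inequality persists for all $\varphi\in L^2_\theta$; choosing $\varphi=f$ yields $\liminf_\e\int|f_\e|^2 d\theta_\e\ge\int|f|^2 d\theta$, which is \eqref{lb}.

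The only genuinely delicate point is the density step: we need $C(K)$ dense in $L^2_\theta$, which holds because $\theta$ is a finite positive Radon measure on a compact metric space, and we need to know that the limiting functional is still defined on all of $L^2_\theta$ (not merely that it is bounded on $C(K)$) before we may insert $\varphi=f$; this is exactly what the Riesz extension argument above provides. Everything else is Cauchy--Schwarz and the definition of weak* convergence. I would present the absolute-continuity/identification part and the $\liminf$ part in that order, since the second uses the density of $C(K)$ in $L^2_\theta$ already set up in the first.
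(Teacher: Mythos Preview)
Your proof is correct and follows the classical duality argument. Note, however, that the paper does not supply its own proof of this lemma: it is reproduced from \cite{BeBo} (Lemma A2) without proof, so there is nothing in the paper to compare against beyond the citation. Your argument is essentially the one that appears in the original reference.
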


%

 \begin{theorem}\label{thDautrayLions}    Let 
 $V $ and  $ H $ be separable  Hilbert spaces  such that   
$ 
V\subset H=H'\subset V',
 $
with  continuous and dense imbeddings. Let $||.||_V,\ |.|_H,\ ((.,.))_{ V}, \
(.,.)_{ H}$
denote their respective    norm  and inner product.  
Let $ a: V\times V \to \RR $ be  a continuous bilinear symmetric form on  $V$. Let  $A\in \L(V,V')$ be  defined by $a(\xi,\widetilde\xi)= (A\xi, \widetilde\xi)_{(V',V)}, \ \forall \
(\xi,\widetilde\xi)\in V^2$. Assume that 

\begin{equation}
\exists (\l,\a)\in \RR_+ \times \RR_+^*, \ \ a( \xi,\xi)+ \l  |\xi |_H^2 \ge \a ||\xi||_V^2, \ \forall \ \xi\in V.
\label{hypC}
\end{equation}

\ni Let  $h\in L^2(0,T; H)$, $ \xi_0 \in V$, $\xi_1 \in H $.
Then there exists a unique solution $\xi$ to
\begin{equation}
\begin{aligned}
 & A\xi(t)+{\partial^2 \xi\over \partial t^2}(t)= h(t), \quad  \xi\in L^2(0,T; V) , 
 \\ &     {\partial \xi\over \partial t}\in L^2(0,T; H),   \quad \xi(0) = \xi_0,  \quad {\partial \xi\over \partial t}(0)= \xi_1.
\end{aligned}\label{Pvar}
\end{equation}

\ni  
Furthermore, we have 
\begin{equation}
   \xi\in C([0,T]; V)\cap   C^1([0,T]; H) ,  \quad {\partial \xi\over \partial t}\in L^2(0,T; V ), \quad {\partial^2 \xi\over \partial t^2}\in L^2(0,T; V').
\label{xireg}
\end{equation}

\ni Besides, setting   

\begin{equation}{e}(\tau):= {1\over 2} \lc
\lp  {\partial \xi\over \partial t}(\tau),{\partial \xi\over \partial t}(\tau)\rp_H\!+ a(\xi(\tau),\xi(\tau))
 \rc,\ \forall \ \tau\in [0,T],
\label{defenergy}
\end{equation}
 
 \ni  the following holds  

\begin{equation} 
{e}(\tau)={e}(0)+ \int_0^\tau \lp h,{\partial \xi\over \partial t}\rp_H dt,  \qquad \forall \ \tau\in [0,T]  .
\label{energy}
\end{equation}

\ni  Problem (\ref{Pvar})  is equivalent to   

\begin{equation}
\begin{aligned}
&\int_{0}^T  \!\lp a(\xi(t),\widetilde\xi)\eta(t)+ (\xi(t),\widetilde\xi)_H{\partial^2 \eta\over \partial t^2}(t)\rp \! dt
  +  (\xi_0,\widetilde\xi)_H {\partial \eta\over \partial t}(0)
   \\
&   \hskip3,7cm - (\xi_1,\widetilde\xi)_H \eta(0)=\!\!\int_{0}^T\!\!(h,\widetilde\xi)_H\eta(t) dt ,  \\
&   \hskip1cm   \forall \ \widetilde\xi\in V, \quad \forall \ \eta\in \D(]-\infty,T[); \quad \xi\!\in \!L^2(0,T;V) , \  {\partial \xi\over \partial t}\! \in \!L^2(0,T;H).
\end{aligned}
\label{Pvar2}
\end{equation}

\end{theorem}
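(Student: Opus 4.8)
\textbf{Proof plan for Theorem \ref{thDautrayLions}.}
The statement collects classical facts about second-order linear evolution equations in the Gelfand triple $V\subset H\subset V'$, so the plan is to reduce it to the standard existence theory (Lions--Magenes, Dautray--Lions) after a preliminary change of unknown that absorbs the possibly non-coercive form $a$. First I would set $\eta(t):=e^{-\mu t}\xi(t)$ for a suitable $\mu>0$; then $\eta$ solves a perturbed equation $\partial_t^2\eta+2\mu\,\partial_t\eta+(A+\mu^2 I)\eta=e^{-\mu t}h$, where, by \eqref{hypC}, the bilinear form $a(\cdot,\cdot)+\lambda(\cdot,\cdot)_H$ is coercive on $V$; choosing $\mu^2\ge\lambda$ makes the spatial operator $A+\mu^2 I$ coercive, so the damped problem for $\eta$ falls under the textbook hyperbolic framework. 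Existence and uniqueness of $\eta\in L^2(0,T;V)$ with $\partial_t\eta\in L^2(0,T;H)$ and $\partial_t^2\eta\in L^2(0,T;V')$ then follow by the Faedo--Galerkin method: project onto a spectral basis of $V$, solve the resulting system of ODEs, derive the energy estimate
\begin{equation}
\tfrac{1}{2}\tfrac{d}{dt}\Bigl(|\partial_t\eta_m|_H^2+a(\eta_m,\eta_m)+\mu^2|\eta_m|_H^2\Bigr)+2\mu|\partial_t\eta_m|_H^2=(e^{-\mu t}h,\partial_t\eta_m)_H,
\nonumber
\end{equation}
apply Gronwall to bound $\eta_m$ in $L^\infty(0,T;V)$ and $\partial_t\eta_m$ in $L^\infty(0,T;H)$, and pass to the limit weakly; reading the equation then gives $\partial_t^2\eta\in L^2(0,T;V')$.

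Having $\eta$, I would transfer everything back to $\xi=e^{\mu t}\eta$: the regularity \eqref{xireg} is stable under multiplication by the smooth scalar $e^{\mu t}$, so $\xi\in L^2(0,T;V)$, $\partial_t\xi\in L^2(0,T;H)$, $\partial_t^2\xi\in L^2(0,T;V')$, and uniqueness for $\xi$ is equivalent to uniqueness for $\eta$. For the continuity statement $\xi\in C([0,T];V)\cap C^1([0,T];H)$ I would invoke the Lions--Magenes interpolation/regularization lemma for functions $w\in L^2(0,T;V)$ with $\partial_t^2 w\in L^2(0,T;V')$, which yields $w\in C([0,T];V)$ and $\partial_t w\in C([0,T];H)$ after modification on a null set; this also makes the initial conditions $\xi(0)=\xi_0$, $\partial_t\xi(0)=\xi_1$ meaningful pointwise. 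The energy identity \eqref{energy} is obtained by pairing the equation with $\partial_t\xi$; since this pairing is only formally justified (the product $(\partial_t^2\xi,\partial_t\xi)$ lies in $L^1$ only after the regularization step), I would first establish it for the Galerkin approximations, where it is a genuine ODE identity, and then pass to the limit using the continuity of $\tau\mapsto e(\tau)$ that follows from \eqref{xireg}; alternatively one quotes it directly from \cite[Ch.~XVIII]{DaLi}.

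Finally, the equivalence with the weak (distributional-in-time, tested-against-$\widetilde\xi\in V$) formulation \eqref{Pvar2} is routine: given a solution of \eqref{Pvar}, multiply by $\widetilde\xi\,\eta(t)$ with $\eta\in\D(]-\infty,T[)$ and integrate by parts twice in $t$, the boundary terms at $t=0$ producing exactly the terms $(\xi_0,\widetilde\xi)_H\,\partial_t\eta(0)-(\xi_1,\widetilde\xi)_H\,\eta(0)$ by virtue of the continuity $\xi\in C([0,T];V)$, $\partial_t\xi\in C([0,T];H)$; conversely, a function satisfying \eqref{Pvar2} has $\partial_t^2\xi=h-A\xi$ in $\D'(0,T;V')$, hence $\partial_t^2\xi\in L^2(0,T;V')$, and the same integration by parts backwards recovers the initial data. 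The only genuinely delicate point in the whole argument is the regularization lemma guaranteeing \eqref{xireg} and the validity of the energy equality (rather than merely an inequality); everything else is bookkeeping, and since the theorem is explicitly attributed to \cite{DaLi,Li,LiMa}, I would in fact present the proof as a short reduction to those references via the exponential shift, spelling out only the shift and the back-substitution in detail.
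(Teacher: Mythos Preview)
The paper does not prove this theorem at all: it is stated in Section~\ref{subsecabstract} explicitly as a reproduction of ``Theorem 6.2 of \cite{BeSiam}, which collects some abstract results proved in \cite{DaLi,Li,LiMa}'', and no argument is given. Your plan to reduce to the coercive case by the exponential shift and then invoke the Faedo--Galerkin machinery is the standard route taken in those references, so in substance you are aligned with what the paper intends; your final remark that you would ``present the proof as a short reduction to those references'' is exactly what the paper does, only the paper skips even the reduction.

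One point worth flagging: your Galerkin energy estimate yields $\eta\in L^\infty(0,T;V)$ and $\partial_t\eta\in L^\infty(0,T;H)$, and the equation then gives $\partial_t^2\eta\in L^2(0,T;V')$, but none of this produces the claim $\partial_t\xi\in L^2(0,T;V)$ appearing in \eqref{xireg}. That regularity does not follow from the data $\xi_0\in V$, $\xi_1\in H$, $h\in L^2(0,T;H)$ by the basic hyperbolic theory you sketch; it normally requires either extra smoothness of the data or a parabolic/damped structure. Since the paper simply quotes the statement without proof, this is not a discrepancy between your argument and the paper's, but if you intend to actually write out a proof you should either locate the precise result in \cite{DaLi,Li,LiMa} that delivers this extra regularity or treat that clause separately.
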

 
 {\color{black}
\section{ Asymptotic    behavior of the solution to \eqref{Pe}}\label{secapriori}  {\color{black} In this section, we establish a series of   estimates satisfied by the solution $\bfu_\e$ to (\ref{Pe}) (see Proposition \ref{propapriori}), and 
investigate  in  lemmas \ref{lemident1}, \ref{lemident2} , and \ref{lemidentu0v}, the asymptotic behavior of sequences satisfying such estimates. 
These results are synthetized in 
 Corollary   \ref{corapriori}. We start with a key inequality. }
    
  \begin{lemma}\label{lemkey} 
We have 
 
  \begin{equation}
\label{key}
\begin{aligned}
& \int   \lp  \lb {\varphi_1 \over  r_\e}\rb^2 +\lb  {\varphi_2\over  r_\e}\rb^2 +
\left|\varphi_3\right|^2 
 \rp dm_\e 
  \le {C\over r_\e^2}\int  \lb  
\bfe(\bfvarphi )\rb^2dm_\e  \quad \forall \bfvarphi\in H^1_0(\O;\RR^3) . 
\end{aligned}
\end{equation}

\end{lemma}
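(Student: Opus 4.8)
The plan is to prove the inequality \eqref{key} by working layer by layer. Since $m_\e = \frac{\e}{r_\e}\mathds{1}_{B_\e}\L^3_{\lfloor\O}$ and $B_\e = \bigcup_{j\in J_\e} B_\e^j$ with $B_\e^j = \Omega'\times(\omega_\e^j + r_\e I)$ a slab of thickness $r_\e$, it suffices to establish the analogous estimate on each individual slab $B_\e^j$ (with the common factor $\frac{\e}{r_\e}$ cancelling from both sides), namely
\begin{equation}
\nonumber
\int_{B_\e^j}\lp \lb\frac{\varphi_1}{r_\e}\rb^2 + \lb\frac{\varphi_2}{r_\e}\rb^2 + |\varphi_3|^2\rp dx \le \frac{C}{r_\e^2}\int_{B_\e^j}\lb\bfe(\bfvarphi)\rb^2 dx
\end{equation}
for all $\bfvarphi\in H^1_0(\O;\RR^3)$, with $C$ independent of $j$ and $\e$. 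Then summing over $j\in J_\e$ gives \eqref{key}.

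To prove the slab estimate, I would rescale in the $x_3$ variable so that $B_\e^j$ becomes the fixed reference slab $\Omega'\times I$ of unit thickness. Under the dilation $x_3 = \omega_\e^j + r_\e y_3$, the three components $\varphi_\a$ ($\a=1,2$) carry a factor $1/r_\e$ and $\varphi_3$ carries a factor $1$ — which is exactly the weighting that appears on the left-hand side — while the components of the strain tensor transform so that the factor $1/r_\e^2$ on the right-hand side is precisely what is needed to control $\partial\varphi_i/\partial x_3$ terms (the in-plane derivatives $\partial/\partial x_\a$ only get helped, not hurt, by the rescaling). The key point is that $\bfvarphi$ vanishes on $\partial\O$, hence on $\Omega'\times\{0,L\}$; and since the slab $B_\e^j$ is separated from the boundary planes $x_3 = 0, L$ only by a distance $\e/2$ (by \eqref{condomegae}), one can extend $\bfvarphi$ from the slab down to the boundary to exploit this. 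More precisely, on the rescaled cell one needs a Korn–Poincaré-type inequality: for a vector field on $\Omega'\times I$ whose trace on $\Omega'\times(I\cup\{\text{extension region}\})$ vanishes somewhere (via the zero boundary condition), one controls the weighted $L^2$ norm of the field by the $L^2$ norm of its symmetrized gradient on a slightly larger domain. The anisotropic scaling of the three components is what makes the constant uniform.

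The cleanest route is probably the following two-step argument. First, a one-dimensional Poincaré inequality in the $x_3$-direction: because $\varphi_i(x', \cdot)$ vanishes at $x_3 = 0$ (or $L$), whichever is within distance $\e/2$ of the slab, we have $\int_{\omega_\e^j + r_\e I}|\varphi_i|^2 dx_3 \le C\e^2 \int_{(0,L)}|\partial\varphi_i/\partial x_3|^2 dx_3$ — but this only controls the full gradient, not the strain. So second, and this is the real content, one must pass from $\partial\varphi_i/\partial x_3$ to the strain components $e_{i3}(\bfvarphi)$. Here the observation is that $\partial\varphi_3/\partial x_3 = e_{33}(\bfvarphi)$ directly, and $\partial\varphi_\a/\partial x_3 = 2e_{\a 3}(\bfvarphi) - \partial\varphi_3/\partial x_\a$; the troublesome term $\partial\varphi_3/\partial x_\a$ is an in-plane derivative of the "good" component $\varphi_3$, which after integrating the Poincaré bound on $\varphi_3$ over $x_3$ and using that $\partial_\a$ commutes with integration in $x_3$, gets absorbed into $\int|e_{33}(\bfvarphi)|^2 + \int|e_{\a 3}(\bfvarphi)|^2$ with the right powers of $r_\e$. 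Tracking the powers of $\e$ versus $r_\e$ and using \eqref{disjoint} ($\e > r_\e(1+\delta)$, so $\e/r_\e$ is bounded below but the factor $\e^2/r_\e^2 \ge (1+\delta)^2$ works in our favor for the $\varphi_\a/r_\e$ terms and gives the stated $C/r_\e^2$ for the $\varphi_3$ term after one more use of $r_\e \le \e$) is the delicate bookkeeping.

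The main obstacle I anticipate is exactly this bookkeeping of the scaling factors combined with handling the cross-term $\partial\varphi_3/\partial x_\a$: one must be careful that integrating the $1$D Poincaré estimate in $x_3$ and then differentiating in $x_\a$ does not lose the uniformity in $j$, and that the extension of $\bfvarphi$ past the slab toward the boundary plane (needed because the zero boundary data lives on $x_3 \in \{0, L\}$, not on $\partial B_\e^j$) is done on a region of width $O(\e)$ whose strain energy is still controlled — but in fact one can avoid an explicit extension by simply integrating $\partial_3\varphi_i$ along the segment from the nearest boundary plane through the slab, since $\bfvarphi \in H^1_0(\O)$ is defined on all of $\O \supset (\Omega' \times (0,L))$ and the slab lies within $\O$. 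A subtlety worth flagging: the constant must not depend on the number $\sharp J_\e$ of layers, which is guaranteed because each layer's estimate is local to a tube of height $\le \e$ around it and these tubes have bounded overlap by \eqref{condomegae}.
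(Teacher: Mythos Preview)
Your reduction to a single slab $B_\e^j$ is correct, but the choice of boundary condition is wrong and makes the rest of the argument collapse. You write that ``the slab $B_\e^j$ is separated from the boundary planes $x_3=0,L$ only by a distance $\e/2$ (by \eqref{condomegae})''. This misreads \eqref{condomegae}: the condition $\dist(\omega_\e,\{0,L\})>\e/2$ is a \emph{lower} bound, not an upper bound. A slab sitting near the middle of $(0,L)$ is at distance of order $L$ from both planes $x_3=0$ and $x_3=L$, so a one-dimensional Poincar\'e from those planes gives a constant of order $L$, not $\e$. Worse, even for the outermost slabs, integrating $\partial_{x_3}\bfvarphi$ along the segment from the boundary plane to the slab means your right-hand side carries $\int_{\O\setminus B_\e}|\bfe(\bfvarphi)|^2\,dx$, whereas the lemma's right-hand side is $\int|\bfe(\bfvarphi)|^2\,dm_\e$, i.e., supported \emph{only} on $B_\e$. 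No bookkeeping with overlap of tubes will repair this: the inequality is purely local to the layers.

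The boundary condition you should use is the \emph{lateral} one: since $\O=\Omega'\times(0,L)$ and $\bfvarphi\in H^1_0(\O;\RR^3)$, you have $\bfvarphi=0$ on $\partial\Omega'\times(0,L)$, hence in particular on $\partial\Omega'\times(\omega_\e^j+r_\e I)=\partial B_\e^j\cap\partial\O$. This lateral boundary meets \emph{every} slab. On the fixed reference cell $\Omega'\times(-\frac12,\frac12)$ there is a Korn inequality $\int|\bfpsi|^2\le C\int|\bfe(\bfpsi)|^2$ for fields vanishing on $\partial\Omega'\times(-\frac12,\frac12)$ (the only rigid motion satisfying that constraint is zero). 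Pull this back via $y_3=(x_3-\omega_\e^j)/r_\e$ with the anisotropic rescaling $\bfpsi=(\varphi_1,\varphi_2,r_\e\varphi_3)$: then $|\bfpsi|^2=|\varphi_1|^2+|\varphi_2|^2+r_\e^2|\varphi_3|^2$ and each strain component of $\bfpsi$ picks up a factor $1$, $r_\e$, or $r_\e^2$, so $|\bfe(\bfpsi)|^2\le C|\bfe(\bfvarphi)|^2$. Dividing by $r_\e^2$ gives exactly the slab estimate, entirely inside $B_\e^j$. This is the paper's argument; your Poincar\'e-in-$x_3$ route cannot work because the required information simply is not inside the layer.
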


  \begin{proof}  By (\ref{defBe}) and (\ref{defme}), it is sufficient  to show that 
 for all $j\in J_\e$, and all $\bfvarphi\in H^1(B_\e^j;\RR^3)$ 
 such that 
 $\bfvarphi=0$ on  $\partial B_\e^j\cap \partial \Omega$, 

 \begin{equation}
\label{keyBj}
\begin{aligned}
& \int_{B_\e^j}   \lp  \lb {\varphi_1 \over  r_\e}\rb^2 +\lb  {\varphi_2\over  r_\e}\rb^2 +
\left|\varphi_3\right|^2 
  \rp dx  
 \le {C\over r_\e^2}\int_{B_\e^j}  \lb  
\bfe(\bfvarphi )\rb^2dx .
\end{aligned}
\end{equation}

 \ni   By    Korn's inequality, we have 

 \begin{equation}
\nonumber
\begin{aligned}
&\int_{\Omega'\times \lp-{1\over 2}; {1\over2} \rp} |\bfpsi|^2 
dy \le C \int_{\Omega'\times \lp-{1\over 2}; {1\over2} \rp} |\bfe(\bfpsi)|^2 dy\qquad  \forall \bfpsi \in W,
\\&
W:=\la\bfpsi\in H^1\lp \Omega'\times \lp-{1\over 2}; {1\over2} \rp;\RR^3\rp , \quad \bfpsi =0 \hbox{ on }
\partial \Omega'
 \times \lp-{1\over 2}; {1\over2}\rp\ra.
 \end{aligned}
\end{equation}

 \ni By making the change of variable $y=(x_1,x_2,\frac{x_3-\omega_\e^j}{r_\e})$, we get, for all $\bfpsi \in W $, 

 \begin{equation}
\label{pW}
\begin{aligned}
\int_{B_\e^{j}}& \lb \bfpsi\rb^2\lp x_1,x_2,\frac{x_3-\omega_\e^j}{r_\e}\rp
 \! dx
  \le \!C\int_{B_\e^{j}} \lb\bfe(\bfpsi)\rb^2\! \lp x_1,x_2,\frac{x_3-\e i}{r_\e}\rp  \!dx.
  \end{aligned}
\end{equation}

\ni 
Setting 
$
\bfvarphi(x)=\begin{pmatrix}   \psi_1 
\\ \psi_2  
\\  \frac{1}{r_\e}\psi_3  
\end{pmatrix}\lp x_1,x_2,\frac{x_3-\omega_\e^j}{r_\e}\rp,
$ a straightforward computation yields 
 
 \begin{equation}
\nonumber
 \begin{aligned}
&\int_{{B_\e^{j}}}\lp   \lb  \varphi_1 \rb^2 +\lb   \varphi_2  \rb^2 +
r_\e^2 \left| \varphi_3   \right|^2 
 \rp(x)  dx 
= \int_{B_\e^{j}}     |\bfpsi|^2 
   \lp x_1,x_2,\frac{x_3-\omega_\e^j }{r_\e}\rp dx
 \\&\hskip 2 cm \le  C \int_{B_\e^{j}} |\bfe(\bfpsi)|^2 \lp x_1,x_2,\frac{x_3-\omega_\e^j }{r_\e}\rp dx
 \le C  \int_{{B_\e^{j}}} \lb  \bfe(\bfvarphi)\rb^2(x) dx.
 \end{aligned}
\end{equation}

\ni  The inequality (\ref{keyBj}) is proved. 
 \end{proof}

%
%
 

\begin{proposition}\label{propapriori} There exists a unique solution $\bfu_\e$ to 
 (\ref{Pe}). Moreover, 

 \begin{equation}
\label{uereg}
\begin{aligned}
 {\partial  \bfu_\e  \over \partial  t } \in    L^2(0,T;H^1_0(\O; \RR^3)) ,
\  {\partial^2 \bfu_\e  \over \partial  t^2}\in L^2(0,T; H^{-1}(\O; \RR^3)).\end{aligned}
\end{equation}

\ni 
Under  (\ref{lmu}),      (\ref{hyp2}),    
there exists a    constant $C>0$ such that

\begin{equation}
\begin{aligned}
& \int_{\O }\!\!   \mu_{0\e}|\bfe( \bfu_\e)|^2( \tau)dx +\int_\O  \lp  \re\lb {\partial \bfu_\e\over \partial t} \rb^2\! +   |\bfu_\e |^2  
\rp( \tau)
dx  
\!\le\! C  
 \quad & &  \forall \tau \in [0,T] ,
 \\  & 
  \int  \lb \bfe \lp \bfu_\e   \rp  \rb^2( \tau)  
  +  \lb   \bfu_\e'     \rb^2( \tau)  dm_\e   \le  C \lp \frac{r_\e}{\e} \mu_{1\e}\rp^{-1}   & & \forall \tau \in [0,T]  ,
   \\  & 
  \int  \lb  u_{\e3}    \rb^2( \tau)  dm_\e   \le  C \lp \frac{r_\e^3}{\e} \mu_{1\e}\rp^{-1} , \qquad   \int  \lb  \bfu_{\e}    \rb^2( \tau)  dm_\e   \le  C   & & \forall \tau \in [0,T].
  \end{aligned}
\label{apriori}
\end{equation}

%
\end{proposition}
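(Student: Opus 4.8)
The plan is to split the statement into three independent assertions and treat them in order. First, the existence, uniqueness and regularity claims \eqref{uereg} follow directly from the abstract Theorem \ref{thDautrayLions}: I would take $V=H^1_0(\O;\RR^3)$, $H=L^2(\O;\RR^3)$ (with the density $\rho_\e$ built into the inner product $(\bfxi,\widetilde\bfxi)_H=\int_\O\rho_\e\bfxi\cdot\widetilde\bfxi\,dx$, which is equivalent to the standard one since $\rho_\e$ is bounded above and below on $\O$ for each fixed $\e$), and $a(\bfxi,\widetilde\bfxi)=\int_\O\bfsigma_\e(\bfxi):\bfe(\widetilde\bfxi)\,dx$. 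The bilinear form $a$ is continuous on $V$ and, by Korn's inequality together with $\mu_\e\ge c>0$, satisfies the Gårding-type estimate \eqref{hypC}; the data $\bff,\bfa_0,\bfb_0$ have exactly the regularity demanded by the theorem. Applying Theorem \ref{thDautrayLions} with $h=\rho_\e\bff$, $\xi_0=\bfa_0$, $\xi_1=\bfb_0$ gives the unique solution $\bfu_\e$ and, via \eqref{xireg}, the regularity $\partial\bfu_\e/\partial t\in L^2(0,T;H^1_0)$ and $\partial^2\bfu_\e/\partial t^2\in L^2(0,T;H^{-1})$ stated in \eqref{uereg}.

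The first line of \eqref{apriori} is the basic energy estimate. I would write the energy identity \eqref{energy} for $\bfu_\e$: with $e_\e(\tau)=\tfrac12\big[\int_\O\rho_\e|\partial_t\bfu_\e|^2(\tau)\,dx+\int_\O\bfsigma_\e(\bfu_\e):\bfe(\bfu_\e)(\tau)\,dx\big]$ one has $e_\e(\tau)=e_\e(0)+\int_0^\tau\int_\O\rho_\e\bff\cdot\partial_t\bfu_\e\,dx\,dt$. Hypothesis \eqref{hyp2} bounds $e_\e(0)$ and $\int_\OT\rho_\e|\bff|^2$ uniformly; a Cauchy–Schwarz and Gronwall argument on $\int_\O\rho_\e|\partial_t\bfu_\e|^2$ then bounds $\sup_\tau e_\e(\tau)$ uniformly in $\e$. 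Since $\bfsigma_\e(\bfu_\e):\bfe(\bfu_\e)\ge 2\mu_\e|\bfe(\bfu_\e)|^2\ge 2\mu_{0\e}|\bfe(\bfu_\e)|^2$, this controls $\int_\O\mu_{0\e}|\bfe(\bfu_\e)|^2(\tau)$ and $\int_\O\re|\partial_t\bfu_\e|^2(\tau)$; the bound on $\int_\O|\bfu_\e|^2(\tau)$ follows by integrating $\partial_t\bfu_\e$ in time (writing $\bfu_\e(\tau)=\bfa_0+\int_0^\tau\partial_t\bfu_\e$) and using that $\re\ge c_0>0$ on $\O\setminus B_\e$ together with the smallness of $|B_\e|$, or more simply from Poincaré applied to $\bfu_\e\in H^1_0$ once $\int_\O\mu_{0\e}|\bfe(\bfu_\e)|^2$ is in hand — though one must be careful since $\mu_{0\e}$ may degenerate, so the time-integration route via $\re|\partial_t\bfu_\e|^2$ is the robust one.

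The second and third lines of \eqref{apriori} are the contrast-sensitive estimates and constitute the main obstacle. The idea is to extract from the uniform energy bound the information carried on the stiff layers. Since $\bfsigma_\e(\bfu_\e):\bfe(\bfu_\e)\ge 2\mu_{1\e}|\bfe(\bfu_\e)|^2$ on $B_\e$ and $dm_\e=\tfrac{\e}{r_\e}\mathds{1}_{B_\e}\L^3$, the bound $\sup_\tau e_\e(\tau)\le C$ yields $\int|\bfe(\bfu_\e)|^2(\tau)\,dm_\e\le C\big(\tfrac{r_\e}{\e}\mu_{1\e}\big)^{-1}$; this is the first half of the second line. To pass from the strain to $\bfu_\e'$ and to $u_{\e3}$ I would invoke the key inequality \eqref{key} of Lemma \ref{lemkey}: applied to $\bfvarphi=\bfu_\e(\tau)$ it gives $\int\big(|u_{\e1}/r_\e|^2+|u_{\e2}/r_\e|^2+|u_{\e3}|^2\big)(\tau)\,dm_\e\le C r_\e^{-2}\int|\bfe(\bfu_\e)|^2(\tau)\,dm_\e\le C r_\e^{-2}\big(\tfrac{r_\e}{\e}\mu_{1\e}\big)^{-1}=C\big(\tfrac{r_\e^3}{\e}\mu_{1\e}\big)^{-1}$. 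The $u_{\e3}$ term of this inequality is precisely the first bound of the third line; dividing the $u_{\e\a}$ terms through by $r_\e^{-2}$ and discarding the (then superfluous) factor gives $\int|\bfu_\e'|^2(\tau)\,dm_\e\le C r_\e^2\big(\tfrac{r_\e^3}{\e}\mu_{1\e}\big)^{-1}=C\big(\tfrac{r_\e}{\e}\mu_{1\e}\big)^{-1}$, completing the second line. Finally, the last bound $\int|\bfu_\e|^2(\tau)\,dm_\e\le C$ combines the previous one (for the primed components, which are $O((\tfrac{r_\e}{\e}\mu_{1\e})^{-1})=O(1)$ since $k<\infty$ forces $\tfrac{r_\e}{\e}\mu_{1\e}$ bounded below, or is absorbed otherwise) with a separate estimate for $u_{\e3}$ on $B_\e$: here one controls $\int|u_{\e3}|^2\,dm_\e$ either from the kinetic part of the energy — noting $\int\re|\partial_t\bfu_\e|^2=\int\rho|\partial_t\bfu_\e|^2\mathds{1}_{\O\setminus B_\e}+\ov\rho_1\int|\partial_t\bfu_\e|^2\,dm_\e$, so $\int|\partial_t\bfu_\e|^2(\tau)\,dm_\e\le C$ — then integrating in time from $\bfu_\e(0)=\bfa_0$, using $\int|\bfa_0|^2\,dm_\e\le C$ which holds because $m_\e$ has uniformly bounded total mass and $\bfa_0$ is continuous. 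The delicate point throughout is bookkeeping the powers of $r_\e/\e$ and of $\mu_{1\e}$ so that the constants genuinely do not depend on $\e$; I expect this, rather than any single inequality, to be where the care is needed.
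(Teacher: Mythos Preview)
Your proposal is correct and follows essentially the same route as the paper: apply Theorem~\ref{thDautrayLions} with the $\rho_\e$-weighted $L^2$ inner product and $a(\bfxi,\widetilde\bfxi)=\int_\O\bfsigma_\e(\bfxi):\bfe(\widetilde\bfxi)\,dx$ for existence and \eqref{uereg}, then use the energy identity \eqref{energy} with Cauchy--Schwarz and a bootstrap to bound $\sup_\tau e_\e(\tau)$, extract the strain bound on $B_\e$ via $dm_\e=\tfrac{\e}{r_\e}\mathds{1}_{B_\e}\L^3$, and feed it into Lemma~\ref{lemkey}. Two small remarks: with the weighted inner product you want $h=\bff$ (not $\rho_\e\bff$, since $(h,\widetilde\xi)_H$ already carries the $\rho_\e$); and the paper handles both $\int_\O|\bfu_\e|^2\,dx$ and $\int|\bfu_\e|^2\,dm_\e$ in one stroke by writing $\bfu_\e(\tau)=\bfa_0+\int_0^\tau\partial_t\bfu_\e$ and using $\int_\O\rho_\e|\partial_t\bfu_\e|^2\le C$ together with $\rho_\e(\L^3+m_\e)\ge c(\L^3+m_\e)$ on the respective supports, which is slightly cleaner than splitting into primed and third components.
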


 \begin{proof} 
Problem (\ref{Pe}) is equivalent to   (\ref{Pvar2}), where  $H:=L^2(\O;\RR^3)$, $(\bfxi,\widetilde\bfxi)_H:=\int_\O \rho_\e \bfxi\cdot \widetilde\bfxi dx $, $V:=H^1_0(\O;\RR^3)$ ($V'=H^{-1}(\O;\RR^3)$),  $a(\bfxi,\widetilde\bfxi):= \int_\O \bfsigma_\e(\bfxi):\bfe(\widetilde{\bfxi})dx$, and 
$(\bfxi_0,\bfxi_1,\bfh)=(\bfa_0,\bfb_0,   \bff)$. By (\ref{defrhoe}), 
 $(H, (.,.)_H)$ is a Hilbert space  and  
the assumptions of   Theorem \ref{thDautrayLions} are satisfied. Therefore, Problem   (\ref{Pe})   has a unique solution. Assertion  (\ref{uereg}) follows  from (\ref{xireg}). 
 By (\ref{energy}) we have, for all $  \tau\in [0,T]$,

\begin{equation}
\begin{aligned}
  {1\over 2}\int_\O   &      \lp\re\left|{\partial \bfu_\e\over \partial t}\right|^2   +  \bfsigma_\e(
\bfu_\e):\bfe(\bfu_\e) \rp
(
\tau) dx  
 \\
 &  \hskip0,5cm =  {1\over 2}\int_\O  \lp\re\left| \bfb_0 \right|^2   + \bfsigma_\e ( \bfa_0 ):\bfe(\bfa_0  )\rp dx +  \int_{\O\times (0,\tau)}
\re
\bff\cdot {\partial 
\bfu_\e
\over\partial t}  dx dt.
\end{aligned}
\label{side}
\end{equation}

\ni We deduce from  Cauchy-Schwarz Inequality that 

$$
 \lb\int_{\O\times (0,\tau)}
\re
\bff\cdot {\partial 
\bfu_\e
\over\partial t}  dx dt\rb \le \sqrt{\int_\OT  \re 
\left|
\bff\right|^2dxdt} \sqrt{\int_\OT  \re 
\left|
{\partial  \bfu_\e  \over \partial t}\right|^2dxdt}.
$$

\ni Taking (\ref{hyp2}) into account, we infer

\begin{equation}
\begin{aligned}
\int_\O     &  \lp
 \re\left|{\partial \bfu_\e\over \partial t}\right|^2 dx +  
\bfsigma_\e( \bfu_\e):\bfe(\bfu_\e) \rp( \tau) dx  
  \\
&  \hskip 1,5cm \le  C\lp 
1 +    \sqrt{\int_\OT  \re 
\left|
{\partial  \bfu_\e  \over \partial t}\right|^2dxdt} 
\rp \qquad \forall  \tau \in [0,T]
.
\end{aligned}
\label{A}
\end{equation}

\ni
By integrating  (\ref{A}) with respect to $\tau$ over $(0,T)$, we deduce   that  
 $
  \int_\OT  
\re
\lb{\partial 
\bfu_\e
\over
\partial t}\rb^2$ $dxdt  \le C  
$
 and then, coming back to  (\ref{A}),  that
\begin{equation}
\int_\O     \re   \left|{\partial \bfu_\e\over
\partial t}\right|^2 ( \tau)   dx +    
\int_\O    
\bfsigma_\e( \bfu_\e ):\bfe(\bfu_\e )( \tau)dx 
 \le C \qquad \forall \tau \in [0,T].
\label{B}
\end{equation}

\ni
We infer from    (\ref{Pe}),    (\ref{lmu}), (\ref{defme}),   and (\ref{B}),  that 

\begin{equation}
\begin{aligned}
 & \int_{\O } \re    \left|{\partial \bfu_\e\over
\partial t} \right|^2\!\! ( \tau)   +  \mu_{0\e}|\bfe (\bfu_\e)|^2 ( \tau)  dx \le  C\quad & & \forall \tau \in [0,T],
  \\
&  
\int   |\bfe(\bfu_\e)|^2( \tau) dm_\e  \le  C\lp\frac{r_\e}{\e}  \mu_{1\e}\rp^{-1}\quad & &  \forall \tau \in [0,T]
.
\end{aligned}
\label{B2} 
\end{equation}

 \ni By  (\ref{defrhoe}),   (\ref{defme}),   and (\ref{B2}),  and by the continuity of $\bfa_0$ (see (\ref{Pe})), we have

\begin{equation}
\begin{aligned}
\int_\O      |\bfu_\e |^2& ( \tau)   dx +\int    |\bfu_\e |^2 ( \tau)   dm_\e 
 =  
\int_\O          \left|\bfa_0 + \int_0^\tau
{\partial \bfu_\e\over
\partial t} (t)dt \right|^2    d(\L^3+m_\e)(x)
\\&\le C\lp 1+  \int_{\Omega\times(0,T)} 
 \rho_\e \lb {\partial \bfu_\e\over
\partial t}\rb^2(t)  dxdt
\rp
  \le
 C \hskip 0,5cm  \forall   \tau \in   [0,T].
  \end{aligned}
\label{uborne}
\end{equation}

 \ni By 
 (\ref{key}) and \eqref{B2}, we have 

\begin{equation}
\begin{aligned}
   \int    \lb\frac{u_{\e1}}{\e} \rb^2 + 
\lb\frac{u_{\e2}}{\e} \rb^2 +   \left | u_{ \e3} \right|^2( \tau) dm_\e  
 &\le  {C \over
r_\e^2   }\int  |\bfe(\bfu_\e)|^2(\tau) dm_\e
   \le {C\over \e^2\mu_{1\e}}\hskip 1cm \forall \tau \in [0,T],
\end{aligned}
\nonumber
\end{equation}
which, 
 joined with
 (\ref{B2}),  (\ref{uborne})  completes the proof of     (\ref{apriori}). 
    \end{proof}


%
}


\begin{lemma}\label{lemident1}
Let $(\bfu_\e)$ be a sequence in $L^\infty(0,T; H^1_0(\Omega;\RR^3))$ satisfying
 
 \begin{equation}
   \begin{aligned}
\sup_{\e>0, \tau\in (0,T)} \int |\bfu_\e|^2+ \lb \bfe(\bfu_\e)\rb^2 (\tau ) d m_\e <+\infty.
\end{aligned}
\label{euborne22}
  \end{equation}
  Then there exists $\bfv\in L^\infty(0,T; L_n^2(\Omega;\RR^3))$ such that, up to a subsequence, 

\begin{equation}
\begin{aligned}
 &   
\bfu_\e m_\e  \buildrel \star \over \rightharpoonup  n\bfv   
& &  \hbox{weakly* in } \ L^\infty(0,T; \M (\ov\Omega;\RR^3) ),
\\&v_1, v_2 \in L^\infty(0,T; L_n^2(0,L; H^1_0(\Omega'))),
 \\& \bfe_{x'}(\bfu_\e')m_\e  \buildrel \star \over \rightharpoonup 
n \bfe_{x'} (\bfv')
\ 
& & \hbox{weakly* in } \ L^\infty(0,T; \M (\ov\Omega; \SS^3) ).
\end{aligned}
 \label{uemetov}
 \end{equation}
 
 {\color{black}
 \ni Furthermore,
 
  \begin{equation}
   \begin{aligned}
& n v_1=n v_2=0\quad & &  \hbox{if} \qquad \liminf_{\e\to0} \sup_{\tau\in (0,T)} \int   \lb \bfe(\bfu_\e)\rb^2 (\tau ) d m_\e  =0,
\\& n\bfv=0 & &  \hbox{if} \qquad \liminf_{\e\to0} \sup_{\tau\in (0,T)} \int   \lb \frac{1}{r_\e} \bfe(\bfu_\e)\rb^2 (\tau ) d m_\e  =0.
\end{aligned}
\label{v=0}
  \end{equation}

 }
 
 \ni  Moreover, if 
   
  \begin{equation}
   \begin{aligned}
&\sup_{\e>0, \tau\in (0,T)} \int_\Omega \mu_{0\e}\lb \bfe(\bfu_\e)\rb^2 (\tau ) d x <+\infty ;\quad \e^2<\!\!<\mu_{\e0},
\\& n_\e \to n   \hbox{ strongly in }   L^2(\Omega)  \hbox{ and }   \bfu_\e \buildrel \star\over \rightharpoonup  \bfu    \hbox{ weakly* in }  L^\infty(0,T; L^2(\Omega;\RR^3)),
\end{aligned}
\label{H1}
  \end{equation}
  
\ni or   if  
 
  \begin{equation}
   \begin{aligned}
&\sup_{\e>0, \tau\in (0,T)} \int_\Omega  \lb \bfe(\bfu_\e)\rb^2 (\tau ) d x <+\infty ,
\\&   \bfu_\e \to  \bfu \quad \hbox{ strongly  in }  L^2(0,T; L^2(\Omega;\RR^3)),
\end{aligned}
\label{H2}
  \end{equation}
  
  \ni then $  n\bfu=n\bfv.$
%
%
   
\end{lemma}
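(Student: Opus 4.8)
The plan is to produce the limit $\bfv$ and the measure convergences from the weak* compactness Lemma~\ref{feps} applied to the measures $(m_\e)$ thickened in time, to identify $\bfv'$ by integrating by parts over the individual layers, to obtain the vanishing statements from the key inequality~\eqref{key}, and to obtain $n\bfu=n\bfv$ from a quantitative comparison of the $x_3$-average of $\bfu_\e$ over a layer with its average over the surrounding $\e$-cell.

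\textbf{Step 1 (compactness).} Integrating~\eqref{euborne22} in $\tau$ over $(0,T)$ and using $|\bfe_{x'}(\bfu_\e')|\le|\bfe(\bfu_\e)|$ gives $\sup_\e\int_{\OT}(|\bfu_\e|^2+|\bfe_{x'}(\bfu_\e')|^2)\,dm_\e\,dt<+\infty$. I apply Lemma~\ref{feps} on $K=\ov\OT$ to the bounded positive measures $\theta_\e:=m_\e\otimes\L^1_{\lfloor(0,T)}$, which by~\eqref{meton} converge weakly* to $n\,\L^3_{\lfloor\O}\otimes\L^1_{\lfloor(0,T)}$, taking for $f_\e$ successively each component of $\bfu_\e$ and of $\bfe_{x'}(\bfu_\e')$. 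Up to a subsequence this produces Borel fields $\bfv,\bfS$ on $\OT$ with $\int_{\OT}(|\bfv|^2+|\bfS|^2)\,n\,dx\,dt<+\infty$, such that $\bfu_\e m_\e\otimes dt\weakstarconverge n\bfv$ and $\bfe_{x'}(\bfu_\e')m_\e\otimes dt\weakstarconverge n\bfS$ as measures on $\ov\OT$. The uniform-in-$\tau$ bound in~\eqref{euborne22} upgrades $\bfv$ to $L^\infty(0,T;L^2_n(\O;\RR^3))$ (and $\bfS$ to the $\SS^3$-valued analogue) by the Riesz argument used at the end of the proof of Lemma~\ref{lemtwoscalewrtme} (test against $\bfpsi\in C(\ov\OT;\RR^3)$, Cauchy--Schwarz in $x$, pass to the limit via~\eqref{meton} and dominated convergence). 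Finally, both sequences of measures being bounded in $L^\infty(0,T;\M(\ov\O;\RR^3))=(L^1(0,T;C(\ov\O;\RR^3)))'$, density of $C(\ov\OT;\RR^3)$ in $L^1(0,T;C(\ov\O;\RR^3))$ turns convergence of measures on $\ov\OT$ into the first convergence of~\eqref{uemetov}, and, once $\bfS$ is identified in Step~2, into the third one.

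\textbf{Step 2 ($\bfS=\bfe_{x'}(\bfv')$ and transversal regularity).} Test $\bfe_{x'}(\bfu_\e')m_\e\otimes dt\weakstarconverge n\bfS$ against $\bfpsi\in\D(\OT;\SS^3)$ with $\psi_{i3}=\psi_{3i}=0$. On each layer $B_\e^j=\O'\times(\omega_\e^j+r_\e I)$, the symmetry of $\bfpsi$ and integration by parts in $x_1,x_2$ --- no lateral boundary term because $\bfu_\e'=0$ on $\partial\O'\times(0,L)$, no top/bottom term because $\nu_1=\nu_2=0$ there --- give $\int_{\OT}\bfe_{x'}(\bfu_\e'):\bfpsi\,dm_\e\,dt=-\sum_{\a,\b=1}^{2}\int_{\OT}u_{\e\b}\,\partial_{x_\a}\psi_{\a\b}\,dm_\e\,dt$; letting $\e\to0$ and using Step~1, $\int_{\OT}n\,\bfS:\bfpsi\,dx\,dt=-\sum_{\a,\b}\int_{\OT}n\,v_\b\,\partial_{x_\a}\psi_{\a\b}\,dx\,dt$. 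Since $n=n(x_3)$ (see~\eqref{defne}), for a.e.\ $(x_3,t)$ with $n(x_3)>0$ the field $\bfv'(\cdot,x_3,t)\in L^2(\O')$ has symmetric gradient $\bfe_{x'}(\bfv'(\cdot,x_3,t))=\bfS(\cdot,x_3,t)\in L^2(\O')$, hence $\bfv'(\cdot,x_3,t)\in H^1(\O')$ by Korn's inequality; repeating the test with a $\bfpsi$ which need not vanish on $\partial\O'$ and integrating by parts on $\O'$ forces the $\partial\O'$-trace of $\bfv'(\cdot,x_3,t)$ to vanish. Korn--Poincar\'e on $H^1_0(\O')$ together with the uniform-in-$\tau$ bound then yields $v_1,v_2\in L^\infty(0,T;L^2_n(0,L;H^1_0(\O')))$ and $\bfe_{x'}(\bfu_\e')m_\e\weakstarconverge n\bfe_{x'}(\bfv')$, completing~\eqref{uemetov}.

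\textbf{Step 3 (the vanishing statements).} Applying~\eqref{key} with $\bfvarphi=\bfu_\e(\tau)$ gives, for every $\tau$, $\int(|u_{\e1}|^2+|u_{\e2}|^2)(\tau)\,dm_\e\le C\int|\bfe(\bfu_\e)|^2(\tau)\,dm_\e$, $\int|u_{\e3}|^2(\tau)\,dm_\e\le C\int|r_\e^{-1}\bfe(\bfu_\e)|^2(\tau)\,dm_\e$, and $\int(|u_{\e1}|^2+|u_{\e2}|^2)(\tau)\,dm_\e\le Cr_\e^2\int|r_\e^{-1}\bfe(\bfu_\e)|^2(\tau)\,dm_\e$. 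Extracting (before Step~1 if necessary) a subsequence along which the relevant $\liminf$ is a limit: if $\sup_\tau\int|\bfe(\bfu_\e)|^2(\tau)\,dm_\e\to0$ then $\int_{\OT}(|u_{\e1}|^2+|u_{\e2}|^2)\,dm_\e\,dt\to0$, so by~\eqref{lb} $\int_{\OT}(|v_1|^2+|v_2|^2)\,n\,dx\,dt=0$, i.e.\ $nv_1=nv_2=0$; if $\sup_\tau\int|r_\e^{-1}\bfe(\bfu_\e)|^2(\tau)\,dm_\e\to0$ the three inequalities (using $r_\e\to0$ for the transversal components) give $\int_{\OT}|\bfu_\e|^2\,dm_\e\,dt\to0$, hence $n\bfv=0$ by~\eqref{lb}.

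\textbf{Step 4 ($n\bfu=n\bfv$ under~\eqref{H1} or~\eqref{H2}).} Since Step~1 already gives $\bfu_\e m_\e\weakstarconverge n\bfv$, it suffices to prove $\bfu_\e m_\e\weakstarconverge n\bfu$, i.e.\ $\int_{\OT}\bfu_\e\cdot\bfpsi\,dm_\e\,dt\to\int_{\OT}n\,\bfu\cdot\bfpsi\,dx\,dt$ for $\bfpsi\in\D(\OT;\RR^3)$. Put $g_\e:=\bfu_\e\cdot\bfpsi$. Writing $m_\e=\tfrac\e{r_\e}\mathds1_{B_\e}\L^3$ and using that, by~\eqref{condomegae}, each $\e$-cell carries at most one layer, split $\int_{\OT}g_\e\,dm_\e\,dt=\int_{\OT}n_\e\,g_\e\,dx\,dt+R_\e$, where $R_\e$ gathers over the layers the differences between the $x_3$-average of $g_\e$ over the layer and over an $\e$-neighbourhood of it. By the elementary bound $\big|\tfrac1{|L|}\int_L g-\tfrac1{|J|}\int_J g\big|\le|J|^{1/2}\|\partial_3 g\|_{L^2(J)}$ for nested intervals $L\subseteq J$, Cauchy--Schwarz, and the fact that the relevant $\e$-intervals are essentially disjoint (hence number $\le C/\e$, by~\eqref{condomegae}), one gets $|R_\e|\le C\e\|\partial_3 g_\e\|_{L^2(\OT)}\le C\e\|\bfpsi\|_{C^1}(1+\|\bfnabla\bfu_\e\|_{L^2(\OT)})$ ($\bfu_\e$ being bounded in $L^2(\OT;\RR^3)$ in either case). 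By Korn's inequality $\|\bfnabla\bfu_\e\|_{L^2(\OT)}\le C$ under~\eqref{H2} and $\le C\mu_{0\e}^{-1/2}$ under~\eqref{H1}, so $R_\e\to0$ in both cases --- under~\eqref{H1} precisely because $\e^2\ll\mu_{0\e}$. Lastly $\int_{\OT}n_\e g_\e\,dx\,dt\to\int_{\OT}n\,\bfu\cdot\bfpsi\,dx\,dt$: under~\eqref{H1} this pairs $n_\e\to n$ strongly in $L^2(\O)$ with $\bfu_\e\weakstarconverge\bfu$ in $L^\infty(0,T;L^2(\O;\RR^3))$; under~\eqref{H2} it pairs $\bfu_\e\to\bfu$ strongly in $L^2(\OT;\RR^3)$ with $n_\e\weakstarconverge n$ in $L^\infty(\O)$. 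Hence $n\bfu=n\bfv$. I expect the delicate point to be the bound on $R_\e$: the two averages must be compared \emph{directly} rather than through a Poincar\'e estimate routed via ``subtract a constant'' (which would cost a factor $\e/r_\e$), so that the gain is exactly $\e$ and matches the borderline scaling $\e^2\ll\mu_{0\e}$ of~\eqref{H1}; the layer/cell bookkeeping and the zero-trace argument of Step~2 are more routine.
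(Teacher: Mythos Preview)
Your Steps 1--3 match the paper's proof essentially line by line: the paper also applies Lemma~\ref{feps} to $\theta_\e=m_\e\otimes\L^1_{\lfloor(0,T)}$, identifies the limit of $\bfe_{x'}(\bfu_\e')m_\e$ via the same integration by parts in $x_1,x_2$, and derives~\eqref{v=0} from~\eqref{key} and the lower-semicontinuity~\eqref{lb}. Your remark that the $L^\infty$-in-time regularity of $\bfv$ needs the Riesz argument from the end of Lemma~\ref{lemtwoscalewrtme} is a detail the paper leaves implicit.

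Your Step 4 is a genuine streamlining. The paper introduces two auxiliary piecewise-constant fields, $\hat\bfv_\e$ (the midpoint value of $\bfu_\e$ on each layer) and $\ov\bfv_\e$ (the sum of these midpoint values over each $\e$-cell), and chains the estimates $\int|\bfu_\e-\hat\bfv_\e|^2\,dm_\e\le C\e r_\e\int_\O|\bfe(\bfu_\e)|^2\,dx$ and $\int_\O|n_\e\bfu_\e-\ov\bfv_\e|^2\,dx\le C\e^2\int_\O|\bfe(\bfu_\e)|^2\,dx$ to show successively $\hat\bfv_\e m_\e\weakstarconverge n\bfv$, $\ov\bfv_\e\weakstarconverge n\bfv$, and $n_\e\bfu_\e\weakstarconverge n\bfv$, before pairing $n_\e$ with $\bfu_\e$ exactly as you do. Your single remainder $R_\e$ collapses the first three steps into one weak-formulation estimate, and the core inequality $|R_\e|\le C\e\|\partial_3 g_\e\|_{L^2}$ is exactly the paper's second estimate tested against a fixed $\bfpsi$. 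What the paper's route buys is that the field $\ov\bfv_\e$ is reused later (see~\eqref{ovve1} in Lemma~\ref{lemidentu0v}); what yours buys is economy here.

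One small imprecision: your elementary bound assumes $L\subseteq J$, but the layer interval $L_j=(\omega_\e^j-r_\e/2,\omega_\e^j+r_\e/2)$ need not sit inside the cell $J_{i(j)}=(\e i-\e/2,\e i+\e/2)$ when $\omega_\e^j$ lies near the cell boundary. The fix is routine --- replace $J$ by the convex hull of $L_j\cup J_{i(j)}$, which has length at most $\e+r_\e<2\e$; these enlarged intervals have bounded overlap, so the rest of the argument is unchanged.
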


  
  \begin{proof} 
  By  applying Lemma \ref{feps}   to $\theta_\e:= \L^1_{\lfloor(0,T)}\otimes m_\e $, $K:= \ov{(0,T)\times \Omega}$, $f_\e \in \{\bfu_\e,  \bfe_{x'}(\bfu_\e')\}$, 
  taking 
 (\ref{meton})   and (\ref{euborne22}) into account,  we obtain
 the  convergences 
  
  \begin{equation}
\begin{aligned}
  &   
\bfu_\e m_\e  \buildrel \star \over \rightharpoonup  n \bfv   
& &  \hbox{weakly* in } \ \M (\ov{(0,T)\times \Omega};\RR^3) ),
 \\& \bfe_{x'}(\bfu_\e')m_\e  \buildrel \star \over \rightharpoonup 
n\bfXi
\ 
& & \hbox{weakly* in }  \ \M (\ov{(0,T)\times \Omega};\SS^3) ),
\end{aligned}
\label{cvcv}
 \end{equation}
 
 \ni  up to a subsequence,  for some suitable 
  $\bfv\in L^2(0,T;L^2_n(\Omega;\RR^3))$,  $\bfXi \in L^2(0,T;L^2_n(\Omega; \SS^3))$ such that $ \Xi_{ij}=0$ if $3\in \{i,j\}$.
 By (\ref{euborne22}), the sequences $(\bfu_\e m_\e) $ and $(\bfe_{x'}(\bfu_\e')m_\e)$ are bounded in $L^\infty(0,T; \M (\ov\Omega ) )$, therefore the   convergences (\ref{cvcv}) also hold  with respect to the  weak* topology of  $L^\infty(0,T; \M (\ov\Omega ) )$.
Let us fix   $\bfPsi\in C^\infty(\ov{\OT};  \SS^2)$. By integration by parts, we have 

\begin{equation}
\begin{aligned}
\sum_{\alpha,\beta=1}^2  \int_\OT \hskip-0,9cm (\bfe_{x'}(\bfu_\e'))_{\a\b} &\Psi_{\a\b}\lp x,t \rp dm_\e dt 
 =-\int_\OT    \hskip-0,9cm  (u_{\e1}\bfe_1+u_{\e2}\bfe_2) \cdot \bfdiv_{x'}\bfPsi\lp x,t \rp dm_\e dt.
\end{aligned}
\nonumber
\end{equation}

 \ni  By  passing  to the limit as $\e\to 0$,  taking (\ref{cvcv}) into account,  we obtain
   
\begin{equation}
\begin{aligned}
\sum_{\alpha,\beta=1}^2  
\int_\OT \Xi_{\a\b}  \Psi_{\a\b} ndxdt & = - \int_\OT(v_1\bfe_1+v_2\bfe_2)  \cdot \bfdiv_{x'}\bfPsi ndxdt.
 \end{aligned}
\label{ajout}
\end{equation}

\ni 
By making  $\bfPsi$ vary in $\D(\OT; \SS^2)$, we deduce that $n\bfe_{x'}(\bfv')(= \bfe_{x'}(n\bfv'))=n\bfXi$ in 
the sense of distributions on $\OT$, and then infer from Korn inequality in $H^1 (\Omega')$ that  
$nv_1, nv_2 \in L^\infty(0,T; L^2(0,L; H^1 (\Omega')))$, that is  $ v_1,  v_2 \in L^\infty(0,T; L^2_n(0,L; H^1 (\Omega')))$.
By integrating (\ref{ajout})  by parts for  $\bfPsi\in C^\infty(\ov{\OT};  \SS^2)$, 
we get $ \int_0^T \int_0^L \int_{\partial\Omega'} n \bfv' \cdot \bfPsi\bfnu $ $ d\H^1 dx_3 dt =0$ and infer   
from   the arbitrariness  of $\bfPsi$    that  $v_1, v_2 \in L^\infty(0,T; L^2_n(0,L;$ $ H^1_0(\Omega')))$.   
{\color{black}
 Assertion \eqref{v=0} is a consequence of the next inequalities (holding for $\a\in \{1,2\}$), deduced from 
\eqref{lb},
  \eqref{key}, \eqref{cvcv}  
\begin{align*}
 & \int_{\OT}\hskip-0,5cm  |v_\a|^2 n dxdt  \le \liminf_{\e\to0} \int_{\OT} \hskip-0,5cm |u_{\e\a}|^2 dm_\e(x)dt 
  \le  \liminf_{\e\to0} \int_{\OT}  \lb  
\bfe(\bfu_\e )\rb^2dm_\e (x)dt,
\\&  \int_{\OT}\hskip-0,5cm  |\bfv|^2 n dxdt  \le \liminf_{\e\to0} \int_{\OT} \hskip-0,5cm |\bfu_\e|^2 dm_\e(x)dt 
  \le  \liminf_{\e\to0} \int_{\OT}  \lb  \frac{1}{r_\e}
\bfe(\bfu_\e )\rb^2dm_\e (x)dt .
\end{align*}
 }

\ni It remains to show that under  (\ref{H1})  or (\ref{H2}), $n\bfu=n\bfv$. To that aim, we set

\begin{equation}\label{defhatve}\begin{aligned}
&\hat\bfv_\e(x ,t):= \sum_{j\in   J_\e}     \bfu_\e(x_1,x_2,\omega_\e^j  ,t) \mathds{1}_{\lp\omega_\e^j -\frac{r_\e}{2}, \omega_\e^j +\frac{r_\e}{2}\rc }(x_3).
\end{aligned}\end{equation}

\ni  By Fubini's Theorem, Jensen's inequality and Korn's inequality in $H^1_0(\Omega;\RR^3)$, we have

 \begin{equation}
\label{estimhatve}
\begin{aligned}
\int \!\! |\bfu_\e-  \hat\bfv_\e |^2& (\tau)   dm_\e
\! =\!{\e\over r_\e}\!\sum_{j\in J_\e}\!
\int_{\Omega'}  \!dx' \!\!\int_{\omega_\e^j-{r_\e\over2}}^{\omega_\e^j+{r_\e\over2}} \!\lb \bfu_\e(x',x_3,\tau)\!-\!\bfu_\e(x',\omega_\e^j,\tau)\rb^2 \!\!dx_3  
\\&
\le{\e\over r_\e}\sum_{j\in  J_\e}
\int_{\Omega'} dx'  \int_{\omega_\e^j-{r_\e\over2}}^{\omega_\e^j+{r_\e\over2}} \lp\int_{\omega_\e^j-{r_\e\over2}}^{\omega_\e^j+{r_\e\over2}} \lb {\partial \bfu_\e\over \partial x_3}\rb (x',s_3,\tau) ds_3 \rp^2 dx_3 
\\&   \le \e r_\e  \int_\O |\bfnabla \bfu_\e|^2(\tau) dx\le C\e r_\e  \int_\O |\bfe( \bfu_\e)|^2 (\tau) dx.
\end{aligned}
\end{equation}

\ni Therefore, under  (\ref{H1})  or (\ref{H2}),      $\lime \int \!\! |\bfu_\e-  \hat\bfv_\e |^2  (\tau)   dm_\e=0$. Hence,  by   (\ref{cvcv}), 

\begin{equation}
\begin{aligned}
 &   
\hat\bfv_\e m_\e  \buildrel \star \over \rightharpoonup  n\bfv   
& &  \hbox{weakly* in } \ L^\infty(0,T; \M (\ov\Omega;\RR^3) ) .
\end{aligned}
 \label{hatvetov}
 \end{equation}
 
 \ni We define (see (\ref{defomegae}, \ref{defZe}))
 
  \begin{equation}\label{defovve}\begin{aligned}
&\ov\bfv_\e(x ,t):=  \sum_{i\in   Z_\e} \lp  \sum_{\la
\omega_\e^j  \in \omega_\e  \cap \lp \e i-\frac{\e}{2}, \e i+\frac{\e}{2} \rc\ra}   \bfu_\e(x',\omega_\e^j  ,t) \rp \mathds{1}_{\lp\e i -\frac{  \e}{2},\e i +\frac{  \e}{2}\rc }(x_3).
\end{aligned}\end{equation}

\ni Noticing that by  \eqref{condomegae}  and  \eqref{defZe}, 

 \begin{equation}
 \begin{aligned}
 &\omega_\e  \subset \bigcup_{i\in Z_\e }  \lp \e i-\frac{ \e}{2}, \e i+\frac{ \e}{2}\rc,
   \end{aligned}
  	\label{calCesubsetZe}
\end{equation}

\ni  we infer from \eqref{defme} and \eqref{defhatve} that 

\begin{equation}\label{ovvdx=hatvdm}\begin{aligned}
 \int_\Omega |\ov\bfv_\e|^2 (\tau)dx &\le C  \sum_{i\in   Z_\e}    \sum_{\la
\omega_\e^j  \in \omega_\e  \cap \lp \e i-\frac{\e}{2}, \e i+\frac{\e}{2} \rc\ra} \e \int_{\Omega'}| \bfu_\e(x',\omega_\e^j  ,\tau)|^2 dx'
\\&=C \frac{\e}{r_\e} \sum_{j\in J_\e} r_\e \int_{\Omega'} | \bfu_\e(x',\omega_\e^j  ,\tau)|^2 dx'
 =  C \int  | \hat\bfv_\e|^2  (\tau)dm_\e . 
\end{aligned}\end{equation}

\ni Therefore, by  (\ref{euborne22}) and (\ref{estimhatve}), under  (\ref{H1})   or (\ref{H2}),  the sequence $(\ov\bfv_\e)$ is bounded in $L^\infty(0,T; L^2(\Omega;\RR^3))$. Thus the following convergence holds, up to a subsequence

 \begin{equation}
 \begin{aligned}
 &\ov\bfv_\e \buildrel \star\over \rightharpoonup \bfw  \quad \hbox{ weakly* in } \ L^\infty(0,T; L^2(\Omega;\RR^3)).\end{aligned}
  	\label{ovvetow}
\end{equation}

\ni To identify $\bfw$, we  fix  $\bfvarphi\in \D(\OT ;\RR^3)$ and set 
$\breve\bfvarphi_\e(x ,t):= \sum_{i\in   Z_\e}$ $ \bfvarphi(x',\e i,t) $ $\mathds{1}_{\lp\e i -\frac{ \e}{2},\e i +\frac{ \e}{2}\rc }(x_3)$. Noticing that $|\breve\bfvarphi_\e-\bfvarphi|_{L^\infty(\OT;\RR^3)}\le C\e$, we infer

  \begin{equation}\label{wphi} \begin{aligned}
  \int_\OT \bfw\cdot  \bfvarphi dxdt &= \lime 
 \int_\OT \ov \bfv_\e \cdot  \breve\bfvarphi_\e dxdt. 
\end{aligned}\end{equation}

\ni On the other hand, by \eqref{defme},   (\ref{defhatve}) and (\ref{defovve}),  we have

\begin{equation}\label{418}\begin{aligned}
  \int_\O  \ov \bfv_\e \cdot  \breve\bfvarphi_\e (t) dx 
 &=  \sum_{i\in   Z_\e}    \sum_{\la
\omega_\e^j  \in \omega_\e  \cap \lp \e i-\frac{\e}{2}, \e i+\frac{\e}{2} \rc\ra}  \int_{\Omega'\times \lp \e i-\frac{\e}{2}, \e i+\frac{\e}{2} \rc}  \hskip-1,5cm  \bfu_\e(x',\omega_\e^j  ,t) \cdot  \bfvarphi(x',\e i,t) dx
\\& = \sum_{i\in   Z_\e}    \sum_{\la
\omega_\e^j  \in \omega_\e  \cap \lp \e i-\frac{\e}{2}, \e i+\frac{\e}{2} \rc\ra}  \int_{\Omega'\times \lp \omega_\e^j +\frac{r_\e}{2}, \omega_\e^j +\frac{r_\e}{2} \rp}    \hskip-1,5cm \hat \bfv_\e(x   ,t) \cdot  \bfvarphi(x',\e i,t) dm_\e.
\end{aligned}\end{equation}

\ni For all $ i\in   Z_\e$ and all   $\omega_\e^j  \in \omega_\e  \cap \lp \e i-\frac{\e}{2}, \e i+\frac{\e}{2} \rc$, we have 
$   |\bfvarphi(x_1,x_2,\e i,t)-\bfvarphi (x,t)|\le C\e$ in $\Omega'\times  \lp \omega_\e^j +\frac{r_\e}{2}, \omega_\e^j +\frac{r_\e}{2} \rp\times (0,T)$. 
Taking \eqref{calCesubsetZe} into account, we deduce  

\begin{equation}
\begin{aligned}
&\lb
\sum_{i\in   Z_\e}    \sum_{\la
\omega_\e^j  \in \omega_\e  \cap \lp \e i-\frac{\e}{2}, \e i+\frac{\e}{2} \rc\ra}  \int_{\Omega'\times \lp \omega_\e^j +\frac{r_\e}{2}, \omega_\e^j +\frac{r_\e}{2} \rp}    \hskip-1,5cm \hat \bfv_\e(x   ,t) \cdot  \bfvarphi(x',\e i,t) dm_\e
- \int \hat\bfv_\e(t) \cdot\bfvarphi (t)dm_\e\rb
\\& \hskip9cm  \le C \e \int  \lb \hat\bfv_\e\rb(t) dm_\e.
\end{aligned}
\nonumber
\end{equation}

\ni Therefore, by (\ref{hatvetov}) and (\ref{418}), the following  holds

\begin{equation}\label{nvphi} \begin{aligned}
 \lime  \int_\OT  \ov \bfv_\e \cdot  \breve\bfvarphi_\e   dx dt&=  \lime \int_\OT  \hat \bfv_\e  \cdot  \bfvarphi  dm_\e dt = \int_\OT  n\bfv\cdot  \bfvarphi dxdt .
\end{aligned}\end{equation}

\ni Joining (\ref{wphi}) and (\ref{nvphi}) we deduce,  by the arbitrary choice of $\bfvarphi$,  

\begin{equation}
 \bfw=n\bfv.
 \label{w=nv}\end{equation}

\ni On the other hand,   
%
%
by (\ref{defne}),   (\ref{defovve}), 
   (\ref{calCesubsetZe}),    Jensen's inequality and  Korn's inequality in $H^1_0(\Omega)$,  
we have

\begin{equation}
\label{neue-ovve} \begin{aligned}
 \int_\Omega & \lb n_\e \bfu_\e-\ov\bfv_\e \rb^2 (\tau) dx 
  \\&
= \sum_{i\in Z_\e}   \int_{\Omega'\times\lp \e i-\frac{\e}{2}, \e i+\frac{\e}{2} \rc} 
 \lb  \sum_{\la
\omega_\e^j  \in \omega_\e  \cap \lp \e i-\frac{\e}{2}, \e i+\frac{\e}{2} \rc\ra} \bfu_\e-\bfu_\e(x', \omega_\e^j,\tau) \rb^2 dx
 \\&
 \le C \sum_{i\in Z_\e}   \sum_{\la
\omega_\e^j  \in \omega_\e  \cap \lp \e i-\frac{\e}{2}, \e i+\frac{\e}{2} \rc\ra} \int_{\Omega'\times\lp \e i-\frac{\e}{2}, \e i+\frac{\e}{2} \rc} 
 \lb  \bfu_\e-\bfu_\e(x', \omega_\e^j,\tau) \rb^2 dx
 \\& \le  C \e^2 \sum_{i\in Z_\e}   n_\e(\e i)  \int_{\Omega'\times\lp \e i-\frac{\e}{2}, \e i+\frac{\e}{2} \rc} 
 \lb \frac{\partial \bfu_\e}{\partial x_3} \rb^2(\tau) dx\le C   \e^2    \int_\Omega
 \lb \bfe( \bfu_\e) \rb^2(\tau) dx .
\end{aligned}\end{equation}

\ni We infer that,  under either (\ref{H1})   or (\ref{H2}), 
the sequence  $
\big( \int_\OT   \lb n_\e \bfu_\e-\ov\bfv_\e \rb^2 (\tau) $ $dx dt\big)$ converges to $0$. 
Therefore, by \eqref{ovvetow} and \eqref{w=nv},

 \begin{equation}
 \begin{aligned}
 &n_\e \bfu_\e \buildrel \star\over \rightharpoonup n\bfv  \quad \hbox{ weakly* in } \ L^\infty(0,T; L^2(\Omega;\RR^3)).\end{aligned}
  	\label{neuetow}
\end{equation}
%
Under (\ref{H2}), it easily follows from  the strong convergence of   $(\bfu_\e)$  to $\bfu$ in $L^2(0,T; L^2(\Omega;$ $\RR^3))$ and  the weak* convergence of $(n_\e)$ to $n$ in $L^\infty(\Omega)$ (see (\ref{neton})),  
that    $(n_\e\bfu_\e)$  weakly  converges to $n\bfu$ in 
 $L^2(0,T; L^2(\O ;\RR^3))$,  therefore $n\bfu=n\bfv$.
The same conclusion holds under   (\ref{H1}), because  $(\bfu_\e)$  weakly*  converges to $\bfu$ in  $L^\infty(0,T; L^2(\O ;\RR^3))$ and $(n_\e)$ is bounded in $L^\infty(\Omega)$ and  strongly converges to $n$ in $L^2(\Omega)$.
The proof of Lemma \ref{lemident1}  is achieved.  
\end{proof}

   

\begin{lemma}\label{lemident2}
Let $(\bfu_\e)$ be a sequence in $L^\infty(0,T; H^1_0(\Omega;\RR^3))$ satisfying

 \begin{equation}
   \begin{aligned}
\sup_{\e>0, \tau\in (0,T)} \int 
 \lb \frac{1}{r_\e}\bfe(\bfu_\e)\rb^2 (\tau ) d m_\e <+\infty.
\end{aligned}
\label{eusurrborne}
  \end{equation}

\ni
Then, up to a subsequence,  the convergences (\ref{uemetov})
 take place. 
Moreover,

  \begin{equation}
\begin{aligned}
 &v_3\in L^\infty(0,T; L_n^2(0,L; H_0^2(\Omega' ))).
\end{aligned}
\label{flexion0}
\end{equation}

\ni Besides,  
up to a subsequence,
the following  convergences  hold  (see (\ref{defmdto})):

    \begin{equation}
\begin{aligned}
 &  {u_{\e \a}\over r_\e}  \ 
 \mdto \   \xi_{\a}(x,t) - {\partial v_3\over \partial x_\a}(x,t) y_3\qquad (\a\in \{1,2\}), 
 \\&   \lp{ 1 \over r_\e}\bfe(\bfu_\e) \rp_{\a\b}    \mdto \    {1\over 2}  \lp {\partial \xi_\a\over \partial x_\b}+ {\partial \xi_\b\over \partial x_\a} \rp(x,t)-  {\partial^2 v_3\over \partial x_\a \partial x_\b}(x,t)y_3\quad (\a,\beta \in \{1,2\}),
 \\& \xi_1,\xi_2\in  L^\infty(0,T; L_n^2(0,L; H_0^1(\Omega' ))).
\end{aligned}
\label{cvflexion}
\end{equation}

\end{lemma}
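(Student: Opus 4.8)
The plan is to transplant to the measure-theoretic setting of Lemma \ref{lemtwoscalewrtme} the classical Kirchhoff--Love argument of thin-plate theory. Since $r_\e\to0$, hypothesis \eqref{eusurrborne} gives in particular $\sup_{\e,\tau}\int|\bfe(\bfu_\e)|^2(\tau)\,dm_\e<+\infty$, and then Lemma \ref{lemkey} (applied to $\bfu_\e(\tau)$ for a.e.\ $\tau$) yields $\sup_{\e,\tau}\int|\bfu_\e|^2(\tau)\,dm_\e<+\infty$; thus \eqref{euborne22} holds and Lemma \ref{lemident1} already provides \eqref{uemetov} and $v_1,v_2\in L^\infty(0,T;L^2_n(0,L;H^1_0(\Omega')))$. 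To resolve the fine structure inside the layers I would work, on each $B_\e^j=\Omega'\times(\omega_\e^j+r_\e I)$, with the rescaled field
\[
\bfphi_\e^j(x',y_3,t):=\Big(\tfrac1{r_\e}u_{\e1},\ \tfrac1{r_\e}u_{\e2},\ u_{\e3}\Big)(x',\omega_\e^j+r_\e y_3,t),\qquad(x',y_3)\in\Omega'\times I,
\]
which, via the change of variables $x_3=\omega_\e^j+r_\e y_3$, amounts to testing $\bfu_\e$ against $m_\e$.

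A one-line computation gives, for the symmetrized gradient $\bfe_y$ taken in the variables $(x_1,x_2,y_3)$ and at the point $(x',\omega_\e^j+r_\e y_3,t)$,
\[
\big(\bfe_y(\bfphi_\e^j)\big)_{\a\b}=\tfrac1{r_\e}e_{\a\b}(\bfu_\e),\qquad\big(\bfe_y(\bfphi_\e^j)\big)_{\a3}=e_{\a3}(\bfu_\e),\qquad\big(\bfe_y(\bfphi_\e^j)\big)_{33}=r_\e\,e_{33}(\bfu_\e).
\]
By \eqref{eusurrborne}, all three right-hand sides are bounded in $L^2(m_\e)$ uniformly in $\tau$, and the last two even tend to $0$ strongly (they carry an extra $r_\e$, resp.\ $r_\e^2$). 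By Lemma \ref{lemkey}, $\bfphi_\e^j$ itself is bounded in $L^2(m_\e)$ uniformly in $\tau$. The decisive point is then Korn's second inequality on the \emph{fixed} Lipschitz domain $\Omega'\times I$, whose constant is independent of $j$: summing it over $j$ with the weights of $m_\e$ gives
\[
\int|\bfnabla_y\bfphi_\e^j|^2(\tau)\,dm_\e\le C\Big(\int|\bfphi_\e^j|^2(\tau)\,dm_\e+\int|\bfe_y(\bfphi_\e^j)|^2(\tau)\,dm_\e\Big)\le C,
\]
uniformly in $\e,\tau$. In particular the transverse derivatives $\partial_{y_3}\phi_{\e\a}^j$ are controlled, which is exactly the gain over Lemma \ref{lemident1}.

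Applying Lemma \ref{lemtwoscalewrtme} and its (routine) gradient version, one extracts along a subsequence $\bfphi_\e^j\ \mdto\ \bfPhi=(\Phi_1,\Phi_2,\Phi_3)\in L^\infty(0,T;L^2_n(\O\times I;\RR^3))$ with $\bfnabla_y\bfphi_\e^j\ \mdto\ \bfnabla_y\bfPhi$ and $\bfPhi=0$ for $x'\in\partial\Omega'$ (inherited from $\bfu_\e\in H^1_0(\O;\RR^3)$, as in Lemma \ref{lemident1}). Passing to the limit in the three identities above yields $\partial_{y_3}\Phi_3=0$ and $\partial_{y_3}\Phi_\a+\partial_{x_\a}\Phi_3=0$: the first makes $\Phi_3$ independent of $y_3$ and, compared with the limit of $u_{\e3}m_\e$ in \eqref{uemetov}, forces $\Phi_3=v_3$; the second then gives $\partial_{y_3}\Phi_\a=-\partial_{x_\a}v_3$, independent of $y_3$, so that $\Phi_\a(x,t,y_3)=\xi_\a(x,t)-\frac{\partial v_3}{\partial x_\a}(x,t)\,y_3$ with $\xi_\a:=\int_I\Phi_\a\,dy_3$ — the first line of \eqref{cvflexion} — while the second line follows from $\frac1{r_\e}e_{\a\b}(\bfu_\e)=\big(\bfe_y(\bfphi_\e^j)\big)_{\a\b}\ \mdto\ \big(\bfe_y(\bfPhi)\big)_{\a\b}$. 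For the regularity: $\frac{\partial v_3}{\partial x_\a}=-\partial_{y_3}\Phi_\a$ and $\frac{\partial^2 v_3}{\partial x_\a\partial x_\b}=-12\int_I y_3\big(\bfe_y(\bfPhi)\big)_{\a\b}\,dy_3$ belong to $L^\infty(0,T;L^2_n(0,L;L^2(\Omega')))$ and $v_3,\nabla_{x'}v_3$ vanish on $\partial\Omega'$, giving \eqref{flexion0}; similarly the $y_3$-average of $\big(\bfe_y(\bfPhi)\big)_{\a\b}$ equals $\frac12(\frac{\partial\xi_\a}{\partial x_\b}+\frac{\partial\xi_\b}{\partial x_\a})$, which together with $\xi_1=\xi_2=0$ on $\partial\Omega'$ and Korn's inequality in $H^1_0(\Omega';\RR^2)$ gives $\xi_1,\xi_2\in L^\infty(0,T;L^2_n(0,L;H^1_0(\Omega')))$.

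The real work, and the main obstacle, is the middle step: the $1/r_\e$-scaling in \eqref{eusurrborne} is precisely what annihilates the shear strains $e_{\a3}(\bfu_\e)$ and the normal strain $e_{33}(\bfu_\e)$ in the limit — forcing the Kirchhoff--Love constraint — and at the same time lets Korn's inequality on the \emph{fixed} cell $\Omega'\times I$ control $\partial_{y_3}\phi_{\e\a}^j$; neither is available under the weaker hypothesis of Lemma \ref{lemident1}. The only remaining subtlety is technical: justifying, within the non-periodic framework of Lemma \ref{lemtwoscalewrtme}, that the two-scale limit (with respect to $(m_\e)$) of the fast derivative $\partial_{y_3}\bfphi_\e^j$ is $\partial_{y_3}\bfPhi$ and that in-plane derivatives pass to the limit without boundary corrections — both routine once the uniform $L^2(m_\e)$ bound on $\bfnabla_y\bfphi_\e^j$ is in hand.
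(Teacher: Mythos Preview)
Your approach is correct but takes a genuinely different route from the paper's. The paper never rescales to a fixed domain and never invokes Korn's second inequality for the full gradient: it extracts $\mdto$-limits $\zeta_{0\a}$ of $u_{\e\a}/r_\e$, $\zeta_{03}$ of $u_{\e3}$, and $\bfXi^b$ of $\tfrac{1}{r_\e}\bfe(\bfu_\e)$ directly from Lemma~\ref{lemtwoscalewrtme}, proves $n\zeta_{03}=nv_3$ by a separate comparison with the layer average $\widetilde v_{\e3}$ (your route gets this more cheaply from $(e_y)_{33}(\bfphi_\e^j)=r_\e e_{33}(\bfu_\e)\to 0$), and then identifies the Kirchhoff--Love structure by integrating by parts against matrix test fields $\bfPsi(x,t,y_3)$ compactly supported in $I$ (formula (\ref{Eb1})). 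What your rescaling-plus-Korn argument buys is a uniform bound on the full rescaled gradient, making the limit identification conceptually cleaner; what the paper's approach buys is that it never needs a ``gradient version'' of Lemma~\ref{lemtwoscalewrtme} at all --- the step you label routine is, in fact, precisely the integration-by-parts identity the paper writes out explicitly, and it requires test functions vanishing at $y_3=\pm\tfrac12$ (the paper's class $\D_\sharp(I)$) to kill the boundary terms on each layer. One notational caution: $\mdto$ is defined for functions of $x\in\Omega$, so ``$\bfphi_\e^j\mdto\bfPhi$'' should be read as $\tfrac{u_{\e\a}}{r_\e}\mdto\Phi_\a$ and $u_{\e3}\mdto\Phi_3$, with the derivative statements translated back to derivatives of $\bfu_\e$ in the physical variables.
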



 \begin{proof}   By \eqref{key} and  (\ref{eusurrborne}), we have 
 
  \begin{equation}
\label{usurrborne}
\begin{aligned}
&\sup_{\e>0, \tau\in (0,T)} \int   \lp  \lb {u_{\e1} \over  r_\e}\rb^2 +\lb  {u_{\e2}\over  r_\e}\rb^2 +
\left|u_{\e3}\right|^2 
 \rp dm_\e 
 <+\infty. 
\end{aligned}
\end{equation}
 
\ni By (\ref{eusurrborne}) and (\ref{usurrborne}),   Assumption  (\ref{euborne22})  of   Lemma \ref{lemident1} is verified,  hence, up to a subsequence,  the convergences (\ref{uemetov}) take place. 
By  Lemma \ref{lemtwoscalewrtme}, (\ref{eusurrborne}) and (\ref{usurrborne}), there  exists  $\bfv\in L^\infty(0,T; L_n^2(\Omega;\RR^3))$,
 $\zeta_{01}, \zeta_{02}, \zeta_{03}  \in 
L^\infty(0,T;L^2_n(\Omega \times  I ))$,  and $ \bfXi^{b}  \in  L^\infty(0,T; $ $L_n^2(\O\times I ;\SS^3))$, such that 
 
   \begin{equation}
\begin{aligned}
 &\bfu m_\e \buildrel \star \over \rightharpoonup n \bfv\  \quad  \hbox{weakly* in } \ L^\infty(0,T; \M (\ov \Omega;\RR^3)); \qquad nv_1=nv_2=0;
\\& u_{\e3} \mdto \zeta_{03};
  \qquad  {u_{\e \a}\over r_\e}  \ 
 \mdto \   \zeta_{0\a} \quad (\a\in \{1,2\});
\qquad   \lp{ 1 \over r_\e}\bfe(\bfu_\e) \rp_{\a\b}  \ \mdto \     \bfXi^{b} .
\end{aligned}
\label{cv2}
\end{equation}

    \ni 
We establish  below that 

 \begin{equation}
\begin{aligned}
 &  n(x) \zeta_{03}(x,t,y_3)=n (x) v_3(x,t) & &  \hbox{ a.e. in } \ \OT\times I .  
\end{aligned}
\label{zetav3}
\end{equation}

\ni  Then, we   fix a matrix field $\bfPsi$ satisfying

 \begin{equation}
\label{Psi1}
\begin{aligned}
&\bfPsi \in C^\infty\lp \ov{ \Omega\times(0,T)}; \D_\sharp(I;\SS^3)\rp, \quad \Psi_{33}=0.
\end{aligned}
\end{equation}

\ni Noticing  that $x\to \bfPsi\lp x,t,{y_\e(x_3) \over r_\e }\rp $   vanishes on the complement of 
 the support of $m_\e$,  
by integration by parts we get

 \begin{equation}
\label{Eb1}
\begin{aligned}
 \int_{\Omega\times{(0,T)} }  \hskip-0,65cm  \bfe(\bfu_\e):& \bfPsi\lp x,t,{y_\e(x_3) \over r_\e }\rp   dm_\e  dt
 = 
 - 
  \int_{\Omega\times{(0,T)}} \hskip-0,65cm\bfu_\e\cdot  \bfdiv_x \bfPsi\lp x,t,{ y_\e(x_3)  \over r_\e }\rp  dm_\e  dt
\\&\hskip2,5cm-\sum_{\a=1}^2
 \int_{\Omega\times{(0,T)} }{u_{\e\a}\over  r_\e}  {\partial \Psi_{\a3}\over \partial y_3}\lp x,t,\frac{y_\e(x_3) }{r_\e }\rp dm_\e  dt. 
\end{aligned}
\end{equation}

\ni By  passing  to the limit as $\e\to 0$ in (\ref{Eb1}), taking   (\ref{eusurrborne}),   (\ref{cv2})  and (\ref{zetav3}) into account,     we infer 
 \begin{equation}
\label{vz}
\begin{aligned}
 0=- \int_{\OTI}  v_3  (\bfdiv_x \bfPsi)_3 n dxdtdy_3 
-\sum_{\a=1}^2\int_{\OTI} 
\zeta_{0\a} {\partial \Psi_{\a3}\over \partial y_3} n dxdtdy_3  . 
\end{aligned}
\end{equation}

\ni Fixing  $\a\in \{1,2\}$,  $\varphi \in C^\infty(\ov {\Omega\times(0,T)} )$,   $\psi \in  \D_\sharp (I) $, 
and selecting  in (\ref{vz}) a field   of the form  
$ 
\bfPsi(x,t,y_3):=  \varphi(x,t)\psi(y_3) ( \bfe_\alpha\otimes\bfe_3+\bfe_3\otimes\bfe_\alpha),
$ 
we get

\begin{equation}
\nonumber
\begin{aligned}
  0= - \int_{\OT }  v_3(x,t)&  {\partial \varphi \over \partial x_\a}(x,t)ndxdt  \lp \int_I \psi(y_3) dy_3  \rp
  \\&- \int_{\OT } \lp  \int_I \zeta_{0\a} (x,t, y_3)   {\partial \psi \over \partial y_3}(y_3) dy_3\rp \varphi(x,t) ndxdt.
\end{aligned}
\end{equation}

\ni Choosing  $\psi$ such that  $\lp \int_I \psi(y_3) dy_3  \rp\not=0$, 
and making   $\varphi$ vary in $C^\infty(\ov{\Omega\times(0,T)} )$, 
we deduce  that 

\begin{equation}
\label{v3H}
\begin{aligned}
v_3\in L^\infty(0,T; L^2_n(0,L;H^1_0(\Omega' ))),
\end{aligned}
\end{equation}

\ni  then, by integration by parts with respect to $x_\alpha$, infer 

\begin{equation}
\nonumber
\begin{aligned}
  0=   \int_{\OT \times I}   \hskip-1,3cm   \varphi  (x,t)\psi(y_3) \frac{\partial v_3}{\partial x_\alpha} (x,t)ndxdt    dy_3  
 - \int_{\OT\times I}   \hskip-1,3cm \zeta_{0\a} (x,t, y_3)   {\partial \psi \over \partial y_3}(y_3)  \varphi(x,t)n dxdt dy_3. 
\end{aligned}
\end{equation}



\ni We deduce,  from   the arbitrary    choice of $\varphi$ and $\psi$, that   

\begin{equation}
\nonumber
\begin{aligned}
&   \zeta_{0\a} \!  \in\! L^\infty\lp 0,T; L^2_n\!\lp \Omega; H^1\lp I\rp\!\rp\!\rp\!; \ 
 n{\partial   \zeta_{0\a} \over \partial y_3}(x,t, y_3) \!  =\!  -
n{\partial v_3\over \partial x_\a}(x,t)
\  \hbox{ in } \OT\times I ,
\end{aligned}
\end{equation}

\ni 
and then that 
 \begin{equation}
\label{ident}
\begin{aligned}
n  \zeta_{0\a}(x,t, y_3)    = n\xi_{\a}(x,t) - n{\partial v_3\over \partial x_\a}(x,t) y_3\  \ 
\ \hbox{ in } \OT\times I,
\end{aligned}
\end{equation}

\ni 
for some  suitable $\xi_\alpha\in L^\infty(0,T; L^2_n(\O ))$.   
Next, we choose  a matrix field $\bfPsi$ satisfying  (\ref{Psi1}) and  
$\Psi_{3k}=0\ \forall\ k\in \{1,2,3\}$.
By   multiplying (\ref{Eb1}) by $\frac{1}{r_\e }$, we get
%
 \begin{equation}
\nonumber
\begin{aligned}
\sum_{\a,\b=1}^2 \int_{\OT } \! \!{ e_{\a\b}(\bfu_\e)\over  r_\e } \Psi_{\a\b}&\lp x,t,\frac{y_\e(x_3)}{r_\e}\rp dm_\e dt=
\\& 
 \!-\!\sum_{\a,\b=1}^2  \int_{\OT } { u_{\e\a}\over  r_\e}   
\frac{\Psi_{\a\b}}{\partial x_\b} \lp x,t,\frac{y_\e(x_3)}{r_\e}\rp dm_\e  dt . 
\end{aligned}
\end{equation}

\ni
By passing to the limit  as $\e\to0$,  thanks to (\ref{cv2})  and (\ref{ident}), we find  

 \begin{equation}
\nonumber
\begin{aligned}
&\sum_{\a,\b=1}^2 \int_{\OT\times I}   \bfXi^b_{\a\b}
\Psi_{\a\b} ndxdtdy  
\\&\quad = -  \sum_{\a,\b=1}^2\int_{\OT\times I} \lp\xi_{\a}(x,t) - {\partial v_3\over \partial x_\a}(x,t) y_3\rp  \frac{\partial \Psi_{\a\b}}{\partial x_\b}(x,t, y_3)   ndxdtdy_3.
\end{aligned}
\end{equation}

\ni By the arbitrary choice of the functions $\Psi_{\a\b}(=\Psi_{\b\a})$ in $C^\infty\lp\ov{\Omega\times(0,T)}; \D (I)\rp$ and  (\ref{v3H}), we deduce that for $\alpha,\beta\in \{1,2\}$, the following holds

 \begin{equation}
\nonumber
\begin{aligned}
& \xi_\a \  \in
L^\infty(0,T;L^2_n(0,L;H^1_0 (\Omega'  ))), 
\quad  v_3   \in
L^\infty(0,T;L^2_n(0,L;H^2_0 (\Omega'  ))),
\\
 &n\Xi_{\a\b}^b(x,t, y_3) =
   {1\over 2} n \lp {\partial \xi_\a\over \partial x_\b}+ {\partial \xi_\b\over \partial x_\a} \rp(x,t)- n {\partial^2 v_3\over \partial x_\a \partial x_\b}(x,t)y_3 
  \hbox{ 
in }  \ 
\OT\times I.\end{aligned}
\end{equation}

\ni  
The proof of Lemma \ref{lemident2} is achieved.


\ni{\bf Proof of (\ref{zetav3}).} We set  
$\widetilde v_{\e3}(x ,t):= \sum_{j\in J_\e} \bfu_\e(x_1,x_2,\omega_\e^{j} ,t) \mathds{1}_{\lp \omega_\e^{j}-\frac{r_\e}{2}, \omega_\e^{j} +\frac{r_\e}{2}\rp }(x_3)$ (see (\ref{defBe})). By (\ref{eusurrborne}), we  have 

 \begin{equation}
\label{estimtildve1}
\begin{aligned}
&\int   \! |u_{\e3}  \!- \!\widetilde v_{\e3}|^2  (\tau)  dm_\e
 \! = \!{\e\over r_\e} \!\sum_{j\in J_\e} \! \!\int_{\Omega'}  \! \! dx' \! \! \int_{\omega_\e^{j}-{r_\e\over2}}^{\omega_\e^{j}+{r_\e\over2}}  \!\lb u_{\e3}(x,\tau) \!- \!u_{\e3}(x',\omega_\e^{j},\tau)  \rb^2  \! \!dx_3  
\\&
\quad\le  \!  \e r_\e   \!\sum_{j\in J_\e}  \!\int_{\Omega'}  \!  \!  \! dx'    \!  \! \int_{\omega_\e^{j}-{r_\e\over2}}^{\omega_\e^{j}+{r_\e\over2}}   \!\lb{\partial u_{\e3}\over \partial x_3}\rb^2   \!  \!(x',x_3,\tau) dx_3     \!
 \le  \! C r_\e^2  \!  \! \int   \!|\bfe(\bfu_\e)|^2(\tau) dm_\e  \! \le   \!C r_\e^4.
\end{aligned}
\end{equation}

\ni We easily deduce from (\ref{cv2}) and (\ref{estimtildve1}) that 

 \begin{equation}
\begin{aligned}
 &\widetilde v_{\e3} m_\e \buildrel \star \over \rightharpoonup n v_3 \  \quad  \hbox{weakly* in } \ L^\infty(0,T; \M (\ov \Omega));
\qquad \widetilde v_{\e3} \mdto \zeta_{03}.
\end{aligned}
\label{cv3}
\end{equation}

 \ni   Fixing  $\psi \in \D(\OT\times I)$, we set (see (\ref{yepsnonper}))
 
  \begin{equation}
\label{tildepsi}
\begin{aligned}
&
\widetilde \psi_{\e}\lp x ,t,  \frac{y_{\e}(x_3)}{r_\e}\rp:= \sum_{j\in J_\e}  \psi\lp x',\omega_\e^{j} ,t,  \frac{y_{\e}(x_3)}{r_\e}\rp
 \mathds{1}_{\lp \omega_\e^{j}-\frac{r_\e}{2}, \omega_\e^{j} +\frac{r_\e}{2}\rp }(x_3).
\end{aligned}
\end{equation}

\ni We have 

\begin{equation}
\label{esttildpsi}
\begin{aligned}
&
\lb \widetilde \psi_{\e}\lp x ,t,  \frac{y_{\e}(x_3)}{r_\e}\rp-\psi \lp x ,t,  \frac{y_{\e}(x_3)}{r_\e}\rp\rb\le C r_\e \quad \hbox{ in } \ B_\e. 
\end{aligned}
\end{equation}

\ni By making the  change of variables  $y=\frac{ x_3-\omega_\e^{j}}{r_\e}$, we get  
$
 \int_{\omega_\e^{j}-\frac{r_\e}{2}}^{ \omega_\e^{j} +\frac{r_\e}{2}}    
\psi\lp x',\omega_\e^{j} ,t,  \frac{y_{\e}(x_3)}{r_\e}\rp dx_3 
=r_\e  \int_I 
\psi\lp x',\omega_\e^{j} ,t, y_3\rp dy_3. 
$
 We infer 

 \begin{equation}
\begin{aligned}
&\hskip-0,5cm \int_{\OT} \widetilde  v_{\e3} \widetilde \psi_\e\lp x,t,\frac{y_{\e}(x_3)}{r_\e}\rp  dm_\e  dt
\\&
 =\frac{\e}{r_\e}   \sum_{j\in J_\e} \int_{\Omega'\times (0,T)} \hskip-0,6cm dx'dt 
 \int_{\omega_\e^{j}-\frac{r_\e}{2}}^{ \omega_\e^{j} +\frac{r_\e}{2}}    u_{\e 3}(x', \omega_\e^{j}, t)
\psi\lp x',\omega_\e^{j} ,t,  \frac{y_{\e}(x_3)}{r_\e}\rp dx_3
\\&
 =\frac{\e}{r_\e}   \sum_{j\in J_\e} \int_{\Omega'\times (0,T)} \hskip-0,6cm
 r_\e    u_{\e 3}(x', \omega_\e^{j}, t)\lp \int_I 
\psi\lp x',\omega_\e^{j} ,t, y_3\rp dy_3\rp  dx'dt 
\\&
 =\frac{\e}{r_\e}   \sum_{j\in J_\e} \int_{\Omega'\times (0,T)} \hskip-0,6cm dx'dt
 \int_{\omega_\e^{j}-\frac{r_\e}{2}}^{ \omega_\e^{j} +\frac{r_\e}{2}}    u_{\e 3}(x', \omega_\e^{j}, t)\lp \int_I 
\psi\lp x',\omega_\e^{j} ,t, y_3\rp dy_3\rp dx_3  
\\& = \int_{\OT} \widetilde  v_{\e3}(x,t) \widetriangle\psi_\e(x,t) dm_\e dt, 
\end{aligned}
\label{u031}
\end{equation}

\ni where 
$ \widetriangle\psi_\e(x,t) := \sum_{j\in J_\e}   \lp \int_I 
\psi\lp x',\omega_\e^{j} ,t, y_3\rp dy_3\rp   \mathds{1}_{\lp \omega_\e^{j}-\frac{r_\e}{2}, \omega_\e^{j} +\frac{r_\e}{2}\rp }(x_3).
$
Noticing that 
$ \lb   \widetriangle\psi_\e(x,t)-  \lp \int_I 
\psi\lp x ,t, y_3\rp dy_3\rp \rb \le C r_\e$  in $B_\e$, we deduce successively from (\ref{cv3}),   (\ref{u031}), (\ref{esttildpsi}), and again (\ref{cv3}) that 


\begin{equation}
\begin{aligned}
  &\int_{\OTI}  \hskip-1,3cm v_3(x,t)  \lp \int_I \hskip-0,1cm
\psi\lp x ,t, y_3\rp dy_3\hskip-0,1cm\rp\hskip-0,1cmn dx dt 
\! =\! \lime  \int_{\OT} \hskip-0,9 cm\widetilde  v_{\e3}(x,t) \hskip-0,1cm \lp \int_I 
\!\psi\lp x ,t, y_3\rp dy_3\rp \!dm_\e dt
\\ &= \lime  \int_{\OT} \hskip-0,8cm\widetilde  v_{\e3}(x,t) \widetriangle\psi_\e(x,t) dm_\e dt
 = \lime\int_{\OT} \hskip- 0,8cm\widetilde  v_{\e3} (x,t)\widetilde \psi_\e\lp x,t,\frac{y_{\e}(x_3)}{r_\e}\rp  dm_\e  dt
\\&= \lime\int_{\OT}  \hskip- 0,95cm\widetilde  v_{\e3} (x,t)  \psi \lp x,t,\frac{y_{\e}(x_3)}{r_\e}\rp  dm_\e  dt
 =  \int_{\OT\times I} \hskip- 1,3cm \zeta_{03} (x,t,y_3) \psi(x,t,y_3) n dxdtdy_3.
\end{aligned}
\label{u033}
\end{equation}

\ni By the arbitrary choice of $\psi$,  Assertion (\ref{zetav3}) is proved. 
 \end{proof}

  
  \ni 
The next Lemma is specific to  the periodic case.
Given a sequence  $(\bfu_\e)$ satisfying (\ref{euborne22}) and (\ref{hypu0v}) (and possibly  \eqref{eusurrborne}), we 
establish   some    relations satisfied by   its two-scale   limit $\bfu_0$   and  by the field $\bfv$ introduced in Lemma \ref{lemident1}.
   
 
 \ni  
 
\begin{lemma}\label{lemidentu0v} {\color{black}  Assume
that   $B_\e$ is the $\e$-periodic set  defined}  by (\ref{defBeper})   and let $(\bfu_\e)$ be a sequence in $L^\infty(0,T; H^1_0(\Omega;\RR^3))$  satisfying (\ref{euborne22}) and 
\begin{equation} 
\begin{aligned}
&\sup_{\tau\in [0,T], \ \e>0}  \int_\Omega  |\bfu_\e|^2+\e^2 |\bfe(\bfu_\e)|^2(\tau)dx  <+\infty.\end{aligned}   
\label{hypu0v}
\end{equation}

 \ni  Then, up to a subsequence,   the convergences  (\ref{uemetov})  take place with $n=1$. Moreover
 
 \begin{equation} 
\begin{aligned}
& \bfu_\e\,  \dto \, \bfu_0  \hbox{ and } \  \e\bfe(\bfu_\e ) \dto \,  \bfe_y(\bfu_0)   \hbox{ in accordance with (\ref{defdto})}, 
  \\& \bfu_0 \in L^\infty(0,T; L^2(\O; H^1_\sharp(Y; \RR^3))) .
\end{aligned}   
\label{uedtou0}
\end{equation}

\ni If, in addition, 

 \begin{equation} 
\begin{aligned}
&
 \sup_{\tau\in [0,T], \ \e>0} \int_\O   \re\lb {\partial \bfu_\e\over \partial t} \rb^2  
 ( \tau)
dx  
<+\infty,
 \end{aligned}   
\label{dtuborne}
\end{equation}

\ni then 

\begin{equation}
\begin{aligned}
  & {\bfu_0}  \in W^{1, \infty}(0,T; L^2(\OY;\RR^3))  ,
  \\
 &   {\partial \bfu_\e \over \partial t} \ \dto \ {\partial \bfu_0 \over \partial t}, \qquad  \bfu_\e(\tau ) \ \dto\ \bfu_0(\tau) \
\forall \ \tau \in [0,T].
\label{u,t-u0,t}
\end{aligned}
\end{equation}

 \ni Moreover, 
 
 \ni (i) {\color{black} If $\vartheta>0$,}
 then 
 
 \begin{equation} 
\begin{aligned}
&\bfu_0(x,t,y)= \bfv(x,t) \quad \hbox{ in } \ \Omega\times (0,T)\times B.
\end{aligned}   
\label{u0=vinB}
\end{equation}

 \ni (ii)   {\color{black} If $\vartheta=0$,}
  then

\begin{equation} 
\begin{aligned}
&  {\bfu'_0}(x,t,y)=\bfv' (x,t)    \hbox{   in }    \OT \times   \Sigma
 \quad \hbox{ and } \quad 
v_3(.) = \int_{\Sigma} u_{03}(.,y) d\H^2(y).
 \end{aligned}   
\label{u0=vonSigma}
\end{equation}

\ni If, in addition, the estimate \eqref{eusurrborne} is satisfied,
%
%
  then

 \begin{equation} 
\begin{aligned}
&\bfu_0(x,t,y)= \bfv(x,t) \quad \hbox{ on } \ \Omega\times (0,T)\times \Sigma.
\end{aligned}   
\label{u0=vonSigma2}
\end{equation}

\end{lemma}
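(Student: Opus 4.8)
The plan is to prove the five displayed assertions one after another: \eqref{uemetov}--\eqref{u,t-u0,t} follow from two–scale compactness, while \eqref{u0=vinB}--\eqref{u0=vonSigma2} require a rigidity analysis near the stiff layers. First, since $B_\e$ is $\e$–periodic, \eqref{n=1per} gives $n\equiv1$; as \eqref{euborne22} is among the hypotheses, Lemma \ref{lemident1} applies and yields the convergences \eqref{uemetov} (with $n=1$) along a subsequence, together with the field $\bfv$. For \eqref{uedtou0}: by \eqref{hypu0v} the sequences $(\bfu_\e)$ and $(\e\bfe(\bfu_\e))$ are bounded in $L^\infty(0,T;L^2(\O;\RR^3))$ and $L^\infty(0,T;L^2(\O;\SS^3))$, so by Lemma \ref{lemtwoscale}(ii), up to a subsequence, $\bfu_\e\dto\bfu_0$ and $\e\bfe(\bfu_\e)\dto\bfXi_0$ for some $\bfu_0\in L^\infty(0,T;L^2(\O\times Y;\RR^3))$, $\bfXi_0\in L^\infty(0,T;L^2(\O\times Y;\SS^3))$. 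Integrating by parts in $\int_\OT\e\bfe(\bfu_\e):\bfPsi\lp x,t,{x\over\e}\rp\,dxdt$ for $\bfPsi\in\D(\OT;C^\infty_\sharp(Y;\SS^3))$, the contribution carrying the explicit factor $\e$ is $O(\e)$ while the $\tfrac1\e\partial_y$–contribution survives and gives, in the limit, $\int_\OTY\bfXi_0:\bfPsi=-\int_\OTY\bfu_0\cdot\bfdiv_y\bfPsi$; hence $\bfXi_0=\bfe_y(\bfu_0)$ in the distributional sense, and Korn's inequality on the torus then forces $\bfu_0(x,t,\cdot)\in H^1_\sharp(Y;\RR^3)$. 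If moreover \eqref{dtuborne} holds, then by \eqref{disjoint} one has $\re\ge\min\big(\rho,(1+\delta)\ov\rho_1\big)>0$, so $(\partial_t\bfu_\e)$ is bounded in $L^\infty(0,T;L^2(\O;\RR^3))$; Lemma \ref{lemtwoscale}(ii) then gives $\bfu_0\in W^{1,\infty}(0,T;L^2(\O\times Y;\RR^3))$ and $\partial_t\bfu_\e\dto\partial_t\bfu_0$, and, the map $\tau\mapsto\bfu_\e(\tau)\in L^2(\O;\RR^3)$ being uniformly Lipschitz, a standard equicontinuity argument upgrades the convergence to $\bfu_\e(\tau)\dto\bfu_0(\tau)$ for every $\tau\in[0,T]$, which is \eqref{u,t-u0,t}.

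To prove (i), assume $\vartheta>0$. From \eqref{euborne22} and \eqref{defme}, $\int_{B_\e}|\bfe(\bfu_\e)|^2(\tau)\,dx=\tfrac{r_\e}\e\int|\bfe(\bfu_\e)|^2(\tau)\,dm_\e\le C\tfrac{r_\e}\e=O(1)$, so $\e\bfe(\bfu_\e)\mathds{1}_{B_\e}\to0$ strongly in $L^2(\OT;\SS^3)$. Since also $\e\bfe(\bfu_\e)\dto\bfe_y(\bfu_0)$ and $\mathds{1}_{B_\e}\sdto\mathds{1}_B$ by \eqref{1Besdto}, \eqref{lem2} gives $\e\bfe(\bfu_\e)\mathds{1}_{B_\e}\dto\bfe_y(\bfu_0)\mathds{1}_B$, while the strong $L^2$–convergence forces the limit to be $0$; by uniqueness of two–scale limits, $\bfe_y(\bfu_0)=0$ on $\OT\times B$. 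As $B$ is connected, for a.e. $(x,t)$ the restriction $\bfu_0(x,t,\cdot)|_B$ is an affine rigid displacement $\bfc(x,t)+\bfM(x,t)y$ with $\bfM$ skew; but $B$ spans the whole cell in the $y_1$ and $y_2$ directions, so matching the traces of the $Y$–periodic field $\bfu_0$ on the opposite faces $\{y_1=\pm\tfrac12\}$ and $\{y_2=\pm\tfrac12\}$ of $\overline B$ forces $\bfM\bfe_1=\bfM\bfe_2=0$, whence $\bfM=0$ and $\bfu_0(x,t,\cdot)\equiv\bfc(x,t)$ on $B$. Finally, writing $\bfu_\e m_\e=\tfrac\e{r_\e}\bfu_\e\mathds{1}_{B_\e}\L^3_{\lfloor\O}$ and using $\tfrac\e{r_\e}\mathds{1}_{B_\e}\sdto\tfrac1\vartheta\mathds{1}_B=\tfrac1{|B|}\mathds{1}_B$ together with $\bfu_\e\dto\bfu_0$ and \eqref{lem2}, one gets $\bfu_\e m_\e\wcv\tfrac1{|B|}\big(\int_B\bfu_0\,dy\big)\L^3_{\lfloor\O}$; comparing with $\bfu_\e m_\e\wcv\bfv\L^3_{\lfloor\O}$ from \eqref{uemetov} yields $\bfv=\tfrac1{|B|}\int_B\bfu_0\,dy=\bfc$, which is \eqref{u0=vinB}.

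To prove (ii), assume $\vartheta=0$, so $A=\Sigma$. Introduce $\hat\bfv_\e$ as in \eqref{defhatve} (here $\omega_\e^j=\e j$). By \eqref{estimhatve}, Korn's inequality in $H^1_0(\O;\RR^3)$ and \eqref{hypu0v}, $\sup_\tau\int|\bfu_\e-\hat\bfv_\e|^2(\tau)\,dm_\e\le C\e r_\e\sup_\tau\int_\O|\bfe(\bfu_\e)|^2(\tau)\,dx\le C\tfrac{r_\e}\e\to0$, hence $\hat\bfv_\e m_\e\wcv\bfv\L^3_{\lfloor\O}$ as well. Testing $\hat\bfv_\e m_\e$ against $\bfvarphi\in\D(\OT;\RR^3)$, replacing $\bfvarphi(x,t)$ by $\bfvarphi(x',\e j,t)$ on each layer (error $O(r_\e)$) and recognising the outcome as a Riemann sum over the layer positions $x_3=\e j$ which samples the traces of $\bfu_0$ on the planes $\{y_3=0\}$, one obtains $\bfv(x,t)=\int_\Sigma\bfu_0(x,t,y)\,d\H^2(y)$; its third component is the second identity of \eqref{u0=vonSigma}. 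For the transverse components I would combine the regularity $v_1,v_2\in L^\infty(0,T;L^2(0,L;H^1_0(\Omega')))$ and the convergence $\bfe_{x'}(\bfu_\e')m_\e\wcv\bfe_{x'}(\bfv')\L^3_{\lfloor\O}$ from \eqref{uemetov} with the same $\hat\bfv_\e$–reduction, testing $\bfe_{x'}(\bfu_\e')m_\e$ against $y'$–dependent fields, to deduce that the trace $\bfu_0'|_\Sigma$ is $y'$–independent and equals $\bfv'$, completing \eqref{u0=vonSigma}. If in addition \eqref{eusurrborne} holds, then Lemma \ref{lemident2} applies and provides the corrector structure \eqref{cvflexion}; in particular $\big(\tfrac{u_{\e\a}}{r_\e}\big)$ is bounded in $L^2(dm_\e\,dt)$ uniformly in $\tau$, so $\int_{B_\e}|u_{\e\a}|^2(\tau)\,dx\le C\tfrac{r_\e^3}\e\to0$, which, together with $\bfu_\e\dto\bfu_0$ and the identification above, forces $u_{0\a}|_\Sigma=v_\a$ for $\a\in\{1,2\}$ too; hence $\bfu_0|_\Sigma=\bfv$, which is \eqref{u0=vonSigma2}.

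The hard part is the passage near the stiff phase, i.e. relating the $x$–two–scale limit $\bfu_0$, defined on $\O\times Y$, to the concentrated measures $m_\e$. When $\vartheta>0$ this is controlled by the strong two–scale convergence $\mathds{1}_{B_\e}\sdto\mathds{1}_B$ and the observation that $Y$–periodicity kills the rotational part of the limiting rigid displacement on $B$; here the rigidity is genuinely built in because $B$ wraps around the cell in two directions. When $\vartheta=0$, $\Sigma$ is Lebesgue–negligible in $Y$ and $\mathds{1}_{B_\e}\sdto0$, so $\Sigma$ is invisible to ordinary two–scale convergence, and one must go through the layer–centred field $\hat\bfv_\e$ and a two–scale trace/Riemann–sum argument; the delicate structural feature there is that the transverse components rigidify to $\bfv'$ on $\Sigma$ while the normal component is recovered only in averaged form, unless the stronger estimate \eqref{eusurrborne} is available. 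Keeping every bound uniform in $\tau$ is needed throughout for the $L^\infty(0,T;\cdot)$ conclusions.
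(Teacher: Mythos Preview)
Your treatment of \eqref{uemetov}, \eqref{uedtou0}, \eqref{u,t-u0,t} and of case (i) is essentially the paper's argument and is fine.

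For case (ii) there are two genuine gaps. First, the ``Riemann sum'' step is the crux and you do not justify it: you write that $\int_\OT\hat\bfv_\e\cdot\bfvarphi\,dm_\e dt$ becomes, up to $O(r_\e)$, the sum $\sum_j\e\int_{\Omega'\times(0,T)}\bfu_\e(x',\e j,t)\cdot\bfvarphi(x',\e j,t)\,dx'dt$ and that this ``samples the traces of $\bfu_0$ on $\{y_3=0\}$''. But two-scale convergence of $\bfu_\e$ gives information against oscillating test functions in $L^2$, not about pointwise traces on the hyperplanes $x_3=\e j$; there is no direct way to pass to $\int_\Sigma\bfu_0\,d\H^2$ from this sum. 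The paper closes this gap by introducing the piecewise-constant extension $\ov\bfv_\e(x,t)=\sum_j\bfu_\e(x',\e j,t)\mathds{1}_{(\e j-\e/2,\e j+\e/2]}(x_3)$, showing it is bounded in $L^\infty(0,T;L^2(\O;\RR^3))$ and two-scale converges to some $\ov\bfv_0$, then proving (a) $\partial_{y_3}\ov\bfv_0=0$, (b) $\bfu_0=\ov\bfv_0$ on $\OT\times\Sigma$ via an integration by parts of $\e\bfe(\bfu_\e)$ against test fields supported in the half-cell $Y^+$ (this is where the trace on $\Sigma$ is actually produced), and (c) $\bfv=\int_Y\ov\bfv_0\,dy$. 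Only the combination (a)+(b)+(c) yields $\bfv=\int_\Sigma\bfu_0\,d\H^2$. Your outline for the $y'$-independence of $\bfu_0'|_\Sigma$ (``testing $\bfe_{x'}(\bfu_\e')m_\e$ against $y'$-dependent fields'') is similarly schematic; the paper carries this out on $\ov\bfv_\e$, not on $\bfu_\e m_\e$, and uses the same $\ov\bfv_0$ bridge.

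Second, your argument for \eqref{u0=vonSigma2} is aimed at the wrong components. After \eqref{u0=vonSigma} you already have $\bfu_0'|_\Sigma=\bfv'$; what remains is to upgrade $v_3=\int_\Sigma u_{03}\,d\H^2$ to $u_{03}|_\Sigma=v_3$, i.e.\ to show that $u_{03}|_\Sigma$ is $y'$-independent. Your observation that $\int_{B_\e}|u_{\e\a}|^2\le C r_\e^3/\e\to0$ for $\a\in\{1,2\}$ says nothing about $u_{03}$ and, since $|B_\e|\to0$ anyway, it gives no information on traces. The paper obtains the missing $y'$-independence from \eqref{eusurrborne} by integrating by parts the identity for $(\partial_{x_3}u_{\e\a}+\partial_{x_\a}u_{\e3})$ against $\psi(x,t,x'/\e)\eta(y_\e(x_3)/r_\e)$: all terms except $\tfrac1\e\int u_{\e3}\,\partial_{y_\a}\psi\,\eta\,dm_\e dt$ are bounded thanks to \eqref{eusurrborne} and \eqref{key}, forcing this last term to vanish in the limit, which (after the same $\ov\bfv_\e$ transfer) gives $\partial_{y_\a}\ov v_{03}=0$ and hence $u_{03}|_\Sigma=v_3$.
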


\begin{proof} The convergences  (\ref{uemetov})   are deduced from  (\ref{euborne22})  in  Lemma \ref{lemident1}.
  Under   (\ref{hypu0v}), by Lemma \ref{lemtwoscale} (ii), the sequence $(\bfu_\e)$ (resp. $(\e\bfe(\bfu_\e))$)
two-scale converges, up to a subsequence, to   some $ \bfu_0 \in L^\infty(0,T; L^2(\OY ;\RR^3))$ (resp. 
 $\bfXi^m \in L^\infty(0,T; L^2(\OY ;\SS^3))$). Choosing   $\bfPsi\in \D(\OT; C^\infty_\sharp(Y;\SS^3))$ and 
  passing to the limit as $\e\to 0$   in the equation

\begin{equation}
\begin{aligned}
&\int_\OT \e    \bfe(\bfu_\e):    \bfPsi\lp x,t, \xe\rp dx dt
= 
 \\
& \quad - \e \int_\OT \bfu_\e\cdot   \bfdiv_x\bfPsi\lp x,t,\xe\rp dxdt
-\int_\OT \bfu_\e\cdot   \bfdiv_y\bfPsi\lp x,t,\xe\rp dxdt,
\end{aligned}
\label{IPee}
\end{equation}

\ni 
we infer $\int_\OTY \bfXi^m\!:\!\bfPsi dxdtdy= -\int_\OTY \bfu_0\cdot   \bfdiv_y \bfPsi dxdtdy$ and  deduce,  by the arbitrary choice of $\bfPsi$, that  
$ \bfu_0 \in L^\infty(0,T; L^2(\O; H^1_\sharp(Y; \RR^3)))$ and $ \bfe_y(\bfu_0)= \bfXi^m$.
 Assertion (\ref{uedtou0}) is proved. 
Under (\ref{dtuborne}),  the convergences       (\ref{u,t-u0,t}) are a straightforward consequence of  Lemma \ref{lemtwoscale} (ii).


If $\vartheta>0$, by {\color{black} \eqref{euborne22} } and  (\ref{hypu0v}), 
 the sequence 
$(\e\bfe(\bfu_\e)\mathds{1}_{B_\e})$ strongly converges   to $0$ in $L^2(\Omega$ $\times(0,T);\RR^3)$.
On the other hand, by \eqref{1Besdto}, \eqref{uedtou0} and Lemma \ref{lemtwoscale}, $(\e\bfe(\bfu_\e)\mathds{1}_{B_\e})$ two-scale converges to $\bfXi^m \mathds{1}_B$.
We deduce that $\bfXi^m=0$ in $\Omega\times(0,T)\times B$.
Let us fix   $\bfPsi \in \D(\OT; \D_\sharp(B;\SS^3))$.
Then for $\e$ small enough, the support of $\bfPsi\lp x,t,\xe\rp$ is included in $B_\e\times(0,T)$. 
 Passing to the limit as $\e\to 0$   in 
  (\ref{IPee}),  we find
$
0= -\int_\OTB \bfu_0\cdot   \bfdiv_y \bfPsi dxdtdy 
$
and infer  from the arbitrariness of $\bfPsi$  that $  \bfe_y(\bfu_0)=0$   in $\OTB$. Hence, for a. e. $(x,t)\in \OT$, the restriction of  $\bfu_0(x,t,.)$  to $B$ is  a rigid displacement. 
Since $\bfu_0$ is  $Y$-periodic, we  deduce that 

\begin{equation}
\bfu_0=\bfa    \ \hbox{ in } \OTB,
\label{rigidab1}
\end{equation}

\ni  for some    $\bfa \!\in \!L^\infty(0,T; L^2(\O;\RR^3)) $.
By  \eqref{1Besdto},  \eqref{uedtou0}  and    Lemma \ref{lemtwoscale} (i),
 the sequence $\lp \bfu_\e  \mathds{1}_{B_\e} \rp $ two-scale converges to 
$\bfu_0(x,t,y)\mathds{1}_B(y)$. 
Fixing  $\bfvarphi\!\in \!\D(\OT;$ $\RR^3)$, taking
 (\ref{uemetov}), (\ref{hypu0v}) and  (\ref{rigidab1})    into account, and   noticing  that
 $\frac{\e}{r_\e}\to\frac{1}{|B|}$,
 we deduce

\begin{equation}
\begin{aligned}
\int_\OT \bfv\cdot  \bfvarphi dxdt&= \lime \int_\OT \bfu_\e\cdot  \bfvarphi dm_\e dt= \lime \frac{\e}{r_\e}\int_\OT \bfu_\e\cdot  \bfvarphi \mathds{1}_{B_\e}(x) dx dt
 \\
&  = {1\over |B|}\int_\OTB \bfu_0\cdot  \bfvarphi(x,t) 1_B(y) dxdtdy
= \int_\OT \bfa\cdot  \bfvarphi dxdt,
\end{aligned}
\nonumber
\end{equation}

\ni and infer, from   the arbitrary choice of $\bfvarphi$,  that $\bfv=\bfa$.    
Assertion   (\ref{u0=vinB}) is proved.

 
Let us assume now  that $\vartheta=0$ (i.e. that $r_\e\ll\e$).  Since the stiff layers are periodicaly distributed, by  (\ref{defBeper}) 
the field   $\ov\bfv_\e$ defined by  (\ref{defovve}) takes the form 

 \begin{equation}
\label{ovve1}
\begin{aligned}
&
\ov\bfv_\e(x ,t):= \sum_{i\in   Z_\e} \bfu_\e(x_1,x_2,\e i ,t) \mathds{1}_{\lp \e i-\frac{\e}{2}, \e i +\frac{\e}{2}\rc }(x_3),
\end{aligned}
\end{equation}

\ni and      coincides  in $B_\e$ with the field  $\hat\bfv_\e$  given by   (\ref{defhatve}).
Therefore, by  (\ref{estimhatve}),

 \begin{equation}
\label{estimovve1}
\begin{aligned}
\int  |\ov\bfv_\e-\bfu_\e|^2  (\tau)dm_\e  
\le  C \frac{r_\e}{\e}  \int_\Omega \e^2 |\bfe(\bfu_\e)|^2(\tau)dx\quad \forall \tau\in [0,T].
\end{aligned}
\end{equation}

\ni   
Since $r_\e\ll\e$, we deduce from (\ref{hypu0v}) and (\ref{estimovve1})  that 
$
\int  |\ov\bfv_\e |^2 (\tau) dm_\e
\le C$.
 On the other hand, taking  (\ref{defme}), (\ref{defBeper})  and (\ref{ovve1}) into account, it is easy to check that 
$
\int  |\ov\bfv_\e |^2 (\tau) dm_\e= \int_\Omega |\ov\bfv_\e |^2(\tau) dx$, therefore the sequence $(\ov\bfv_\e)$ 
is bounded in $L^\infty(0,T; L^2(\Omega;\RR^3))$. It then follows from  Lemma \ref{lemtwoscale} (ii) that

\begin{equation}
\label{ovvedtoovv0}
\begin{aligned}
\ov \bfv_\e \dto \ov \bfv_0, 
\end{aligned}
\end{equation}

\ni   up to  a subsequence, for  some $\ov\bfv_0\in L^\infty(0,T; L^2(\Omega\times Y ;\RR^3))$. We establish  below that 

\begin{equation}
\label{u0=v01}
\begin{aligned}
& \frac{\partial \ov \bfv_0}{\partial y_3}=0  \  \hbox{ a.e. in } \Omega\times (0,T)\times Y ,
\qquad 
\bfu_0=\ov \bfv_0 \ \hbox{ on } \  \Omega\times (0,T)\times \Sigma,
\end{aligned}
\end{equation}

\ni and that (see (\ref{exprim}))

\begin{equation}
\label{v0=v1}
\begin{aligned}
&   \bfv'(x,t)=\ov \bfv'_0(x,t,y) \quad \hbox{ a.e. in } \Omega\times (0,T)\times Y, \quad 
\\& v_3(x,t)= \int_{\lp-\frac{1}{2}, \frac{1}{2}\rp^2} \ov v_{03}(x,t,s_1,s_2, y_3) ds_1ds_2 ,
\end{aligned}
\end{equation}

\ni yielding   (\ref{u0=vonSigma}). 
The  next equation (proved  below)

 \begin{equation}
\begin{aligned}
& \frac{\partial \ov v_{03}}{\partial y_\alpha} =0  \quad \forall \a\in \{1,2\},
 \quad \hbox{ if } \  (\ref{eusurrborne}) \hbox{ holds true}, 
\end{aligned}
\label{v03=v3}
\end{equation}

\ni   joined with  (\ref{u0=vonSigma}), yields (\ref{u0=vonSigma2}).
It remains to prove \eqref{u0=v01}, \eqref{v0=v1} and  \eqref{v03=v3}.


\ni {\bf Proof of (\ref{u0=v01}).} 
  Let us  fix $\bfpsi\in \D(\OT; \D_\sharp (Y;\RR^3))$. By (\ref{ovve1}) we have

\begin{equation}
\nonumber
\begin{aligned}
&\int_{\OT }  \ov \bfv_\e \cdot  \frac{\partial \bfpsi}{\partial y_3}\lp x,t,\xe\rp dxdt 
\\&\  = \sum_{i\in Z_\e} \int_{\Omega'\times (0,T)}  \hskip-0,3cm \bfu_\e(x_1,x_2, \e i,t)\cdot  
\lp \int_{\lp \e i-\frac{\e}{2}, \e i+\frac{\e}{2}\rp} \frac{\partial \bfpsi}{\partial y_3}\lp x,t,\xpe, \frac{x_3}{\e}\rp dx_3 \rp dx'dt.
\end{aligned}
\end{equation}

\ni  Since $\bfpsi(x,t,.)\in \D_\sharp(Y;\RR^3)$, the following  holds 

\begin{equation}
\nonumber
\begin{aligned}
&  \int_{\lp \e i-\frac{\e}{2}, \e i+\frac{\e}{2}\rp} \frac{\partial \bfpsi}{\partial y_3}\lp x,t,\xpe, \frac{x_3}{\e}\rp dx_3  
= \frac{1}{\e}\int_{\lp-\frac{1}{2},\frac{1}{2}\rp} \frac{\partial \bfpsi}{\partial y_3}\lp x,t,\xpe, y_3\rp dy_3=0,
\end{aligned}
\end{equation}

\ni therefore 
$ \int_{\OT }  \ov \bfv_\e\cdot   \frac{\partial \bfpsi}{\partial y_3}\lp x,t,\xe\rp dxdt 
= 0$. By passing to the limit as $\e\to 0$, we infer $ \int_{\OTY }  \ov \bfv_0\cdot   \frac{\partial \bfpsi}{\partial y_3}\lp x,t,y\rp dxdt dy
= 0,$  and deduce from the arbitrary choice of   $\bfpsi $ that  
  
\begin{equation}
\label{v0,3=0}
\begin{aligned}
\frac{\partial \ov v_0}{\partial y_3}=0 \quad \hbox{ in } \ \ \OT\times Y.
\end{aligned}
\end{equation}

\ni We set $Y^+:= \lp-{1\over 2}, {1\over2}\rp^2 \times\lp 0, {1\over2}\rp$ and fix  $ 
\bfPsi \in \D(\OT; \D_\sharp(Y;\SS^3))$.
 Then, for each $i\in Z_\e$ (defined by  (\ref{defZe})), the field 
$\bfPsi\lp x, t, \xe\rp$ vanishes on  $\partial\lp\Omega'\times\lp \e i-\frac{\e}{2}, \e i +\frac{\e}{2} \rp \rp\times(0,T) $, and, for $\e$ small enough, the support of $\bfPsi\lp x, t, \xe\rp$ is included in $\bigcup_{i\in Z_\e}  \Omega'\times\lc \e i-\frac{\e}{2}, \e i +\frac{\e}{2}   \rc\times(0,T) $. Hence, 
 by  integration by parts,    we get 
 
%
 \begin{equation}
\label{ipp}
\begin{aligned}
& \int_\OT\hskip-0,9cm \e  \bfe(\bfu_\e): \bfPsi\lp x, t, \xe\rp  \mathds{1}_{Y^+_\sharp}\lp\xe\rp dxdt 
 =  \sum_{i\in Z_\e}  \int_{\Omega'\times\lp \e i, \e i +\frac{\e}{2}\rp \times(0,T) }\hskip-1,9cm \e\bfe(\bfu_\e): \bfPsi\lp x, t, \xe\rp  dxdt 
\\& = -\sum_{i\in Z_\e} \e \int_{\Omega'\times \{\e i\}\times(0,T)}\!\!\!\!\!\!\!\! \bfu_\e\cdot   \bfPsi\lp x, t, \xpe,0\rp \bfe_3 d\H^2(x)dt 
\\&\ \  - \int_\OT \!\!\! \!\!\!\e  \bfu_\e \bfdiv_x\bfPsi\lp x, t, \xe\rp \mathds{1}_{Y_\sharp^+}\lp\xe\rp + \bfu_\e \bfdiv_y\bfPsi\lp x, t, \xe\rp \mathds{1}_{Y_\sharp^+}\lp\xe\rp dxdt.
\end{aligned}
\end{equation}

\ni We set
$
\ov\bfPsi_\e\lp x, t, \xpe,0\rp:= \sum_{i\in   Z_\e}\bfPsi\lp x_1,x_2,\e i, t, \xpe,0\rp \mathds{1}_{\lp \e i-\frac{\e}{2}, \e i +\frac{\e}{2}\rp }(x_3).
$ Notice that 

 \begin{equation}
\label{ovPsiPsi}
\begin{aligned}
&\lb \ov\bfPsi_\e\lp x, t, \xpe,0\rp-\bfPsi \lp x, t, \xpe,0\rp \rb \le C\e,
\\&\lb \frac{\partial}{\partial y_\a}\ov \bfPsi_\e\lp x,t,\xpe,0\rp-\frac{\partial}{\partial y_\a} \bfPsi \lp x,t,\xpe,0\rp \rb \le C\e \quad \hbox{ for }\ \a\in \{1,2\}. 
\end{aligned}
\end{equation}

  \ni By 
  the definitions      $\ov\bfPsi_\e$  and $\ov\bfv_\e$   (see   (\ref{ovve1})),  there holds 
 \begin{equation}
\label{uvsv}
\begin{aligned}
-\!\sum_{i\in Z_\e}\! \e\! \int_{\Omega'\times \{\e i\}\times(0,T)}\hskip-1,7cm \bfu_\e\cdot   \bfPsi( x, t, \xpe,0) \bfe_3 d\H^2(x)&dt \!
 = \!-\!\!\int_\OT \hskip-0,9cm \ov\bfv_\e \cdot  \ov\bfPsi_\e\lp x, t, \xpe,0\rp\bfe_3 dxdt.
\end{aligned}
\end{equation}

\ni  
Taking  (\ref{ovvedtoovv0})
   and   (\ref{ovPsiPsi}) into account, and noticing that by (\ref{v0,3=0})  there holds $\ov\bfv_0(x,t,y)= \ov\bfv_0(x,t,y',0)$ in $\OT\times Y$, we obtain  

 \begin{equation}
\label{barvlim}
\begin{aligned}
\lime-\int_\OT \hskip-0,9cm \ov\bfv_\e \cdot    \ov\bfPsi_\e\lp x, t, \xpe,0\rp\bfe_3 dxdt
  &=-\int_{\OT \times Y} \hskip-0,9cm\ov\bfv_0 \cdot    \bfPsi \lp x, t, y',0\rp\bfe_3 dxdt dy 
 \\&= -\int_{\OT \times\Sigma} \ov\bfv_0\cdot    \bfPsi  \bfe_3 dxdt d\H^2(y).
\end{aligned}
\end{equation}

\ni
By passing to the limit as $\e\to 0$ in (\ref{ipp}), 
  applying  Lemma \ref{lemtwoscale}   (i)  with  $h_\e= \mathds{1}_{Y^+}\lp\xe\rp$ and taking  (\ref{uedtou0}),  (\ref{uvsv}) and  (\ref{barvlim}) into account,
 we get
 
 \begin{equation}
\nonumber
\begin{aligned}
\int_{\OT\times Y^+} \hskip-1,3cm\bfe_y(\bfu_0): \bfPsi dxdtdy= -&\int_{\OT\times \Sigma}\hskip-1,3cm \ov\bfv_0\cdot   \bfPsi\bfe_3 dxdtd\H^2(y)
 -\int_{\OT\times Y^+}\hskip-1,3cm\bfu_0\cdot  \bfdiv_y\bfPsi dxdtdy.
\end{aligned}
\end{equation}

\ni By integration by parts,  we have

 \begin{equation}
\nonumber
\begin{aligned}
- \int_{\OT\times Y^+}\hskip-1,4cm  \bfu_0\cdot  \bfdiv_y\bfPsi dxdtdy=  & \int_{\OT\times\Sigma} \hskip-1,2cm \bfu_0\cdot   \bfPsi\bfe_3 dxdtd\H^2(y)
 +\int_{\OT\times Y^+}\hskip-1,4cm \bfe_y(\bfu_0):\bfPsi dxdtdy.
\end{aligned}
\end{equation}

\ni  Joining the last two  equations,  we infer  that 
$ \int_{\OT\times \Sigma} \bfu_0\cdot   \bfPsi\bfe_3 dxdtd\H^2(y)= $

\ni  $\int_{\OT\times \Sigma} \ov\bfv_0\cdot   \bfPsi\bfe_3 dxdtd\H^2(y).
$
By the arbitrary choice of $\bfPsi$  
(and by (\ref{v0,3=0})),  
we deduce that  (\ref{u0=v01}) holds.


\ni{\bf Proof of (\ref{v0=v1}).}  Let us fix $\bfpsi\in \D(\OT;\RR^3)$ and   set 

 \begin{equation}
\label{ovpsi}
\begin{aligned}
&
\ov\bfpsi_\e(x ,t):= \sum_{i\in   Z_\e} \bfpsi (x_1,x_2,\e i ,t) \mathds{1}_{\lp \e i-\frac{\e}{2}, \e i +\frac{\e}{2}\rc }(x_3).
\end{aligned}
\end{equation}

\ni By 
(\ref{defme}) and 
(\ref{ovve1})  
we  have  

 \begin{equation}
\label{v0ri20}
\begin{aligned}
&   \int_{\OT}  \ov \bfv_\e\cdot  \ov\bfpsi_\e dm_\e dt
 = 
\frac{\e}{r_\e} \sum_{i\in Z_\e} \int_{\Omega'\times \lp \e i-\frac{r_\e}{2}, \e i +\frac{r_\e}{2}\rp\times(0,T)}
\hskip-1,5cm \bfu_\e(x',\e i ,t)\cdot  \bfpsi(x',\e i ,t) dxdt
\\&\quad= 
 \sum_{i\in Z_\e} \int_{\Omega'\times \lp \e i-\frac{ \e}{2}, \e i +\frac{ \e}{2}\rp\times(0,T)}
\hskip-1cm \bfu_\e(x',\e i ,t)\cdot  \bfpsi(x',\e i ,t) dxdt
 =  \int_\OT  \ov \bfv_\e\cdot   \ov\bfpsi_\e dx dt.
\end{aligned}
\end{equation}

\ni We infer from  (\ref{ovvedtoovv0})   and from  the estimate 

\begin{equation}
|\bfpsi-\ov\bfpsi_\e|_{L^\infty({\OT};\RR^3)}\le C\e,
\label{psiovpsi}
\end{equation}

\ni  that 

 \begin{equation}
\label{limovveovpsie}
\begin{aligned}
&\lime
 \int_\OT  \ov \bfv_\e\cdot   \ov\bfpsi_\e dx dt=\int_{\OT} \lp\int_Y  \ov\bfv_0(x,t,y)dy \rp  \bfpsi (x,t)dxdt.
 \end{aligned}
\end{equation}

\ni By (\ref{estimovve1}) and (\ref{psiovpsi}), the following  holds 

 \begin{equation}
\label{uvp}
\begin{aligned}
\lime \lb  \int_{\OT}   \bfu_\e\cdot   \bfpsi  dm_\e dt-
 \int_{\OT} \ov \bfv_\e\cdot  \ov\bfpsi_\e dm_\e dt\rb=0.
\end{aligned}
\end{equation}

\ni The weak* convergence of $(\bfu_\e m_\e)$ to $\bfv$ and 
 (\ref{v0ri20}),  (\ref{limovveovpsie}),   (\ref{uvp}), imply 

\begin{equation}
\nonumber
\begin{aligned}
  \int_\OT    \hskip-0,5cm  \bfv \cdot   \bfpsi dx dt  & = \lime \int_{\OT}  \hskip-0,5cm\bfu_\e\cdot  \bfpsi dm_\e dt 
 =  \int_{\OT}  \lp\int_Y  \ov\bfv_0(x,t,y)dy \rp \cdot  \bfpsi (x,t)dxdt  ,
\end{aligned}
\end{equation}

\ni yielding, by  the arbitrary choice of $\bfpsi$,  

\begin{equation}
\label{v=intovv0}
\begin{aligned}
 &\bfv(x,t)= \int_Y  \ov\bfv_0(x,t,y)dy\quad \hbox{ in } \ \ \OT .
\end{aligned}
\end{equation}

\ni By (\ref{v0,3=0}) and (\ref{v=intovv0}), the proof of (\ref{v0=v1}) is achieved provided  that we establish that 

\begin{equation}
\begin{aligned}
& \frac{\partial \ov v_{0\a}}{\partial y_\b} =0  \quad \forall \a, \b \in \{1,2\}.\end{aligned}
\label{v0a,b=0}
\end{equation}

\ni To that aim, let us   fix   $\bfPsi \in \D\lp\OT; C^\infty_\sharp\lp\lp-\frac{1}{2},\frac{1}{2}\rp^2;\SS^3\rp\rp$.
Since $\bfu_\e$ vanishes on $ \partial \Omega\times (0,T)$, by integrating by parts with respect to $x_1$ and $x_2$, we get 
 (see (\ref{exprim}))
 \begin{equation}
\nonumber
\begin{aligned}
\int_\OT \bfe_{x'}( \bfu'_\e): \bfPsi\lp x,t,\xpe\rp dm_\e dt=&
- \int_\OT  \bfu_\e'\cdot   \bfdiv'_{x'} \bfPsi\lp x,t,\xpe\rp dm_\e dt
\\&-  {1\over \e}\int_\OT   \bfu_\e'\cdot   \bfdiv'_{y'} \bfPsi\lp x,t,\xpe\rp dm_\e dt.
\end{aligned}
\end{equation}

\ni 
By (\ref{hypu0v}),  the term of the  left hand side  and the first term of the right hand side of the above equation are bounded, therefore
%
 \begin{equation}
\label{v0ri}
\begin{aligned}
\lime \int_\OT   \bfu_\e'\cdot   \bfdiv'_{y'} \bfPsi\lp x,t,\xpe\rp dm_\e dt
=0. 
\end{aligned}
\end{equation}

\ni  On the other hand, by  (\ref{estimovve1}) and   (\ref{ovPsiPsi}), there holds 

 \begin{equation}
\label{v0ri1}
\begin{aligned}
 \lime \lb\int_\OT   \hskip-0,8cm \bfu_\e'\cdot   \bfdiv'_{y'} \bfPsi\lp x,t,\xpe\rp  dm_\e dt
- \int_\OT   \hskip-0,8cm \ov \bfv_\e'\cdot   \bfdiv'_{y'} \ov \bfPsi_\e\lp x,t,\xpe\rp dm_\e dt\rb=0
. 
\end{aligned}
\end{equation}
 
\ni  A computation analogous to  (\ref{v0ri20}) yields
 \begin{equation}
\label{v0ri2}
\begin{aligned}
& \int_\OT  \hskip-0,4cm\ov \bfv_\e'\cdot   \bfdiv'_{y'} \ov \bfPsi_\e\lp x,t,\xpe\rp dm_\e dt 
=  \int_\OT \hskip-0,4cm \ov \bfv_\e'\cdot   \bfdiv'_{y'} \ov \bfPsi_\e\lp x,t,\xpe\rp dx dt.
\end{aligned}
\end{equation}

\ni By (\ref{ovPsiPsi}) and by the two-scale convergence of $\ov \bfv_\e$ to $\ov \bfv_0$ (see \eqref{ovvedtoovv0}), there holds

 \begin{equation}
\label{v0ri3}
\begin{aligned}
\lime \int_\OT   \hskip-0,4cm \ov \bfv_\e'\cdot   \bfdiv'_{y'}  \ov \bfPsi_\e\lp x,t,\xpe\rp dx dt
 &=  \int_\OTY  \hskip-0,8cm \ov \bfv_0'\cdot   \bfdiv'_{y'}  \bfPsi\lp x,t,y'\rp dx dt dy. 
\end{aligned}
\end{equation}

\ni Joining (\ref{v0ri}),  (\ref{v0ri1}),  (\ref{v0ri2}), and  (\ref{v0ri3}), we  get

 \begin{equation}
\label{v0ri4}
\begin{aligned}
  \int_\OTY    \ov \bfv_0'\cdot   \bfdiv'_{y'}  \bfPsi\lp x,t,y'\rp dx dt dy=0,
\end{aligned}
\end{equation}

\ni 
hence $\bfe_{y'}(\ov \bfv'_0)=0$, in the sense of distributions. We deduce that  $y'\to\ov  \bfv_0'(x,t,y',y_3)$  is a rigid displacement. By integrating (\ref{v0ri4}) by parts, we infer
 $$
\int_{\OT \times \partial \lp-\frac{1}{2}, \frac{1}{2}\rp^2} \ov \bfv_0'(x,t,y)  \cdot     \bfPsi \lp x, t, y' \rp\bfnu  dxdt d\H^1(y')=0,
$$
and infer  from the arbitrary choice of $\bfPsi \in \D\lp\OT; C^\infty_\sharp \lp\lp-\frac{1}{2}, \frac{1}{2}\rp^2;\SS^3\rp\rp$,  that $\ov \bfv_0' \in L^2\lp\OT, H^1_\sharp\lp\lp-\frac{1}{2}, \frac{1}{2}\rp^2;\RR^3\rp\rp$. The periodicity of $\ov \bfv'_0$  with respect to $y'$ 
and  the fact   that $y'\to\ov  \bfv_0'(x,t,y',y_3)$  is a rigid displacement imply  that $y'\to \ov\bfv_0'(x,t,y')$
is a constant field.  Assertion 
(\ref{v0a,b=0}) is proved. The proof of (\ref{v0=v1}) is achieved.


\ni {\bf Proof of (\ref{v03=v3}).} \ We assume   (\ref{eusurrborne}),   fix $ \psi\in \D\lp\OT; \D_\sharp\lp \lp -\frac{1}{2}, \frac{1}{2}\rp^2\rp\rp$, $\eta \in \D(I)$,
and  $\a\in \{1,2\}$. Noticing that the mapping $x\to  \psi\lp x, t,\frac{x'}{\e}\rp \eta\lp \frac{ y_\e(x_3)}{r_\e}\rp $ is compactly supported in $B_\e$, by integration by parts we obtain

\begin{equation}
\begin{aligned}
& \int_{\OT} \lp\frac{\partial u_{\e\a }}{\partial x_3}+\frac{\partial u_{\e3}}{\partial x_\a }\rp \psi\lp x, t,\frac{x'}{\e}\rp \eta\lp \frac{ y_\e(x_3)}{r_\e}\rp dm_\e dt
\\&  = - \int_{\OT} \lp  u_{\e\a } \frac{\partial \psi}{\partial x_3} \lp x, t,\frac{x'}{\e}\rp  + u_{\e3} \frac{\partial \psi}{\partial x_\a } \lp x,t, \frac{x'}{\e}\rp
 \rp\eta\lp \frac{ y_\e(x_3)}{r_\e}\rp dm_\e dt
\\& \hskip3cm - \int_{\OT}  \frac{u_{\e\a }}{r_\e}   \psi\lp x, t,\frac{x'}{\e}\rp \frac{\partial  \eta}{\partial x_3}\lp \frac{ y_\e(x_3)}{r_\e}\rp dm_\e dt
\\& \hskip3,5cm  -\frac{1}{\e} \int_{\OT}   u_{\e3}  \frac{\partial   \psi}{\partial y_\a }\lp x,t, \frac{x'}{\e}\rp  \eta\lp \frac{ y_\e(x_3)}{r_\e}\rp dm_\e dt.
 \end{aligned}
\label{ip03}
\end{equation}

\ni By (\ref{key}) we have

\begin{equation}
\begin{aligned}
& \int 
 \lb\frac{\partial u_{\e\a }}{\partial x_3}+\frac{\partial u_{\e3}}{\partial x_\a }\rb^2  + 
\lb  \frac{u_{\e\a }}{r_\e}  \rb^2 + | u_{\e3}|^2 (\tau)dm_\e
 \le C \int \frac{1}{r_\e^2} |\bfe(\bfu_\e)|^2 (\tau) dm_\e,
 \end{aligned}
\nonumber
\end{equation}

\ni hence, by  (\ref{eusurrborne}), all terms of the three first lines of (\ref{ip03}) are bounded. We infer 

\begin{equation}
\begin{aligned}
&\lime \int_{\OT}   u_{\e3}  \frac{\partial   \psi}{\partial y_\a }\lp x, t, \frac{x'}{\e}\rp  \eta\lp \frac{ y_\e(x_3)}{r_\e}\rp dm_\e dt=0,
 \end{aligned}
\nonumber
\end{equation}

\ni and then deduce from (\ref{estimovve1})  and  from an estimate analogous to (\ref{ovPsiPsi}) that

\begin{equation}
\begin{aligned}
&\lime \int_{\OT}   \ov v_{\e3}  \frac{\partial   \ov \psi_\e}{\partial y_\a }\lp x, t,\frac{x'}{\e}\rp  \eta\lp \frac{ y_\e(x_3)}{r_\e}\rp dm_\e dt=0,
 \end{aligned}
\label{j1}
\end{equation}

\ni where $\ov \psi_\e$ is defined by (\ref{ovpsi}).
 Taking 
 (\ref{ovve1}) and  (\ref{ovpsi}) into account, and noticing that 
$\frac{1}{r_\e}\int_{\lp \e i-\frac{r_\e}{2}, \e i +\frac{r_\e}{2}\rp}\eta\lp \frac{ y_\e(x_3)}{r_\e}\rp dx_3=\int_I \eta(y)dy$, we get

\begin{equation}
\begin{aligned}
&  \int_{\OT}   \ov v_{\e3}  \frac{\partial   \ov \psi_\e}{\partial y_\a }\lp x, t,\frac{x'}{\e}\rp  \eta\lp \frac{ y_\e(x_3)}{r_\e}\rp dm_\e dt
\\&=\frac{\e}{r_\e} \!\sum_{i\in Z_\e}\! \int_{\Omega'\times \lp \e i-\frac{r_\e}{2}, \e i +\frac{r_\e}{2}\rp\times(0,T)}\hskip-1cm 
 u_{\e3}(x',\e i ,t) \frac{\partial \psi}{\partial y_\a }\!\lp x',\e i ,t,\xpe\rp\! \eta\lp\! \frac{ y_\e(x_3)}{r_\e}\rp \!dx'dx_3dt  
 \\&=\e  \sum_{i\in Z_\e} \int_{\Omega'  \times(0,T)}\hskip-0cm 
 u_{\e3}(x',\e i ,t) \frac{\partial \psi}{\partial y_\a }\lp x',\e i ,t,\xpe\rp dx' dt \lp \int_I \eta(y)dy\rp
\\&= \lp \int_{\OT}   \ov v_{\e3}  \frac{\partial   \ov \psi_\e}{\partial y_\a }\lp x, t,\frac{x'}{\e}\rp dxdt \rp
\lp\int_I \eta(y)dy\rp.
 \end{aligned}
\label{j2}
\end{equation}

\ni  
On the other hand, by (\ref{ovvedtoovv0}) and an estimate analogous to  (\ref{ovPsiPsi}), there holds 

\begin{equation}
\begin{aligned}
& \lime \int_{\OT}   \ov v_{\e3}  \frac{\partial   \ov \psi_\e}{\partial y_\a }\lp x, t,\frac{x'}{\e}\rp dxdt 
= \int_{\OTY} \bfv_{03} \frac{\partial    \psi }{\partial y_\a } dxdtdy.
 \end{aligned}
\label{j3}
\end{equation}

 \ni Joining (\ref{j1}), (\ref{j2}), (\ref{j3}), and choosing $\eta$ such that $\int_I \eta dy_3\not=0$, we infer  that
$  \int_{\OTY} v_{03} \frac{\partial    \psi }{\partial y_\a } dxdtdy=0$. 
By the arbitraryness of $\psi$,    Assertion (\ref{v03=v3}) is proved. 
\end{proof}




  \ni {\color{black}   
  {\color{black}   
 In the next  Corollary, we derive from  Proposition  \ref{propapriori} and lemmas   \ref{lemident1},  \ref{lemident2},  \ref{lemidentu0v},
    a series of convergences  and identification relations  for  various sequences associated with  the solution  to \eqref{Pe}.}
  
  \begin{corollary}\label{corapriori} 
 Let $\bfu_\e$ be the solution to \eqref{Pe}. 

 \ni (i)  Up to a subsequence,
the convergences (\ref{uemetov})  hold and 
{\color{black}

\begin{equation}
  nv_1=nv_2=0  \quad   \hbox{  if \quad $k=+\infty$, }  \qquad 
   n\bfv=0   \quad \hbox{  if \quad
 $\kappa=+\infty$.
 }
\label{v1v2=0}
\end{equation}
 }

\ni Under (\ref{H1}) or (\ref{H2}), $n\bfu=n\bfv$.

\ni  (ii) If   $\kappa>0$,  
 the   relation 
 (\ref{flexion0})  and convergences
 (\ref{cvflexion})  hold.
 
 \ni (iii)
 In the periodic case, that is under (\ref{mu0lambda0}) and  (\ref{defBeper}),
  the convergences and relations  (\ref{uedtou0}), (\ref{u,t-u0,t}) hold.
If $\vartheta>0$
 (resp.   $\vartheta=0$),
  the relations (\ref{u0=vinB}) (resp. (\ref{u0=vonSigma}))
  are verified.
If  in addition $\kappa>0$, then 
  (\ref{u0=vonSigma2}) holds.

\end{corollary}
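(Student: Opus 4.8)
The proof is purely one of verification and invocation: the corollary adds no new analysis but simply records what the three identification lemmas yield once they are fed the a priori bounds of Proposition \ref{propapriori}. The preliminary remark is that, by \eqref{uereg} together with the energy estimate in Proposition \ref{propapriori}, the solution $\bfu_\e$ lies in $L^\infty(0,T;H^1_0(\O;\RR^3))$, so the hypothesis ``$(\bfu_\e)\subset L^\infty(0,T;H^1_0(\Omega;\RR^3))$'' of lemmas \ref{lemident1}, \ref{lemident2}, \ref{lemidentu0v} is met. I would then handle the three parts in order. For part (i): since $k=\lim_{\e\to0}\frac{r_\e}{\e}\mu_{1\e}\in(0,+\infty]$ by \eqref{kkappa}, the scalar $(\frac{r_\e}{\e}\mu_{1\e})^{-1}$ is bounded for small $\e$, and combining this with the second and fourth inequalities of \eqref{apriori} gives exactly \eqref{euborne22}; Lemma \ref{lemident1} then produces the convergences \eqref{uemetov} along a subsequence. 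If moreover $k=+\infty$ the same scalar tends to $0$, so $\sup_\tau\int|\bfe(\bfu_\e)|^2(\tau)\,dm_\e\to0$, which is the hypothesis of the first alternative of \eqref{v=0} and yields $nv_1=nv_2=0$. If $\kappa=\lim_{\e\to0}\frac{r_\e^3}{\e}\mu_{1\e}=+\infty$, then dividing the second inequality of \eqref{apriori} by $r_\e^2$ gives $\sup_\tau\int|\frac1{r_\e}\bfe(\bfu_\e)|^2(\tau)\,dm_\e\le C(\frac{r_\e^3}{\e}\mu_{1\e})^{-1}\to0$, which triggers the second alternative of \eqref{v=0} and yields $n\bfv=0$. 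Finally, \eqref{H1} and \eqref{H2} as quoted in the corollary coincide with those of Lemma \ref{lemident1}, so $n\bfu=n\bfv$ is immediate under either one, completing \eqref{v1v2=0}.

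For part (ii): when $\kappa>0$ one has $\liminf_{\e\to0}\frac{r_\e^3}{\e}\mu_{1\e}>0$, so $(\frac{r_\e^3}{\e}\mu_{1\e})^{-1}$ is bounded for small $\e$; dividing the second inequality of \eqref{apriori} by $r_\e^2$ now gives $\sup_{\e,\tau}\int|\frac1{r_\e}\bfe(\bfu_\e)|^2(\tau)\,dm_\e<+\infty$, which is precisely \eqref{eusurrborne}. Lemma \ref{lemident2} then delivers the regularity \eqref{flexion0} and the two-scale convergences \eqref{cvflexion}.

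For part (iii): in the periodic case one has $\mu_{0\e}=\e^2\mu_0$ by \eqref{mu0lambda0}, so the first line of \eqref{apriori} reads $\sup_{\e,\tau}\int_\O(|\bfu_\e|^2+\e^2|\bfe(\bfu_\e)|^2)(\tau)\,dx<+\infty$, which is \eqref{hypu0v}; together with \eqref{euborne22} from part (i) this lets Lemma \ref{lemidentu0v} apply, yielding \eqref{uedtou0}. The bound on $\re|\partial_t\bfu_\e|^2$ in the first line of \eqref{apriori} is exactly \eqref{dtuborne}, so \eqref{u,t-u0,t} follows as well. Invoking Lemma \ref{lemidentu0v}(i) when $\vartheta>0$ gives \eqref{u0=vinB}, and Lemma \ref{lemidentu0v}(ii) when $\vartheta=0$ gives \eqref{u0=vonSigma}; if in addition $\kappa>0$, then \eqref{eusurrborne} holds by the computation of part (ii), and the final clause of Lemma \ref{lemidentu0v}(ii) produces \eqref{u0=vonSigma2}.

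The only point needing attention — and hence the ``main obstacle'', modest as it is — is the bookkeeping around the possibly infinite values of $k$ and the possible vanishing of $\kappa$: one must track which of the scalings $\frac{r_\e}{\e}\mu_{1\e}$ and $\frac{r_\e^3}{\e}\mu_{1\e}$ is bounded below (always the first, since $k>0$; the second only when $\kappa>0$) and which tends to $+\infty$ (the first if $k=+\infty$, the second if $\kappa=+\infty$), so that the correct alternative in \eqref{v=0} is selected and the hypotheses \eqref{euborne22}, \eqref{eusurrborne}, \eqref{hypu0v} are matched to the right line of \eqref{apriori}. No new estimate beyond Proposition \ref{propapriori} is required.
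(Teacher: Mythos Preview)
Your proposal is correct and follows exactly the paper's own approach: the corollary is derived by feeding the a priori estimates \eqref{apriori} of Proposition \ref{propapriori} into Lemmas \ref{lemident1}, \ref{lemident2}, and \ref{lemidentu0v}, with the values of $k$ and $\kappa$ dictating which bounds are available. Your write-up is simply a more explicit version of the paper's terse argument, and the bookkeeping you flag around the scalings $\frac{r_\e}{\e}\mu_{1\e}$ and $\frac{r_\e^3}{\e}\mu_{1\e}$ is handled correctly.
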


\begin{proof}
  Noticing that by   (\ref{kkappa})
   and    (\ref{apriori}),   
 the estimate (\ref{euborne22}) holds, Assertion (i) follows from Lemma \ref{lemident1} 
 ( Assertion \eqref{v1v2=0} is a consequence of  \eqref{kkappa}, \eqref{apriori},  and \eqref{v=0}).
   If   $\kappa>0$,   by     (\ref{kkappa}) and (\ref{apriori})
    the estimate (\ref{eusurrborne}) holds, and Assertion (ii) follows from 
Lemma \ref{lemident2}.
 In the periodic case, by   (\ref{mu0lambda0}),  (\ref{defBeper}),
and  (\ref{apriori}),  $\bfu_\e$  satisfies  
(\ref{hypu0v}) and (\ref{dtuborne}), hence (iii) results  from   Lemma \ref{lemidentu0v}.
\end{proof}
 }

 
   \section{Proof of theorems \ref{thstiff}, \ref{thinter},  \ref{th}}\label{secprooftheoremth}  \quad  
      {\color{black}
    In the spirit of Tartar \cite{Ta}, we will multiply \eqref{Pe} by an appropriate  test field $\bfphi_\e$, integrate by parts,     and,  passing  to the limit as $\e\to 0$ by means of 
 the   convergences derived  in Corollary \ref{corapriori}, obtain a variational problem equivalent to 
  the announced limit problem, and also to 
 \eqref{Pvar2} for some  suitable  $H,V,a, h, \xi_0, \xi_1$. Theorem \ref{thDautrayLions} will yield existence, uniqueness, and regularity of the effective displacement. Uniqueness implies  that    the convergences   obtained  in Corollary \ref{corapriori}   for   subsequences     hold  for the  complete   sequences.


  } 
   
      \subsection{Proof of Theorem \ref{th}}\label{secproofth}  \quad  
We set
  
\begin{equation}
\begin{aligned}
&
\lpt \begin{aligned}
 &H:=\la   
 \begin{aligned} &(\bfw_0, \bfpsi )\in   L^2(\O\times Y;\RR^3) \times   L^2(\O ;\RR^3)  , \ 
 \\& \bfw_0 = \bfpsi     \  \hbox{ in }   \O \times {B} \end{aligned}
\ra ,
\\ &
((\bfw_0, \bfpsi ), (\widetilde\bfw_0, \widetilde\bfpsi  ))_{ H} := \int_{\OY}  \rho \bfw_0\cdot \widetilde\bfw_0 dxdy,  
\end{aligned} \ra& & \hbox{if } \ \vartheta>0,
\\&
\lpt \begin{aligned}
&H\!:=   L^2(\OY; \RR^3) \!\times  L^2(\Omega ;\RR^3),
\\&
((\bfw_0, \bfpsi ), (\widetilde\bfw_0, \widetilde\bfpsi ) )_{ H} := \int_{\OY}   \rho    \bfw_0\cdot \widetilde\bfw_0 dxdy + \int_\Omega \ru \bfpsi\cdot \widetilde\bfpsi dx
,
\end{aligned} \ra& & \hbox{if } \ \vartheta=0.
\end{aligned}
\label{H}
\end{equation}

\ni   We easily  deduce from the positiveness of  $\rho$ and $\ov\rho_1$ (see (\ref{defrhoe}))  
 that   $H$ is a Hilbert space.  
We fix  a couple $(\bfw_0   ,
 \bfpsi   )\in  L^2(0,T;H) $ 
satisfying    (see (\ref{defA}))

   \begin{align}
  	& \bfw_0  \in  C^\infty ([0, T];\D(\O;   C^\infty_\sharp (Y;\RR^3))),  \quad \bfpsi \in  C^\infty ([0, T];\D(\O;    \RR^3)),
\label{W0s}
	 \\ &   \bfw_0(T)={\partial \bfw_0\over \partial t}(T)= \bfpsi(T)={\partial \bfpsi\over \partial t}(T) =0,
  \label{suphyp}
  \\&
\begin{aligned} &\psi_1=\psi_2= 0 & & \hbox{if } \quad k=+\infty,
\qquad  \bfpsi=0 & &  \hbox{if } \quad \kappa=+\infty,
\end{aligned}
\label{f0}
\\& \bfw_0  (x,t,y)=  \bfpsi (x,t) \quad    \hbox{ in } \OT\times A.\label{w0=psi0}
\end{align}

%
%
%

  \ni  We choose  a sequence $(\alpha_\e)$  of positive reals  such that 
\begin{equation}
 \e r_\e <\!\!< \alpha_\e<\!\!<1, 
\label{alphapetit}
\end{equation}
and  set

\begin{equation}  
{C_\e} := \la x\in \Omega,   \ {\rm dist} (x,B_\e) <\alpha_\e r_\e  \ra. 
\label{defCe}
\end{equation}

\ni It is usefull to notice that 

\begin{equation}  
 \L^3({C_\e}\setminus {B_\e})\le C\frac{\a_\e r_\e}{\e},
 \label{LCe}
\end{equation}

\ni and that, by (\ref{w0=psi0}), the following estimate holds for  $m\in \{1,2\}$:

\begin{equation}
\begin{aligned}
 &\lb \bfpsi(x,\!t)\!- \!\bfw_0\! \lp\! x,t,\!\xe\rp\!\rb  \!+\! \lb \frac{\partial^m\bfpsi}{\partial t^m}(x,\!t)\!- \!\frac{\partial^m\bfw_0}{\partial t^m}  \lp\! x,\!t,\!\xe\rp\!\rb  \!
  \le \!C  \frac{\alpha_\e r_\e}{\e}  \  
   \hbox{ in } C_\e\!\times \!(0,\!T).
  \end{aligned} 
\label{estimpsiw}
\end{equation}

 \ni  By \eqref{defCe}, we can fix  a sequence    $(\eta_\e)$      in   $C^\infty(\ov \Omega)$    satisfying 

\begin{equation}
0\le \eta_\e \le 1, \qquad
 \eta_\e=1\ \ \hbox{ in } \ \ {B_\e} ,  \qquad \eta_\e = 0 \ \hbox{ in } \ \ \Omega\setminus {C_\e}, \qquad |\bfnabla \eta_\e |<{C\over
r_\e \alpha_\e}. 
\label{defeta}
\end{equation}

\ni   The sequence of test fields  $(\bfphi_\e)$ mentioned above  will    be defined  by

\begin{equation}
  \bfphi_{\e} (x,t) :=
   \eta_\e(x) \,\wideparen\bfpsi_\e\lp x,t \rp +  (1- \eta_\e(x) )\bfw_0 \lp x,t,\xe\rp,
\label {defphie} 
\end{equation}

\ni  
where  $\wideparen\bfpsi_\e$ is  described   in Section  \ref{appendix}.  
 As  $\bfphi_{\e}(x,t) = \bfw_0 \lp x,t,\xe\rp$
in $\Omega\setminus C_\e \times(0,T)$,   we deduce from (\ref{w0=psi0}), (\ref{estimpsiw}), (\ref{defeta}),  (\ref{defphie}), and (\ref{estimpsi}) that  the following estimates hold  in 
$ \Omega\times (0,T)$ for $m\in \{1,2\}$:
 
\begin{equation}
\begin{aligned}
  \lb  \bfphi_{\e}(x,t) \!- \! \bfw_0 \lp x,t,\xe\rp\rb\!  + \! \lb  \frac{\partial^m\bfphi_{\e}}{\partial t^m}(x,t) \!- \! \frac{\partial^m\bfw_0}{\partial t^m} \lp x,t,\xe\rp\rb\!  
  \le\! C\lp r_\e\!+ \!  \frac{\a_\e r_\e}{\e}\rp .
  \end{aligned} 
\label{estimphi}
\end{equation}

\ni It is also interesting to notice that by (\ref{defeta}), (\ref{defphie}), and (\ref{estimpsi}),  

\begin{equation}
\begin{aligned}
  \lb  \bfphi_{\e}(x,t) - \bfpsi(x,t)\rb  +   \lb  \frac{\partial^m \bfphi_{\e}}{\partial t^m}(x,t) - \frac{\partial^m  \bfpsi}{\partial t^m}(x,t)\rb      \le C r_\e  \quad \hbox{ in  } \ B_\e\times(0,T).
  \end{aligned} 
\label{estimphiBe}
\end{equation}

\ni   

\ni By  (\ref{Pe}) and  (\ref{lmu}) we have $|\bfsigma_\e( \bfphi_{\e} ) |  \le C \mu_{0\e} |\bfnabla\bfphi_\e| $ in $\Omega\setminus B_\e\times(0,T)$, therefore by (\ref{defeta}), (\ref{estimphi}), (\ref{estimpsi}), the next estimates are satisfied in  $C_\e\setminus B_\e\times(0,T)$

\begin{equation}
\begin{aligned}
 |\bfsigma_\e(\! \bfphi_{\e}\!) | &
 \le \!C \mu_{0\e} \!\lp 
  |\nabla \eta_\e|  \! \lb  \bfphi_{\e}(x,\!t)\! -\!  \bfw_0 \lp\! x,t,\!\xe\rp\!\rb  \!+ \!
  \lb \bfnabla  \wideparen\bfpsi_\e( x,\!t)\rb \! +\!  \lb \bfnabla\! \lp \!\bfw_0 \lp\! x,\!t,\!\xe\rp\! \rp\!\rb\rp
 \\& \le C \mu_{0\e} \lp 
\frac{1}{\a_\e r_\e}  \lp r_\e+   \frac{\a_\e r_\e}{\e}\rp  + 
\frac{C}{\e} \rp \le C \mu_{0\e} \lp \frac{1}{\e}+\frac{1}{\a_\e} \rp ,
  \end{aligned} 
\label{estimsigmaC-B}
\end{equation}

\ni yielding 

\begin{equation}
\begin{aligned}
 \bfsigma_\e( \bfphi_{\e})  
 & \le  C \mu_{0\e} \lp \frac{1}{\e}+\frac{1}{\a_\e} \rp 
  \hskip 0,5cm \hbox{ in } (C_\e\setminus B_\e)\times (0,T).
  \end{aligned} 
\label{estimsigmasoft}
\end{equation}

\ni 
Applying  (\ref{lem1})   to  $\chi_\e =\rho_\e \mathds{1}_{\Omega\setminus B_\e}$, $h_0\in \{
\bfw_0, {\partial^2\bfw_0 \over
\partial t^2},\bfw_0( 0 ), $ ${\partial^2\bfw_0 \over
\partial t^2} ( 0 )\}$, we deduce from  (\ref{defrhoe}), (\ref{1Besdto}), (\ref{alphapetit}), and  (\ref{estimphi}),
that
  the following   convergences hold  for $  m\in \{1,2\}$ 

\begin{equation}
\begin{aligned} &  \rho_\e  \bfphi_{\e}\mathds{1}_{\Omega\setminus B_\e} \ \sdto \   \rho \mathds{1}_{Y\setminus A}(y) \bfw_0,\quad & & \rho_\e {\partial^m  \bfphi_{\e}\over \partial t^m}  \mathds{1}_{\Omega\setminus B_\e} \ \sdto \  
\rho\mathds{1}_{Y\setminus A}(y){\partial^m \bfw_0 \over \partial t^m}, 
\\ &    \rho_\e
 \bfphi_{\e}( 0 )\mathds{1}_{\Omega\setminus B_\e}  \sdto    \rho\mathds{1}_{Y\setminus A}(y) \bfw_0 (  0  ),\  
& & \rho_\e{\partial
 \bfphi_{\e}\over
\partial t }( 0)\mathds{1}_{\Omega\setminus B_\e} \sdto  \rho\mathds{1}_{Y\setminus A}(y){\partial  \bfw_0 \over
\partial t }( 0 ).
\end{aligned}
\label{Phidtothin0}
\end{equation}

\ni 
  By  multiplying     (\ref{Pe}) by   
$ \bfphi_{\e} $,    after integrations by parts   we obtain (see (\ref{suphyp})) 
\begin{equation}
\begin{aligned}
&  \int_\OT  \re \bfu_\e\cdot  {\partial^2  \bfphi_{\e} \over \partial t^2} dxdt    + \int_\O  \re \bfa_0 \cdot {\partial
 \bfphi_{\e} \over \partial t}(0)dx-\int_\O \re \bfb_0 \cdot  \bfphi_{\e}( 0) dx
\\ & \hskip3cm +\int_\OT     \bfe(\bfu_\e):  \bfsigma_\e( \bfphi_{\e} ) dxdt 
 =\int_\OT \re \bff \cdot  \bfphi_{\e} dxdt. \label{IP}
 \end{aligned}
\end{equation}

\ni By (\ref{defme}) and (\ref{defrhoe}), we have

\begin{equation}
\begin{aligned}
   \int_\OT  \re \bfu_\e\cdot  {\partial^2  \bfphi_{\e} \over \partial t^2} dxdt  
= \int_\OT & \rho \mathds{1}_{\Omega\setminus B_\e} \bfu_\e\cdot  {\partial^2  \bfphi_{\e} \over \partial t^2} dxdt
\\&+ \int_\OT  \frac{r_\e}{\e}\rho_{1\e}  \bfu_\e\cdot  {\partial^2  \bfphi_{\e} \over \partial t^2} dm_\e(x) dt.
 \end{aligned}
\label{splitOB}
\end{equation}
 
\ni
We deduce from   (\ref{uemetov}),  
  (\ref{uedtou0})  (see Corollary \ref{corapriori}), and  (\ref{Phidtothin0})  that

\begin{equation}
\begin{aligned}
 & \lime \int_\OT  \re \mathds{1}_{\Omega\setminus B_\e} \bfu_\e\cdot  {\partial^2  \bfphi_{\e} \over \partial t^2} dxdt
 = \int_\OTY \rho \mathds{1}_{Y\setminus A}(y) \bfu_0\cdot  {\partial^2  \bfw_0 \over \partial t^2} dxdt dy. 
 \end{aligned}
 \nonumber
\end{equation}

\ni By (\ref{defrhoe}),  (\ref{uemetov}),
  and (\ref{estimphiBe}), we have 
$$
 \lime  \int_\OT  \frac{r_\e}{\e}\rho_{1\e}  \bfu_\e\cdot  {\partial^2  \bfphi_{\e} \over \partial t^2} dm_\e(x) dt
 = \int_{\Omega\times (0,T)} \ov\rho_1 \bfv \cdot  {\partial^2  \bfpsi  \over \partial t^2} dxdt.
 $$

\ni The last two equations imply

\begin{equation}
\begin{aligned}
\lime    \int_\OT  \hskip-0,9cm \re \bfu_\e\cdot  {\partial^2  \bfphi_{\e} \over \partial t^2} dxdt  
= \int_{\OT\times Y\setminus A} \hskip-0,8cm \rho  \bfu_0\cdot  &{\partial^2  \bfw_0 \over \partial t^2} dxdt dy
 + \int_{\Omega\times (0,T)} \hskip-0,6cm \ov\rho_1 \bfv \cdot  {\partial^2  \bfpsi  \over \partial t^2} dxdt.
 \end{aligned}
\label{lim0}
\end{equation}

\ni  
As, by (\ref{Pe}),   $\bfa_0$, $\bfb_0$, and $\bff$ are    continuous, 
 we obtain by the same argument

\begin{equation}
\begin{aligned}
& \lime  \int_\O  \re \bfa_0 \cdot {\partial
 \bfphi_{\e} \over \partial t}(0)dx=\int_{\Omega\times Y\setminus A} \hskip-0,5cm\rho   \bfa_0\cdot  {\partial  \bfw_0 \over \partial t} dx dy
+ \int_\Omega \ov\rho_1 \bfa_0 \cdot  {\partial^2  \bfpsi  \over \partial t^2} dxdt,
 \\& \lime  \int_\O \re \bfb_0 \cdot  \bfphi_{\e}( 0) dx =\int_{\Omega\times Y\setminus A} \rho  \bfb_0\cdot   \bfw_0   dx dy
+ \int_\Omega \ov\rho_1 \bfb_0 \cdot    \bfpsi   dxdt,
\\ & \lime \int_\OT \re \bff \cdot  \bfphi_{\e} dxdt =\int_{\OT\times Y\setminus A} \hskip-0,5cm\rho  \bff\cdot    \bfw_0   dxdt dy
+ \int_{\Omega\times (0,T)}\hskip-0,8cm \ov\rho_1 \bff \cdot  {\partial^2  \bfpsi  \over \partial t^2} dx dt
. 
 \end{aligned}
 \label{lim1}
\end{equation}


\ni We split  the $4^{th}$ term of the  left hand member of   (\ref{IP}) into the sum of three terms: 

\begin{equation}
\begin{aligned}
 &  \int_\OT   \hskip-0,8cm    \bfe(\bfu_\e) :  \bfsigma_\e( \bfphi_{\e}  ) dxdt  = I_{1\e} + I_{2\e} + I_{3\e} ; 
\quad I_{1\e}:=
\int_{\O\setminus {C_\e}\times (0,T)}  \hskip-1,2cm 
  \e\bfe(\bfu_\e):  \frac{1}{\e}\bfsigma_\e( \bfphi_{\e} ) dxdt,
\\ &   I_{2\e}:= \int_{{C_\e}\setminus {B_\e} \times (0,T)}   \hskip-1cm   \e \bfe(\bfu_\e): \frac{1}{\e} \bfsigma_\e( \bfphi_{\e} ) dxdt,\quad 
I_{3\e}:= \int_{  {B_\e} \times (0,T)}    \hskip-1cm   \bfe(\bfu_\e): \bfsigma_\e( \bfphi_{\e} ) dxdt.
\end{aligned}
\label{split}
\end{equation}  
 
\ni By    (\ref{defeta}) and  (\ref{defphie}), we have  $\bfphi_{\e} \mathds{1}_{\O\setminus {C_\e} }= \bfw_0 \lp x,t,{x \over \e} \rp \mathds{1}_{\O\setminus {C_\e} }$.
Taking  (\ref{Pe}), (\ref{lmu}),  and (\ref{mu0lambda0})  into account, we deduce that 

\begin{equation}
\label{estimsigmaO/C}
\begin{aligned}
\lb  {1\over \e}\bfsigma_\e\lp\bfphi_\e \rp \! - \bfsigma_{0y}( \bfw_0)\!\lp  x,t,{x \over \e}  \rp \rb \mathds{1}_{\O\setminus {C_\e} } 
\le C \e,
\end{aligned}
\end{equation}

\ni where the operator $\bfsigma_{0y}$ is defined by (\ref{eysigmayg}). The following convergence

\begin{equation}
\label{1Cesdto}
\begin{aligned}
\mathds{1}_{\Omega\setminus C_\e} \ \sdto \ \mathds{1}_{Y\setminus A},
\end{aligned}
\end{equation}

\ni follows from (\ref{1Besdto}) and from the strong convergence of $\mathds{1}_{C_\e\setminus B_\e}$ to $0$ in $L^2(\Omega)$, which results  from  (\ref{alphapetit}) and  (\ref{LCe}). 
By   applying Assertion  (\ref{lem1})  of Lemma \ref{lemtwoscale}   to  $h_0 :=\bfsigma_{0y}( \bfw_0)   $ and $
\chi_\e := 
\mathds{1}_{\O\setminus {C_\e} } $, taking (\ref{estimsigmaO/C}) into account, we infer

\begin{equation}
     {1\over \e}\bfsigma_\e\lp  \bfphi_{\e} \rp  \mathds{1}_{\O\setminus {C_\e} }\  \sdto \ \bfsigma_{0y}(\bfw_0 )   \mathds{1}_{ Y\setminus A }(y) .
 \label{sdto2}
\end{equation}

\ni We deduce from  (\ref{uedtou0}), (\ref{split}),   and (\ref{sdto2}) that 

\begin{equation}
\label{I1}
\begin{aligned}
\lim_{\e\to 0} I_{1\e}&
 = \int_{\OT\times Y\setminus A}
  \bfe_y(\bfu_0): \bfsigma_{0y}(\bfw_0 ) dxdtdy.
\end{aligned}
\end{equation}

\ni By    (\ref{mu0lambda0}),  (\ref{estimsigmasoft}) and (\ref{LCe}), we have 
$\int_{C_\e\setminus B_\e\times (0,T)} \lb\frac{1}{\e} \bfsigma_\e(\bfphi_\e) \rb^2 dx  dt 
\le C \lp \frac{\a_\e r_\e}{\e} + \frac{\e r_\e}{\a_\e}\rp$,
\ni therefore, by (\ref{alphapetit}), the sequence $\lp\frac{1}{\e} \bfsigma_\e(\bfphi_\e) \mathds{1}_{C_\e\setminus B_\e}\rp$ strongly converges to $0$ in $L^2(C_\e\setminus B_\e\times(0,T);\SS^3)$. 
Accordingly, we  infer  from (\ref{hypu0v}) and  (\ref{split}) that

\begin{equation}
\lim_{\e\to 0} I_{2\e}=0.
\label{I2}
\end{equation}

 \ni  Finally, the limit of the sequence $(I_{3\e})$ defined by (\ref{split}) is   computed in Lemma \ref{lemI3} in terms of  $k$ and $\kappa$.
 Passing   to the limit as $\e \to 0$ in (\ref{IP}), collecting  (\ref{f0}),  (\ref{lim0}), (\ref{lim1}),  (\ref{split}),  (\ref{I1}), (\ref{I2}), and  (\ref{I32}), 
   we  obtain the variational formulation 
 given, according to the order of magnitude of $k$ and $\kappa$, by (\ref{Elims}), (\ref{Elim22}), (\ref{Elim3}), or (\ref{Elim0}). 
 We    distinguish $4$ cases:

\ni {\bf  Case $0<k<+\infty$.} \quad    We find
   
\begin{equation}
\begin{aligned} &   \int_{\OT\times Y\setminus A}\hskip-0,5cm \rho {\bfu_0} \cdot {\partial^2 \bfw_0 \over \partial t^2}  dxdtdy    + \int_{\OT\times Y\setminus A} \hskip-0,5cm  \rho \bfa_0 \cdot {\partial \bfw_0
\over \partial t}( 0  )-\rho \bfb_0 \cdot  \bfw_0 ( 0 ) dxdy
\\& +\int_\OT     \ru  {\bfv } \cdot {\partial^2 \bfpsi \over \partial t^2}  dxdt    + \int_\O    \ru  \bfa_0 \cdot {\partial \bfpsi
\over \partial t}( 0  )- \ru  \bfb_0 \cdot  \bfpsi ( 0 ) dx  
\\&
  +\int_{\OT\times Y\setminus A}  \bfe_y({\bfu_0}):\bfsigma_{0y}(\bfw_0 ) dxdt dy  
 + {k }   \int_\OT  \! \bfe_{x'}(\bfv'): \bfsigma_{x'}(\bfpsi' )
dxdt 
 \\&=\int_{\OT\times Y\setminus A} \rho \bff \cdot \bfw_0  dxdtdy+\int_\OT   \ru  \bff \cdot \bfpsi dxdt,
\end{aligned}
  \label{Elims}
\end{equation}

\ni for all $(\bfw_0,\bfpsi)\in L^2(0,T;H)$ satisfying (\ref{W0s}), (\ref{suphyp}), (\ref{w0=psi0}). We set (see (\ref{H}))

\begin{equation}
\begin{aligned}
 &   \xi=(\bfu_0,\bfv ),\quad\xi_0= (\bfa_0,\bfa_0 ),\quad \xi_1=(\bfb_0,\bfb_0 ), \quad h= (\bff, \bff),
\\ & 
   V:= \la (\bfw_0, \bfpsi)\in H\lb\begin{aligned} &\bfw_0 \in   L^2(\O; H^1_\sharp(Y;\RR^3)) 
  \\& \psi_1, \psi_2 \in  L^2(0,L; H^1_0(\Omega' ))\end{aligned} \rpt \ra \hskip 1,3cm  \hbox{if } r_\e=e \e, 
 \\ &V:= \la  (\bfw_0, \bfpsi)\in H  \lb \  \begin{aligned}&   \bfw_0 \in   L^2(\O; H^1_\sharp(Y;\RR^3)) , 
  \\& \psi_1, \psi_2 \in  L^2(0,L; H^1_0(\Omega' ))  \\& \bfw_0' (x,y)= \bfpsi'(x) \ \hbox{ on } \ \Omega\times \Sigma
  \\&  \int_{\Sigma} w_{03}(.,y) d\H^2(y)=\psi_3
  \end{aligned}\rpt
 \ra \qquad \hbox{if } r_\e<\!\!<\e, 
\\ &    \ov a(\bfv,\bfpsi
):=  k \int_\Omega \bfe_{x'}(\bfv'): \bfsigma_{x'}(\bfpsi') dx,
\\ &  a((\bfu_0,  \bfv  ), (\bfw_0, \bfpsi )):= 
 \int_{\Omega\times Y\setminus A}   \bfe_{ y}(\bfu_0): \bfsigma_{ y}(\bfw_0)dxdy +\ov a( \bfv  ,   \bfpsi ),
\\ &   (\!(\!(\!\bfu_0,\! \bfv ),\!(\bfw_0,\! \bfpsi \!)\!)\!)_{ V} \!\!:=\!
  (\!(\!\bfu_0, \!\bfv\! )\!,\!(\!\bfw_0,\! \bfpsi \!)\!)_{ H} \!+\! \ov a(   \! \bfv , \!\bfpsi )
+ \int_{\Omega\times Y\setminus A}\hskip-0,8cm \bfnabla_y\bfu_0\!\cdot\! \bfnabla_y\bfw_0dxdy.
\end{aligned}
 \label{data1}
\end{equation}

\ni
By (\ref{n=1per}),  (\ref{uemetov}), (\ref{uedtou0}),  and  (\ref{u,t-u0,t}) we have  $\xi=(\bfu_0,\bfv )\!\in \!L^2(0,T;V) , 
 {\partial \xi\over \partial t}\! \in \!L^2(0,T;H)$, thus  by a density argument     the   variational
formulation (\ref{Elims}) is equivalent to  (\ref{Pvar2}). By (\ref{H}), (\ref{data1}),  and  the next  Korn's inequality 
 (see \cite{OlYoSh},   p. 14), 
 $$
 \int_{Y\setminus A} |\bfw|^2 +|\bfnabla(\bfw)|^2 dy\le C
 \int_{Y\setminus A} |\bfw|^2 +|\bfe(\bfw)|^2 dy\quad \forall \bfw\in  H^1(Y\setminus A;\RR^3), 
 $$
   for   all $\widetilde\xi=(\bfw_0, \bfpsi) \in V$,  the following holds
\begin{equation}
\begin{aligned}
 ||\widetilde\xi||^2_V   
  & =  |\widetilde\xi|^2_H+  \ov a(\psi,\psi)
  +  || \bfnabla_y(\bfw_0)||^2_{L^2(\Omega\times Y\setminus A;\RR^3)} \!
  \\&   \le C|\widetilde\xi|^2_H+ \!C|| \bfe_y(\bfw_0)||^2_{L^2(\Omega\times Y\setminus A;\RR^3)} \!+ \ov a(\psi,\psi)
 \le C|\widetilde\xi|^2_H+ C a(\widetilde\xi,\widetilde\xi),
 \end{aligned}
 \label{VHa}
\end{equation} 
yielding (\ref{hypC}).  We deduce from   Theorem \ref{thDautrayLions}    that  
$  \xi=(\bfu_0,\bfv ) $ is the unique solution to (\ref{Elims}). By (\ref{Pvar}), (\ref{xireg}),
(\ref{data1}),  
the following holds \begin{equation} \begin{aligned} 
   \xi \in C([0,T]; V)\cap C^1([0,T]; H),
\ 
 \xi(0)=   (\bfa_0,\bfa_0 ), \   {\partial \xi\over \partial t}(0) =  (\bfb_0, \bfb_0 ).
 \end{aligned} \label{bdu0}
\end{equation}
It follows  from (\ref{bdu0}), from  the next  inequalities (deduced from (\ref{H}), (\ref{data1}))   
\begin{equation}
\begin{aligned} 
  &  ||\bfw_0||_{L^2(\O; H^1_\sharp(Y;\rr^3))}    +||\bfpsi||_{L^2(\O;\rr^3)} + || \psi_1||_{ L^2(0,L; H^1_0(\Omega' ))}
\\ &  \hskip 1,3cm+ || \psi_2||_{ L^2(0,L; H^1_0(\Omega' ))}\le C
|| (\bfw_0,\bfpsi)||_{V}  \quad \forall \ (\bfw_0,\bfpsi) \in V,
\\ &  || \bfw_0||_{L^2(\OY; \rr^3)} +||\bfpsi||_{L^2(\O;\rr^3)}  \le C | (\bfw_0,\bfpsi )|_{H}  
 \quad \forall \ (\bfw_0,\bfpsi) \in H,
 \end{aligned}
 \label{norm}
\end{equation}  
and  the next 
 implication, holding for all  couple  $(E_1,E_2)$ of Banach  spaces 
\begin{equation}
\lpt\begin{aligned}
 &
  A\!\in\! \L(E_1,E_2)
  \\&  B \!\in\! C^k([0,T];E_1) 
  \end{aligned}\ra 
    \  \Rightarrow 
\lc   A\circ B \!\in \! C^k([0,T];E_2);\  
\  {d^s\over
dt^s}( A\circ
B)  = A\circ {d^s\over dt^s}B
\ \forall  s\le k 
 \rc,\nonumber
\end{equation}

\ni  applied  with $B=\xi=(\bfu_0, \bfv )$, $E_1\in \{H,V\}$,
$(A(\xi), E_2) \in \Big\{ \lp \bfu_0, L^2(\O; H^1_\sharp(Y;\RR^3))\rp ,$ $ \lp\bfv,  L^2(\O;\RR^3)\rp, 
\lp v_\a,  L^2(0,L;  H^1_0(\Omega' ))\rp ,$ $ \lp \bfu_0,  L^2(\OY; \RR^3)\rp,  \lp \bfv, L^2(\O )\rp \Big\}$, 
%
  that
\begin{equation}
\begin{aligned}
&    
\bfu_0 \!\in\! C([0,T];\! L^2(\O; H^1_\sharp(Y;\RR^3) ) )\!\cap\! C^1([0,T]; L^2(\OY;\RR^3)), 
\\ & \bfu_0(0)= \bfa_0, \, {\partial \bfu_0 \over \partial t}(0)=
\bfb_0,
\\ &   \bfv\in   C^1([0,T]; L^2(\O; \RR^3 )), \ \bfv(0)= \bfa_0, \ {\partial \bfv \over \partial t}(0)= \bfb_0,
\\ &    v_1, v_2\! \in \!C([0,T]; L^2(0,L; H^1_0(\Omega' )))\!\cap \!C^1([0,T]; L^2(\O )).
 \end{aligned}
 \label{bdeff}
\end{equation}
\ni Next we   prove that the variational problem (\ref{Elims}) is equivalent to (\ref{Phom1}). 
Setting  $\bfpsi=0$ in (\ref{Elims}), 
noticing that  $\bfe_y( {\bfu_0}) : \bfsigma_{0y}(\bfw_0) =\bfsigma_{0y}({\bfu_0}) : \bfnabla_{ y} (\bfw_0)$,
we   get 

\begin{equation}
\begin{aligned}
      \int_{\Omega\times(0,T)\times  Y\setminus A}& \hskip-0,7cm  \rho {\bfu_0} \cdot {\partial^2 \bfw_0  \over \partial t^2}  dxdtdy   +
\int_{\Omega \times  Y\setminus A}\hskip-0,7cm 
\rho
\bfa_0 \cdot {\partial \bfw_0\over
\partial t}( 0  )dxdy
 -\int_{\Omega \times  Y\setminus A}\hskip-0,7cm   \rho \bfb_0 \cdot  \bfw_0( 0 ) dxdy
  \\& +\int_{\OT\times Y\setminus A}\hskip-0,9cm
\bfsigma_{0y}({\bfu_0}) : \bfnabla_{ y} (\bfw_0) dxdt dy 
   =  
\int_{{\Omega\times(0,T)\times  Y\setminus A}} \hskip-0,9cm \rho \bff\cdot \bfw_0 dxdtdy,
 \end{aligned}
 \label{Elim0}
\end{equation} 
and,  letting   $\bfw_0$ vary  over  $\D(\O\! \times \! (0,T)\! \times\!  Y\setminus A;\RR^3)$,  
deduce 
\begin{equation}
\rho{\partial^2 \bfu_0\over \partial t^2}-\bfdiv_y(\bfsigma_{0y}(\bfu_0))=\rho \bff  \qquad \qquad \ \hbox{ in }\  {\Omega\times(0,T)\times  Y\setminus A}.
\label{Equ0}
\end{equation} 

\ni 
By integrating (\ref{Elim0}) by parts with respect to $(t,y)$
 for an arbitrary   $\bfw_0 $ satisfying (\ref{W0s}), (\ref{suphyp}), (\ref{w0=psi0}),   we infer from (\ref{Equ0})  that
 $
\int_{\OT\times \partial Y} \bfsigma_{0y}(\bfu_0)  \bfnu\cdot \bfw_0 dxdtd\H^2(y)$ $=0 
 $
($\bfnu:= $  outward pointing normal to $\partial Y$). 
Noticing that   $\bfsigma_{0y}(\bfu_0)\bfnu=0$ $\H^2$ a. e. on    $\partial Y \cap \ov   A $ 
(because if $\vartheta>0$, then $A=B$ and, by (\ref{u0=vinB}),  $\bfsigma_{0y}(\bfu_0)=0$ in $B$, whereas  if $r_\e<\!\!<\e$, then $A=\Sigma$ and    $\H^2(\partial Y \cap \ov \Sigma)=0$), we deduce 
\begin{equation}
\bfsigma_{0y}(\bfu_0) \bfnu(x,t,y)= -\bfsigma_{0y}(\bfu_0)\bfnu(x,t,-y)\quad   \hbox{ on } \quad \OT\times \partial Y. \qquad
\label{OTdY}
\end{equation}


\ni Fixing    $(\bfw_0, \bfpsi )\in L^2(0,T;H)$    satisfying (\ref{W0s}), (\ref{suphyp}), we infer from  the $Y$-periodicity of $\bfw_0$, {\color{black}  \eqref{w0=psi0}, 
and  (\ref{OTdY}),     that (see (\ref{eysigmayg}))
\begin{equation}
\begin{aligned}
   - \int_{\OT \times \partial (Y\setminus {A})} \hskip-1,5 cm  \bfsigma_{0y} ({\bfu_0}) \bfnu\cdot  \bfw_0  dxdtd\H^2(y)
  &= -\int_{\OT \times \partial (Y\setminus A)\cap \ov A} \hskip-1,5 cm  \bfsigma_{0y}(\bfu_0)  \bfnu_{Y\setminus A}\cdot \bfpsi   dxdt d\H^2(y)
 \\  &    =  \int_{\OT  }   \bfg( \bfu_0 )\cdot    \bfpsi  
 dxdt 
.
\end{aligned}
 \label{bord}
\end{equation} 
}

\ni 
By multiplying      (\ref{Equ0}) by    $ \bfw_0$    and by integrating it by parts over ${\Omega\times(0,T)\times  Y\setminus A}$, 
thanks to  (\ref{bdeff}),  (\ref{OTdY}), (\ref{bord}) we obtain  

\begin{equation}
\begin{aligned}
 &   \int_{\Omega\times(0,T)\times  Y\setminus A}   \hskip-1,1  cm   \rho\, {\bfu_0}\cdot  {\partial^2 \bfw_0 \over \partial t^2}  dxdtdy     +  \int_{\Omega \times  Y\setminus A}   \hskip-0,8  cm  \rho\, \bfa_0  \cdot  {\partial \bfw_0\over
\partial t}( 0  )dxdy 
  - \int_{\Omega \times  Y\setminus A} \hskip-0,7  cm   \rho \,\bfb_0   \cdot  \bfw_0( 0 ) dxdy
 \\&  \ +\int_{\Omega\times(0,T)\times  Y\setminus A}   \hskip-1,6 cm     \bfe_y({\bfu_0}):\bfsigma_{0y}(\bfw_0) dxdt dy  
 +  \int_{\OT  }    \hskip-0,8 cm \bfg( \bfu_0 )\cdot    \bfpsi 
 dxdt \!
  = \int_{\Omega\times(0,T)\times  Y\setminus A}   \hskip-1  cm \rho \bff \cdot  \bfw_0 dxdtdy.
 \end{aligned}
 \label{partiel}
\end{equation}

\ni By subtracting  (\ref{partiel}) from (\ref{Elims}),  we find

\begin{equation}
\begin{aligned}
   \int_\OT &  \hskip-0,5 cm  \ru   \bfv 
   \cdot    {\partial^2 \bfpsi\over \partial t^2} dxdt  
   - \int_{\OT}   \hskip-0,5 cm   \bfg(\bfu_0) \cdot     \bfpsi    dxdt 
  +  {k }   \int_\OT   \hskip-0,8 cm \bfe_{x'}(\bfv'): \bfsigma_{x'}(\bfpsi' )dxdt 
 \\& + \int_\O     {\overline \rho_1} \bfa_0 \cdot  {\partial
\bfpsi\over \partial t}( 0  )dx  
-\int_\O     {\overline \rho_1} \bfb_0 \cdot   \bfpsi( 0 ) dx
   =\!\int_\OT  {\overline \rho_1} \bff \cdot  \bfpsi dxdt\!
    .
  \end{aligned}
\label{sss}
\end{equation} 

\ni Making $\bfpsi$ vary in $\D(\OT;\RR^3)$, 
we infer 
\begin{equation}
\begin{aligned}
 &  
{\overline \rho_1}\! {\partial^2
\bfv
\over \partial t^2}  \! 
-k \bfdiv\bfsigma_{x'}(\bfv')=
   \!\ru  \bff +   \bfg(\bfu_0)   \quad \hbox{ in  } \ \ \OT.
  \end{aligned}
 \label{Ev}
\end{equation}
By   (\ref{bdeff}), (\ref{Equ0}), (\ref{OTdY}),  (\ref{Ev}), and Lemma \ref{lemidentu0v},  the couple  $(\bfu_0, \bfv )$ is a solution to (\ref{Phom1}), (\ref{k0}).
Conversely, any solution to (\ref{Phom1}), (\ref{k0}) satisfies (\ref{Elims}).


\ni  {\bf  Case   $k=+\infty, \ \kappa=0$.} \  \quad   
We obtain 
  
\begin{equation}
\begin{aligned}
&   \int_{\OT\times Y\setminus A} \rho {\bfu_0} \cdot  {\partial^2 \bfw_0 \over \partial t^2}  dxdtdy   
 + \int_{\Omega\times Y\setminus A} \hskip-0,6cm  \rho \bfa_0 \cdot  {\partial \bfw_0
\over \partial t}( 0  )-\rho \bfb_0 \cdot   \bfw_0 ( 0 ) dxdy
\\& +\int_\OT     \ru  {v_3 } {\partial^2 \psi_3\over \partial t^2}  dxdt    + \int_\O    \ru  a_{03}{\partial \psi_3
\over \partial t}( 0  )- \ru  b_{03} \psi_{03} ( 0 ) dx  
 \\  &   +\int_{\OT\times (Y\setminus  A) }  \hskip-1,5 cm \bfe_y({\bfu_0}) : \bfsigma_{0y}(\bfw_0  ) dxdt dy   
 =\int_{\OT\times Y\setminus A}  \hskip-1,5 cm \rho \bff \cdot  \bfw_0  dxdtdy+\int_\OT  \hskip-0,5 cm   \ru  f_3\psi_3  dxdt. 
   \end{aligned}\label{Elim22}
\end{equation}

\ni  This variational formulation is   satisfied  for all $(\bfw_0,\bfpsi )\in L^2(0,T;H)$ verifying (\ref{W0s}), (\ref{suphyp}), and (\ref{f0}). 
 We set ($H$ and $V$ being  given by  (\ref{H}), (\ref{data1}))

\begin{equation}
\begin{aligned}
 &\xi = (\bfu_0, \bfv), \quad 
H^{(2)}:= \la (\bfw_0,\bfpsi )\in H, \ \psi_1=\psi_2 =0\ra,  
  \\  &  (.,.)_{H^{(2)}}:=  (.,.)_{H },     
 \quad  V^{(2)}:= V\cap H^{(2)}, \quad 
  ((.,.))_{V^{(2)}}:=  ((.,.))_{V },
    \\  &  h^{(2)}\!:= \lp   \bff \mathds{1}_{Y\setminus A}\!
+   f_3\bfe_3  \mathds{1}_{A} ,  f_3\bfe_3
   \rp, 
 \\  & a^{(2)}((\bfu_0,  \bfv  ), (\bfw_0, \bfpsi )):= 
 \int_{\Omega \times  Y\setminus A}   \bfe_{ y}(\bfu_0): \bfsigma_{ y}(\bfw_0)dxdy ,
 \\  & \xi_0^{(2)}:=  (\bfa_0\mathds{1}_{Y\setminus A}\! +  a_{03}\bfe_3 \mathds{1}_{A},  a_{03}\bfe_3 ), \quad 
    \xi_1^{(2)}\!:= (\bfb_0\mathds{1}_{Y\setminus A}\! +  b_{03}\bfe_3 \mathds{1}_{A},  b_{03}\bfe_3 ). 
  \end{aligned}\label{data2}
\end{equation}

\ni   By  (\ref{n=1per}), 
 (\ref{u,t-u0,t}) and 
 (\ref{v1v2=0}),    we have   $\xi  \in L^2(0,T;V^{(2)})$  and $\xi'\in L^2(0,T;H^{(2)})$. 
Therefore, by a density argument,  the variational problem (\ref{Elim22}) is equivalent to  (\ref{Pvar2}).
By (\ref{VHa}), (\ref{data2}), 
 the estimate  (\ref{hypC})  is  satisfied. We  deduce  from    Theorem
\ref{thDautrayLions}  that  $\xi= (\bfu_0,\bfv
)$   is the unique solution to (\ref{Elim22}) and that 
$ \xi  \in  C ([0, T];  V^{(2)} ) \cap   C^1 ([0, T]; H^{(2)} ), \ \xi(0) =    \xi^{(2)}_0, \   {\partial \xi \over \partial t}(0)  =   \xi^{(2)}_1$.
Then, repeating the argument employed to prove (\ref{bdeff}), 
we infer from   (\ref{norm}) and  (\ref{data2})  that  the initial-boundary conditions and regularity properties stated in (\ref{Phommatrix}), (\ref{infty0}) are satisfied. Setting $\psi_3=0$ in (\ref{Elim22}), we get (\ref{Elim0})  and deduce (\ref{Equ0}), (\ref{OTdY}), (\ref{bord}), (\ref{partiel}).
Then, substracting (\ref{partiel})  from (\ref{Elim22}), taking (\ref{f0}) into account,  we find  

\begin{equation}
\begin{aligned}
 &  \int_\OT   \ru   v_3
   {\partial^2  \psi_3 \over \partial t^2} dxdt  
 - \int_{\OT}    (\bfg(\bfu_0))_3 \psi_3   dxdt 
+ \int_\O     {\overline \rho_1} a_{03} {\partial
\psi_3 \over \partial t}( 0  )dx  
\\& \hskip4,5cm
-\int_\O     {\overline \rho_1} b_{03} \psi_3( 0 ) dx
  =\!\int_\OT  {\overline \rho_1}f_3\psi_3 dxdt\!   .
  \end{aligned}
\nonumber
 \end{equation}

\ni Making  $\psi_3$ vary in $\D(\OT)$, we deduce that
${\overline \rho_1} {\partial^2v_3\over \partial t^2}     = \ru f_3+  (\bfg(\bfu_0))_3$ in  $ \OT$ and infer that  $(\bfu_0, \bfv)$ is solution to (\ref{Phom1}), (\ref{infty0}). 
    

\ni{\bf  Case   $0<\kappa<+\infty$.} 
 \quad Passing to the limit as $\e\to 0$ in (\ref{IP}),   we obtain  
  \begin{equation}
\begin{aligned}
&   \int_{\OT\times Y\setminus A} \rho {\bfu_0} \cdot  {\partial^2 \bfw_0 \over \partial t^2}  dxdtdy   
 + \int_{\O \times Y\setminus A} \hskip-0,6cm  \rho \bfa_0 \cdot  {\partial \bfw_0
\over \partial t}( 0  )-\rho \bfb_0 \cdot   \bfw_0 ( 0 ) dxdy
\\& +\int_\OT     \ru  {v_3 } {\partial^2 \psi_3\over \partial t^2}  dxdt    + \int_\O    \ru  a_{03}{\partial \psi_3
\over \partial t}( 0  )- \ru  b_{03} \psi_{03} ( 0 ) dx  
 \\  &  +\int_{\OT\times (Y\setminus  A) } \hskip-1cm \!\bfe_y({\bfu_0}) : \bfsigma_{0y}(\bfw_0  ) dxdt dy  
  +{\kappa  \over6}\!\int_{\OT } \bfH(v_3): \!
\bfH^{\bfsigma}(\psi_3 ) dxdt
 \\&=\int_{\OT\times Y\setminus A} \rho \bff \cdot \bfw_0  dxdtdy+\int_\OT   \ru  f_3\psi_3  dxdt,
   \end{aligned}
 \label{Elim3} 
\end{equation}

\ni   for all 
 $\!(\bfw_0,\!\bfpsi )\!\!\in \!L^2(0 ,\!T\!;\!H)\!$ verifying    (\ref{W0s}), (\ref{suphyp}), (\ref{f0}).
We set (see  (\ref{data1}), (\ref{data2}), \eqref{sigmab0})
\begin{equation}
\begin{aligned}
 & H^{(3)}:=H^{(2)},\quad
 \\&V^{(3)}:=  \la  (\bfw_0, \psi_3\bfe_3)\in V^{(2)}   \lb \  \begin{aligned}&    \psi_3 \in L^2(0,L; H^2_0(\Omega' ))
   \\& \bfw_0  (x,y)=  \psi_3(x)\bfe_3 \ \hbox{ on } \ \Omega\times \Sigma
     \end{aligned}\rpt
 \ra,
\\  &
  ( ( ( \bfu_0, v_3\bfe_3), (\bfw_0, \psi_3\bfe_3)  ) )_{V^{(3)}} :=  
 ( ( ( \bfu_0, v_3\bfe_3), (\bfw_0, \psi_3\bfe_3 )  ) )_{V }
 \\  & \hskip 1cm+ \int_\O \lp {\partial^2 v_3\over \partial x_1^2}{\partial^2 \psi_3\over \partial x_1^2}+{\partial^2 v_3\over \partial x_2^2}{\partial^2
\psi_3\over\partial x_2^2} +{\partial^2 v_3\over \partial x_1^2}{\partial^2
\psi_3\over\partial x_2^2}+{\partial^2 v_3\over \partial x_2^2}{\partial^2
\psi_3\over\partial x_1^2}\rp dx  ,
 \end{aligned}
 \label{data3} 
\end{equation}
\begin{equation}
\begin{aligned}
 &
\ov a^{(3)}( v_3\bfe_3,   \psi_3\bfe_3)  ):=  
{\kappa  \over6}\!\int_{\O  } \!\! \bfH(v_3): \!
\bfH^{\bfsigma}(\psi_3 ) dxdt,
\\  & a^{(3)}( ( \bfu_0, \bfv ), (\bfw_0, \bfpsi )  ):= \int_{\OYs} \bfe_y(\bfu_0):\bfsigma_y(\bfw_0)dxdy
+ \ov a^{(3)} (  \bfv ,   \bfpsi   ),
    \\  & \xi_0^{(3)}:= \xi_0^{(2)}, \quad 
  \xi_1^{(3)}\!:= \xi_1^{(2)}, 
\quad  h^{(3)}\!:= h^{(2)}.
 \end{aligned}
 \label{data3bis} 
\end{equation}

\ni
By   Corollary \ref{corapriori} (ii) 
 and assertions  (\ref{n=1per}),  (\ref{u,t-u0,t})   and (\ref{v1v2=0}),  
 there holds $\xi=(\bfu_0,\bfv )\in L^2(0,T;V^{(3)})$ and  $\xi'\in L^2(0,T;H^{(3)})$ hence, by  a density argument,  
the variational formulation (\ref{Elim3}) is equivalent to   (\ref{Pvar2}).
By    (\ref{data1}),  (\ref{VHa}),   (\ref{data2}),   (\ref{data3}), 
 (\ref{data3bis}),
   and (\ref{sigmab0}),
  for   all $\widetilde\xi=(\bfw_0, \bfpsi) \in V^{(3)}$, we have 

\begin{equation}
\begin{aligned}
||\widetilde\xi||_{V^{(3)}}^2&  \le ||\widetilde\xi||_{V }^2+ C  \ov a^{(3)}( \bfpsi ,  \bfpsi )
   \le C (|\widetilde \xi|_H+  a(\widetilde \xi, \widetilde \xi)+ \ov a^{(3)}( \bfpsi ,  \bfpsi))
 \\ &  \le C (|\widetilde \xi|_{H^{(3)}}+  a^{(3)}(\widetilde \xi, \widetilde \xi) ), 
\end{aligned}
\nonumber
\end{equation}
\ni
hence Assumption   (\ref{hypC}) is satisfied.
We deduce from  Theorem
\ref{thDautrayLions} that    $\xi\!= \! (\bfu_0,\bfv )$   is the unique solution to (\ref{Elim3}) and  that
$ 
   \xi \in C([0,T]; V^{(3)}) \cap   C^1([0,T]; H^{(3)} ) $,
  $\xi(0)=  \xi_0^{(3)}$,   ${\partial \xi \over \partial t}(0)  =   \xi_1^{(3)}$,
yielding, by   the inequality   (\ref{norm}) joined with    

$$
 ||\psi_3||_{L^2(0,L; H^2_0(\Omega' ))} \! 
 \le  \!C 
|| ( \bfw_0,\bfpsi  ) ||_{V^{(3)}},    \forall\    ( \bfw_0,\bfpsi  )\! \in \!V^{(3)},
$$

\ni the initial-boundary conditions and regularity properties stated in (\ref{Phommatrix}), (\ref{inftykappa}).
  Repeating   the   argument of the case $0<k<+\infty$, we set $\psi_3=0 $ in (\ref{Elim3}),
  obtain 
(\ref{Elim0}), deduce  (\ref{Equ0}), (\ref{OTdY}), (\ref{bord}), (\ref{partiel}), substract  (\ref{partiel}) from  (\ref{Elim3}), and get

\begin{equation}
\begin{aligned}
  & \int_\OT   \!\!\!\!\!\!\!  {\overline \rho_1} v_3  {\partial^2  \psi_3 \over \partial t^2}
     dxdt 
  +{\kappa   \over6}\!\int_{\OT } \!\!\bfH(v_3): \!
\bfH^{\bfsigma}(\psi_3 ) dxdt
\\  & \hskip0,5cm  -\int_{\OT  }    (\bfg( \bfu_0 ))_3  \bfpsi_3
 dxdt    -\! \int_\O  
  {\overline \rho_1} ( \bfb_0)_3   \psi_3(x,0 )
   dx 
 =  \!\int_\OT\! \! \! \! \! {\overline
\rho_1}f_3 \psi_3
 dxdt.
  \end{aligned}
  \label{bbb}
  \end{equation}

\ni By (\ref{sigmab0}),  the following equation  holds in the sense of distributions in $\D'(\OT)$

\begin{equation}
\begin{aligned}
&{\kappa   \over6}\!\left \langle   \bfH(v_3): \!
\bfH^{\bfsigma}(\psi_3 )\right \rangle_{\D' , \D }
 = \frac{\kappa  }{3}\frac{l+1}{l+2}\left \langle  
  \sum_{\a,\b=1}^2\!\! {\partial^4  v_3 \over \partial x_\a^2\partial x_\b^2} 
 , \psi_3 \right  \rangle_{\D' , \D }.
 \end{aligned}
\nonumber
  \end{equation}

\ni
Making  $\psi_3$ vary in $\D(\OT)$ in (\ref{bbb}),  we infer

\begin{equation}
\begin{aligned} 
& {\overline \rho_1} {\partial^2
 v_3
\over \partial t^2}   +  \frac{\kappa  }{3}\frac{l+1}{l+2}
\sum_{\a,\b=1}^2\!\! {\partial^4  v_3 \over \partial x_\a^2\partial x_\b^2} 
 =  \ru f_3+   (\bfg(\bfu_0))_3, \ \hbox{  in }\ \OT,  
 \end{aligned}
\nonumber
  \end{equation}

\ni and deduce that  $(\bfu_0,\bfv)$
satisfies (\ref{Phom1}), (\ref{inftykappa}).  

\ni{\it Case  $\kappa=+\infty$.} By  (\ref{I32}) 
 we have   $I_{3\e}=0$.
By passing to the limit as $\e\to 0$ in (\ref{IP}),  we  obtain  (\ref{Elim0}) and, taking (\ref{v1v2=0}) into account,  deduce in a similar manner   that  $(\bfu_0,\bfv )$
satisfies  (\ref{Phom1}), (\ref{inftyinfty}). The proof of Theorem \ref{th}  is achieved.



\subsection{  Proofs  of theorems \ref{thstiff} and \ref{thinter}}  
Under the assumptions of Theorem \ref{thstiff}, by (\ref{mulambda}) and 
(\ref{apriori}), 
the sequence $(\bfu_\e)$ (resp.  $\lp \frac{\partial \bfu_\e}{\partial t}\rp$) is bounded in 
$L^\infty(0,T; H^1_0(\Omega;\RR^3))$ (resp. $L^\infty(0,T; L^2(\Omega;$ $\RR^3))$), therefore 
by the Aubin-Lions-Simon lemma (see \cite[Corollary 6]{Si}),  
$(\bfu_\e)$  strongly converges in $L^2(0,T; L^2(\Omega;\RR^3))$ and weakly* converges in $L^\infty(0,T; H^1_0(\Omega;\RR^3))$, up to a subsequence, to some  $\bfu\in L^\infty(0,T; H^1_0(\Omega;\RR^3))$. 
In particular, assumption (\ref{H2}) of Lemma \ref{lemident1} is satisfied,  hence 
 $n\bfu=n\bfv$.
 
 Under the assumptions of Theorem \ref{thinter}, by   the apriori estimates  (\ref{apriori}),
  the sequence $(\bfu_\e)$ is bounded in 
$L^\infty(0,T; L^2(\Omega;\RR^3))$, hence   weakly* converges in $L^\infty(0,T; $ $ L^2(\Omega;\RR^3))$, up to a subsequence, to some  $\bfu\in L^\infty(0,T; L^2(\Omega;\RR^3))$. 
By (\ref{mulambdaintermediaire}) and (\ref{netonstrong}), Assumption (\ref{H1}) of Lemma \ref{lemident1} is satisfied, thus we also get   $n\bfu=n\bfv$.   
  Applying  Corollary  \ref{corapriori}, we  deduce in both cases  from  (\ref{uemetov}), \eqref{flexion0},      \eqref{v1v2=0}, and \eqref{I32},  that  
 
 \begin{equation}
\begin{aligned}
 &   
\bfu_\e m_\e  \buildrel \star \over \rightharpoonup  n\bfu   
\hskip 3cm   & &\hbox{weakly* in } \ L^\infty(0,T; \M (\ov\Omega;\RR^3) ),
\\&u_1, u_2 \in L^\infty(0,T; L_n^2(0,L; H^1_0(\Omega'))),
 \\& \bfe_{x'}(\bfu_\e')m_\e  \buildrel \star \over \rightharpoonup 
n \bfe_{x'} (\bfu')
\hskip 1cm & & \hbox{weakly* in } \ L^\infty(0,T; \M (\ov\Omega; \SS^3) ),
\\& 
  nu_1=nu_2=0,
    \quad   & &\hbox{if \quad $k=+\infty$, } 
\\
 &u_3\in L^\infty(0,T; L_n^2(0,L; H_0^2(\Omega' ))) \qquad& & \hbox{if } \quad \kappa>0,
\\&  
   n\bfu=0   \quad & &\hbox{if \quad
 $\kappa=+\infty$,}
\\& \I_{n,k,\kappa}(\bfv, \bfpsi)  = 
 \I_{n,k,\kappa}(\bfu, \bfpsi). 
 \end{aligned}
\label{totstiff}
\end{equation} 
 

\ni Let us check  that 

\begin{equation}
\begin{aligned}
\mathds{1}_{\Omega\setminus B_\e} \buildrel\star \over \rightharpoonup 1-\vartheta n \ \hbox{ weakly* in } \ L^\infty(\O),
\end{aligned} 
\label{1Omega-BetonA}
\end{equation}

\ni where $\vartheta$ is defined by \eqref{defvartheta}. 
 If   $\vartheta =0$,  
  (\ref{1Omega-BetonA})   follows from the fact that $|B_\e|\to 0$.
Otherwise, if   $\vartheta>0$,   
then 
 the sequence $(\frac{\e}{r_\e}\mathds{1}_{B_\e})$ is bounded in 
$L^\infty(\Omega)$ and, by (\ref{meton}), weakly* converges in $L^\infty(\Omega)$
to $n$. It then follows from \eqref{defvartheta} that 
 $\lp   \mathds{1}_{B_\e}\rp$ 
  weakly* converges  in $L^\infty(\O)$  to $\vartheta  n$,  yielding 
(\ref{1Omega-BetonA}).
  Next, we  check that

\begin{equation}
\bfu_\e \mathds{1}_{\Omega\setminus B_\e} \rightharpoonup \bfu (1-\vartheta n) \quad \hbox{weakly in } \ L^2(\Omega\times (0,T);\RR^3). 
\label{cvuO-B}
\end{equation}

\ni If 
  $\vartheta =0$,
    Assertion  (\ref{cvuO-B}) follows from the weak convergence of $(\bfu_\e)$ 
to $\bfu$ in $L^2(\Omega\times(0,T))$ and the   convergence of $\L^3(B_\e)$ to $0$. Otherwise, $\vartheta>0$, 
then $\lp \frac{\e}{r_\e} \bfu_\e \mathds{1}_{B_\e} \rp$ is bounded in $L^2(\Omega\times(0,T))$,
and weakly converges, by \eqref{totstiff}, to $n\bfu$.  Hence, by \eqref{defvartheta}, 
 $\lp  \bfu_\e \mathds{1}_{B_\e} \rp$ weakly converges to $ n \vartheta \bfu$, yielding (\ref{cvuO-B}).

\ni{\color{black} We fix a field $\bfpsi$ verifying \eqref{W0s}, \eqref{suphyp}, and 

  \begin{equation}
\begin{aligned} &n\psi_1=n\psi_2= 0\quad  \hbox{if } \quad k=+\infty; 
 \quad n \bfpsi=0 \quad  \hbox{if } \quad \kappa=+\infty.
\end{aligned}
\label{f0n}
\end{equation}

\ni   The} sequence of  test fields $(\bfphi_\e)$ defined by substituting $\bfpsi$ for $\bfw_0$ in (\ref{defphie}), {\color{black} that is}

 \begin{equation}
  \bfphi_{\e} (x,t) :=
   \eta_\e(x) \,\wideparen\bfpsi_\e\lp x,t \rp +  (1- \eta_\e(x) )\bfpsi \lp x,t \rp,
\label{defphiestiff} 
\end{equation}

\ni  where $\wideparen\bfpsi_\e\lp x,t \rp$ is described in Section \ref{appendix}, and $\eta_\e$ satisfies  (\ref{defeta}), now  with respect to  the non-periodic sets $B_\e$, $C_\e$  given  by (\ref{defBe}), (\ref{defCe}). We    assume  that (see Remark \ref{remI22})

\begin{equation}
\begin{aligned}
&\frac{r_\e}{\e} <\!\!< \alpha_\e<\!\!<1 & &\hbox{under the assumptions of Theorem \ref{thstiff}}, 
\\& \mu_{0\e}<\!\!<\alpha_\e<\!\!<1 & &\hbox{under the assumptions of Theorem \ref{thinter}}. 
\label{alpha2}
\end{aligned}\end{equation}

\ni By  (\ref{defeta}),  (\ref{defphiestiff}), and (\ref{estimpsi}),   the following estimates hold  in 
$ \Omega\times (0,T)$ for $m\in \{1,2\}$:
 
\begin{equation}
\begin{aligned}
  \lb  \bfphi_{\e}(x,t) -  \bfpsi \lp x,t,\xe\rp\rb  +  \lb  \frac{\partial^m\bfphi_{\e}}{\partial t^m}(x,t) -  \frac{\partial^m\bfpsi}{\partial t^m} \lp x,t,\xe\rp\rb  
  \le C r_\e.
  \end{aligned} 
\label{estimphistiff}
\end{equation}

\ni We deduce from  (\ref{LCe}) and from the estimate  
$  \bfsigma_\e( \bfphi_{\e}(x,t) )   
  \le C \frac{\mu_{0\e} }{\a_\e}
  \hbox{ in } C_\e\setminus B_\e\times (0,T),
 $
   obtained in a similar manner as  (\ref{estimsigmaC-B}), that 

\begin{equation}
\begin{aligned}
\int_{C_\e\setminus B_\e\times(0,T)}  \lb \bfsigma_\e( \bfphi_{\e}(x,t) ) \rb^2 dxdt \le    C \frac{\mu_{\e0}^2 r_\e}{\alpha_\e \e}.
 \end{aligned} 
\label{sigphiC2}
\end{equation}

\ni  

 \ni We multiply Equation  (\ref{Pe}) by $\bfphi_\e$ and integrate it  by parts to get 
 (see (\ref{IP}),  \eqref{split}) 
 
 \begin{equation}
\begin{aligned}
&  \int_\OT  \re \bfu_\e\cdot  {\partial^2  \bfphi_{\e} \over \partial t^2} dxdt    + \int_\O  \re \bfa_0 \cdot {\partial
 \bfphi_{\e} \over \partial t}(0)dx-\int_\O \re \bfb_0 \cdot  \bfphi_{\e}( 0) dx
\\ & \hskip3cm +I_{1\e} + I_{2\e} + I_{3\e} 
 =\int_\OT \re \bff \cdot  \bfphi_{\e} dxdt. \label{IPstiffinter}
 \end{aligned}
\end{equation}

\ni  By the same argument as the one used  to get \eqref{lim0}, \eqref{lim1}, splitting each term as in \eqref{splitOB} and 
taking into account  (\ref{defrhoe}), (\ref{meton}), (\ref{1Omega-BetonA}), \eqref{cvuO-B}, (\ref{estimphistiff}), 
    we obtain 

\begin{equation}
\begin{aligned}
&\lime    \int_\OT  \hskip-0,5cm \re \bfu_\e\cdot  {\partial^2  \bfphi_{\e} \over \partial t^2} dxdt  
= \int_\OT \hskip-1cm  \rho (1-\vartheta n)   \bfu\cdot   {\partial^2  \bfpsi \over \partial t^2} dxdt  
 + \int_{\Omega\times (0,T)} \hskip-1cm \ov\rho_1 \bfu\cdot  {\partial^2  \bfpsi  \over \partial t^2} n  dxdt ,
 \\& \lime  \int_\O \hskip-0,2cm \re \bfa_0 \cdot {\partial
 \bfphi_{\e} \over \partial t}(0)dx=\int_{\Omega }  \hskip-0,2cm\rho  (1-\vartheta n)    \bfa_0\cdot  {\partial  \bfpsi \over \partial t} (0)dx  
+ \hskip-0,1cm \int_\Omega \hskip-0,2cm \ov\rho_1 \bfa_0 \cdot  {\partial^2  \bfpsi  \over \partial t^2} (0)ndxdt,
 \\& \lime  \int_\O \re \bfb_0 \cdot  \bfphi_{\e}(0) dx =\int_{\Omega }  \rho  (1-\vartheta n)  
 \bfb_0\cdot   \bfpsi (0)   dx 
+ \int_\Omega \ov\rho_1 \bfb_0 \cdot    \bfpsi  (0) n dxdt,
\\ & \lime \int_\OT  \hskip-0,5cm\re \bff \cdot  \bfphi_{\e} dxdt =\int_{\OT} \hskip-0,5cm\rho  (1-\vartheta n)   \bff\cdot    \bfpsi    dxdt  
+ \int_{\Omega\times (0,T)}\hskip-0,5 cm \ov\rho_1 \bff \cdot  {\partial^2  \bfpsi  \over \partial t^2} n dx dt
. 
 \end{aligned}
 \label{lim12}
\end{equation}

\ni 
Under the assumptions of Theorem \ref{thstiff}, by  (\ref{mulambda}),  (\ref{defsigma}) and (\ref{defphiestiff})  we have $\bfsigma_\e(\bfphi_\e)  = \bfsigma(\bfpsi)$ in $\Omega\setminus C_\e\times (0,T)$, and 
by (\ref{repetit}) and (\ref{defCe}),   $\lime |C_\e|= 0$, therefore the sequence $(\bfsigma_\e(\bfphi_\e) \mathds{1}_{\O\setminus {C_\e}})$ strongly converges to $\bfsigma(\bfpsi)$ in 
$L^2( \Omega\times (0,T);\SS^3))$. 
We deduce from 
  the weak* convergence of $(\bfu_\e)$ to $\bfu$ in   $L^\infty(0,T; H^1_0(\Omega;\RR^3))$ that
\begin{equation}
\label{I12}
\begin{aligned}
\lim_{\e\to 0} I_{1\e}&
= \int_{\OT } \bfe (\bfu ): \bfsigma (\bfpsi) dxdt .
\end{aligned}
\end{equation}

\ni Under the assumptions of Theorem \ref{thinter}, noticing that $|\bfsigma_\e(\bfphi_\e)\mathds{1}_{\Omega\setminus C_\e}|=|\bfsigma_\e(\bfpsi)\mathds{1}_{\Omega\setminus C_\e}|\le C\mu_{0\e}$ and  taking  (\ref{mulambdaintermediaire}), (\ref{apriori}), (\ref{split}) into account, we  get 

\begin{equation}
\begin{aligned}
 \limsup_{\e\to0} I_{1\e} \le  \limsup_{\e\to0}  C \mu_{0\e}^{\frac{1}{2}}\lp  \int_{\OT } \mu_{0\e}|\bfe(\bfu_\e)(\tau)|^2 dxdt\rp^{\frac{1}{2}}=0.
\end{aligned}
\label{I13}
\end{equation}
 
 \ni 
By  (\ref{apriori}),  and (\ref{sigphiC2}), we have

$$
I_{2\e} \le \lp \int_{\OT} |\bfe(\bfu_\e)|^2 dxdt\rp^{\frac{1}{2}}\lp \int_{C_\e\setminus B_\e\times(0,T)}  \lb \bfsigma_\e( \bfphi_{\e}(x,t) ) \rb^2 dxdt \rp^{\frac{1}{2}}
\le  C\lp  \frac{\mu_{0\e}}{\alpha_\e } \frac{r_\e}{\e}\rp^{\frac{1}{2}}.
$$

\ni Under the assumptions of Theorem \ref{thstiff} (resp. Theorem \ref{thinter}), we deduce from  (\ref{mulambda}) and  (\ref{alpha2})    that

\begin{equation}
\begin{aligned}
 \lim_{\e\to0} I_{2\e}  =0.
\end{aligned}
\label{limI2=0stiffinter}
\end{equation}

\ni   
Collecting
   (\ref{lim12}),     (\ref{I12}),  \eqref{I13}, \eqref{limI2=0stiffinter},  and  (\ref{I32}),   by passing    to the limit as $\e \to 0$ in (\ref{IP}),     we obtain, under the assumptions of Theorem \ref{thstiff},

\begin{equation}
\begin{aligned} &   \int_{\OT } (\rho+\ru n) \bfu   \cdot {\partial^2 \bfpsi \over \partial t^2}  dxdt    + \int_{\OT} \hskip-0,5cm (\rho+\ru n)\lp  \bfa_0 \cdot {\partial \bfpsi
\over \partial t}( 0  )-  \bfb_0 \cdot  \bfpsi( 0 )\rp dx 
\\&    +\int_{\OT}  \bfe ({\bfu }):\bfsigma (\bfpsi) dxdt  
 +\I_{n,k,\kappa}(\bfu, \bfpsi)   =\int_{\OT }(\rho+\ru n) \bff \cdot \bfu dxdt .
\end{aligned}
   \label{Elimstiff}
\end{equation}

\ni and, under the assumptions of Theorem \ref{thinter},

\begin{equation}
\begin{aligned} &   \int_{\OT } (\rho(1-\vartheta n)+\ru n) \bfu   \cdot {\partial^2 \bfpsi \over \partial t^2}  dxdt  
\\&\qquad  + \int_{\OT} \hskip-0,5cm (\rho(1-\vartheta n)+\ru n)\lp  \bfa_0 \cdot {\partial \bfpsi
\over \partial t}( 0  )-  \bfb_0 \cdot  \bfpsi( 0 )\rp dx 
 +\I_{n,k,\kappa}(\bfu, \bfpsi)
 \\&   \hskip5cm   =\int_{\OT }(\rho(1-\vartheta n)+\ru n) \bff \cdot \bfu dxdt .
\end{aligned}
   \label{Eliminter}
\end{equation}
{\color{black}
\ni 
The variational formulation   \eqref{Eliminter}, joined with \eqref{totstiff},  is equivalent to \eqref{Pvar2}, where

\begin{equation}
\begin{aligned} &  H_{n,k,\kappa}\!=\!\la \bfpsi\! \in\! L^2(\O;\RR^3)\lb
\begin{aligned} &n\psi_1=n\psi_2=0 & &    \hbox{if } k=+\infty;\  
\\& n\bfpsi=0 & & \hbox{if } \kappa=+\infty 
\end{aligned} \rpt
\ra\!; \  
\\&V_{n,k, \kappa}\!=\!\la \bfpsi\!\in\! 
H_{n,k,\kappa}\lb \begin{aligned}&\psi_1,\psi_2\! \in \! L_n^2(0,L; H_0^1(\Omega' )) & &\hbox{if } 0<k  
\\&  \psi_3\in  L_n^2(0,L; H_0^2(\Omega' )) & &\hbox{if } 0<\kappa,
\end{aligned} \rpt\ra  ; 
\\& (\bfu,\bfpsi)_{H_{n,k, \kappa}}=
  \int_\O (\rho(1-\vartheta n)+\ru n) \bfu   \cdot \bfpsi  dx ; \ 
 \\& 
   ((\bfu,\bfpsi))_{V_{n,k, \kappa}}= (\bfu,\bfpsi)_{H_{n,k, \kappa}}+ \I_{n,k, \kappa}(\bfu,\bfpsi); \quad a_{n,k, \kappa}(\bfu,\bfpsi)= \I_{n,k, \kappa}(\bfu,\bfpsi);
\end{aligned}
\label{datainter33}
\end{equation}
\begin{equation}
\begin{aligned} &  h_{n,k,\kappa}= \H_{n,k,\kappa}(\bff),\ \xi_{0,n,k,\kappa}= \H_{n,k,\kappa}(\bfa_0),\ \xi_{1,n,k,\kappa}= \H_{n,k,\kappa}(\bfb_0),\ 
   \\&  \H_{n,k,\kappa}(\bfg):=  \la\begin{aligned}& \bfg & & \hbox{if } 0<k<+\infty,
   \\&(g_1 \mathds{1}_{\{n=0\}}, g_2\mathds{1}_{\{n=0\}},g_3), & & \hbox{if } k=+\infty, \ \kappa<+\infty,
   \\& \bfg \mathds{1}_{\{n=0\}} & & \hbox{if } \kappa=+\infty.
   \end{aligned}\rpt
\end{aligned}
\label{datainter33bis}
\end{equation}

\ni The  variational formulation \eqref{Elimstiff}, joined with \eqref{totstiff},  is  equivalent to \eqref{Pvar2}, with data deduced from \eqref{datainter33},
\eqref{datainter33bis}
 by substituting 
$\widetilde V_{n,k,\kappa}$ and $\widetilde a_{n,k,\kappa}$ for $V_{n,k,\kappa}$ and $a_{n,k,\kappa}$, where 

\begin{equation}
\begin{aligned} &  
\widetilde V_{n,k,\kappa}:=   V_{n,k,\kappa}\cap H^1_0(\Omega;\RR^3);\   ((\bfu,\bfpsi))_{\widetilde V_{n,k, \kappa}}\hskip-0,2cm =  ((\bfu,\bfpsi))_{V_{n,k, \kappa}}+ \int_\Omega \bfnabla \bfu\cdot \bfnabla\bfpsi dx,
\\& \widetilde a_{n,k,\kappa}(\bfu,\bfpsi):= a_{n,k,\kappa} (\bfu,\bfpsi)+ \int_\O  \bfe ({\bfu }):\bfsigma (\bfpsi) dx.
\end{aligned}
\label{tildeVa}
\end{equation}

\ni  The assumptions of Theorem \ref{thDautrayLions} are satisfied in both cases,  guaranteeing existence,  uniqueness and  regularity properties 
 of the solution.
  Finally, by integrations by parts, it  is easy to check that the  variational problems \eqref{Elimstiff}, \eqref{Eliminter}, associated with  \eqref{totstiff},  are equivalent 
 to the problems announced in theorems \ref{thstiff}, \ref{thinter}. 

 \begin{remark}\label{remI22} The assumption stated in the first line of (\ref{alpha2}) is   employed  to derive (\ref{I2}) and  requires (\ref{repetit}).
 The case $\mu_{0\e}=\mu>0$, $\vartheta>0$ $k=+\infty$ is  open.
\end{remark}

\begin{remark}[Multiphase case]\label{remmultistiffinter}
Theorems \ref{thstiff}, \ref{thinter}  can be extended to the case of  $m$ distributions $B_\e^{[s]} $ ($s\in \{1,..,m\}$)  of parallel disjoint homothetical layers 
of   thickness $r_\e^{[s]}$,  Lam\'e coefficients $\l_{1\e}^{[s]}, \mu_{1\e}^{[s]}$,  and mass density $\frac{\e}{r_\e^{[s]}} \ov\rho_1^{[s]}$, 
 defined in terms of a finite subset $\omega_\e^{[s]}$ of $(0,L)$ and   $r_\e^{[s]}$ by a formula like \eqref{defBe}.   
The sets $\omega_\e^{[s]}$ are disjoint and their union 
 $\omega_\e:= \bigcup_{s=1}^m \omega_\e^{[s]}$
  satisfies \eqref{condomegae}, which implies  that  the minimal distance between  two distincts points  of $\omega_\e$ is equal to $\e$. 
  We suppose    that $\e >r_\e^{[s]}(1+\delta),\  \forall s\in \{1,..,m\} $ for some $\delta>0$ and set $\vartheta^{[s]} := \lim_{\e\to 0} \frac{r_\e^{[s]}}{\e}$.
The Lam\'e coefficients in $\Omega \setminus \bigcup_{s=1}^m B_\e^{[s]}$ are assume to be constant  and denoted by $\l_{0\e}$, $\mu_{0\e}$. 

When $\l_{0\e}$, $\mu_{0\e}$ satisfy \eqref{mulambdaintermediaire} and  each sequence $(n_\e^{[s]})$ strongly converges to $n^{[s]}$ in $L^1(\O)$,   the solution    to \eqref{Pe}
weakly*  converges  in $L^\infty(0,T; L^2(\O;\RR^3))$
to the unique solution to the   problem \eqref{Pvar2}, where the  data are deduced from \eqref{datainter33} as follows:

\begin{equation}
\label{datamulti}
\begin{aligned}
&H:= \bigcap_{s=1}^m H_{n^{[s]},k^{[s]},\kappa^{[s]}}; \quad 
 (\bfu,\bfpsi)_H=
  \int_\O (\rho(1-\sum_{s=1}^m \vartheta^{[s]} n^{[s]})+\ru^{[s]} n^{[s]}) \bfu   \cdot \bfpsi  dx
\\&V:= \bigcap_{s=1}^m V_{n^{[s]},k^{[s]},\kappa^{[s]}};\quad   ((\bfu,\bfpsi))_{V }= (\bfu,\bfpsi)_H+ \sum_{s=1}^m \I_{n^{[s]},k^{[s]}, \kappa^{[s]}}(\bfu,\bfpsi),
\\&a (\bfu,\bfpsi)=\sum_{s=1}^m a_{n^{[s]},k^{[s]},\kappa^{[s]}}(\bfu,\bfpsi), 
\\& h = \H (\bff),\ \xi_{0 }= \H (\bfa_0),\ \xi_{1 }= \H (\bfb_0),\ 
  \\&(\H (\bfg)(x))_\a =  \la\begin{aligned}&0 && \hbox{if } \exists s\in \{1,..,m\},\, n^{[s]}(x)>0\, \hbox{ and } k^{[s]}=+\infty,
  \\& g_\a & &\hbox{otherwise},  \qquad (\a\in \{1,2\}),
  \end{aligned}\rpt
  \\&(\H (\bfg)(x))_3=  
    \la\begin{aligned}&0 && \hbox{if } \exists s\in \{1,..,m\},\, n^{[s]}(x)>0\, \hbox{ and } \kappa^{[s]}=+\infty,
  \\& g_3(x) & &\hbox{otherwise.}
   \end{aligned}\rpt
\end{aligned}
\end{equation}

  When $\l_{0\e}$, $\mu_{0\e}$ satisfy  \eqref{mulambda}, and when $\vartheta^{[s]}=0$ for each $s\in \{1,..,m\}$, 
    the
solution  to  (\ref{Pe}) weakly* converges   in $L^\infty(0,T; $ $H^1_0(\O;\RR^3))$ 
 to the unique
solution to \eqref{Pvar2}, with data $\widetilde H$, $\widetilde V$, $\widetilde a$...  deduced from $H$, $V$, $a$...  defined in \eqref{datamulti} as follows:

\begin{equation}
\label{datamultistiff}
\begin{aligned}
&\widetilde H:=  H; \quad \widetilde V = V \cap H^1_0(\O;\RR^3); \quad ((\bfu,\bfpsi))_{\widetilde V}:= ((\bfu,\bfpsi))_V + \int_\O \bfnabla\bfu\cdot\bfnabla \bfpsi dx;
\\& \widetilde a(\bfu,\bfpsi)= a(\bfu,\bfpsi)+ \int_\O \bfe(\bfu):\bfsigma(\bfpsi) dx; \quad (\widetilde h, \widetilde \xi_0, \widetilde \xi_1):= (h,\xi_0, \xi_1). 
\end{aligned}
\end{equation}
   
  \end{remark}

 \begin{remark}[Elliptic case]\label{remequilibriumstiffinter} When $\l_{0\e}$, $\mu_{0\e}$ satisfy  \eqref{mulambda}, and when $\vartheta^{[s]}=0$ for each $s\in \{1,..,m\}$, 
 the solution $\bfu_\e$ to the equilibrium problem

 \begin{equation} 
  - \bfdiv(\bfsigma_\e( \bfu_\e)) =
  \bff \quad \hbox{ in } \quad \O ,
\ \quad \bfu_\e\in   H_0^1(\O, \RR^3) ,  \quad  \bff \in  L^2(\O, \RR^3), \label {Pelliptique2} 
\end{equation}

\ni  is bounded in $H^1_0(\O;\RR^3)$ and weakly converges to the unique field $  \bfu  \in \widetilde V$ satisfying  $ a(\bfu , \bfpsi)= (\bff, \bfpsi)_{\widetilde H}, \  \forall \bfpsi \in \widetilde V$, where $\widetilde V $ is the Hilbert space    and $\widetilde a(.,.)$   the continuous coercive bilinear form on $\widetilde V$ given by \eqref{datamultistiff}. 
 
If  $\l_{0\e}$, $\mu_{0\e}$ satisfy \eqref{mulambdaintermediaire},    each sequence $(n_\e^{[s]})$ strongly converges to $n^{[s]}$ in $L^2(\O)$, 
and   $\bfu_\e$ is bounded in $L^2(\Omega;\RR^3)$,  then $\bfu_\e$  weakly converges, up to a subsequence,  to   some $\bfu\in V$ verifying  
$a(\bfu , \bfpsi)= (\bff, \bfpsi)_{ H}\  \forall \bfpsi \in  V$,
 with  $H,V,a(.,.)$  defined by \eqref{datamulti}. In this case, 
 the non-negative bilinear form $a(.,.)$  may fail to be coercive on $L^2(\O;\RR^3)$ and the sequence $\bfu_\e$  to be bounded in $L^2(\O;\RR^3)$. 
These coercivity   
and
 boundedness
 are guaranteed 
by the existence of  $s\in \{1,..,m\}$ 
 and $c>0$ such that 
 $\kappa^{[s]}>0$ and  $n^{[s]}_\e\ge c$ a.e. in $\O_\e:=\bigcup_{i\in Z^{[s]}_\e} \lp \e i-\frac{\e}{2}, \e i+\frac{\e}{2}\rc$ (see \eqref{defZe}). 
(Notice that if  the second assumption in \eqref{condomegae} is  replaced by 
 $\min_{j,j'\in J_\e, j\not= j'} |\omega_\e^{j}-\omega_\e^{j'}|= \eta \e$ for some arbitrarily  fixed $\eta\in \lp 0, \frac{1}{2}\rp$, our proofs are unchanged and   $n_\e^{[s]}\ge c
 \mathds{1}_{\O_\e} $ 
  does not imply that 
 $B_\e^{[s]}$ is $\e$-periodic).

 \ni{\it Sketch of the proof. } Let $s$ be such that $\kappa^{[s]}>0$. 
 The bilinear form associated with \eqref{Pelliptique2}, namely $a_\e(\bfvarphi, \bfpsi) = \int_\Omega  \bfe(\bfvarphi):\bfsigma_\e(\bfpsi) dx\  \forall (\bfvarphi, \bfpsi)\in \lp H^1_0(\O;\RR^3)\rp^2$,  satisfies,
by  \eqref{Pe}, \eqref{kkappa}, and \eqref{mulambdaintermediaire}  

 \begin{equation}
\label{estimae}
\begin{aligned}
a_\e(\bfvarphi, \bfvarphi)&\ge C  \int_\Omega  \e^2 \lb \bfe(\bfvarphi)\rb^2  dx +  C \int_\O \lb  \frac{1}{  r_\e^{[s]} } \bfe(\bfvarphi)\rb^2dm_\e^{[s]}. 
\end{aligned}
\end{equation}

 \ni Let $\bfu_\e$ be a sequence in $H^1_0(\O;\RR^3)$,  and let  $m_\e^{[s]}$, $\hat\bfv_\e^{[s]}$,  $\ov\bfv_\e^{[s]}$ be defined by substituting $\omega_\e^{[s]}$ for $\omega_\e$ in \eqref{defme}, \eqref{defhatve}, \eqref{defovve}. 
We have, since $n_\e^{[s]}\ge c \mathds{1}_{\O_\e} $, 
 
  \begin{equation}
\nonumber
\begin{aligned}
  \int_{\O} \lb  \bfu_\e\rb^2dx & \le     \int_{\O\setminus \O_\e} \lb  \bfu_\e\rb^2dx +  C\int_{\O_\e} \lb n_\e^{[s]} \bfu_\e-\ov\bfv_\e^{[s]}\rb^2dx + C\int_\O \lb   \ov\bfv_\e^{[s]}\rb^2dx.
\end{aligned}
\end{equation}

\ni Looking back at \eqref{neue-ovve}, and using the fact that $\bfu_\e$ vanishes on $\partial \O$, we obtain

\begin{equation}
\nonumber
\begin{aligned}
  \int_{\O\setminus \O_\e} &\lb  \bfu_\e\rb^2dx +  C\int_{\O_\e} \lb n_\e^{[s]} \bfu_\e-\ov\bfv_\e^{[s]}\rb^2dx 
\\&  \le      C\e^2  \int_{\Omega\setminus \O_\e} 
 \lb \frac{\partial \bfu_\e}{\partial x_3} \rb^2(\tau) dx+
   C\e^2 \sum_{i\in Z^{[s]}_\e}    \int_{\Omega'\times\lp \e i-\frac{\e}{2}, \e i+\frac{\e}{2} \rc} 
 \lb \frac{\partial \bfu_\e}{\partial x_3} \rb^2(\tau) dx
 \\&\le C\e^2 \int_\O |\bfnabla \bfu_\e|^2dx \le C\e^2 \int_\O |\bfe( \bfu_\e)|^2dx , 
\end{aligned}
\end{equation}
 
\ni yielding $\int_{\O} \lb  \bfu_\e\rb^2dx   \le C\e^2 \int_\O |\bfe(\bfu_\e)|^2 dx+ C\int_\O \lb   \ov\bfv_\e^{[s]}\rb^2dx $.  On the other hand, by  \eqref{ovvdx=hatvdm}, \eqref{estimhatve}, \eqref{key}, 
 
 \begin{equation}
\nonumber
\begin{aligned}
 \int_\O \lb   \ov\bfv_\e^{[s]}\rb^2dx &=  \int_\O \lb   \hat\bfv_\e^{[s]}\rb^2dm_\e^{[s]}\le C\int_\O \lb \bfu_\e-  \hat\bfv_\e^{[s]}\rb^2dm_\e^{[s]}+ C\int_\O \lb   \bfu_\e \rb^2dm_\e^{[s]}
    \\& \le  C\e r_\e^{[s]}\int_\O \lb \bfe(\bfu_\e)\rb^2dm_\e^{[s]}+ C\int_\O \lb  \frac{1}{  r_\e^{[s]} } \bfe(\bfu_\e)\rb^2dm_\e^{[s]},\end{aligned}
\end{equation}

\ni therefore, for all $\bfu_\e \in H^1_0(\O;\RR^3)$, $ \int_{\O } \lb  \bfu_\e\rb^2dx   \le   C  a_\e(\bfu_\e,\bfu_\e)$.
%
%
%
In the particular case when  $\bfu_\e$ is the solution to \eqref{Pelliptique2}, we infer $\int_\O \lb   \bfu_\e\rb^2dx  \le  C  \int_\O  \bff\cdot\bfu_\e dx \le C \lp \int_\O \lb   \bfu_\e\rb^2dx\rp^{\frac{1}{2}}, $
%
   hence $(\bfu_\e)$ is bounded in $L^2(\O;\RR^3)$. 
 We choose a smooth field $\bfpsi\in V$ and consider the associated   sequence of test field  $\bfphi_\e$ used for the proof of the multiphase case, whose construction is similar to  
 \eqref{defphiestiff}. 
  Repeating the argument of \cite[p. 40, $(iii)\Rightarrow (i)$]{BeSiam}, we find that $ a(\bfpsi, \bfpsi)=\lim_{\e\to0}  a_\e(\bfphi_\e,\bfphi_\e)
  \ge c\lime  \int_\O \lb  \bfphi_\e\rb^2dx=\int_\O \lb \psi \rb^2dx$. 
  By a density argument, we deduce 
 $\int_\O \lb \psi \rb^2dx\le Ca(\bfpsi, \bfpsi) \ \forall \bfpsi\in V$.

 \end{remark}
 
 }

 \section{Appendix}\label{appendix}
  
  \ni A common step in   the proofs of theorems \ref{thstiff}, \ref{thinter}, and   \ref{th} 
lies in  the computation of the limit of the sequence  $(I_{3\e})$  defined by  (see (\ref{defphie}), (\ref{split}))
  
   \begin{equation}
\begin{aligned}
I_{3\e}:= \int_{  {B_\e} \times (0,T)}     \bfe(\bfu_\e): \bfsigma_\e( \wideparen\bfpsi_{\e} ) dxdt,
\end{aligned}
\label{defI3}
\end{equation}

\ni where $\bfu_\e$ is  the solution to (\ref{Pe})  and the oscillating test fields  $\wideparen\bfpsi_{\e} $  is defined bellow, 
in terms of    $\bfpsi \in  C^\infty ([0, T];\D(\O;    \RR^3))$      satisfying (\ref{f0}),  of  $\delta$     given by  (\ref{disjoint}),
and of the order of magnitude  of   the parameters $k$ and $\kappa$.
We   introduce the field 
$\ov  \bfpsi_\e $ given by 

\begin{equation}
\begin{aligned}
  &\ov  \bfpsi_\e (x,t  ):= \sum_{j\in J_\e} \lp \intb_{ \lp \omega_\e^{j}-\frac{r_\e}{2}, \omega_\e^{j}+\frac{r_\e}{2}\rp}
\hskip-1,5cm  \bfpsi(x_1,x_2,s_3 ,t)ds_3\rp \mathds{1}_{ \lp \omega_\e^{j}-\frac{r_\e(1+\delta)}{2}, \omega_\e^{j}+\frac{r_\e(1+\delta)}{2}\rp}
(x_3).  
\end{aligned}
\label{ovpsie0}
\end{equation} 

\ni

\ni (i) If $0<k\le+\infty$ and $\kappa=0$,  we set 

\begin{equation}
\begin{aligned}
&\wideparen\bfpsi_\e(x,t)\! :=
 \ov \bfpsi_\e \lp x,t \rp
  +r_\e \bfw_{1\e}\lp x,t, \frac{y_\e(x_3)}{r_\e}\rp,  
  \\&\bfw_{1\e}(x,t,y_3):=
\begin{pmatrix}-{\partial \ov\psi_{\e3}\over \partial x_1} y_3\cr -{\partial \ov\psi_{\e3}\over \partial x_2} y_3
\cr {-l_\e \over l_\e+2}\lp  {\partial \ov\psi_{\e1}\over \partial x_1}+{\partial \ov\psi_{\e2}\over \partial x_2} \rp y_3\end{pmatrix} .
\end{aligned}
\label{defparenpsikfini}
\end{equation}

\ni where  the function $y_\e(.)$  is defined by    (\ref{yepsnonper}). 
     

 %

\ni (ii) If $0<\kappa\le +\infty$,
we   set

\begin{equation}
\begin{aligned}
  &
\wideparen\bfpsi_\e(x,t ) :=
  \ov\bfpsi_\e \lp x,t \rp   + r_\e\bfw_{1\e}\lp x,t,\frac{y_\e(x_3)}{r_\e}\rp+ r_\e^2 \bfw_{2\e}\lp x,t,\frac{y_\e(x_3)}{r_\e}\rp
 ,
 \\&\bfw_{1\e}\lp x,t,y_3 \rp := - {\partial  \ov\psi_{\e  3} \over \partial x_1 }y_3 \bfe_1  - {\partial  \ov\psi_{\e  3} \over \partial x_2 } y_3\bfe_2 ,
\\  &
\bfw_{2\e} \lp x,t,y_3 \rp\!:=  {  l_\e \over 2(l_\e+2)} \lp \frac{ \partial^2 \ov\psi_{\e3}}{\partial x_1^2}+\frac{\partial^2\ov\psi_{\e3}}{\partial x_2^2}\rp y_3^2 \bfe_3.
\end{aligned}
\nonumber
\end{equation}

\ni It is usefull to notice that  $ \wideparen\bfpsi_\e$ is continuously differentiable in $C_\e\times (0,T)$ (see (\ref{defCe})), that 
 $ \wideparen\bfpsi_\e=0$ if  $\kappa=+\infty$ and that  for $m\in \{1,2\}$,

\begin{equation}
\begin{aligned}
 &  \lp  \lb  \wideparen\bfpsi_{\e}(x,t)   -\bfpsi(x,t)\rb+  \lb \frac{\partial^m\wideparen\bfpsi_{\e}}{\partial t^m}(x,t)   -
    \frac{\partial^m \bfpsi}{\partial t^m}(x,t)
   \rb\rp   \mathds{1}_{C_\e}(x)
 \le Cr_\e,
 \\& \lb \bfnabla  \wideparen\bfpsi_\e( x,t)\rb \mathds{1}_{C_\e} \le C .
  \end{aligned} 
\label{estimpsi}
\end{equation}






\begin{lemma}\label{lemI3}
Let $\bfu_\e$ be the solution to (\ref{Pe}).
Let $I_{3\e}$ be defined by (\ref{defI3}) in terms of 
$ \wideparen\bfpsi_{\e}$ described above.
 Then, 

\begin{equation}
\label{I32}
\begin{aligned}
\lim_{\e\to 0} I_{3\e}& \!=\I_{n,k,\kappa}(\bfv,\bfpsi) \!:= \!
%
%
 \la  \begin{aligned}
 &k \int_{\OT}  \hskip-0,5cm  \bfe_{x'}(\bfv'): \bfsigma_{x'}(\bfpsi') n dx dt \ & & \hbox{if } 0<k<+\infty,
 \\&  {\kappa  \over6}\!\int_{\OT }\hskip-0,5cm  \bfH(v_3): \!
\bfH^{\bfsigma}(\psi_3 ) n  dxdt 
 & &  \hbox{if } 0<\kappa<+\infty,
\\& 0 & & \hskip-3cm \hbox{if } (k,\kappa) = (+\infty ,0)    \hbox{ or   } \kappa=+\infty ,
  \end{aligned}\rpt
  \end{aligned}
\end{equation}

\ni  the operators $\bfH$,  $\bfH^{\bfsigma}$
being defined by 

\begin{equation}
\begin{aligned} 
&\bfH (\psi  ):=\begin{pmatrix}{\partial^2
\psi\over \partial x_1^2} 
&  {\partial^2
 \psi\over \partial x_1\partial x_2} &0 
\cr  {\partial^2
\psi\over \partial x_1\partial x_2  }\!\!\!\!\!\!&   {\partial^2
\psi \over \partial x_2^2}  &0
\\0&0&0
\end{pmatrix}; 
\\&
 \bfH^{\bfsigma}(\psi ):=  \begin{pmatrix}2{ l +1\over l +2}{\partial^2
 \psi \over \partial x_1^2}+{ l \over l +2} {\partial^2
 \psi \over \partial x_2^2} 
&  {\partial^2
 \psi \over \partial x_1\partial x_2} & 0
\cr  {\partial^2
 \psi \over \partial x_1\partial x_2  }&  { l \over l +2} {\partial^2
 \psi \over \partial x_1^2}+2{ l +1\over l +2}{\partial^2
 \psi \over \partial x_2^2}& 0\cr 0&0&0
\end{pmatrix}. 
 \end{aligned}
 \label{sigmab0}
\end{equation}

\end{lemma}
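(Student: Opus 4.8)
\textbf{Proof plan for Lemma \ref{lemI3}.} The plan is to treat the three regimes separately, but in each case the strategy is the same: exploit the fact that on $B_\e$ the oscillating test field $\wideparen\bfpsi_\e$ has been engineered so that $\bfsigma_\e(\wideparen\bfpsi_\e)$ has only the ``membrane'' (resp. ``bending'') components that survive in the limit, while the transverse components that would blow up are exactly cancelled by the corrector terms $r_\e\bfw_{1\e}$ (resp. $r_\e\bfw_{1\e}+r_\e^2\bfw_{2\e}$). First I would rewrite $I_{3\e}$ as an integral against $m_\e$ by using $dm_\e=\frac{\e}{r_\e}\mathds{1}_{B_\e}\L^3$, i.e. $I_{3\e}=\frac{r_\e}{\e}\int_{\OT}\bfe(\bfu_\e):\bfsigma_\e(\wideparen\bfpsi_\e)\,dm_\e dt$. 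Then I would compute $\bfsigma_\e(\wideparen\bfpsi_\e)$ explicitly on $B_\e$: since $\wideparen\bfpsi_\e$ depends on $x'$ through $\ov\bfpsi_\e$ and on $x_3$ only through the fast variable $y_3=y_\e(x_3)/r_\e$, the strain $\bfe(\wideparen\bfpsi_\e)$ splits into an $O(1)$ in-plane part coming from $\bfe_{x'}(\ov\bfpsi_\e')$ plus the $y_3$-derivatives of the correctors, which are $O(1)$ in $B_\e$; the key algebraic point is that the particular choice of $\bfw_{1\e}$ (and $\bfw_{2\e}$) makes the components $\sigma_{i3}$ of $\bfsigma_\e(\wideparen\bfpsi_\e)$ vanish identically to leading order on $B_\e$, so that $\bfe(\bfu_\e):\bfsigma_\e(\wideparen\bfpsi_\e)=\sum_{\a,\b=1}^2 e_{\a\b}(\bfu_\e)\,\sigma_{\a\b}$ up to negligible terms.

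\textbf{Case $0<k<+\infty$.} Here $r_\e\mu_{1\e}/\e\to k$, and the surviving in-plane stress is $\mu_{1\e}\bfsigma_{x'}(\ov\bfpsi_\e')$ (after eliminating $\partial\ov\psi_{\e3}/\partial x_\a$ via $\bfw_{1\e}$, which is why the $l_\e/(l_\e+2)$-corrector in the $\bfe_3$-component appears: it adjusts the plane-stress Lam\'e constant). Thus $I_{3\e}\approx\frac{r_\e\mu_{1\e}}{\e}\int_{\OT}\bfe_{x'}(\bfu_\e'):\bfsigma_{x'}(\ov\bfpsi_\e')\,dm_\e dt$. I would pass to the limit using $r_\e\mu_{1\e}/\e\to k$, the strong convergence $\ov\bfpsi_\e'\to\bfpsi'$ (hence $\bfsigma_{x'}(\ov\bfpsi_\e')\to\bfsigma_{x'}(\bfpsi')$ uniformly, by \eqref{estimpsi} and the averaging in \eqref{ovpsie0}), and the weak* convergence $\bfe_{x'}(\bfu_\e')m_\e\buildrel\star\over\rightharpoonup n\,\bfe_{x'}(\bfv')$ from Corollary \ref{corapriori}(i). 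This yields the first line of \eqref{I32}.

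\textbf{Case $0<\kappa<+\infty$.} Now $r_\e^3\mu_{1\e}/\e\to\kappa$ while $r_\e\mu_{1\e}/\e\to+\infty$, so the $O(1)$ in-plane strain $\bfe_{x'}(\ov\bfpsi_\e')$ would give a divergent contribution unless it is suppressed; this is exactly why in this regime the test field carries no $\ov\psi_{\e\a}$-stretching at leading order (the $\bfw_{1\e}$ here is only $-(\partial\ov\psi_{\e3}/\partial x_\a)y_3\bfe_\a$) and the effective in-plane strain of $\wideparen\bfpsi_\e$ on $B_\e$ is $-\frac{\partial^2\ov\psi_{\e3}}{\partial x_\a\partial x_\b}\,r_\e y_3$, an $O(r_\e)$ quantity linear in $y_3$, with the second corrector $r_\e^2\bfw_{2\e}$ killing the residual $\sigma_{33}$. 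Multiplying out, the integrand on $B_\e$ becomes, to leading order, $r_\e^2 y_3^2$ times $\bfH(\text{(second derivatives of }u_{\e})):\bfH^\bfsigma(\psi_3)$-type terms; after the change of variable and using $\int_I y_3^2\,dy_3=\frac{1}{12}$ together with the factor $r_\e^3\mu_{1\e}/\e\to\kappa$, one gets the prefactor $\kappa/6$. Concretely I would use the convergences \eqref{cvflexion} of Lemma \ref{lemident2}: $\frac1{r_\e}e_{\a\b}(\bfu_\e)\buildrel m_\e\over\rightharpoonup \frac12(\partial_\b\xi_\a+\partial_\a\xi_\b)-\partial^2_{\a\b}v_3\,y_3$, test against the matrix field built from $\bfH^\bfsigma(\psi_3)y_3$, and observe that the $\xi_\a$-terms integrate to zero against $y_3$ over $I$ (odd in $y_3$), leaving only $\int_I y_3^2\,dy_3\cdot\bfH(v_3):\bfH^\bfsigma(\psi_3)$; a short check that $\bfH:\bfH^\bfsigma$ reproduces the right quadratic form with the plane-stress factor $\frac{l+1}{l+2}$ completes this case. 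For $(k,\kappa)=(+\infty,0)$ the same computation as the $k<+\infty$ case applies but $r_\e\mu_{1\e}/\e\to+\infty$ is compensated by $\bfe_{x'}(\bfu_\e')m_\e$ vanishing in the limit (since $nv_1=nv_2=0$ and, more precisely, by the $a$ priori bound $\int|\bfe(\bfu_\e)|^2dm_\e\le C(r_\e\mu_{1\e}/\e)^{-1}$ in \eqref{apriori} the product stays bounded and in fact the quadratic form controlling it is of lower order than the test field's stress) so $I_{3\e}\to0$; and for $\kappa=+\infty$ we simply have $\wideparen\bfpsi_\e\equiv0$, hence $I_{3\e}=0$.

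\textbf{Main obstacle.} The delicate point is the $0<\kappa<+\infty$ regime: one must show rigorously that the potentially divergent terms (coming from $\frac{r_\e\mu_{1\e}}{\e}\to+\infty$ acting on $O(1)$ pieces of $\bfe(\wideparen\bfpsi_\e)$ and on cross terms like $\frac1{r_\e}e_{\a\b}(\bfu_\e)$ times $O(r_\e)$ stresses) cancel or are absorbed, using only the $a$ priori estimates \eqref{apriori} of Proposition \ref{propapriori} and the identification relations \eqref{cvflexion}. This requires carefully tracking the orders in $r_\e$ and $\e$ in the expansion of $\bfsigma_\e(\wideparen\bfpsi_\e)$ on $B_\e$, verifying via Korn's inequality on the layers (Lemma \ref{lemkey}) that the remainders are $o(1)$, and checking that the boundary/normal-trace terms produced by the integrations by parts implicit in passing from $\bfe(\bfu_\e)$ to the two-scale limit all vanish by the compact support of $\bfpsi$ in $\Omega'$. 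Everything else is a routine, if lengthy, algebraic verification.
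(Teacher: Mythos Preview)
Your overall strategy is correct and matches the paper's: compute $\bfsigma_\e(\wideparen\bfpsi_\e)$ explicitly on $B_\e$, observe that the correctors kill the transverse stress components, rewrite $I_{3\e}$ as an integral against $m_\e$, and pass to the limit using the convergences of Corollary~\ref{corapriori}. The cases $0<k<+\infty$ and $0<\kappa<+\infty$ are essentially right, including the use of \eqref{cvflexion} and the odd-in-$y_3$ cancellation of the $\xi_\a$-terms.

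There is, however, a genuine gap in your treatment of the case $(k,\kappa)=(+\infty,0)$. You argue that ``$r_\e\mu_{1\e}/\e\to+\infty$ is compensated by $\bfe_{x'}(\bfu_\e')m_\e$ vanishing in the limit'' and that ``by the a priori bound \dots\ the product stays bounded''. This is false: with a generic $\bfpsi'$, Cauchy--Schwarz and \eqref{apriori} give only
\[
\Big|\tfrac{r_\e\mu_{1\e}}{\e}\!\int_{\OT}\!\bfe_{x'}(\bfu_\e')\!:\!\bfsigma_{x'}(\bfpsi')\,dm_\e dt\Big|
\le C\,\tfrac{r_\e\mu_{1\e}}{\e}\Big(\tfrac{\e}{r_\e\mu_{1\e}}\Big)^{1/2}
= C\Big(\tfrac{r_\e\mu_{1\e}}{\e}\Big)^{1/2}\longrightarrow+\infty,
\]
so a diverging prefactor times a weakly null sequence need not stay bounded, let alone tend to zero. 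The missing ingredient is assumption \eqref{f0}: when $k=+\infty$ one has $\psi_1=\psi_2=0$, hence $\ov\psi_{\e1}=\ov\psi_{\e2}=0$, and the \emph{leading} in-plane stress in your expansion of $\bfsigma_\e(\wideparen\bfpsi_\e)$ vanishes identically. What remains is only the $O(r_\e\mu_{1\e})$ remainder, and then the a priori bound yields
\[
|I_{3\e}|\le C\mu_{1\e}r_\e\int_{B_\e\times(0,T)}\!\!|\bfe(\bfu_\e)|\,dxdt
\le C\mu_{1\e}\tfrac{r_\e^2}{\e}\Big(\tfrac{\e}{r_\e\mu_{1\e}}\Big)^{1/2}
= C\Big(\tfrac{r_\e^3}{\e}\mu_{1\e}\Big)^{1/2}\longrightarrow 0,
\]
using $\kappa=0$. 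The same observation clarifies the $0<\kappa<+\infty$ case (and the reason $\wideparen\bfpsi_\e\equiv0$ when $\kappa=+\infty$): it is not that the construction of $\wideparen\bfpsi_\e$ ``carries no $\ov\psi_{\e\a}$-stretching'' by design, but that \eqref{f0} forces $\psi_\a=0$ whenever $k=+\infty$, so the potentially divergent $O(1)$ in-plane strain is absent from the start and no cancellation against $\bfe(\bfu_\e)$ is needed.
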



\begin{proof}
{\bf Case $0<k<+\infty$.} 
We easily check that 
\begin{equation}
\begin{aligned}
&\ln    \bfpsi\! -\! \ov\bfpsi_\e   \rn_{L^\infty(B_\e\times(0,T))}  + \ln {\partial (\bfpsi-\ov\bfpsi_\e)\over \partial x_\a}  \rn_{ L^\infty(B_\e\times(0,T))} \le
 Cr_\e  \quad (\a\in \{1,2\}),
 \\&  \ln {\partial^2 (\bfpsi-\ov\bfpsi_\e)\over \partial x_\a\partial x_\beta}  \rn_{ L^\infty(B_\e\times(0,T))} \le
 Cr_\e  \quad (\a, \beta\in \{1,2\}).
 \end{aligned}
\label{estimpsiovpsi0}
\end{equation}

\ni  A straightforward computation yields (see (\ref{Pe}), (\ref{lmu}))

 \begin{equation}
   \label{sigmaephie=0}
\begin{aligned}
&\bfsigma_\e (\wideparen\bfpsi_\e)  \mathds{1}_{B_\e} \! \! =  
 \\&  \mu_{1\e}  \!\begin{pmatrix}2 { \partial \ov\psi_{  \e1} \over \partial x_1}+ {2l_\e\over l_\e+2}\lp { \partial \ov\psi_{  \e1} \over \partial x_1}+{ \partial \ov\psi_{\e2} \over \partial x_2}\rp&\!\!\!\!\!\!\!\!  { \partial \ov\psi_{  \e1} \over \partial x_2}+{ \partial \ov\psi_{  \e2} \over \partial x_1} & 0
\cr \!\!\!\!\!\!\!\!\! { \partial \ov\psi_{  \e1} \over \partial x_2}+{ \partial \ov\psi_{  \e2} \over \partial x_1} & \!\!\!\!\!\!\!2 { \partial \ov\psi_{  \e2} \over \partial x_2}+ {2l_\e\over l_\e+2}\lp { \partial \ov\psi_{  \e1} \over \partial x_1}+{ \partial \ov\psi_{\e2} \over \partial x_2}\rp&0
\cr 0 &  0& 0
\end{pmatrix}  \mathds{1}_{B_\e}
\\&+r_\e \mu_{1\e}O(1)    .
\end{aligned}
\end{equation}

\ni
Since $\frac{r_\e}{\e} \mu_{1\e} \to k \in (0,+\infty)$  and  $l_\e\to l\in (0,\infty)$ (see 
 (\ref{lmu})),     we infer  from (\ref{estimpsiovpsi0})  that 
   
   \begin{equation} 
\begin{aligned}
 &\lime  \lb  \frac{r_\e}{\e} \bfsigma_\e( \wideparen\bfpsi_\e) 
- k \bfsigma_{x'}(\bfpsi')(x,t)\rb_{L^\infty(B_\e\times(0,T))} =0,
\end{aligned}
\nonumber
 \label{estsigmakfini0}
\end{equation} 

\ni where $ \bfsigma_{x'}(\bfpsi')$ is given  by   (\ref{exprim}).
By Corollary   \ref{corapriori} (i), the convergences     (\ref{uemetov}) are verified, thus  the sequence  $ ( \bfe_{x'}(\bfu'_\e)m_\e )$ 
 weakly* converges  in $L^\infty(0,T; \M (\ov\Omega; \SS^3) )$    to $n\bfe_{x'}(\bfv')$.
 Taking (\ref{defme}) and  (\ref{defI3}) into account,
we deduce that 

$$
\lim_{\e\to 0} I_{3\e}= \lime  \int_{  \Omega \times (0,T)}    k \bfe_{x'}(\bfu_\e): \bfsigma_{x'}( \psi' ) dm_\e dt
= k \int_{\OT}  \hskip-0,5cm  \bfe_{x'}(\bfv'): \bfsigma_{x'}(\bfpsi') n dx dt.
$$

\ni {\bf Case $(k,\kappa)=(+\infty,0)$.}  By    (\ref{f0}),    (\ref{sigmaephie=0}),  we have     
$ 
|\bfsigma_\e( \wideparen\bfpsi_{\e})\mathds{1}_{{B_\e}}| 
\le\!  C\mu_{1\e}  r_\e$, thus,   by  (\ref{kkappa}),  the second line of (\ref{apriori}),  and (\ref{defI3}),  there holds  

\begin{equation}
\begin{aligned}
I_{3\e}  & \le C  \mu_{1\e} r_\e   \int_{{B_\e}\times
(0,T)}  |\bfe(\bfu_\e)|  dxdt=  C  \mu_{1\e}\frac{ r_\e^2}{\e}   \int_{{B_\e}\times
(0,T)}  |\bfe(\bfu_\e)|  dm_\e dt
 \\ &\le C  \mu_{1\e}\frac{ r_\e^2}{\e}   \sqrt{  \int_{{B_\e}\times
(0,T)}  |\bfe(\bfu_\e)|^2 dm_\e dt  } 
 \le C  \mu_{1\e} \frac{ r_\e^2}{\e}   \sqrt{   \frac{\e}{r_\e\mu_{\e1}} }
 \le C \sqrt{\frac{r_\e^3}{\e} \mu_{1\e}} =   o(1).
  \end{aligned}
\nonumber
\end{equation}

\ni {\bf Case $0<\kappa<+\infty$.}   
A straightforward computation  gives
 
  \begin{equation}
\begin{aligned} 
&\frac{r_\e^2}{\e} \bfsigma_\e(\wideparen\bfpsi_\e   )\mathds{1}_{{B_\e}}   =
\\&-2\frac{r_\e^3}{\e}  \mu_{1\e}   \! \frac{y_\e(x_3)}{r_\e}
\!\! \begin{pmatrix}
 \!{2(l_\e+1)\over l_\e+2} {\partial^2\ov\psi_{\e3}\over \partial x_1^2} \!+\!{l_\e\over l_\e+2}  {\partial^2\ov \psi_{\e3}\over \partial x_2^2} & {\partial^2\ov\psi_{\e3}\over \partial x_1\partial x_2} & 0
 \\
  {\partial^2\ov\psi_{\e3}\over \partial x_1\partial x_2} & \hskip-0,5cm {l_\e\over l_\e+2}  {\partial^2\ov\psi_{\e3}\over \partial x_1^2}\!+\!{2(l_\e+1)\over l_\e+2}{\partial^2\ov\psi_{\e3}\over \partial x_2^2}  & 0 \cr 0&0&0
\end{pmatrix}\!\!\mathds{1}_{B_\e}
\\&\hskip9cm + r_\e O\lp\frac{r_\e^3}{\e}\mu_\e\rp
.
  \end{aligned}
  \nonumber
\end{equation}

\ni We deduce from (\ref{lmu}),   (\ref{kkappa}),  (\ref{sigmab0}), and (\ref{estimpsiovpsi0}),  that 
 
 \begin{equation}
\begin{aligned} 
&\lime \lb \frac{r_\e^2}{\e}\bfsigma_\e(\wideparen\bfpsi_\e   ) 
+2\kappa   \bfH^{\bfsigma}(\psi_3 )\frac{y_\e(x_3)}{r_\e}
 \rb_{L^\infty({B_\e}\times(0,T); \SS^3)} =0.
  \end{aligned}
 \label{estsigmakappafini}
\end{equation}

\ni By (\ref{defme}) and (\ref{defI3}), we have

   \begin{equation}
 \label{estsigmakappafini2}
\begin{aligned}
  I_{3\e}  &=  \frac{r_\e}{\e}  \int_\OT \hskip-0,8cm \bfe (\bfu_\e): \bfsigma_\e (\wideparen\bfpsi_\e )  d m_\e dt 
  =  \int_\OT \frac{1}{r_\e}\bfe (\bfu_\e): \frac{r_\e^2}{\e}\bfsigma_\e (\wideparen\bfpsi_\e )  d m_\e dt  .
\end{aligned}
\end{equation}

  \ni  Taking  Corollary   \ref{corapriori} (ii) into account,  we infer from (\ref{eusurrborne}), 
    (\ref{cvflexion}),     (\ref{estsigmakappafini}),    (\ref{estsigmakappafini2}),   that
  
   \begin{equation}  
\begin{aligned}
 & \lime  I_{3\e}    =  \lime  -  \int_\OT \frac{1}{r_\e}\bfe_{x'} (\bfu_\e): 2\kappa   \bfH^{\bfsigma}(\psi_3 )\frac{y_\e(x_3)}{r_\e}  d m_\e dt  
 \\& =\!\!-2\kappa \!\!\!\sum_{\a,\b=1}^2  \!\! \int_{\OT\times I} \hskip-1,3cm (\bfH^{\bfsigma}(\psi_3 ))_{\a\b}
\lp \! {1\over 2}  \!\lp\! {\partial \xi_\a\over \partial x_\b}\!
 + \!{\partial \xi_\b\over \partial x_\a} \!\rp\!\!(x,t)\!
 -  \!{\partial^2 v_3\over \partial x_\a \partial x_\b}(x,t)y_3\!\rp y_3 n dxdtdy_3  
\\& = {\kappa  \over6}\!\int_{\OT } \bfH(v_3) :
\bfH^{\bfsigma}(\psi_3 ) n dxdt .
\end{aligned}\nonumber
\end{equation}
\end{proof}


\end{document}